\newcommand{\IF}{\ensuremath{\mathbb{F}}}
\newcommand{\MM}{\ensuremath{\mathcal{M}}}
\newcommand{\NN}{\ensuremath{\mathcal{N}}}
\newcommand{\WW}{\ensuremath{\mathcal{W}}}
\newcommand{\IN}{\ensuremath{\mathbb{N}}}
\newcommand{\IM}{\ensuremath{\mathbb{M}}}
\newcommand{\R}{\ensuremath{\mathbb{R}}}
\newcommand{\RR}{\ensuremath{\mathcal{R}}}
\newcommand{\MS}{\ensuremath{\mathcal{S}}}
\newcommand{\ba}{\begin{align*}}
\newcommand{\ea}{\end{align*}}
\newcommand{\na}{\nabla}
\newcommand{\la}{\langle}
\newcommand{\ra}{\rangle}
\newcommand{\lc}{\left(}
\newcommand{\rc}{\right)}
\newcommand{\ep}{\epsilon}
\newcommand{\XX}{\mathcal{X}}
\newcommand{\tf}{\mathfrak{t}}
\renewcommand{\t}{\mathfrak{t}}
\newcommand{\Var}{\text{Var}}
\newcommand{\CF}{\mathfrak{C}}
\newcommand{\II}{\mathcal{I}}
\newcommand{\td}[1]{\widetilde{#1}}
\newcommand*\owedge{\mathpalette\@owedge\relax}
\newcommand*\@owedge[1]{%
\mathbin{%
\ooalign{%
$#1\m@th\bigcirc$\cr
\hidewidth$#1\m@th\wedge$\hidewidth\cr
}%
}%
}
\def\ExtendSymbol#1#2#3#4#5{\ext@arrow 0099{\arrowfill@#1#2#3}{#4}{#5}}
\def\ExtendSymbol#1#2#3#4#5{\ext@arrow 0099{\arrowfill@#1#2#3}{#4}{#5}}
\def\XXint#1#2#3{{\setbox0=\hbox{$#1{#2#3}{\int}$ }
\vcenter{\hbox{$#2#3$ }}\kern-.55\wd0}}
\numberwithin{equation}{section}
\newtheorem{thm}{Theorem}[section]
\newtheorem{cor}[thm]{Corollary}
\newtheorem{prop}[thm]{Proposition}
\newtheorem{lem}[thm]{Lemma}
\newtheorem{rem}[thm]{Remark}
\newtheorem{defn}[thm]{Definition}
\title{Heat kernel on Ricci shrinkers (II)}
\author{Yu Li \quad and \quad Bing Wang}
\date{\today}
\begin{document}
\maketitle

\begin{abstract}
This paper is the sequel to our study of heat kernels on Ricci shrinkers in \cite{LW20}. In this paper, we improve many estimates in \cite{LW20} and extend the recent progress of Bamler \cite{Bam20a}. In particular, we drop the compactness and curvature boundedness assumptions and show that the theory of $\IF$-convergence holds naturally on any Ricci flows induced by Ricci shrinkers.
\end{abstract}

\tableofcontents

\section{Introduction}

A Ricci shrinker $(M^n, g, f)$ is a complete Riemannian manifold $(M^n,g)$ coupled with a smooth function $f$ satisfying 
\begin{align} 
Rc+\text{Hess}\,f=\frac{1}{2}g \label{E100},
\end{align}
where the potential function $f$ is normalized so that
\begin{align} 
R+|\nabla f|^2&=f \label{E101}.
\end{align}

The study of shrinkers is an essential component of analyzing the singularity formation of solutions to the Ricci flow. For a Ricci flow with type-I curvature bound, it is proved by Enders-M\"uller-Topping \cite{EMT11} that any proper blow-up sequence converges smoothly to a nontrivial Ricci shrinker. For general compact Ricci flows, it is proved by Bamler \cite{Bam20c} that the finite-time singularities are modeled on Ricci shrinkers containing a singular set by using the theory of $\IF$-convergence developed in \cite{Bam20a, Bam20b, Bam20c}.

In dimension $2$ or $3$, all Ricci shrinkers are completely classified (cf.~\cite{Ha95}\cite{Naber}\cite{NW}\cite{CCZ}, etc). We know that $\R^2,S^2,\R^3,S^3,S^2 \times \R$ and their quotients form the complete list. In particular, all low-dimensional Ricci shrinkers have bounded and nonnegative sectional curvature.

In higher dimensions, the complete classification of Ricci shrinkers seems out of reach. Subject to an additional curvature positivity assumption, some partial classifications are also known (cf. \cite{Naber}\cite{MW17}\cite{LNW18}\cite{LN20}\cite{Na19}). In general, it is still unclear if there exists any Ricci shrinker with unbounded sectional curvature. 

On the one hand, Ricci shrinkers can be regarded as critical metrics which generalize the classical positive Einstein manifolds. On the other hand, for any Ricci shrinker, there exists an associated self-similar solution to the Ricci flow (cf. Section 2). As a special class of Ricci flows, Ricci shrinkers have many known important properties of compact Ricci flows. In \cite{LW20}, many fundamental analytic tools, including the maximum principle, optimal log-Sobolev constant estimate, the no-local-collapsing theorems, etc., are established for Ricci flows associated with Ricci shrinkers. Many heat kernel estimates include the differential Harnack inequality and the pseudolocality theorem are also known in \cite{LW20}.

In this paper, we continue to focus on Ricci flows associated with Ricci shrinkers without any curvature assumption. Based on the techniques and results in \cite{LW20} and \cite{Bam20a}, we further obtain results, including a Gaussian bound on the heat kernel, no-local-collapsing and non-expanding estimates, an $\ep$-regularity theorem, etc. All those results are stronger than their counterparts in \cite{LW20}.
It is important to notice that we have no assumption of curvature at all. If we assume bounded curvature on non-compact manifold, then many results are already known (cf.~\cite{Bam21}~\cite{CMZa}). 

The pointed Nash entropy (cf. Definition \ref{def:nash}) plays an important role in \cite{Bam20a}, which first appears in~\cite[Section 5]{Pe1} and is systematically studied in~\cite{HN14}. 
In \cite{LW20}, we use Perelmam's entropy $\boldsymbol{\mu}$ (see \eqref{eq:mu}) to characterize the optimal log-Sobolev constant and the local non-collapsing. The pointed Nash entropy, which is always bounded below by $\boldsymbol{\mu}$, has the advantage of being local in the spacetime of Ricci flows. In \cite{HN14}, it is proved that the Nash entropy is Lipschitz. Moreover, the oscillation of the Nash entropy in the spacetime is established in \cite{Bam20a}. We generalize the Nash entropy and its fundamental estimates to the Ricci flows associated with Ricci shrinkers; see Theorem \ref{thm:T305} and Corollary \ref{cor:404}. 

\begin{thm} \label{thm:101}
Let $(M^n,g(t))_{t<1}$ be the Ricci flow associated with a Ricci shrinker. Then for any $s<t<1$, the Nash entopy $\NN_s^*(x,t):=\NN_{(x,t)}(t-s)$ is smooth and satisfies the following estimates on $M \times (s,1)$.
\begin{align} \label{E102a}
|\na \NN_s^*| \le \sqrt{\frac{n}{2(t-s)}}\quad \text{and} \quad -\frac{n}{2(t-s)} \le \square \NN_s^* \le 0.
\end{align}
\end{thm}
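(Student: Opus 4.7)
My approach is to express $\NN_s^*$ explicitly via the heat kernel $K(x,t;y,s)$ whose existence and smoothness were established in \cite{LW20}, extract a simple identity for $\square\,\NN_s^*$ that already yields the lower bound, and then deduce both remaining bounds from a single Hein--Naber type parabolic gradient inequality. Writing the conjugate heat kernel in the form $(4\pi(t-s))^{-n/2}e^{-f}$ gives
\begin{equation*}
\NN_s^*(x,t) = -\int_M K(x,t;y,s)\log K(x,t;y,s)\,dg_s(y) - \frac{n}{2}\log\bigl(4\pi(t-s)\bigr) - \frac{n}{2}.
\end{equation*}
Smoothness on $M\times(s,1)$ then follows from smoothness of the heat kernel together with the Gaussian upper bounds on $K$ and its spatial derivatives proved in \cite{LW20}, which justify differentiation under the integral sign --- this is the first place the non-compactness of $M$ has to be handled carefully.

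Using $\partial_t K = \Delta_x K$ and the elementary identity $\Delta_x(K\log K) = (1+\log K)\Delta_x K + |\nabla_x K|^2/K$, a direct calculation produces
\begin{equation*}
\square\,\NN_s^*(x,t) = -\frac{n}{2(t-s)} + \int_M \frac{|\nabla_x K|^2}{K}\,dg_s(y).
\end{equation*}
Since the integrand is non-negative, the lower bound $\square\,\NN_s^*\ge -n/(2(t-s))$ is immediate, and the remaining content of the theorem reduces to the gradient bound together with $\square\,\NN_s^*\le 0$.

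Both of these would follow at once from the Hein--Naber parabolic inequality
\begin{equation*}
|\nabla \NN_s^*|^2 + \square\,\NN_s^* \le 0,
\end{equation*}
because combining it with the identity above yields $\square\,\NN_s^*\le -|\nabla \NN_s^*|^2 \le 0$ and $|\nabla \NN_s^*|^2 \le -\square\,\NN_s^* \le n/(2(t-s))$ simultaneously. I would establish this inequality following the strategy of \cite{HN14} and \cite{Bam20a}: couple the conjugate heat kernel measures based at two nearby spacetime points by parallel transport of a Brownian motion, apply a Bakry--\'Emery type estimate, and let the basepoints coalesce; equivalently, the parabolic inequality can be derived from a Hamilton-type differential Harnack estimate for the heat kernel.

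The main obstacle is pushing the Hein--Naber computation through with no global curvature boundedness. Every integration by parts and every exchange of limits must be supported by quantitative decay of $K$, $\nabla K$ and $\nabla^2 K$ at spatial infinity; these are exactly the heat kernel estimates from \cite{LW20}, proved there without any curvature hypothesis. The shrinker identity $Rc + \mathrm{Hess}\,f = g/2$ supplies the weighted one-sided control on curvature that, in the closed setting of \cite{Bam20a}, is instead provided by Perelman's monotonicity machinery; combining it with the decay estimates of \cite{LW20} is what I expect to make the argument go through uniformly on $M\times(s,1)$.
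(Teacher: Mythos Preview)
Your identity for $\square\,\NN_s^*$ and the lower bound are correct and coincide with the paper's computation. The gap is the inequality you call ``Hein--Naber'':
\[
|\nabla \NN_s^*|^2 + \square\,\NN_s^* \le 0.
\]
This is \emph{not} what is proved in \cite{HN14} or \cite{Bam20a}. Unwinding your own formula, it is equivalent to
\[
|\nabla \NN_s^*|^2 + \int_M \frac{|\nabla_x K|^2}{K}\,dg_s \le \frac{n}{2(t-s)},
\]
which is strictly stronger than the two separate bounds in the theorem (note that on $\R^n$ both sides equal $n/(2(t-s))$, so there is no slack). Your sketch --- coupling conjugate heat kernels by parallel transport, or invoking an unspecified Hamilton-type Harnack --- does not produce this combined inequality; the coupling argument in \cite{HN14} gives a Lipschitz bound on $\NN_s^*$, not a relation between $|\nabla\NN_s^*|^2$ and $\square\,\NN_s^*$. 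So as written you have reduced the theorem to a statement you have not established and which is not in the literature you cite.

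The paper proceeds differently and more directly. The key input is a sharp integral estimate on $\nabla_x K/K$ (Theorem~\ref{thm:T304}): for any unit $w\in T_xM$,
\[
(t-s)\int_M \Big(\frac{\partial_w K}{K}\Big)^2\,dv \le \frac12,
\qquad
(t-s)\int_M \Big|\frac{\nabla_x K}{K}\Big|^2\,dv \le \frac{n}{2}.
\]
This is obtained from the improved gradient estimate for heat solutions taking values in $(0,1)$ (Theorem~\ref{thm:T303}, via the map $\Phi^{-1}$), which in turn rests on a maximum principle adapted to the shrinker setting. Plugging the full-gradient bound into your identity gives $\square\,\NN_s^*\le 0$ immediately. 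The Lipschitz bound is obtained \emph{separately}: one writes $\partial_w\NN_s^* = \int (b-\bar b)\,(\partial_w K/K)\,dv$ with $\bar b = \NN_s^*+n/2$, and applies Cauchy--Schwarz together with the variance bound $\int (b-\bar b)^2\,dv \le n$ (Corollary~\ref{cor:301}, itself a consequence of the $L^2$-Poincar\'e inequality for $dv$) and the directional estimate above. Smoothness and the passage from distributional to classical inequalities are handled first with the cutoffs $\phi^r$ (Theorem~\ref{thm:T305}) and then upgraded using the Gaussian heat-kernel bounds (Corollary~\ref{cor:404}). If you want to salvage your approach, the missing ingredient is precisely this integral estimate on $\nabla_x K/K$; once you have it, the combined inequality you posit is unnecessary.
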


The proof of \eqref{E102a} is based on an integral estimate of the heat kernel (cf. Theorem \ref{thm:T304}), which was initially obtained in \cite{Bam20a} for compact Ricci flows. A key application of Theorem \ref{thm:101} is to estimate the local oscillation of the Nash entropy (cf. Corollary \ref{cor:303}). Using the Nash entropy properties and the heat kernel estimates, we obtain the improved no-local-collapsing and non-expanding result (cf. Theorem~\ref{thm:volume2} and Theorem~\ref{thm:volume4}).

\begin{thm}[\textbf{No-local-collapsing and non-expanding}] \label{thm:102}
Let $(M^n,g(t))_{t<1}$ be the Ricci flow associated with a Ricci shrinker. For any $x \in M$ and $t<1$,
\begin{align*}
|B_t(x,r)|_t \le C(n) \exp\lc \NN_{x,t}(r^2) \rc r^n
\end{align*}
and if $R \le r^{-2}$ on $B_t(x,r)$, then
\begin{align*}
|B_t(x,r)|_t \ge c(n) \exp\lc \NN_{x,t}(r^2) \rc r^n.
\end{align*}
\end{thm}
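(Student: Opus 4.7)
The plan is to prove both bounds by adapting the strategy Bamler introduced in \cite[Section 8]{Bam20a}, replacing his closed-manifold and bounded-geometry inputs with the Nash entropy estimates of Theorem \ref{thm:101}, the integral heat kernel bound of Theorem \ref{thm:T304}, and the oscillation estimate of Corollary \ref{cor:303}. The central object is the conjugate heat kernel $u(y, s) := K(x, t; y, s)$ with its probability measure $d\nu_s = u\, dV_s$, and the guiding heuristic is that
\[
\NN_{x,t}(r^2) = -\int u \log u \, dV_{t-r^2} - \tfrac{n}{2}\log(4\pi r^2) - \tfrac{n}{2}
\]
encodes the logarithm of the effective volume on which $\nu_{t-r^2}$ concentrates: total mass $\sim 1$ distributed with density $\sim r^{-n} e^{-\NN}$ over a region of $g(t-r^2)$-volume $\sim r^n e^{\NN}$.

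For the upper bound I would first establish the pointwise lower bound $u(y, s) \ge c(n)\, r^{-n} e^{-\NN_{x,t}(r^2)}$ at time $s = t - r^2$ on an enlargement of the time-$t$ ball $B_t(x, r)$. At the base point this follows from the Nash-entropy interpretation together with a Harnack-type inequality for the heat kernel; one then spreads the bound to a full ball via the gradient estimate on $\log u$, which is a translation of \eqref{E102a} from $\nabla \NN$ to $\nabla \log K$. A distance-distortion estimate comparing $B_t(x, r)$ with the corresponding region at time $s$ is supplied by $\square \NN \le 0$ together with the Gaussian heat kernel bounds. Feeding the pointwise lower bound into the identity $1 = \int u\, dV_s$ then yields $|B_t(x, r)|_t \le C(n)\, r^n e^{\NN}$.

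For the lower bound the hypothesis $R \le r^{-2}$ on $B_t(x, r)$ is what makes matters work. It provides a genuine distance-distortion estimate along minimizing geodesics, and in tandem with the Gaussian upper bound on $u$ it produces the matching pointwise upper bound $u(y, s) \le C(n)\, r^{-n} e^{-\NN}$ on the ball. The remaining input is the mass lower bound $\nu_s(B_t(x, r)) \ge c(n)$, which I would deduce from Corollary \ref{cor:303}: the concentration region of $\nu_s$ lies within $d_{t-r^2}$-distance $O(r)$ of $x$, and the scalar curvature bound turns this into concentration inside $B_t(x, r)$ itself. Plugging both bounds into $\nu_s(B_t(x, r)) = \int_{B_t(x, r)} u\, dV_s$ delivers $|B_t(x, r)|_t \ge c(n)\, r^n e^{\NN}$.

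The hard part will be compensating for the noncompactness and possibly unbounded curvature, since the corresponding results of \cite{Bam20a} were stated for closed Ricci flows with bounded geometry. The substitute must come from the Ricci shrinker structure itself: the quadratic growth of $f$ from \eqref{E101} yields decay of $u$ at spatial infinity (Theorem \ref{thm:T304}), and the entropy bounds of \cite{LW20} force all constants to depend only on $n$. The most delicate step is the lower-bound argument, where the single hypothesis $R \le r^{-2}$ has to simultaneously control the distance distortion, the heat kernel upper bound, and the fraction of $\nu_s$-mass captured by $B_t(x, r)$, and these three uses must be balanced against each other without losing the dimensional constants.
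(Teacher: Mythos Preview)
Your proposal has genuine gaps in both directions, stemming from a confusion about which variable of the heat kernel carries the gradient control.

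For the upper bound, you work with $u(y,s)=H(x,t,y,s)$ and propose to spread a pointwise lower bound over $y$ via ``the gradient estimate on $\log u$, which is a translation of \eqref{E102a}.'' But \eqref{E102a} bounds $|\nabla_x\NN_s^*|$, an integrated quantity in the \emph{first} variable; it does not yield any pointwise control on $|\nabla_y\log H(x,t,y,s)|$, and no such $y$-gradient bound of the required form is available without curvature hypotheses. The paper (following \cite[Theorem 8.1]{Bam20a}) instead uses the pointwise gradient estimate in the \emph{first} variable, Theorem~\ref{thm:gra}, which requires the heat kernel upper bound Theorem~\ref{thm:heatupper2} and is not a consequence of \eqref{E102a}. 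One fixes a single $y$ with $H(x,t,y,s)\ge c\,r^{-n}e^{-\NN}$ (such $y$ exists by the very definition of $\NN$), spreads this via Theorem~\ref{thm:gra} to all $x'\in B_t(x,r)$, and integrates using $\int_M H(x',t,y,s)\,dV_t(x')\le 1$ at time $t$---not $\int u\,dV_s=1$ at time~$s$.

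For the lower bound your scheme produces $|B_t(x,r)|_s\ge c\,r^ne^{\NN}$ (the $g(s)$-volume of the time-$t$ ball), and you then need $|B_t(x,r)|_t\ge c\,|B_t(x,r)|_s$. But $R\ge 0$ on a Ricci shrinker gives $dV_t\le dV_s$, i.e.\ the inequality goes the wrong way; and the hypothesis $R\le r^{-2}$ is only assumed at time $t$, so you cannot bound $\int_s^t R\,d\tau$ on the ball to reverse it. Likewise your appeal to Corollary~\ref{cor:303} for ``$\nu_s(B_t(x,r))\ge c$'' is off: concentration of $\nu_s$ is near the $H_n$-center $(z,s)$, and placing $z$ close to $x$ (Proposition~\ref{prop:Hcent1}) again needs $R(x,\tau)\le r^{-2}$ for all $\tau\in[s,t]$, not just $\tau=t$. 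The paper's route (following \cite[Theorem 6.1]{Bam20a}) avoids the time change entirely: one takes a cutoff $\varphi$ supported in $B_t(x,r)$, uses $R\le r^{-2}$ at time~$t$ to bound its $\WW$-entropy by $\log(|B_t(x,r)|_t/r^n)+C(n)$, and compares with $\NN_{x,t}(r^2)$ via Jensen's inequality together with the Lipschitz bound on $z\mapsto\NN_{z,t}(r^2)$ from Theorem~\ref{thm:T305}.
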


Note that $\NN_{x,t}(r^2) \leq 0$ (cf. Corollary~\ref{cor:302}),  it is clear that Theorem~\ref{thm:102} provides a uniform volume ratio upper bound, independent of base point and radius.
This clearly improves the known volume upper bounds (cf. ~\cite{CZ10},~\cite{HM11},~\cite{LLW21}). 
On the other hand, as $\boldsymbol{\mu} \leq \NN_{x,t}(r^2)$, the non-collapsing estimate in Theorem~\ref{thm:102} also improves the one in~\cite{LW20}. 

An important concept introduced in \cite{Bam20a} is the $H$-center (cf. Definition \ref{def:Hcenter}). Roughly speaking, an $H$-center is a point around which the conjugate heat kernel is concentrated (cf. Proposition \ref{prop:304}). In addition, for any two conjugate heat kernels, the $W_1$-Wasserstein distance between them can be roughly measured by the distance between two $H$-centers. We prove the existence of an $H_n$-center, where $H_n=(n-1)\pi^2/2+4$, for any conjugate heat kernel, by generalizing the monotonicity of the variance obtained in \cite{Bam20a} to our setting (cf. Proposition \ref{prop:302}, Proposition \ref{prop:303}). By using these concepts and related techniques, we have the following heat kernel estimates (cf. Theorem \ref{thm:lower}, Theorem \ref{thm:heatupper}, Theorem \ref{thm:heatupper3}).

\begin{thm}[\textbf{Heat kernel estimates}] \label{thm:103}
Let $(M^n,g(t))_{t<1}$ be the Ricci flow associated with a Ricci shrinker satisfying $\boldsymbol{\mu} \ge -A$. Then the following properties hold.
\begin{enumerate}[label=(\roman*)]
\item There exists a constant $C=C(n,A,\delta)>1$ such that
\begin{align} \label{E103b}
\frac{C^{-1}}{(t-s)^{\frac n 2}} \exp \lc-\frac{d_s^2(x,y)}{C^{-1}(t-s)} \rc \le H(x,t,y,s) \le \frac{C}{(t-s)^{\frac n 2}} \exp \lc-\frac{d_s^2(x,y)}{C(t-s)} \rc
\end{align}
for any $-\delta^{-1} \le s <t \le 1-\delta$ and $d_t(p,x) \le \delta^{-1}$.

\item For any $\ep>0$, there exists a constant $C = C(n,\ep) >0$ such that
\begin{align} \label{E103c}
H(x,t,y,s) \le \frac{C \exp \lc -\NN_{(x,t)}(t-s) \rc}{(t-s)^{\frac n 2}} \exp \lc -\frac{d_s^2(z,y)}{(4+\ep)(t-s)} \rc,
\end{align}
for any $s<t<1$ and any $H_n$-center $(z,s)$ of $(x,t)$.
\end{enumerate}
\end{thm}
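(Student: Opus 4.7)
The plan is to prove (ii) first by combining the $H_n$-center concentration property with an $L^\infty$ Nash-entropy bound, and then to derive the two-sided Gaussian bound (i) from (ii) together with the hypothesis $\boldsymbol{\mu}\ge -A$. For (ii), I would use two ingredients already set up earlier in the paper. First, by the defining variance bound for an $H_n$-center (cf.\ Proposition \ref{prop:304}), Chebyshev gives
\begin{equation*}
\int_{M\setminus B_s(z,r)} H(x,t,\cdot,s)\,d\mu_s \le \frac{H_n(t-s)}{r^2}
\end{equation*}
for every $r>0$. Second, the integral estimate of Theorem \ref{thm:T304} together with Corollary \ref{cor:404} yields the on-diagonal pointwise bound $H(x,t,y,s) \le C(n)(t-s)^{-n/2}\exp(-\NN_{(x,t)}(t-s))$. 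One then converts the polynomial $L^1$-tail into a pointwise Gaussian tail via a mean-value inequality for the conjugate heat equation applied on a parabolic neighborhood of $(y,s)$ whose radius is optimized against $d_s(z,y)$. The factor $(4+\ep)^{-1}$ in the exponent, rather than the sharp $1/4$, is the usual loss in this interpolation step, and $\ep$ can be made as small as desired at the cost of the constant $C$.

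For the upper bound in (i), the hypothesis $\boldsymbol{\mu}\ge -A$ gives via Corollary \ref{cor:404} that $\NN_{(x,t)}(t-s) \ge -C(n,A,\delta)$ on the stated range, so the entropy prefactor in (ii) is bounded. An $H_n$-center $(z,s)$ of $(x,t)$ satisfies $d_s(x,z)^2 \le H_n(t-s)$, so $d_s(z,y) \ge d_s(x,y) - \sqrt{H_n(t-s)}$; combining with the distance-distortion estimates from \cite{LW20} (available since we are in a compact spacetime region determined by $\delta$) lets us replace $d_s(z,y)$ by $d_s(x,y)$ in the exponent of (ii) while absorbing a constant into $C$. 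For the lower bound, I would iterate the reproducing formula
\begin{equation*}
H(x,t,y,s) = \int_M H(x,t,w,r)\, H(w,r,y,s)\,d\mu_r(w), \qquad s<r<t,
\end{equation*}
together with the differential Harnack inequality for the heat kernel from \cite{LW20}: along a discrete sequence of $\sim d_s^2(x,y)/(t-s)$ intermediate spacetime points joining $(x,t)$ to $(y,s)$, each Harnack step costs a bounded multiplicative factor depending only on $n,A,\delta$, and the initial on-diagonal seed is furnished by the non-collapsing half of Theorem \ref{thm:102}.

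The main obstacle is the simultaneous non-compactness of $M$ and the absence of a priori curvature bounds, which prevents direct quotation of Bamler's arguments: extremal points for mean-value comparisons may escape to infinity, and integration-by-parts identities have uncontrolled boundary terms at spatial infinity. The workaround is to exploit the properness of the potential $f$ together with the exponential concentration of the conjugate heat kernel about its $H$-centers---both established in earlier sections of the paper---to localize every integral estimate to a compact region on which the Bamler-type arguments go through verbatim.
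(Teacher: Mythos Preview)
Your plan for (ii) has a genuine gap. The step where you ``convert the polynomial $L^1$-tail into a pointwise Gaussian tail via a mean-value inequality for the conjugate heat equation applied on a parabolic neighborhood of $(y,s)$'' cannot be carried out as stated: a parabolic mean-value inequality requires local control on the geometry (at minimum a scalar curvature bound) in that neighborhood, and for arbitrary $y\in M$ no such control is available on a general Ricci shrinker. This is exactly the difficulty the paper isolates in the introduction. The paper's argument for (ii) is instead a point-picking iteration (Lemma~\ref{lem:tech} and Proposition~\ref{prop:weak}): assuming the bound fails at $(x,T)$ with constant $Q$, one uses the semigroup property together with Proposition~\ref{prop:305} to locate a new base point $(x_1,T/8)$ at which it fails with constant $2Q$; crucially, the $H_n$-center machinery keeps $d_{T_k}(x_k,p)$ bounded along the iteration, so all the $(x_k,T_k)$ remain in a fixed compact set. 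The contradiction then comes from the \emph{local} estimate Corollary~\ref{cor:401}, which is legitimate precisely because scalar curvature is bounded on that compact set. Your final paragraph gestures at this localization, but the point is that one cannot localize the mean-value step at $(y,s)$---one must instead localize the \emph{base points} via the iteration, and this requires the point-picking mechanism, not just concentration of a single conjugate heat kernel.

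There is also an error in your argument for the upper bound in (i). The claim ``an $H_n$-center $(z,s)$ of $(x,t)$ satisfies $d_s(x,z)^2\le H_n(t-s)$'' is false: the definition only gives $\Var_s(\delta_z,v_{x,t;s})\le H_n(t-s)$, hence $d^s_{W_1}(\delta_z,v_{x,t;s})\le\sqrt{H_n(t-s)}$, which says nothing about $d_s(x,z)$ without knowing that $v_{x,t;s}$ itself concentrates near $x$ in the metric $d_s$. The paper obtains $d_s(x,z)\le L_2\sqrt{t-s}$ in Lemma~\ref{lem:402} by combining the $H_n$-center tail (Proposition~\ref{prop:305}) with the concentration of $v_{x,t;s}$ around $x$ from \cite[Theorem~19]{LW20}, and the latter genuinely uses the hypothesis $d_t(p,x)\le\delta^{-1}$. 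For the lower bound in (i), the paper simply quotes \cite[Formula~(203)]{LW20} (a Harnack-type lower bound) together with the distance comparison of Lemma~\ref{lem:401}; your chaining argument could be made to work but is more elaborate than necessary.
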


Here, the point $p$ is a minimum point of $f$, regarded as the Ricci shrinker's base point. The Gaussian estimate \eqref{E103c} is previously proved in \cite{Bam20a} for compact Ricci flows, with $4+\ep$ replaced by $8+\ep$. Our proof uses an iteration argument by showing that if \eqref{E103c} fails, one can find a new spacetime point $(x',t')$ with an $H_n$-center $(z',s)$ such that $H(x',t',y,s)$ has a worse bound than \eqref{E103c}. Eventually, we will arrive at a contradiction if $t'$ is sufficiently close to $s$. The proof in our case is more involved since we do not have a global heat kernel bound as \eqref{E103c} when $t$ is close to $s$, which is always available for compact Ricci flows. Therefore, in the iteration process, we must carefully choose the sequence of spacetime points, so they all fall into a compact set. Then the contradiction comes from the local heat kernel estimate (cf. Corollary \ref{cor:401}) since locally the scalar curvature is bounded. 

Once we have the estimate \eqref{E103c}, the upper bound in \eqref{E103b} follows since the distance between $(x,s)$ and $(z,s)$ can be well-controlled. Moreover, the lower bound in \eqref{E103b} is already contained in \cite{LW20} in a different guise. We also obtain the gradient estimate of the heat kernel; see Theorem \ref{thm:gra}.

By the monotonicity of the $W_1$-Wasserstein distance between two conjugate heat kernels (cf. Proposition \ref{prop:301}), it is natural to consider new $P^*$-parabolic neighborhoods in the spacetime of the Ricci flow, as pointed out in \cite{Bam20a} (cf. Definition \ref{def:pnei}, \eqref{E501a}, \eqref{E501aa}). Comparing the $P^*$-parabolic neighborhoods with the conventional ones, we have the following result (cf. Proposition \ref{prop:Hcent1a}, Proposition \ref{prop:com1}, Proposition \ref{prop:Hcent2}, Proposition \ref{prop:Hcent3}).

\begin{thm} \label{thm:104}
Let $(M^n,g(t))_{t<1}$ be the Ricci flow associated with a Ricci shrinker satisfying $\boldsymbol{\mu} \ge -A$. Then the following properties hold.
\begin{enumerate}[label=(\roman*)]
\item Given $\delta \in (0,1)$, $t_0 \in (-\infty,1)$, $T^{\pm} \ge 0$ and $S \ge 0$, there exists a constant $C=C(n,A,\delta)>1$ such that
\begin{align*}
P^*(p,t_0;S,-T^-,T^+) \subset Q(p,t_0; \sqrt{2} S+C,-T^-,T^+) \subset P^*(p,t_0;\sqrt{2}S+2C,-T^{-},T^+)
\end{align*}
provided that $t_0-T^- \ge -\delta^{-1}$.

\item There exists a constant $\rho=\rho(n,A) \in (0,1)$ satisfying the following property.
Given $(x_0,t_0) \in M \times (-\infty,1)$ and $r>0$, suppose that $R \le r^{-2}$ on $P(x_0,t_0;r,-(\rho r)^2,(\rho r)^2)$. Then
\begin{align*} 
P(x_0,t_0; \rho r) &\subset P^*(x_0,t_0;r, -(\rho r)^2,(\rho r)^2) \quad \text{and} \quad P^*(x_0,t_0; \rho r) \subset P(x_0,t_0; r, -(\rho r)^2,(\rho r)^2).
\end{align*}
\end{enumerate}
\end{thm}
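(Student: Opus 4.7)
The plan is to exploit the interplay between $W_1$-Wasserstein distances of conjugate heat kernels and their concentration near $H_n$-centers, combining the monotonicity of $W_1$ from Proposition~\ref{prop:301}, the $H_n$-center machinery in Propositions~\ref{prop:303}--\ref{prop:304}, and the heat kernel bounds from Theorem~\ref{thm:103} and Corollary~\ref{cor:401}.

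\textbf{Part (i).} If $(x,t) \in P^*(p,t_0;S,-T^-,T^+)$, the monotone Wasserstein distance between the conjugate heat kernels $\nu_{x,t;s}$ and $\nu_{p,t_0;s}$ at time $s:=t_0-T^-$ is at most $S$. By Proposition~\ref{prop:304}, each kernel concentrates within $g(s)$-distance $\sqrt{H_n(t-s)}$, resp.\ $\sqrt{H_n(t_0-s)}$, of its $H_n$-center $z_1$, resp.\ $z_2$; the hypothesis $t_0-T^-\ge -\delta^{-1}$ uniformly bounds both spreads by a constant $C=C(n,A,\delta)$. A triangle inequality then yields $d_s(z_1,z_2)\le S+C$. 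Converting this time-$s$ distance between $H$-centers into a time-$t$ distance between $(x,p)$ is accomplished by a Perelman $\mathcal{L}$-geodesic argument in the spirit of \cite{Bam20a}: using the lower Gaussian bound \eqref{E103b}, pick a point $y$ of significant $\nu_{x,t;s}$-mass near $z_1$; Cauchy--Schwarz on the $\mathcal{L}$-energy bounds $d_t(x,y)$ and introduces the $\sqrt{2}$. Do the analogous estimate on the $(p,t_0)$ side and concatenate with the $g(s)$-geodesic from $z_1$ to $z_2$ to obtain $d_t(x,p)\le \sqrt{2}S+C$. The reverse inclusion is dual: a $d_t$-bound on $(x,p)$ plus the same concentration at time $s$ caps the $W_1$-distance between the two conjugate heat kernels at $s$, producing the additional constant $+2C$ that absorbs the spread terms.

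\textbf{Part (ii).} The scalar curvature bound $R\le r^{-2}$ on $P(x_0,t_0;r,-(\rho r)^2,(\rho r)^2)$ is precisely what activates the local two-sided Gaussian heat kernel estimates of Corollary~\ref{cor:401} on this region. Fix $\rho=\rho(n,A)\in(0,1)$ small enough that every $H_n$-center we encounter remains inside this region. For the forward inclusion, given $(x,t)\in P(x_0,t_0;\rho r)$, the $H_n$-centers $(z_1,s)$ and $(z_2,s)$ of $(x,t)$ and $(x_0,t_0)$ at $s=t_0-(\rho r)^2$ satisfy $d_s(z_1,z_2)\le C\rho r$ (using $d_t(x,x_0)\le\rho r$ and the local Gaussian comparison along the flow), and Proposition~\ref{prop:304}-concentration then bounds $W_1(\nu_{x,t;s},\nu_{x_0,t_0;s})\le (C+2\sqrt{H_n})\rho r$, which is $\le r$ after shrinking $\rho$. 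The opposite inclusion $P^*(x_0,t_0;\rho r)\subset P(x_0,t_0;r,-(\rho r)^2,(\rho r)^2)$ runs in reverse: a $W_1$-bound at time $s$ together with the local Gaussian lower bound forces $d_t(x,x_0)\le r$.

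\textbf{The main obstacle.} Unlike the compact setting of \cite{Bam20a}, no a priori global Gaussian heat kernel estimate is available, so every step must be confined to a region where Corollary~\ref{cor:401} or Theorem~\ref{thm:103}(i) applies. The bulk of the work lies in verifying that all $H_n$-centers and auxiliary spacetime points produced by the triangle inequalities remain inside such a region; this is precisely what forces the hypothesis $t_0-T^-\ge -\delta^{-1}$ in~(i) and the a priori choice of a small $\rho$ in~(ii).
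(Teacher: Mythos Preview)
Your outline is on target for the easier directions: the reverse containment in (i) and the forward containment in (ii) both follow from the triangle inequality, the monotonicity of $W_1$ (Proposition~\ref{prop:301}), and the fact that under a scalar-curvature bound one has $d_{W_1}^{s}(v_{y,t;s},\delta_y)\le C\sqrt{t-s}$ via the reduced-length estimate (Proposition~\ref{prop:Hcent1}). That part matches the paper.

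The two remaining directions, however, have a common gap: you need to pass from information at the bottom time $s=t_0-T^-$ (or $s=t_0-(\rho r)^2$) \emph{forward} to a distance bound at the later time $t$ (or $t_0$), and neither the $\mathcal L$-geodesic idea nor the Gaussian lower bound does this.

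For (i) forward, the bound \eqref{E103b} you invoke requires $d_t(p,x)\le K$ as a hypothesis, which is precisely the conclusion you seek, so the argument is circular. The $\mathcal L$-energy Cauchy--Schwarz inequality would bound $d_t(x,y)$ in terms of $\int R$ along paths, but you have no control on $R$ near the unknown point $x$. The paper's mechanism is completely different and exploits the shrinker structure: one uses the global cutoff $\phi^r=\eta(F/r)$ built from the potential $F$. Since $|\square\phi^r|\le Cr^{-1}$, the quantity $\int_M\phi^r\,dv_{x_1,t_1;t}$ is nearly constant in $t$; knowing that $v_{x_1,t_1;s}$ is concentrated where $\phi^r=1$ forces $\phi^r(x_1,t_1)\ge\frac12$, hence $F(x_1,t_1)\le 2r$. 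The factor $\sqrt2$ then comes from $F\approx d_t(\cdot,p)^2/4$ (Lemma~\ref{L201}): choosing $2\sqrt r=S+C$ gives $d_{t_1}(x_1,p)\le\sqrt2 S+C'$.

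For (ii) reverse, the same obstruction appears locally: a $W_1$-bound at time $s$ tells you the $H_n$-center of $(y,s)$ is near $x_0$ at time $s$, but you still need a device to propagate this to $d_{t_0}(y,x_0)\le r$. The paper uses a \emph{local} cutoff $\varphi$ (Proposition~\ref{prop:cutoff}) supported in $P(x_0,t_0;r,\ldots)$ with $|\square\varphi|\le r^{-2}$; the same integration-in-time argument then gives $\varphi(y,s)>0$, forcing $(y,s)$ into the support of $\varphi$ and hence into the conventional parabolic neighborhood.

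In both cases the missing idea is the same: a spacetime cutoff that is nearly preserved under the heat operator, so that concentration of the conjugate heat kernel at the bottom time lifts to membership in a sublevel set at the later time.
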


The proof of Theorem \ref{thm:104} involves the distance distortion estimates globally with respect to $p$ and locally under the scalar curvature control. Moreover, one needs to locate the $H_n$-center of $(p,t_0)$ or $(x_0,t_0)$. Notice that, if $t_0+T^+<1$, Theorem \ref{thm:104} implies that any $P^*(p,t_0;S,-T^-,T^+)$ is precompact, i.e., its closure is compact.
By using the estimates of the Nash entropy and $P^*$-neighborhoods, one has the following $\ep$-regularity theorem (cf. Theorem \ref{thm:epr}), which is proved in \cite{Bam20a} for compact Ricci flows. Here, $r_{\text{Rm}}$ is the spacetime curvature radius, whose definition can be found in Definition \ref{def:curv}.

\begin{thm}[\textbf{$\ep$-regularity theorem}]\label{thm:105}
There exists a small constant $\ep=\ep(n)>0$ satisfying the following property.

Let $(M^n,g(t))_{t<1}$ be the Ricci flow associated with a Ricci shrinker. Given $(x,t) \in M \times (-\infty,1)$ and $r>0$, suppose that $\NN_{(x,t)}(r^2) \ge -\ep$, then $r_{\emph{Rm}}(x,t) \ge \ep r$.
\end{thm}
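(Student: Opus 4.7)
The plan is to argue by contradiction, following the scheme of Bamler's $\ep$-regularity theorem in \cite{Bam20a} and invoking the Ricci shrinker analogues of his tools developed throughout this paper. Suppose the conclusion fails for every choice of $\ep(n)>0$. Then there exist a sequence of Ricci flows $(M_i^n,g_i(t))_{t<1}$ associated with Ricci shrinkers, spacetime points $(x_i,t_i)$, and radii $r_i>0$ with
\[
\NN_{(x_i,t_i)}(r_i^2) \ge -\tfrac{1}{i} \qquad \text{but} \qquad r_{\text{Rm}}(x_i,t_i) < \tfrac{1}{i}\, r_i.
\]
I would parabolically rescale by $r_i^{-2}$ and time-shift so that $r_i=1$, $t_i=0$, $\NN_{(x_i,0)}(1)\ge -1/i$, and $r_{\text{Rm}}(x_i,0)<1/i$. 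The class of Ricci flows associated with Ricci shrinkers is preserved under parabolic rescaling, so we remain within the setting of the paper.

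Next, I would use the local oscillation estimate (Corollary~\ref{cor:303}) together with the monotonicity provided by Theorem~\ref{thm:101} to upgrade $\NN_{(x_i,0)}(1)\ge -1/i$ to the statement that $\NN$ is uniformly close to $0$ on a definite $P^*$-parabolic neighborhood of $(x_i,0)$. By Theorem~\ref{thm:102}, this yields uniform two-sided volume comparability with Euclidean balls on that neighborhood, and by Theorem~\ref{thm:103} we obtain uniform Gaussian heat kernel bounds (noting that on the rescaled flows a uniform lower bound on $\boldsymbol{\mu}$ over the relevant compact region is supplied by the Nash entropy control and Theorem~\ref{thm:101}). Theorem~\ref{thm:104} then shows that the $P^*$-neighborhoods in question are precompact and comparable with conventional parabolic neighborhoods. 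With these uniform estimates, I would extract a subsequential pointed limit in Bamler's $\IF$-convergence topology, whose Ricci shrinker analogue is established elsewhere in this paper, producing a pointed metric flow $(\XX_\infty,x_\infty,0)$ with $\NN_{(x_\infty,0)}(1)=0$.

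The final step is rigidity: the identity $\NN_{(x_\infty,0)}(1)=0$ forces equality in the monotonicity of the Nash entropy, which in turn forces the conjugate heat kernel based at $(x_\infty,0)$ to coincide with the standard Gaussian and the limit flow to be the static Euclidean flow in a neighborhood of $(x_\infty,0)$. In particular $(x_\infty,0)$ is a regular spacetime point with $r_{\text{Rm}}(x_\infty,0)=\infty$, which contradicts $r_{\text{Rm}}(x_i,0)<1/i\to 0$ by the lower semicontinuity of $r_{\text{Rm}}$ under smooth convergence at regular points of the limit. The main obstacle will be the compactness/limit step: in the compact setting of \cite{Bam20a}, Bamler's $\IF$-compactness machinery applies directly, whereas in our noncompact shrinker setting we must rely on the precompactness of $P^*$-neighborhoods (Theorem~\ref{thm:104}) together with the Gaussian heat kernel upper bound \eqref{E103c} to confine the geometry to a controlled compact region, and only then apply the rigidity argument locally.
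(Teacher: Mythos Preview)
Your contradiction setup and the rigidity endgame are on target, but the argument has a genuine gap: you omit the point-picking step, which is precisely the mechanism the paper uses to make the compactness and contradiction work. After your rescaling you only know $r_{\text{Rm}}(x_i,0)<1/i$; you have no control whatsoever on the curvature near $(x_i,0)$, so Cheeger--Gromov smooth compactness is unavailable. You propose to substitute $\IF$-convergence, but this runs into two problems. First, the structure theory of $\IF$-limits that would let you conclude ``limit is Euclidean $\Rightarrow$ smooth convergence at the base point $\Rightarrow$ lower semicontinuity of $r_{\text{Rm}}$'' is developed in Section~6 and itself uses the $\ep$-regularity theorem as an input (e.g.\ to show $\mathcal R=\mathcal R^*$ and that points with vanishing Nash entropy are regular), so your route is circular. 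Second, even setting circularity aside, the $\IF$-framework places the base point at time $0$ where $\XX^\infty_0$ is a single point; the regular/singular decomposition and smooth convergence live on $\{t<0\}$, so ``lower semicontinuity of $r_{\text{Rm}}$ at $(x_\infty,0)$'' is not something the $\IF$-theory hands you directly.

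There is also a smaller confusion: parabolic rescaling takes you from a Ricci flow \emph{associated with} a Ricci shrinker to one merely \emph{induced by} a Ricci shrinker (Definition~\ref{dfn:RA19_2}); more importantly, your claim that ``a uniform lower bound on $\boldsymbol{\mu}$ is supplied by the Nash entropy control'' is wrong, since the inequality goes the other way ($\boldsymbol{\mu}\le\NN$, Corollary~\ref{cor:302}). The paper's proof avoids all of this by first performing a Perelman-type point-picking \emph{inside each fixed flow}, using the precompactness of $P^{*-}$-neighborhoods (Corollary~\ref{cor:501}), to find $(x',t')$ with $r_{\text{Rm}}(x',t')\le r_{\text{Rm}}(x,t)$ and $r_{\text{Rm}}\ge r_{\text{Rm}}(x',t')/10$ on $P^{*-}(x',t';Ar_{\text{Rm}}(x',t'))$. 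After rescaling to $r_{\text{Rm}}=1$ at the new base, one has uniform curvature bounds on arbitrarily large $P^*$-balls, and the Nash entropy hypothesis (propagated via Corollary~\ref{cor:303}) supplies non-collapsing through Theorem~\ref{thm:volume2}. This yields honest smooth convergence to a limit with $\NN=0$, hence Euclidean, contradicting $r_{\text{Rm}}=1$. The point-picking is not an optional refinement; it is what replaces your appeal to $\IF$-compactness and makes the argument self-contained.
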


Based on the results and techniques generalized (or slightly improved) from \cite{Bam20a}, we can generalize the theory about metric flows and $\IF$-convergence in \cite{Bam20b} and \cite{Bam20c} from compact Ricci flows to the setting of Ricci flows associated with or induced by Ricci shrinkers (cf. Definition~\ref{dfn:RA19_2}). In particular, a pointed Ricci flow induced by a Ricci shrinker can be regarded as a metric flow pair in the sense of \cite[Definition 5.1]{Bam20b}. Therefore, any sequence of pointed Ricci shrinkers induced by Ricci shrinkers with $\boldsymbol{\mu} \ge -A$, by taking a subsequence, will $\IF$-converge to a limit metric flow admitting concrete structure theorems (cf. Theorem \ref{thm:601}, Theorem \ref{thm:602}). As an application of the theory of $\IF$-convergence, we have the following two-sided pseudolocality theorem. Notice that the forward pseudolocality theorem is proved in \cite[Theorem 24]{LW20}. Thus, to obtain a two-sided pseudolocality,  it suffices to obtain a backward pseudolocality, which is proved in Theorem~\ref{thm:603}. 

\begin{thm}[\textbf{Two-sided pseudolocality theorem}] \label{thm:106}
For any $\alpha > 0$, there is an $\ep (n, \alpha) > 0$ such that the following holds.

Let $(M^n,g(t))_{t<1}$ be a Ricci flow associated with a Ricci shrinker. Given $(x_0, t_0) \in M \times (-\infty,1)$ and $r > 0$, if
\begin{equation*}
|B_{t_0}(x_0,r)| \geq \alpha r^n, \qquad |Rm| \leq (\alpha r)^{-2} \quad \text{on} \quad B_{t_0}(x_0,r),
\end{equation*}
then
\[ |Rm| \leq (\ep r)^{-2} \quad \text{on} \quad P(x_0,t_0; (1-\alpha) r, -(\ep r)^2,(\ep r)^2). \]
\end{thm}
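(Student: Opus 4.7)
The plan is to argue by contradiction, following the $\IF$-compactness strategy of Bamler, combined with the $\ep$-regularity theorem (Theorem~\ref{thm:105}) and the Nash entropy oscillation estimate (Corollary~\ref{cor:303}). Since forward pseudolocality is already available from \cite{LW20}, it suffices to establish the backward half, that is, the curvature bound on $P(x_0,t_0;(1-\alpha)r,-(\ep r)^2,0)$. I fix $\alpha > 0$, suppose the statement fails for this $\alpha$, and extract a sequence $(M_i^n, g_i(t))$ of Ricci flows associated with Ricci shrinkers, base points $(x_{0,i}, t_{0,i})$, radii $r_i$, and numbers $\ep_i \to 0$ that satisfy the hypotheses but violate the conclusion. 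After parabolic rescaling I may assume $r_i = 1$, and after translating in time I may assume $t_{0,i} \to t_0^\infty \in (-\infty,1]$; the scaling also preserves the associated Ricci shrinker structure so the hypothesis $\boldsymbol{\mu} \geq -A$ is retained on the rescaled flows (possibly with a new constant depending only on $n$ and $\alpha$).

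First I would promote the pointwise curvature bound on $B_{t_{0,i}}(x_{0,i}, 1)$ together with the volume lower bound $|B_{t_{0,i}}(x_{0,i},1)| \geq \alpha$ into a lower bound $\NN_{(x_{0,i},t_{0,i})}(1) \geq -C(n,\alpha)$ on the pointed Nash entropy. This is the standard comparison between pointed Nash entropy and local volume ratios obtainable from Theorem~\ref{thm:102} (the no-local-collapsing estimate) once the local curvature is bounded, and it is where the forward pseudolocality from \cite{LW20} is used to propagate the curvature bound slightly forward in time so that the heat kernel based at $(x_{0,i}, t_{0,i})$ can be estimated on the relevant scale.

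Next I would use the Nash entropy oscillation estimate (Corollary~\ref{cor:303}, or more precisely the gradient and $\square$-estimate in Theorem~\ref{thm:101}) to propagate this lower bound: for every $(y,s)$ inside the parabolic neighborhood $P(x_{0,i},t_{0,i};(1-\alpha),-\tau^2,\tau^2)$ with $\tau = \tau(n,\alpha)$ sufficiently small, one obtains $\NN_{(y,s)}(\tau^2) \geq -C(n,\alpha)$. The crux is to choose $\tau$ small enough that the constant from oscillation plus the constant from the initial Nash entropy bound still falls below the $\ep(n)$ threshold appearing in Theorem~\ref{thm:105}, after one more rescaling. Once that is arranged, the $\ep$-regularity theorem provides a uniform lower bound $r_{\text{Rm}}(y,s) \geq c(n,\alpha) > 0$ throughout this parabolic neighborhood, which contradicts the standing assumption that $|Rm|$ blows up at scale $\ep_i \to 0$ somewhere in $P(x_{0,i},t_{0,i};(1-\alpha),-\ep_i^2, \ep_i^2)$.

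The main obstacle I anticipate is the first step: turning a \emph{one-sided} volume and curvature bound at time $t_{0,i}$ into a lower bound on the pointed Nash entropy $\NN_{(x_{0,i},t_{0,i})}(1)$. On a compact flow this is essentially the content of \cite{Bam20a} and relies on being able to run the conjugate heat kernel back from $(x_{0,i}, t_{0,i})$ and compare its integral with $|B_{t_{0,i}}(x_{0,i},1)|$; in our non-compact Ricci shrinker setting one needs the Gaussian heat kernel upper bound (Theorem~\ref{thm:103}) together with the global control coming from $\boldsymbol{\mu} \geq -A$ to control the tail of the conjugate heat kernel and justify the same comparison. A secondary technical point is to ensure that the $H_n$-center of $(x_{0,i}, t_{0,i})$ at time $t_{0,i} - \tau^2$ stays inside the region where we already control curvature (via the forward pseudolocality applied slightly backward after reflecting time), so that the Gaussian upper bound in Theorem~\ref{thm:103}(ii) can be activated; this is handled by part~(ii) of Theorem~\ref{thm:104}, which compares $P^*$-neighborhoods with conventional parabolic neighborhoods under bounded scalar curvature. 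Once these two technical issues are settled, the remaining steps are the direct application of the $\ep$-regularity theorem and a contradiction as $\ep_i \to 0$.
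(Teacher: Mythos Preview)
Your proposal has a genuine circularity at the propagation step. The first move is fine: the non-expanding bound in Theorem~\ref{thm:102} (equivalently Theorem~\ref{thm:volume4}) turns $|B_{t_0}(x_0,1)|\geq\alpha$ into $\NN_{(x_0,t_0)}(1)\geq -C(n,\alpha)$ with no curvature input. The trouble is the next step, where you want $\NN_{(y,s)}(\tau^2)\geq -\ep(n)$ for $(y,s)$ in a \emph{backward} parabolic neighborhood. The oscillation estimate in Corollary~\ref{cor:303} carries a term of order $\sqrt{n/(2\tau^2)}\cdot d^{s}_{W_1}(v_{x_0,t_0;s},\delta_y)$, and bounding that $W_1$-distance forces you to control either $d_s(x_0,y)$ or the location of the $H_n$-center of $(x_0,t_0)$ at time $s<t_0$; both require curvature control \emph{before} $t_0$, which is exactly the conclusion you are after. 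The only available tool for boosting a Nash entropy bound from $-C(n,\alpha)$ up to $-\ep(n)$ is Theorem~\ref{thm:epr3}, and it already presupposes $|Rm|\leq r^{-2}$ on a backward cylinder $P^-(x,t;r)$. Your appeal to Theorem~\ref{thm:104}(ii) has the same defect: that comparison assumes $R\leq r^{-2}$ on a two-sided parabolic neighborhood. And the phrase ``forward pseudolocality applied slightly backward after reflecting time'' is not meaningful---Ricci flow is not time-symmetric.

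The paper's proof genuinely uses the $\IF$-limit, not just its name. After extracting the Nash entropy bound as you do, one passes via Theorem~\ref{thm:601} to a limit metric flow and invokes the structure theory of Theorem~\ref{thm:602}: the regular--singular decomposition with $\dim_{\mathcal M^*}\mathcal S\leq n-2$, smooth convergence on $\mathcal R$, and the fact that tangent flows are metric solitons. The hypotheses at time $t_0$ force the limit to be smooth there, and the structure theory (this is the content of \cite[Theorem~2.47]{Bam20c} transplanted to the shrinker setting) rules out a singularity appearing immediately before $t_0$; the contradiction is reached in the limit space. In short, you never actually take an $\IF$-limit, and the direct quantitative propagation you sketch cannot break the circularity---some use of the limit's structure is required.
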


Another application of the $\IF$-converge is the following integral estimate of curvature, which originates from the estimate of Cheeger-Naber~\cite{CN13}. 
For more details, see Theorem~\ref{thm:604} and Corollary~\ref{cor:602}. 

\begin{thm} \label{thm:107}
Let $(M^n,g,f,p)$ be a Ricci shrinker in $\MM(A)$. Then
\begin{align*} 
\int_{d(p,\cdot) \le r} |Rm|^{2-\ep} \,dV \le& \int_{d(p,\cdot) \le r} r_{\emph{Rm}}^{-4+2\ep} \,dV \le C r^{n+2\ep-2}, \\
\int_{d(p,\cdot) \ge 1} \frac{|Rm|^{2-\ep}}{d^{n+2\ep-2}(p,\cdot)} \,dV \le& \int_{d(p,\cdot) \ge 1} \frac{r_{\emph{Rm}}^{-4+2\ep}}{d^{n+2\ep-2}(p,\cdot)} \,dV \le C
\end{align*}
for any $\ep>0$ and $r \ge 1$, where $r_{\emph{Rm}}(\cdot)=r_{\emph{Rm}}(\cdot,0)$ and $C=C(n,A,\ep)$.
\end{thm}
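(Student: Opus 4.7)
The first inequality in each line of Theorem~\ref{thm:107} is immediate from the definition of the spacetime curvature radius (Definition~\ref{def:curv}), which forces $|Rm|(x) \le r_{\text{Rm}}(x)^{-2}$ and hence $|Rm|^{2-\ep} \le r_{\text{Rm}}^{-4+2\ep}$ pointwise. So the whole content is to bound the integrals of $r_{\text{Rm}}^{-4+2\ep}$, and my plan is to execute a Cheeger--Naber style quantitative stratification \cite{CN13} inside the $\IF$-convergence framework developed earlier in the paper.

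The core input is a Minkowski-type volume bound: for every $\eta \in (0,1)$ there is a constant $C=C(n,A,\eta)$ such that
\begin{equation*}
\bigl|\{x \in B(x_0,R) : r_{\text{Rm}}(x,0) < s\}\bigr| \le C\, s^{4-\eta}\, R^{n-4+\eta}
\end{equation*}
for all $0 < s \le R$, whenever $\NN_{(x_0,0)}(R^2) \ge -A$. I would prove this by contradiction in the standard Cheeger--Naber way: a sequence of counterexamples, parabolically rescaled so that $r_{\text{Rm}}$ has unit size at the offending points, $\IF$-subconverges (using the shrinker-induced metric-flow framework of Theorem~\ref{thm:601}) to a pointed metric flow whose singular stratum would exceed the Minkowski-content bound guaranteed by the structure Theorem~\ref{thm:602}. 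The $\ep$-regularity Theorem~\ref{thm:105} supplies the crucial implication that low $r_{\text{Rm}}$ forces low Nash entropy and hence singular behaviour in the limit, while uniform Nash-entropy bounds along the rescaled sequence come from the global bound $\boldsymbol{\mu} \ge -A$ together with the oscillation estimate of Corollary~\ref{cor:303}.

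With the volume bound in hand, both inequalities follow by layer-cake:
\begin{equation*}
\int_B r_{\text{Rm}}^{-4+2\ep}\, dV = (4-2\ep) \int_0^\infty \sigma^{-5+2\ep}\, \bigl|\{r_{\text{Rm}} < \sigma\} \cap B\bigr|\, d\sigma,
\end{equation*}
split at $\sigma = R$ with $\eta < 2\ep$ (and using $|B(x_0,R)| \le C R^n$ for the tail). Applied with $B = B(p,r)$ this yields the first inequality. For the second, I would decompose $\{d(p,\cdot) \ge 1\}$ dyadically into annuli $A_k = \{2^k \le d(p,\cdot) < 2^{k+1}\}$ and cover each $A_k$ by a uniformly bounded number of balls $B(x_{k,j}, 2^k)$ centered in $A_k$. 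The Nash-entropy hypothesis $\NN_{(x_{k,j},0)}((2^k)^2) \ge -A'$ propagates from the base point via Corollary~\ref{cor:303}, so the volume bound applies on each such ball and layer-cake gives $\int_{B(x_{k,j}, 2^k)} r_{\text{Rm}}^{-4+2\ep}\, dV \le C\, 2^{k(n-4+2\ep)}$. Since the weight satisfies $d(p,\cdot)^{-(n+2\ep-2)} \asymp 2^{-k(n+2\ep-2)}$ on $A_k$, each annulus contributes $\le C \cdot 2^{-2k}$, and summing in $k$ is a convergent geometric series.

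The main obstacle is the $\IF$-compactness used inside the proof of the volume bound. Since we assume neither compactness nor bounded curvature, verifying that the rescaled blow-up sequence fits into the metric-flow framework of Theorem~\ref{thm:601} requires careful uniformity: the Nash-entropy estimates, the $H_n$-center locations, and the heat-kernel Gaussian bounds of Theorem~\ref{thm:103} must all persist under the rescaling with constants depending only on $n$ and $A$. All of these ingredients are provided by the preceding sections of the paper, so once the blow-up is set up correctly, the Cheeger--Naber stratification argument proceeds essentially unchanged from the compact Ricci-flow case treated in \cite{Bam20a}.
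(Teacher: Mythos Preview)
Your Minkowski content bound is false as stated, and the error propagates through both conclusions. On the round cylinder shrinker $S^2\times\R$ (so $n=3$) one has $r_{\text{Rm}}(x,0)\equiv c_0$ for some fixed constant $c_0>0$; taking $s=2c_0$ gives $|\{r_{\text{Rm}}<s\}\cap B(p,R)|=|B(p,R)|\sim R$, while your bound would force this to be $\le C\,R^{n-4+\eta}=C\,R^{-1+\eta}\to 0$. Consequently your layer-cake yields $\int_{B(p,R)} r_{\text{Rm}}^{-4+2\ep}\,dV\le C\,R^{n-4+2\ep}$, which on the cylinder would say $R\le C\,R^{-1+2\ep}$. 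The mistake is in the codimension count: Bamler's quantitative stratification shows that the singular set has codimension $4$ in \emph{spacetime} (parabolic dimension $n+2$), i.e.\ $*$-Minkowski dimension $\le n-2$ in the sense of Theorem~\ref{thm:602}(1); there is no general codimension-$4$ estimate on a fixed time slice. Separately, a content bound of this type is not proved by a single blow-up contradiction as you sketch: it requires the full scale-by-scale covering argument (cf.\ \eqref{eq:cover}).

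The paper's proof proceeds quite differently and hinges on self-similarity. It invokes the \emph{spacetime} integral estimate of Theorem~\ref{thm:604} (this is \cite[Theorem~2.28]{Bam20c}) on the bounded region $P^*(p,0;C_1)\supset Q(p,0;1,0,1)$, obtained from Proposition~\ref{prop:Hcent1a}. Then it uses $g(t)=(1-t)(\psi^t)^*g$: under $x\mapsto\psi^t(x)$ and the change of variable $t=1-r^{-2}$, the bounded spacetime integral $\int_0^1\int_{d_t(p,\cdot)<1}|Rm|^{2-\ep}\,dV_t\,dt$ becomes $\int_1^\infty r^{1-2\ep-n}m(r)\,dr$ with $m(r)=\int_{d(p,\cdot)<r}|Rm|^{2-\ep}\,dV$. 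Finiteness of this integral forces $\liminf_{r\to\infty} m(r)/r^{n+2\ep-2}=0$ along a sequence, and integration by parts along that sequence gives the weighted inequality; the ball inequality then follows from the weighted one. Self-similarity is precisely the mechanism that converts a bounded-region spacetime bound into a large-radius time-slice bound, and your proposal does not use it.
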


This paper is organized as follows. Section 2 discusses some properties of Ricci flows associated with Ricci shrinkers, including the existence of cutoff functions and maximum principles. In Section 3, we prove some estimates and properties regarding the variance, $H$-centers and the Nash entropy. Section 4 focuses on various estimates of the heat kernel. In Section 5, we prove the theorems about the parabolic neighborhoods and the $\ep$-regularity theorem. In the last section, we generalize the theory of $\IF$-convergence in our setting and prove some applications in Ricci shrinkers.

{\bf Acknowledgements}: 

Yu Li is supported by YSBR-001, NSFC-12201597 and research funds from USTC (University of Science and Technology of China) and CAS (Chinese Academy of Sciences). Bing Wang is supported by YSBR-001, NSFC-11971452, NSFC-12026251 and a research fund from USTC.

\section{Preliminaries}
For any Ricci shrinker $(M^n,g,f)$, the scalar curvature $R\ge 0$ from \cite[Corollary $2.5$]{CBL07} and $R>0$ unless $(M^n,g)$ is isometric to the Gaussian soliton $(\R^n,g_E)$, by the strong maximum principle.

With the normalization \eqref{E101}, the entropy is defined as
\begin{align} \label{eq:mu}
\boldsymbol{\mu}=\boldsymbol{\mu}(g)\coloneqq \log \int\frac{e^{-f}}{(4\pi)^{n/2}}\, dV.
\end{align}

Notice that $e^{\boldsymbol{\mu}}$ is uniformly comparable to the volume of the unit ball $B(p,1)$ (cf. \cite[Lemma $2.5$]{LLW21}). It was proved in \cite[Theorem $1$]{LW20} that $\boldsymbol{\mu}$ is the optimal log-Sobolev constant for all scales. Following \cite{LWs1}, we have the following definition. 

\begin{defn}\label{dfn:201}
Let $\mathcal M(A)$ be the family of Ricci shrinkers $(M^n,g,f)$ satisfying
\begin{align}\label{eq:mubound}
\boldsymbol{\mu}(g) \ge -A.
\end{align}
\end{defn}

Recall that any Ricci shrinker $(M^n,g,f)$ can be considered a self-similar solution to the Ricci flow. Let ${\psi^t}: M \to M$ be a family of diffeomorphisms generated by $\dfrac{1}{1-t}\nabla f$ and $\psi^{0}=\text{id}$.
In other words, we have
\begin{align} 
\frac{\partial}{\partial t} {\psi^t}(x)=\frac{1}{1-t}\nabla f\left({\psi^t}(x)\right). \label{E201a}
\end{align}
It is well known that the rescaled pull-back metric $g(t)\coloneqq (1-t) (\psi^t)^*g$ satisfies the Ricci flow equation for any $-\infty <t<1$,
\begin{align} 
\partial_t g=-2 Rc_{g(t)} \quad \text{and} \quad g(0)=g. \label{E201ax}
\end{align}
Sometimes we encounter Ricci flow obtained from the above Ricci flow through time-shifting and rescaling.
We emphasize whether there exist extra time-shifting and rescaling by the following definition. 

\begin{defn}
For any Ricci shrinker, the Ricci flow defined in \eqref{E201ax} is called the \textbf{associated Ricci flow}. 
Any Ricci flow obtained from the associated Ricci flow via time-shifting and rescaling is called the \textbf{Ricci flow induced by a Ricci shrinker}. 
\label{dfn:RA19_2}
\end{defn}

Clearly, a Ricci flow associated to a Ricci shrinker must be a Ricci flow induced by a Ricci shrinker, but the reverse is generally not true. 
In this article, if not mentioned explicitly, the associated Ricci flow is the default one.

Next, we recall the function $F(x,t):=\bar \tau f(x,t)$, where $\bar \tau:=1-t$ and $f(x,t):=(\psi^t)^*f$, satisfies the following identities (see \cite[Section 2]{LW20} for proofs):
\begin{align} 
&\partial_t f=|\na f|^2, \label{E202xa}\\
&\partial_tF=-\bar \tau R,
\label{E202a} \\
&\bar \tau R+\Delta F=\frac{n}{2},
\label{E202b}\\
&\bar \tau^2R+|\nabla F|^2=F,
\label{E202c} \\
& \square F=-\frac{n}{2}. \label{E202d}
\end{align}
Here, we define $\square:=\partial_t-\Delta_t$ and have dropped the subscript $g(t)$ or $t$ if there is no confusion. Based on these identities, we have the following estimates of $F$.

\begin{lem}[Lemma 1 of \cite{LW20}]
\label{L201}
There exists a point $p \in M$ where $F$ attains its infimum and $F$ satisfies the quadratic growth estimate
\begin{align}
\frac{1}{4}\left(d_t(x,p)-5n\bar \tau-4 \right)^2_+ \le F(x,t) \le \frac{1}{4} \left(d_t(x,p)+\sqrt{2n\bar \tau} \right)^2
\label{E203}
\end{align}
for all $x\in M$ and $t < 1$, where $a_+ :=\max\{0,a\}$.
\end{lem}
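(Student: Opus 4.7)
The plan is to prove the two halves of the estimate separately, using only the soliton identities \eqref{E202a}--\eqref{E202d}.

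For the upper bound, \eqref{E202c} together with $R \ge 0$ yields $|\nabla F|^2 \le F$, so $\sqrt{F}$ is $\tfrac{1}{2}$-Lipschitz with respect to $g(t)$. Existence of an interior minimum of $F(\cdot,t)$ reduces to the $t=0$ case $F = f$, which is classical for Ricci shrinkers; since any zero of $\nabla f$ is a fixed point of the diffeomorphism $\psi^t$ generated by $\nabla f/(1-t)$, the same point $p$ minimizes $F(\cdot,t)$ for every $t<1$. At $p$, \eqref{E202b} combined with $\Delta F(p,t) \ge 0$ gives $\bar\tau R(p,t) \le n/2$, and \eqref{E202c} with $\nabla F(p,t)=0$ then produces $F(p,t) = \bar\tau^2 R(p,t) \le n\bar\tau/2$. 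The Lipschitz inequality $\sqrt{F(x,t)} \le \sqrt{F(p,t)} + \tfrac{1}{2} d_t(x,p)$ then squares to the upper half of \eqref{E203}.

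The lower bound is the substantive part, and I would follow the Cao--Zhou strategy adapted to time $t$. Pulling the shrinker equation back through $\psi^t$ and scaling by $\bar\tau$ yields the rescaled identity $\bar\tau\, Rc_{g(t)} + \text{Hess}_{g(t)} F = \tfrac{1}{2} g(t)$. Fix $x \in M$, set $L := d_t(x,p)$, and let $\gamma:[0,L] \to M$ be a unit-speed minimizing $g(t)$-geodesic from $p$ to $x$. Setting $\phi(s) := F(\gamma(s),t)$, the rescaled identity gives $\phi''(s) = \tfrac{1}{2} - \bar\tau\, Rc_{g(t)}(\gamma',\gamma')$. Integrating from $0$ to $L$, using $\phi'(0)=0$ and the bound $\phi'(L) \le |\nabla F|(x,t) \le \sqrt{F(x,t)}$ from \eqref{E202c}, one obtains
\begin{align*}
\sqrt{F(x,t)} \ \ge\ \frac{L}{2} - \bar\tau \int_0^L Rc_{g(t)}(\gamma',\gamma')\, ds.
\end{align*}
The key step is to control the remaining Ricci integral via the index form for the minimizing geodesic $\gamma$: insert a piecewise-linear cutoff $\eta$ (equal to $1$ on $[1,L-1]$ and vanishing at the endpoints) times a parallel orthonormal frame perpendicular to $\gamma$ into the second variation of arc length, and sum over the frame to bound the middle portion of the integral; the two unit-length endpoint pieces are handled by a complementary cutoff. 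Rearrangement yields $\sqrt{F(x,t)} \ge \tfrac{1}{2}(L - 5n\bar\tau - 4)_+$, which squares to the left half of \eqref{E203}.

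The chief obstacle is tracking the $\bar\tau$-factors carefully so that the shift takes the precise form $5n\bar\tau + 4$: the $\bar\tau$ multiplying the Ricci term in the rescaled soliton identity scales the leading contribution, producing the $5n\bar\tau$ summand, while the unit-length endpoint intervals of the cutoff contribute the constant $+4$. The small-$L$ case $L \le 5n\bar\tau + 4$ is automatic, as the lower bound then vanishes by the positive-part convention.
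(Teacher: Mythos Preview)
The paper does not prove this lemma; it simply quotes it as Lemma~1 of \cite{LW20}. Your sketch is the standard Cao--Zhou/Haslhofer--M\"uller argument, rescaled to time $t$ via the identity $\bar\tau\,Rc_{g(t)} + \text{Hess}_{g(t)} F = \tfrac{1}{2} g(t)$, and this is almost certainly what \cite{LW20} does as well; the upper bound you give is exactly right, and the lower-bound strategy is correct in outline, with the precise constants $5n\bar\tau$ and $+4$ coming from careful bookkeeping in the second-variation and endpoint steps respectively.
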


Thanks to \eqref{E203}, $F(x,t)$ grows like $d^2_t(x,p)/4$ and hence one can obtain a family of cutoff functions by composing $F$ with a cutoff function on $\R$. More precisely, we fix a function $\eta \in C^{\infty}([0,\infty))$ such that $0\le \eta \le 1$, $\eta=1$ on $[0,1]$ and $\eta=0$ on $[2,\infty)$. Furthermore, $-C \le \eta'/\eta^{\frac{1}{2}}\le 0$ and $|\eta''| \le C$ for a universal constant $C>0$.
For each $r \ge 1$, we define 
\begin{align}
\phi^r \coloneqq \eta\left(\frac{F}{r} \right). 
\label{E204}
\end{align}
Then $\phi^r$ is a smooth function on $M \times (-\infty,1)$. The following estimates of $\phi^{r}$ are proved in \cite[Lemma 3]{LW20}:
\begin{align}
(\phi^r)^{-1} |\nabla \phi^r|^2 & \le Cr^{-1}, \label{E205a}\\
|\phi^r_t|&\le C{\bar \tau}^{-1}, \label{E205b}\\
|\Delta \phi^r|&\le C(\bar \tau^{-1}+r^{-1}), \label{E205c} \\
|\square \phi^r|& \le Cr^{-1}, \label{E205d}
\end{align}
where the constant $C$ depends only on the dimension $n$.

For later applications, we recall the following volume estimate proved in \cite[Lemma 2]{LW20}.
\begin{lem}
\label{L202}
There exists a constant $C=C(n)>0$ such that for any Ricci shrinker $(M^n,g,f)$ with $p \in M$ a minimum point of $f$,
\begin{align*}
|B_t(p,r)|_t \le Cr^n.
\end{align*}
\end{lem}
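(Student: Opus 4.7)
The strategy is to reduce the lemma to the static $t=0$ case via the self-similarity of the flow, and then to carry out a Cao-Zhou/Haslhofer-M\"uller style volume estimate on the underlying Ricci shrinker. Since $p$ is a minimum of $f$ we have $\nabla f(p)=0$, so by \eqref{E201a} the flow map $\psi^t$ fixes $p$ for every $t<1$. Because $g(t)=\bar\tau(\psi^t)^*g$, the map $\psi^t\colon (M,\bar\tau g)\to(M,g(t))$ is an isometry fixing $p$; rescaling balls and volumes therefore gives
\[
|B_t(p,r)|_t \;=\; \bar\tau^{n/2}\,|B_g(p,r/\sqrt{\bar\tau})|_g,
\]
which reduces the lemma to the static bound $|B_g(p,\rho)|_g\le C(n)\rho^n$ for all $\rho>0$.

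For the static bound I would combine the quadratic growth of Lemma~\ref{L201} at $t=0$, i.e. $\tfrac14(d(x,p)-5n-4)_+^2 \le f(x)\le \tfrac14(d(x,p)+\sqrt{2n})^2$, with the shrinker identities $R\ge 0$, $|\nabla f|^2 = f-R$, and $\Delta f = n/2 - R$ coming from \eqref{E101} and \eqref{E202b}. Introduce $u := 2\sqrt f$. A direct calculation gives $|\nabla u|^2 = 1 - 4R/u^2 \le 1$ and
\[
\Delta u \;=\; \frac{n-1}{u} - \frac{2R}{u} + \frac{4R}{u^3} \;\le\; \frac{n-1}{u} \quad \text{on } \{u\ge 2\}.
\]
Writing $V(r) := |\{u\le r\}|$, the divergence theorem yields $\int_{\{u=r\}}|\nabla u|\,d\sigma = \int_{\{u\le r\}}\Delta u\,dV$, while the coarea formula gives $V'(r) = \int_{\{u=r\}}|\nabla u|^{-1}\,d\sigma$. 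Applying Cauchy-Schwarz through $|\{u=r\}|^2 \le V'(r)\int_{\{u=r\}}|\nabla u|\,d\sigma$ produces an integral-differential inequality for $V$ that integrates to $V(r)\le C(n)\,r^n$ for all $r\ge r_0(n)$. Since the upper quadratic bound forces $B_g(p,\rho)\subset\{u\le \rho+\sqrt{2n}\}$, we obtain $|B_g(p,\rho)|_g \le C(n)\rho^n$ for $\rho\ge 1$. The remaining range $\rho\le 1$ is absorbed into the constant by invoking the shrinker-specific bound $0\le f(p)=R(p)\le n/2$ (the upper bound being a consequence of $\text{Hess}\,f(p)\ge 0$), which controls the local geometry near $p$.

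The main technical obstacle is the integro-differential step: one must handle the critical set of $u$ (where $R=u^2/4$), control the $4R/u^3$ remainder on $\{u<2\}$, and extract the sharp exponent $n$ so that the final constant depends only on the dimension. Uniformity over all Ricci shrinkers in the small-$\rho$ range is similarly delicate, since no two-sided Ricci bound is available a priori beyond what the shrinker equation supplies at the minimum point $p$.
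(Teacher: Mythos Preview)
Your reduction to the static case $t=0$ is correct: since $\nabla f(p)=0$ the flow map $\psi^t$ fixes $p$, and the scaling identity $|B_t(p,r)|_t=\bar\tau^{\,n/2}\,|B_g(p,r/\sqrt{\bar\tau})|_g$ follows. The paper itself does not reprove the lemma but simply recalls it from \cite{LW20}, where the static bound is the Cao--Zhou estimate you cite; so at the level of strategy you are aligned with the paper.

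There is, however, a gap in your execution of the static estimate. The Cauchy--Schwarz inequality $|\{u=r\}|^2\le V'(r)\int_{\{u=r\}}|\nabla u|\,d\sigma$ yields a \emph{lower} bound on $V'(r)$, not the upper bound needed to force $V(r)\le C(n)\,r^n$; together with $|\nabla u|\le 1$ and $\Delta u\le (n-1)/u$ it does not close to a usable differential inequality. Cao--Zhou's actual mechanism applies the divergence theorem to $\nabla f$ (not $\nabla u$) to obtain the exact identity $nV(r)-2\chi(r)=rV'(r)-\tfrac{4}{r}\chi'(r)$ with $\chi(r)=\int_{\{2\sqrt f\le r\}}R\,dV$, and then analyses $r^{-n}V(r)$ using only $R\ge 0$; no Cauchy--Schwarz enters. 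Your small-$\rho$ fix is also insufficient: the bound $R(p)\le n/2$ at a single point does not control $|B(p,\rho)|/\rho^n$ uniformly as $\rho\to 0$ across all shrinkers, since there is no accompanying Ricci lower bound. In the applications in this paper the lemma is only invoked to control integrals dominated by the large-distance regime (e.g.\ $\int d_t^4(x,p)\,e^{-2f}\,dV_t$), so the Cao--Zhou range $\rho\ge\rho_0(n)$ already suffices; the genuinely scale-free upper bound $|B_t(x,r)|_t\le C(n)r^n$ for all $r>0$ is the content of the later Theorem~\ref{thm:volume4} and requires the Nash-entropy machinery.
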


Next, we recall the following version of the maximum principle on Ricci shrinkers, which is proved in \cite[Theorem 6]{LW20} and will be frequently used.

\begin{thm}[Maximum principle on Ricci shrinkers I]
\label{T201}
Let $(M,g(t))_{t<1}$ be the Ricci flow associated with a Ricci shrinker. Given any closed interval $[a,b] \subset (-\infty,1)$ and a function $u$ which satisfies $\square u \le 0$ on $M \times [a,b]$, suppose that 
\begin{align} 
\int_a^b \int_M u^2_{+}(x,t)e^{-2f(x,t)}\,dV_t(x)\,dt < \infty.
\label{E206}
\end{align}
If $u(\cdot,a) \le c$, then $u(\cdot, b) \le c$.
\end{thm}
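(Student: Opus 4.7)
The plan is to convert the weighted integrability hypothesis into an unweighted one via the substitution $w := u\,e^{-f}$, and then run a cutoff energy argument that exploits a cancellation from the shrinker identities. First I reduce to $c=0$ by replacing $u$ with $u-c$: since $\int_M e^{-2f}\,dV \leq \int_M e^{-f}\,dV = (4\pi)^{n/2} e^{\boldsymbol{\mu}} < \infty$ by \eqref{eq:mu}, the hypothesis \eqref{E206} is preserved. Under $w=ue^{-f}$, one has $w_+^2 = u_+^2 e^{-2f}$, so \eqref{E206} becomes the unweighted $\int_a^b \int_M w_+^2\,dV_t\,dt < \infty$, while $u(\cdot,a)\leq 0$ becomes $w_+(\cdot,a)\equiv 0$.

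Next, using \eqref{E202xa}--\eqref{E202c} together with the derived identities $\Delta f = n/(2\bar\tau) - R$ and $|\nabla f|^2 = f/\bar\tau - R$, a direct calculation gives
\begin{align*}
\square u = e^{f}\bigl[\square w - 2\nabla f\cdot\nabla w - (n/(2\bar\tau) - R)\,w\bigr],
\end{align*}
so $\square u\leq 0$ is equivalent to $\partial_t w \leq \Delta w + 2\nabla f\cdot\nabla w + (n/(2\bar\tau) - R)\,w$, and the same inequality holds for $w_+$ weakly via mollification of $s\mapsto s_+$. Testing this against $2w_+\phi^r$ with the cutoff $\phi^r = \eta(F/r)$ from \eqref{E204}, the drift contribution $4\int w_+\phi^r\nabla f\cdot\nabla w_+\,dV = 2\int \phi^r\nabla f\cdot\nabla(w_+^2)\,dV$ integrates by parts to $-2\int w_+^2\phi^r\Delta f\,dV - 2\int w_+^2\nabla\phi^r\cdot\nabla f\,dV$, and the first of these cancels exactly with the potential contribution $+2\int w_+^2\phi^r(n/(2\bar\tau) - R)\,dV$ via $\Delta f = n/(2\bar\tau) - R$. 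After absorbing the $|\nabla w_+|^2$ cross-term by Cauchy--Schwarz, the remaining inequality is
\begin{align*}
\frac{d}{dt}\int_M w_+^2 \phi^r\, dV_t \leq -\int_M w_+^2 \phi^r R\, dV + \int_M w_+^2\Bigl(\tfrac{|\nabla\phi^r|^2}{\phi^r} + 2|\nabla\phi^r||\nabla f| + |\phi^r_t|\Bigr)\, dV.
\end{align*}

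On the annulus $\{r\leq F \leq 2r\}$ supporting $\nabla\phi^r$ and $\phi^r_t$, the estimates \eqref{E205a}--\eqref{E205d} combined with $|\nabla f|^2 \leq f/\bar\tau \leq 2r/\bar\tau^2$ give $|\nabla\phi^r||\nabla f|\leq C\sqrt{\phi^r}/\bar\tau$, so the bracketed factor is dominated by $C/\bar\tau$ uniformly in $r$. Integrating from $a$ to $b$, using $w_+(\cdot,a)\equiv 0$ and the fact that $\bar\tau^{-1}$ is bounded on $[a,b]$, I obtain
\begin{align*}
\int_M w_+^2(\cdot,b)\,\phi^r\, dV_b \leq C_{[a,b]} \int_a^b \int_{\{r\leq F \leq 2r\}} u_+^2\, e^{-2f}\, dV_t\, dt,
\end{align*}
and the right-hand side vanishes as $r\to\infty$ by absolute continuity of the finite integral in \eqref{E206}; monotone convergence on the left then yields $\int_M w_+^2(\cdot,b)\,dV_b = 0$, hence $u(\cdot,b)\leq 0$. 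The main obstacle is choosing the correct substitution: a direct energy estimate with weight $e^{-\alpha f}$ for $\alpha\neq 1$ always leaves behind an uncontrollable term proportional to $\alpha(\alpha-1)/\bar\tau\cdot\int u^2 f\,e^{-\alpha f}\,dV$, and only $w=ue^{-f}$ (equivalently $\alpha=1$, tuned to the natural shrinker drift) simultaneously matches the $e^{-2f}$ decay in \eqref{E206} and produces the drift--potential cancellation driven by $\Delta f = n/(2\bar\tau) - R$.
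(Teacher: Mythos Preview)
Your proof is correct and is essentially the same cutoff energy argument that the paper cites from \cite{LW20} (as one can infer from the sketch of Theorem~\ref{T302}, where the test function is $u_+(\phi^r)^2e^{-2f}$). The only organizational difference is that you first substitute $w=ue^{-f}$ and then test against $w_+\phi^r$, whereas the cited proof keeps $u$ and carries the weight $e^{-2f}$ through directly; both routes hinge on the same shrinker identity $\Delta f=n/(2\bar\tau)-R$ to kill the dangerous zeroth-order term, and both finish by observing that the remaining error is supported on the annulus $\{r\le F\le 2r\}$ and controlled by the finite integral~\eqref{E206}. Your substitution makes the drift--potential cancellation slightly more transparent, but there is no substantive difference in the argument.
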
 

We also need the following version of the maximum principle, which is proved in \cite[Theorem 12.14]{CCG2} for Ricci flows with bounded curvature. Notice that if $X \equiv 0$, Theorem \ref{T202} follows from Theorem \ref{T201}.

\begin{thm}[Maximum principle on Ricci shrinkers II]
\label{T202}
Let $(M,g(t))_{t<1}$ be the Ricci flow associated with a Ricci shrinker. Given any closed interval $[a,b] \subset (-\infty,1)$ and a function $u$ which satisfies 
\begin{equation*}
Lu:=\square u-\left\langle\nabla u, X(t)\right\rangle\leq0
\end{equation*}
on $M \times [a,b]$, suppose that $X(t)$ is a bounded vector field on $M \times [a,b]$ and
\begin{align} \label{E207a}
u(x,t) \le K e^{kf(x,t)}
\end{align}
on $M \times [a,b]$ for some constants $K>0$ and $k<1$. If $u(\cdot,a) \le c$, then $u(\cdot, b) \le c$.
\end{thm}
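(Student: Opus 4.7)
The plan is to dominate $u$ by a family of strict supersolutions of $L$ and pass to the limit. For parameters $\alpha \in (\max\{k,0\},1)$ and $A, \ep > 0$, set $V_\ep(x,t) := c + \ep\, e^{A(t-a) + \alpha f(x,t)}$ and let $W := u - V_\ep$. The exponent $\alpha$ serves two purposes: being strictly above $k$ combines with the assumption $u \le K e^{kf}$ to force $W(x,t) \to -\infty$ as $f(x,t) \to \infty$, uniformly in $t\in[a,b]$, which (together with the properness of $f$ from Lemma~\ref{L201}) ensures that the supremum $M^* := \sup_{M\times[a,b]} W$ is attained at some point $(x_0, t^*)$; being strictly below $1$ produces the favorable coefficient $\alpha(1-\alpha)>0$ in front of $|\nabla f|^2$ in the key calculation below.

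The core computation uses $\partial_t f = |\nabla f|^2$ from \eqref{E202xa} and $\Delta f = \tfrac{n}{2\bar\tau} - R$ (which follows from \eqref{E202b} and $F = \bar\tau f$) to yield
\begin{equation*}
\square\!\left(e^{A(t-a)+\alpha f}\right) = e^{A(t-a)+\alpha f}\!\left[A + \alpha(1-\alpha)|\nabla f|^2 + \alpha R - \tfrac{\alpha n}{2\bar\tau}\right].
\end{equation*}
The drift contribution from $\langle \nabla V_\ep, X\rangle$ is $\alpha\ep\langle \nabla f, X\rangle e^{A(t-a)+\alpha f}$, which I will absorb via Young's inequality $\alpha|\langle \nabla f, X\rangle| \le \tfrac{\alpha(1-\alpha)}{2}|\nabla f|^2 + \tfrac{\alpha\|X\|_\infty^2}{2(1-\alpha)}$ against half of the $|\nabla f|^2$ term. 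Using $\bar\tau \ge 1-b>0$ and $R\ge 0$ (see Section~2) and choosing $A > \tfrac{\alpha n}{2(1-b)} + \tfrac{\alpha\|X\|_\infty^2}{2(1-\alpha)}$, I arrive at the \emph{strict} inequality $LW < 0$ on $M \times [a,b]$.

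Since $W(\cdot,a) \le -\ep < 0$, a positive $M^* > 0$ would have to be attained at some $t^* > a$. At $(x_0,t^*)$, the standard conditions $\nabla W = 0$, $\Delta W \le 0$, $\partial_t W \ge 0$ (one-sided if $t^* = b$) give $LW(x_0,t^*) = \partial_t W - \Delta W \ge 0$, contradicting $LW < 0$. Hence $W \le 0$ on $M \times [a,b]$, i.e., $u \le c + \ep e^{A(t-a)+\alpha f}$ pointwise, and sending $\ep \downarrow 0$ at each fixed $(x,t)$ yields $u \le c$.

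The principal subtlety is that only half of the coefficient $\alpha(1-\alpha)|\nabla f|^2$ remains after Young's inequality absorbs the drift, so $\alpha$ must be kept strictly below $1$. This is precisely why the hypothesis restricts the growth rate to $k < 1$: we need to choose $\alpha$ in the non-empty open interval $(\max\{k,0\},1)$. Note also that the argument bypasses the integrability condition \eqref{E206} of Theorem~\ref{T201} entirely, replacing it with the pointwise barrier construction, so no weighted $L^2$ estimate is needed.
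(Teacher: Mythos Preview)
Your proof is correct and follows essentially the same approach as the paper: both construct a barrier of the form $c+\text{const}\cdot e^{A(t-a)+\alpha f}$ with $\alpha\in(\max\{k,0\},1)$ (the paper writes $\alpha=1-\epsilon$), verify via $\partial_t f=|\nabla f|^2$ and $\Delta f=\tfrac{n}{2\bar\tau}-R$ that $L$ applied to the barrier is strictly positive after absorbing the drift term, and then reach a contradiction at an interior spacetime maximum of $u$ minus the barrier, which exists by the growth hypothesis $u\le Ke^{kf}$ and the properness of $f$. The only cosmetic differences are that the paper first reduces to $c=0$ and phrases the contradiction via a first-touching-time argument rather than a global maximum, and it records the slightly sharper inequality $L\phi\ge\phi$ instead of just $L\phi>0$.
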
 

\begin{proof}
We first construct a barrier function
\begin{align*} 
\phi(x,t):=Ke^{B(t-a)+(1-\ep)f(x,t)},
\end{align*}
where $1-\ep>k$ and $B$ is a constant determined later.

\textbf{Claim:} There exists a constant $B>0$ such that
\begin{align} \label{E207b}
L\phi \geq \phi.
\end{align}

\emph{Proof of Claim}: By direct computations, we have
\begin{align*}
L\phi&=\phi \lc B+(1-\ep)f_t-(1-\ep)^2|\na f|^2-(1-\ep) \Delta f-(1-\ep) \la \na f,X \ra \rc \\
&= \phi \lc B+\ep(1-\ep)|\na f|^2-\frac{n(1-\ep)}{2\bar \tau}+(1-\ep)R-(1-\ep) \la \na f,X \ra \rc \\
&\ge \phi \lc B+\ep(1-\ep)|\na f|^2-C_1|\na f|-\frac{n(1-\ep)}{2(1-b)} \rc,
\end{align*}
where we have used \eqref{E202a}, \eqref{E202b} and the assumption that $|X| \le C_1$. Therefore, \eqref{E207b} holds if we choose
$$
B=\frac{C_1^2}{4\ep(1-\ep)}+\frac{n(1-\ep)}{2(1-b)}+1.
$$

Now, we assume $c=0$ by considering $u-c$ instead of $u$. To complete the proof, we only need to verify that for any $\delta>0$, $u \leq \delta\phi$ on $M\times [a,b]$. Otherwise, then there exists $(x',t')\in M\times [a,b]$ such that $\left(u-\delta\phi\right)(x',t')>0$. Due to the estimate \eqref{E207a} and our definition of $\phi$, we know that $\left(u-\delta\phi\right)(x,t)\longrightarrow-\infty$ as $d_t(x,p)\longrightarrow+\infty$ uniformly in $t$, i.e., $u-\delta\phi<0$ for $d_t(x,p)$ large enough independent of $t$. Moreover, $\left(u-\delta\phi\right)(x,a)<0$ for all $x\in M$. Consequently, there exists $(x'',t'') \in M\times (a,t')$ such that $\left(u-\delta\phi\right)(x,t)\leq0$ for all $(x,t)\in M\times[a,t'']$ and $\left(u-\delta\phi\right)(x'',t'')=0$. At $(x'',t'')$, we compute
\begin{align*}
0\leq L\left(u-\delta\phi\right) \le -\delta \phi <0,
\end{align*}
which is a contradiction. In sum, our proof is complete.
\end{proof}

\section{Variance, $H$-center and Nash entropy}

Let $(M^n,g(t))_{t<1}$ be the Ricci flow associated with a Ricci shrinker. It is proved in \cite[Theorem 7]{LW20} that there exists a positive heat kernel function $H(x,t,y,s)$ for $x,y \in M$ and $s<t<1$. More precisely,
\begin{align*}
\square H(\cdot,\cdot,y,s)=0, \quad \lim_{t \searrow s}H(\cdot,t,y,s)=\delta_y 
\end{align*}
and
\begin{align*}
\square^* H(x,t,\cdot,\cdot)=0, \quad \lim_{s \nearrow t}H(x,t,\cdot,s)=\delta_x, 
\end{align*}
where $\square:=\partial_t-\Delta$ and $\square^*:=-\partial_t-\Delta+R$. Furthermore, the heat kernel $H$ satisfies the semigroup property
\begin{align}
H(x,t,y,s)=&\int_{M}H(x,t,z,\rho)H(z,\rho,y,s)\, dV_{\rho}(z), \quad \forall \; x, y \in M, \; \rho \in (s,t) \subset (-\infty, 1), \label{E301} 
\end{align}
and the following integral relationships
\begin{align} 
&\int_{M}H(x,t,y,s)\,dV_t(x) \le 1, \label{E301a}\\
&\int_{M}H(x,t,y,s)\,dV_s(y) =1. \label{E301b}
\end{align}

For any $(x,t) \in M \times (-\infty,1)$, we define the conjugate heat kernel measure $v_{x,t;s}$ by $dv_{x,t;s}(y)=K(x,t,y,s)\,dV_s(y)$. It follows immediately from \eqref{E301b} that $v_{x,t;s}$ is a probability measure on $M$. In particular, $v_{x,t;t}=\delta_x$. 

With the help of the heat kernel, one can solve the (conjugate) heat solution from the given initial condition. More precisely, it follows from \cite[Lemma 5, Lemma 6]{LW20} that

\begin{thm}\label{thm301}
Suppose $[a,b] \subset (-\infty, 1)$ and $u_{a}$ is a bounded function on the time slice $(M, g(a))$. Then
\begin{align}
u(x,t) \coloneqq \int_{M}H(x,t,y,a)u_a(y)\,dV_{a}(y), \quad \forall \; t \in [a,b] \label{E302a}
\end{align}
is the unique bounded heat solution with the initial value $u_a$. Similarly, suppose $w_{b}$ is an integrable function on the time slice $(M, g(b))$. Then
\begin{align}
w(y,s) \coloneqq \int_{M}H(x,b,y,s)w_b(x) \, dV_{b}(x) \label{E302b}
\end{align}
is the unique conjugate heat solution with initial value $w_{b}$ such that
\begin{align}
\sup_{s \in [a,b]} \int |w| \,dV_s< \infty. \label{E302c}
\end{align}
\end{thm}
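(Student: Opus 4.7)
\emph{Proof proposal.} The plan is to establish existence for both solutions by direct verification from the heat kernel properties stated above, and to establish uniqueness through the maximum principle (Theorem \ref{T201}) for the forward case and a duality pairing argument for the conjugate case. For the bounded heat solution $u$ defined by \eqref{E302a}, boundedness $|u(x,t)|\le \sup|u_a|$ follows immediately from \eqref{E301b}. Differentiation under the integral sign, justified by Gaussian decay of $H(\cdot,\cdot,y,a)$ in $x$ together with boundedness of $u_a$, combined with $\square_{(x,t)} H(x,t,y,a)=0$ gives $\square u = 0$, while the initial condition holds because $H(\cdot,t,y,a)\to \delta_y$ as $t\searrow a$. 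The construction of $w$ is analogous; the key bound is the $L^1$ estimate
\begin{align*}
\int_M |w(y,s)|\,dV_s(y)\le \int_M |w_b(x)|\int_M H(x,b,y,s)\,dV_s(y)\,dV_b(x) = \|w_b\|_{L^1(dV_b)},
\end{align*}
obtained from Fubini and \eqref{E301b}, which yields \eqref{E302c} uniformly in $s\in[a,b]$.

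For uniqueness of the forward solution, let $v = u_1 - u_2$, so $v$ is bounded, $\square v = 0$, and $v(\cdot,a) = 0$. I would apply Theorem \ref{T201} to both $v$ and $-v$. The integrability hypothesis \eqref{E206} reduces to $\int_a^b \int_M e^{-2f(x,t)}\,dV_t\,dt < \infty$, which follows from the quadratic lower bound \eqref{E203}: on $[a,b]\subset(-\infty,1)$ one has $f(x,t)\ge c\,d_t(x,p)^2$ outside a compact set, for a constant $c>0$ depending on $b$, so $e^{-2f}$ has uniform Gaussian decay, and the ambient volume growth from Lemma \ref{L202} keeps the integral finite.

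For uniqueness of the conjugate solution, the difference $w = w_1 - w_2$ is merely integrable rather than bounded, so Theorem \ref{T201} does not apply directly. Instead, I would use a duality pairing: given any bounded test function $\phi_a$ on $(M,g(a))$, let $\tilde u$ be the bounded forward heat solution with initial value $\phi_a$, whose uniqueness is already proved. Using $\partial_t(dV_t) = -R\,dV_t$ together with $\square \tilde u = 0$ and $\square^* w = 0$, the quantity $\int_M \tilde u\, w\,dV_t$ would be constant on $[a,b]$ if integration by parts were unrestricted; since $w(\cdot,b)=0$, constancy forces $\int_M \phi_a\, w(\cdot,a)\,dV_a = 0$ for all bounded $\phi_a$, whence $w(\cdot,a)\equiv 0$. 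The principal obstacle is justifying the integration by parts on the non-compact manifold; the plan is to insert the cutoffs $\phi^r$ from \eqref{E204} and apply Green's identity to $\int_M \tilde u\, w\, \phi^r\,dV_t$, producing error terms of the form $\int_M \langle \tilde u\,\nabla w - w\,\nabla \tilde u,\,\nabla \phi^r\rangle\,dV_t$ supported in $\{r \le F \le 2r\}$. These are controlled using $|\nabla \phi^r|\le C r^{-1/2}(\phi^r)^{1/2}$ from \eqref{E205a}, an $L^2$-energy bound for $\nabla w$ obtained by testing $\square^* w = 0$ against $w\phi^r$, and the boundedness of $\tilde u$; passing $r\to\infty$ closes the argument.
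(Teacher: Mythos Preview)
The paper does not give an independent proof here; it simply cites \cite[Lemma 5, Lemma 6]{LW20}. Your existence arguments and your forward uniqueness via Theorem~\ref{T201} are correct and match what is done there. The duality idea for the conjugate uniqueness is also the right one; however, your treatment of the cutoff error has a genuine gap.

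You write the error after inserting $\phi^r$ as $\int_M \langle \tilde u\,\nabla w - w\,\nabla \tilde u,\nabla\phi^r\rangle\,dV_t$ and propose to control the term $\int_M \tilde u\,\langle\nabla w,\nabla\phi^r\rangle\,dV_t$ via an ``$L^2$-energy bound for $\nabla w$ obtained by testing $\square^* w = 0$ against $w\phi^r$.'' This does not close: pairing $\square^* w=0$ with $w\phi^r$ produces quantities like $\int_M w^2\,|\partial_t\phi^r|\,dV_t$ and boundary terms $\int_M w^2(\phi^r)^2\,dV_t$, and you only have $\sup_s\int_M|w|\,dV_s<\infty$, not $\int_M w^2\,dV_s<\infty$. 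Nothing in the hypotheses gives $L^2$ control of $w$, so the energy identity for $\nabla w$ is unavailable.

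The fix is to arrange the integration by parts asymmetrically so that no derivative ever lands on $w$. Write
\[
\partial_t\!\int_M \tilde u\,w\,\phi^r\,dV_t
=\int_M\bigl\{w\,\square(\tilde u\,\phi^r)-(\tilde u\,\phi^r)\,\square^* w\bigr\}\,dV_t
=\int_M w\,\bigl\{\tilde u\,\square\phi^r-2\langle\nabla\tilde u,\nabla\phi^r\rangle\bigr\}\,dV_t,
\]
which is exactly the computation in Proposition~\ref{prop:iden1}(i) just below. The right-hand side is then bounded by $C\|w(\cdot,t)\|_{L^1}\bigl(\|\tilde u\|_\infty\,\|\square\phi^r\|_\infty+\|\nabla\tilde u\|_\infty\,\|\nabla\phi^r\|_\infty\bigr)$, and \eqref{E205a}, \eqref{E205d} send this to zero as $r\to\infty$. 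To have $\|\nabla\tilde u\|_\infty<\infty$, simply take your test function $\phi_a$ to be smooth with compact support (this class is rich enough to conclude $w(\cdot,a)=0$), and invoke Lemma~\ref{lem:301}(i). Finally, note that your pairing argument as stated only yields $w(\cdot,a)=0$; run the same argument on $[s,b]$ for each $s\in[a,b]$ to get $w(\cdot,s)=0$ identically.
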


Next, we recall the following gradient estimate, which slightly strengthens \cite[Corollary 1]{LW20}.

\begin{lem} \label{lem:301}
Let $u$ be a bounded heat solution on $M \times [a,b]$ such that $\sup_{M} |\nabla u(\cdot, a)| <\infty$. Then
\begin{enumerate}[label=\textnormal{(\roman{*})}]
\item We have 
\begin{align}
\sup_{M} |\nabla u(\cdot, b)| \leq \sup_{M} |\nabla u(\cdot, a)|. \label{E303a}
\end{align}

\item Assume $w$ is a nonnegative conjugate heat solution on $M \times [a,b]$ such that 
\begin{align} \label{E304aa}
\sup_{t \in [a,b]} \int_M w\,dV_t <\infty,
\end{align}
then we have
\begin{align}
2\int_a^b \int_M |\emph{Hess } u|^2 w\,dV_t dt =\left. \int_{M} |\na u|^2 w \,dV \right|_b^a<\infty. \label{E304a}
\end{align}
\end{enumerate}
\end{lem}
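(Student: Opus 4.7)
The starting point is the Ricci-flow Bochner identity for a heat solution $u$:
\begin{align*}
\square |\nabla u|^2 = -2|\he u|^2,
\end{align*}
verified from $u_t=\Delta u$, $\partial_t g=-2Rc$, and the usual Bochner formula; the Ricci contributions from $\partial_t g^{ij}=2Rc^{ij}$ and from $\Delta|\nabla u|^2$ cancel. My plan is to prove (ii) first and then deduce (i) by specializing the conjugate heat solution $w$ to a conjugate heat kernel.

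For (ii), I would study $F(t) := \int_M |\nabla u|^2 w\,dV_t$. A formal computation combining $\partial_t dV_t = -R\,dV_t$, $\square^* w=0$, the Bochner identity above, and integration by parts gives
\begin{align*}
F'(t) = -2\int_M |\he u|^2\,w\,dV_t,
\end{align*}
so integrating from $a$ to $b$ produces the identity \eqref{E304a}. The hypothesis $\sup|\nabla u(\cdot,a)|<\infty$ together with \eqref{E304aa} immediately gives $F(a)<\infty$, and the monotonicity $F'\le 0$ then forces both $F(b)$ and the spacetime integral $\int_a^b\int|\he u|^2 w\,dV_t\,dt$ to be finite as well.

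To make this rigorous on a complete non-compact shrinker, I would insert the spatial cutoff $\phi^r$ from \eqref{E204} and work with $F_r(t):=\int_M \phi^r |\nabla u|^2 w\,dV_t$. Differentiating in $t$ and integrating by parts, besides the main term $-2\int \phi^r |\he u|^2 w\,dV_t$, produces error contributions of the schematic form
\begin{align*}
\int w\langle\nabla\phi^r,\nabla|\nabla u|^2\rangle\,dV,\qquad \int |\nabla u|^2\langle\nabla\phi^r,\nabla w\rangle\,dV,\qquad \int |\nabla u|^2 w\,\square\phi^r\,dV.
\end{align*}
The cutoff estimates \eqref{E205a} (which gives $|\nabla\phi^r|^2\le Cr^{-1}\phi^r$) and \eqref{E205d}, combined with Cauchy--Schwarz and the integrability of $w$ supplied by \eqref{E304aa}, allow these errors to be absorbed into $F_r$ plus $\int \phi^r|\he u|^2 w\,dV$ plus $o(1)$ as $r\to\infty$. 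Monotone convergence on the nonnegative term $|\he u|^2 w$ and dominated convergence elsewhere then upgrade the cutoff identity to \eqref{E304a}.

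For (i), fix $(x_0,t_0)\in M\times(a,b]$ and apply (ii) on $[a,t_0-\ep]$ with $w(y,s)=H(x_0,t_0,y,s)$, the conjugate heat kernel based at $(x_0,t_0)$; by \eqref{E301b} we have $\int w\,dV_s=1$, so hypothesis \eqref{E304aa} is satisfied. The resulting monotonicity yields $F(t_0-\ep)\le F(a)\le \sup_M|\nabla u(\cdot,a)|^2$. Sending $\ep\to 0$, the Dirac limit $w(\cdot,s)\to\delta_{x_0}$ together with continuity of $|\nabla u|^2$ gives $F(t_0-\ep)\to|\nabla u|^2(x_0,t_0)$, proving \eqref{E303a}. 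The principal obstacle throughout is the cutoff limit in step three: prior to establishing (i), one has no uniform control on $|\nabla u|$ or $|\he u|$ at intermediate times, so the vanishing of the three error terms must be shown using only local parabolic regularity of $u$ and the global integrability of $w$, which forces a careful Cauchy--Schwarz splitting that trades part of the error against $\int \phi^r|\he u|^2 w\,dV$ itself.
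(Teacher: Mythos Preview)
Your plan reverses the paper's order, and that reversal creates a circularity you acknowledge but do not close. The paper proves (i) first via the weighted maximum principle Theorem~\ref{T201}: one checks the growth hypothesis \eqref{E206} for the subsolution $|\nabla u|^2$ by an energy estimate against the weight $e^{-2f}$ (multiply $\square u=0$ by $u(\phi^r)^2 e^{-2f}$, integrate, and use the shrinker identities for $f$). Once $|\nabla u|$ is known to be uniformly bounded, the cutoff errors in (ii) --- namely $\int |\nabla u|^2 |\square\phi^r|\,w$ and the Cauchy--Schwarz remainder $\int |\nabla u|^2 \frac{|\nabla\phi^r|^2}{\phi^r}\,w$ --- are dominated by $C r^{-1}\sup|\nabla u|^2\int w$ and vanish as $r\to\infty$.

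In your route, the only error you can genuinely absorb is the cross term $\int w\langle\nabla\phi^r,\nabla|\nabla u|^2\rangle$, which after Cauchy--Schwarz contributes $\epsilon\int\phi^r|\he u|^2 w$. The remaining errors are of the form $\int_{\{r\le F\le 2r\}} |\nabla u|^2 w$ times $O(r^{-1})$, and for a \emph{general} nonnegative $w$ satisfying only \eqref{E304aa} there is no rate on $\int_{\{r\le F\le 2r\}} w$, while local parabolic regularity on an unbounded-curvature shrinker yields no uniform (or even polynomial) spatial bound on $|\nabla u|$. So ``local regularity plus integrability of $w$'' is not enough to make these terms $o(1)$; you would in effect need (i), which is what you are trying to deduce. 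The fix is precisely the paper's: establish (i) independently via the $e^{-2f}$-weighted maximum principle, and only then run the cutoff argument for (ii).
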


\begin{proof}
(i) From $\square u=0$ and direct computation, we have
\begin{align}
\square |\nabla u|^2 =-2|\text{Hess }u|^2 \le 0. \label{E303c}
\end{align}
Therefore, \eqref{E303a} follows from Theorem \ref{T201} provided that
\begin{align}
\int_a^b \int_M |\na u|^2 e^{-2f} \,dV_tdt<\infty. \label{E303cc}
\end{align}
Now, we fix $r \gg1$ and multiply both sides of $\square u=0$ by $u (\phi^r)^2e^{-2f}$. By integrating on $M \times [a,b]$, we obtain 
\begin{align*} 
&\left. \frac{1}{2}\int_{M} u^2(\phi^r)^2 e^{-2f} \,dV \right|_a^b \notag \\
&-\int_a^b \int_{M} u^2 \phi^r\phi^r_t e^{-2f} \,dV_tdt+\int_a^b \int_{M} u^2 (\phi^r)^2 f_t e^{-2f} \,dV_tdt+\frac{1}{2}\int_a^b \int_{M} u^2 (\phi^r)^2Re^{-2f} \,dV_tdt \notag \\
=&\int_a^b \int_{M} \left\{-|\na(u \phi^r)|^2+|\na \phi^r|^2u^2+\la \na u^2,\na f \ra(\phi^r)^2 \right\} e^{-2f} dV_tdt \\
= & \int_a^b \int_{M} \left\{-|\na(u \phi^r)|^2+|\na \phi^r|^2u^2+ (2|\na f|^2-\Delta f) u^2(\phi^r)^2-2 u^2 \phi^r \la \na \phi^r,\na f \ra \right\} e^{-2f} dV_tdt.
\end{align*}

Since $R \ge 0$ and $f_t=|\na f|^2$ by \eqref{E202xa}, we have
\begin{align*} 
&\int_a^b \int_{M} |\na(u \phi^r)|^2 e^{-2f} dV_tdt+\left. \frac{1}{2}\int_{M} u^2(\phi^r)^2 e^{-2f} \,dV \right|_a^b \\
\le &\int_a^b \int_{M} \left\{|\na \phi^r|^2u^2+ (|\na f|^2-\Delta f) u^2(\phi^r)^2+ u^2 \phi^r(\phi^r_t-2 \la \na \phi^r,\na f \ra) \right\} e^{-2f} dV_tdt \\
= &\int_a^b \int_{M} \left\{|\na \phi^r|^2u^2+ \frac{1}{1-t} \lc f-\frac{n}{2} \rc u^2(\phi^r)^2+ u^2 \phi^r(\phi^r_t-2 \la \na \phi^r,\na f \ra) \right\} e^{-2f} dV_tdt,
\end{align*}
where we have used the identity $\Delta f-|\na f|^2=\bar \tau(f-n/2)$ from \eqref{E202b} and \eqref{E202c}.

Since $u$ is bounded on $M \times [a,b]$ and $|\na f|^2 \le f/(1-b)$, it follows from \eqref{E205a}, \eqref{E205b}, Lemma \ref{L201} and Lemma \ref{L202} that by letting $r \to +\infty$,
\begin{align*} 
&\int_a^b \int_{M} |\na u|^2 e^{-2f} dV_tdt \le \left. \frac{1}{2}\int_{M} u^2 e^{-2f} \,dV \right|_b^a+\int_a^b \int_{M} \frac{1}{1-t} \lc f-\frac{n}{2} \rc u^2e^{-2f} dV_tdt <\infty
\end{align*}
and hence \eqref{E303cc} holds.

(ii) Fix $r \gg 1$ and $\ep \ll 1$. We calculate 
\begin{align} 
\partial_t\int_{M} |\na u|^2 \phi^r w\,dV&=\int_{M} \left\{ \square (|\na u|^2\phi^{r}) w- (|\na u|^2 \phi^{r}) \square^{*} w \right\} dV \notag\\
&=\int_{M} \left\{ |\na u|^2 \square \phi^{r} + \phi^{r} \square |\na u|^2 - 2\langle \nabla |\na u|^2, \nabla \phi^{r}\rangle \right\} w \,dV \notag\\
&\le \int_{M} \left\{ |\na u|^2 \square \phi^{r} -2|\text{Hess }u|^2\phi^r+4|\text{Hess }u||\nabla u||\nabla \phi^{r}| \right\} w \,dV \notag \\
&\le \int_{M} \left\{ |\na u|^2 |\square \phi^{r}| -(2-\ep^2)|\text{Hess }u|^2\phi^r+4\ep^{-2}|\nabla u|^2|\nabla \phi^{r}|^2 (\phi^r)^{-1} \right\} w \,dV. \label{E304b}
\end{align}
Since $|\na u|$ is uniformly bounded by \eqref{E303a}, it follows from \eqref{E304b}, \eqref{E205a} and \eqref{E205d} that
\begin{align*}
2\int_a^b \int_M |\text{Hess } u|^2 w\,dV_t dt \le \left. \int_{M} |\na u|^2 w \,dV \right|_b^a
\end{align*}
if we let $r \to +\infty$ and $\ep \to 0$. The other inequality can be proved similarly and hence \eqref{E304a} holds.
\end{proof}

Next, we prove

\begin{prop} \label{prop:iden1}
For any $[a,b]\subset (-\infty,1)$, suppose $u$ and $w$ are two smooth functions on $M \times [a,b]$ satisfying $\square u=\square^* w=0$.
Then, the identity
\begin{align} 
\int_M uw \,dV_a=\int_M uw \,dV_b \label{E304xxb}
\end{align}
holds under one of the following additional assumptions:
\begin{enumerate}[label=\textnormal{(\roman{*})}]
\item $\displaystyle \sup_{t \in [a,b]} \int_M |wu|\,dV_t+\int_a^b \int_M |w||\na u| \,dV_tdt<\infty$.

\item $\displaystyle \sup_{t \in [a,b]} \int_M |wu|\,dV_t+\int_a^b \int_M |u||\na w| \,dV_tdt<\infty$.
\end{enumerate}
\end{prop}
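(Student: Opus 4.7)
The plan is to prove the identity by differentiating the integral $\int_M uw\,\phi^r\,dV_t$ with the cutoff function $\phi^r$ from \eqref{E204}, integrating from $a$ to $b$, and then sending $r \to +\infty$ using the estimates \eqref{E205a}--\eqref{E205d} together with the two integrability hypotheses. Since $\phi^r$ has compact support (by \eqref{E203}) and $\partial_t dV_t = -R\,dV_t$, a direct computation using $\square u=0$ (so that $u_t = \Delta u$), $\square^* w = 0$ (so that $w_t = Rw - \Delta w$), and integration by parts on $M$ yields
\begin{align*}
\frac{d}{dt}\int_M uw\,\phi^r\,dV_t
= \int_M \bigl[-w\langle \nabla u,\nabla \phi^r\rangle + u\langle \nabla w,\nabla \phi^r\rangle + uw\,\phi^r_t\bigr]\,dV_t.
\end{align*}
The scalar curvature terms cancel, which is the only reason $\square$ and $\square^*$ appear in a balanced way.

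Next, I would use the identity (another integration by parts, valid because $\phi^r$ is compactly supported)
\begin{align*}
\int_M w\langle \nabla u,\nabla \phi^r\rangle\,dV_t + \int_M u\langle \nabla w,\nabla \phi^r\rangle\,dV_t + \int_M uw\,\Delta \phi^r\,dV_t = 0
\end{align*}
to trade the ``gradient-on-$w$'' term for the ``gradient-on-$u$'' term, or vice versa. Under assumption (i) I would keep $w\langle \nabla u,\nabla \phi^r\rangle$ and rewrite $u\langle \nabla w,\nabla \phi^r\rangle$ in terms of it plus a $uw\,\Delta\phi^r$ term; under assumption (ii) I would do the opposite. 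Thus, integrating in $t$ from $a$ to $b$, the identity becomes
\begin{align*}
\int_M uw\,\phi^r\,dV_b - \int_M uw\,\phi^r\,dV_a = \int_a^b \int_M \bigl[(\text{cutoff error})\bigr]\,dV_t\,dt,
\end{align*}
where the cutoff error is either $-2w\langle \nabla u,\nabla \phi^r\rangle - uw\,\Delta \phi^r + uw\,\phi^r_t$ (case i) or $2u\langle \nabla w,\nabla \phi^r\rangle + uw\,\Delta \phi^r + uw\,\phi^r_t$ (case ii).

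To send $r \to \infty$, I would apply \eqref{E205a} to get $|\nabla \phi^r| \le C r^{-1/2}$ and bound the gradient term by $C r^{-1/2}\int_a^b \int_M |w||\nabla u|\,dV_t\,dt$ (respectively $\int_a^b \int_M |u||\nabla w|\,dV_t\,dt$), which is finite by hypothesis and vanishes in the limit. For the $uw\,\Delta\phi^r$ and $uw\,\phi^r_t$ pieces, \eqref{E205b}--\eqref{E205c} give a uniform bound $C(\bar\tau^{-1}+r^{-1}) \le C(1-b)^{-1}$, while these quantities are supported where $F \ge r$; since $\sup_{t\in[a,b]} \int_M |uw|\,dV_t < \infty$ and $F(\cdot,t) \to +\infty$, dominated convergence forces both integrals to tend to $0$. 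The left-hand side converges to $\int_M uw\,dV_b - \int_M uw\,dV_a$ by another application of dominated convergence using $\phi^r \nearrow 1$ pointwise and $|uw\,\phi^r| \le |uw|$.

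The main delicate point is that neither $\phi^r_t$ nor $\Delta \phi^r$ decays in $r$ pointwise on its support; the argument must rely on the supports $\{r \le F \le 2r\}$ moving off to infinity together with the $t$-uniform $L^1$ control $\sup_{t\in[a,b]} \int_M |uw|\,dV_t < \infty$. The factor $\bar\tau^{-1}$ in \eqref{E205b} is harmless precisely because $[a,b]\subset(-\infty,1)$ is closed, giving a uniform bound, and the asymmetric integrability hypotheses in (i) versus (ii) are handled by the single integration-by-parts swap above.
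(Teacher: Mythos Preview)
Your proposal is correct and follows essentially the same route as the paper: cutoff with $\phi^r$, compute $\partial_t\int_M uw\,\phi^r\,dV_t$, arrange the error terms asymmetrically for cases (i) and (ii) via one integration by parts, and pass to the limit using \eqref{E205a}--\eqref{E205d} together with the support of $\nabla\phi^r,\Delta\phi^r,\phi^r_t$ in $\{r\le F\le 2r\}$. One small simplification you missed in case (i): the combination $-uw\,\Delta\phi^r+uw\,\phi^r_t$ is exactly $uw\,\square\phi^r$, so \eqref{E205d} gives $O(r^{-1})$ decay directly and the support/DCT argument is only needed in case (ii), which is precisely how the paper handles the two cases.
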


\begin{proof}
(i) We take $r \gg1$ and calculate 
\begin{align*} 
\partial_t\int_{M} wu\phi^r \,dV&=\int_{M} \left\{ w \square (u\phi^{r}) - (u \phi^{r}) \square^{*} w \right\} dV \notag\\
&=\int_{M} w\left\{ u \square \phi^{r} + \phi^{r} \square u - 2\langle \nabla u, \nabla \phi^{r}\rangle \right\} dV \notag\\
&=\int_{M} w\left\{ u \square \phi^{r} - 2\langle \nabla u, \nabla \phi^{r}\rangle \right\}dV.
\end{align*}
By using \eqref{E205a} and \eqref{E205d}, we conclude
\begin{align*}
\left| \left. \int_{M} wu \phi^{r} dV \right|_{a}^{b} \right| \leq C(r^{-1}+r^{-\frac{1}{2}}) \int_a^b \int_M |w|(|u|+|\na u|) \,dV_tdt.
\end{align*}
By taking $r \to \infty$, we arrive at $\eqref{E304xxb}$.

(ii) Similarly, we have
\begin{align*} 
\partial_t\int_{M} u w\phi^r\,dV&=\int_{M} \left\{ (\square u) w\phi^r- u \square^{*} (w\phi^r) \right\} dV \notag\\
&=\int_{M} u\left\{-(\square^* w)\phi^r+w(\Delta \phi^r+\phi^r_t)+2\la \na w,\na \phi^r \ra \right\} \,dV \notag\\
&=\int_{M} u\left\{w(\Delta \phi^r+\phi^r_t)+2\la \na w,\na \phi^r \ra \right\} \,dV. 
\end{align*}
Therefore, by \eqref{E205a}, \eqref{E205b} and \eqref{E205c}, we have
\begin{align*}
\left| \left. \int_{M} wu \phi^{r} dV \right|_{a}^{b} \right| \leq C(r^{-1}+r^{-\frac{1}{2}}+(1-a)^{-1}) \iint_{K_r} |u|(|w|+|\na w|) \,dV_tdt,
\end{align*}
where $K_r:= \{r \le F(x,r) \le 2r,\, a\le t\le b\}$. Consequently, by our assumption, \eqref{E304xxb} holds if $r \to \infty$.
\end{proof}

\begin{rem} \label{rem:iden2}
Suppose $\square u=\square^* w=0$.
\begin{enumerate}[label=\textnormal{(\roman{*})}]
\item If $\displaystyle \sup_M |\na u(\cdot,a)|+\sup_{M \times [a,b]}|u|+\sup_{t \in [a,b]} \int_{M} |w|\,dV_t<\infty$, then assumption (i) holds by \eqref{E303a}. If $\displaystyle \sup_{M \times [a,b]}|u|+\sup_{t \in [a,b]} \int_{M} |w|\,dV_t<\infty$ and $u$ is positive, then $|\na u| \le C/\sqrt{t-a}$ by \emph{\cite[Lemma 18]{LW20}}. Therefore, \eqref{E304xxb} also holds by taking the limit for $t \searrow a$.

\item If $\displaystyle \sup_{M \times [a,b]}|u|+\sup_{t \in [a,b]} \int_{M} |w|\,dV_t<\infty$ and $w(\cdot,b)$ is a nonnegative function with compact support, then assumption (ii) holds. Indeed, it follows from \emph{\cite[Lemma 9]{LW20}} that $\displaystyle \int_a^b \int_M \frac{|\na w|^2}{w} \,dV_tdt<\infty$ and hence
\begin{align*} 
\int_a^b \int_M |u||\na w| \,dV_tdt \le C(\sup_{M \times [a,b]}|u|) \lc \int_a^b \int_M \frac{|\na w|^2}{w} \,dV_tdt \rc^{\frac 1 2} \lc \int_a^b \int_M w \,dV_tdt \rc^{\frac 1 2}<\infty.
\end{align*}
\end{enumerate}
\end{rem}

For later applications, we prove the following estimate of the heat kernel.

\begin{lem} \label{lem:302}
For any $y \in M$ and $s<t<1$, we set $u(x,t):=H(x,t,y,s)$ and $\bar w(x,t):=(4\pi\bar \tau)^{-\frac n 2}e^{-f(x,t)}$. Then
\begin{align}\label{E304xa}
\int_M u(x,t) \bar w(x,t) \,dV_t(x)=\bar w(y,s).
\end{align}
\end{lem}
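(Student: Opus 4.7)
The plan is to apply the uniqueness part of Theorem~\ref{thm301} to $w\equiv\bar w$ on the time interval $[s,t]$; once this is set up, the desired identity \eqref{E304xa} is just the representation formula \eqref{E302b} applied to $\bar w(y,s)$.

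First, I would verify that $\bar w$ is itself a conjugate heat solution, $\square^{*}\bar w=0$. Using $\partial_t f=|\na f|^2$ from \eqref{E202xa} together with $\nabla \bar w=-\bar w\,\nabla f$, a direct calculation reduces $\square^{*}\bar w=0$ to the shrinker identity $R+\Delta f=n/(2\bar\tau)$, which is just \eqref{E202b} rewritten in terms of $f$ (recall $F=\bar\tau f$, so $\Delta F=\bar\tau\Delta f$).

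Second, I would verify the uniqueness hypothesis \eqref{E302c}, namely $\sup_{\sigma\in[s,t]}\int_M \bar w\,dV_\sigma<\infty$. I would compute this integral by pulling back via the self-similar diffeomorphisms $\psi^\sigma$ of \eqref{E201a}: since $g(\sigma)=(1-\sigma)(\psi^\sigma)^{\ast}g$ and $f(\cdot,\sigma)=(\psi^\sigma)^{\ast}f$, the factors $(1-\sigma)^{\pm n/2}$ cancel against each other and one obtains $\int_M \bar w\,dV_\sigma=e^{\boldsymbol{\mu}}$, independent of $\sigma$. In particular the supremum is finite on $[s,t]$, and the terminal data $\bar w(\cdot,t)$ is integrable on $(M,g(t))$.

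Finally, Theorem~\ref{thm301} applied to the conjugate heat solution $\bar w$ on $M\times[s,t]$ with terminal data $\bar w(\cdot,t)$ yields, via \eqref{E302b},
\[
\bar w(y,s)=\int_M H(x,t,y,s)\,\bar w(x,t)\,dV_t(x),
\]
which is precisely \eqref{E304xa}. The only mildly non-trivial step is the cancellation giving $\int_M\bar w\,dV_\sigma\equiv e^{\boldsymbol{\mu}}$; this is what allows the $L^1$-control needed for uniqueness, and it rests on the self-similar structure of the flow and the definition \eqref{eq:mu} of $\boldsymbol{\mu}$.
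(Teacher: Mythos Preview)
Your argument is correct and in fact somewhat cleaner than the paper's own proof. The paper argues directly via the pairing identity Proposition~\ref{prop:iden1}(ii): with $u=H(\cdot,\cdot,y,s)$ and $w=\bar w$, it checks the integrability condition $\int_a^b\int_M u\,|\nabla\bar w|\,dV_t\,dt<\infty$ using $|\nabla\bar w|\le \bar w(1+|\nabla f|)$, \eqref{E202c}, and \eqref{E301a}, then runs the cutoff argument of Proposition~\ref{prop:iden1} on $[a,t]$ with $a\searrow s$ (to handle the initial singularity of $u$) and finally lets $r\to\infty$. You instead package everything into the uniqueness statement of Theorem~\ref{thm301}: once $\square^*\bar w=0$ and the $L^1$ bound $\int_M\bar w\,dV_\sigma=e^{\boldsymbol{\mu}}$ are verified, the representation formula \eqref{E302b} forces \eqref{E304xa} immediately. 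The self-similarity computation giving $\int_M\bar w\,dV_\sigma\equiv e^{\boldsymbol{\mu}}$ is a nice touch that sidesteps any gradient estimate on $\bar w$ and any limiting procedure in $a$; the paper's route, on the other hand, is more self-contained in that it does not invoke the (cited) uniqueness theorem but reproves what is needed by hand.
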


\begin{proof}
It is clear from the definition of $\bar w$ that $\square^* \bar w=0$; see \cite[Equation (28)]{LW20}. Moreover, for any $[a,b] \subset (s,t]$, the assumption (ii) of Proposition \ref{prop:iden1} holds since
\begin{align*}
\int_a^b \int_M u |\na \bar w| \,dV_tdt \le& \int_a^b \int_M u \bar w (1+|\na f|) \,dV_tdt \\
\le& (4\pi(1-b))^{-\frac n 2}\int_a^b \int_M u (1+(1-b)^{-\frac 1 2} f^{\frac 1 2}) e^{-f} \,dV_t dt \\
\le& C \int_a^b\int u\,dV_t dt\le C(b-a)
\end{align*}
where we have used \eqref{E202c} and \eqref{E301a} and the constant $C$ depends only on $n$ and $b$.

By choosing $b=t$ and letting $a \searrow s$, the proof of Proposition \ref{prop:iden1} yields
\begin{align*} 
\int_{M} u \bar w \phi^r \,dV_t-\bar w(y,s)\phi^r(y,s)=o(r)
\end{align*}
where $o(r) \to 0$ as $r \to \infty$. Therefore, we immediately obtain \eqref{E304xa} by letting $r \to \infty$.
\end{proof}

Next, we recall the definition of $W_1$-Wasserstein distance.

\begin{defn} \label{def:W1}
Let $(X,d)$ be a complete metric space and $\mu_1,\mu_2$ two probability measures on $X$. Then the $W_1$-Wasserstein distance between $\mu_1$ and $\mu_2$ is defined by 
\begin{align*} 
d_{W_1}(\mu_1,\mu_2):=\sup_{f} \lc \int f\,d\mu_1-\int f \,d\mu_2 \rc
\end{align*}
where the supremum is taken for all bounded $1$-Lipschitz functions $f$. We also use $d^t_{W_1}$ to denote the $W_1$-distance with respect to $g(t)$.
\end{defn}

We prove the following monotonicity of the Wasserstein distance as \cite[Lemma 2.7]{Bam20a}.

\begin{prop} \label{prop:301}
Let $(M^n, g(t))_{t<1}$ be a Ricci flow associated with a Ricci shrinker. For $[a,b] \subset (-\infty,1)$, let $w_1,w_2 \in C^\infty(M \times [a,b])$ be two nonnegative conjugate heat solutions such that $\int_M w_i \, dV_t=1$ for any $t \in [a,b]$ and $i=1,2$. We define the probability measures with $d\mu_{i,t} = w_i(\cdot, t) \, dV_t$, $i=1,2$.
Then
\begin{align*}
d^t_{W_1}(\mu_{1,t}, \mu_{2,t})
\end{align*}
is increasing for $t \in [a,b]$. In particular, if $t_1 \le t_2<1$, then for any $x_1,x_2 \in M$ and $t \le t_1$,
\begin{align*}
d^t_{W_1}(v_{x_1,t_1;t},v_{x_2,t_2;t})
\end{align*}
is increasing and 
\begin{align*}
d^t_{W_1}(v_{x_1,t_1;t},v_{x_2,t_2;t}) \le d_{t_1}(x_1,x_2).
\end{align*}
\end{prop}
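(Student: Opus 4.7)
The plan is to prove this via the Kantorovich--Rubinstein / McCann--Topping dualization adapted to Ricci flow by Bamler: use the $W_1$ dual formulation to reduce to a bounded 1-Lipschitz test function $f$ at time $s$, transport $f$ forward in time via the heat equation, and exploit two key facts already established in the excerpt --- (i) forward heat flow preserves the 1-Lipschitz constant by Lemma~\ref{lem:301}(i), and (ii) the pairing $\int u w_i\,dV_\tau$ is independent of $\tau$ when $\square u=0$ and $\square^* w_i=0$, by Proposition~\ref{prop:iden1}.

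Concretely, to show monotonicity on $[a,b]$ I would fix $a\le s<t\le b$ and take any bounded 1-Lipschitz $f$ on $(M,g(s))$. First I would use Theorem~\ref{thm301} to construct the bounded heat extension $u:M\times[s,t]\to\R$ with $\square u=0$, $u(\cdot,s)=f$; the maximum principle applied to $\pm u$ gives $\|u\|_\infty\le\|f\|_\infty$, and Lemma~\ref{lem:301}(i) applied on each subinterval $[s,\tau]$ propagates $|\na u(\cdot,\tau)|_{g(\tau)}\le 1$, so $u(\cdot,t)$ is itself a bounded 1-Lipschitz function at time $t$. Next I would invoke Proposition~\ref{prop:iden1}(i), whose integrability hypothesis holds by Remark~\ref{rem:iden2}(i) (we have $\sup|\na u(\cdot,s)|\le 1$, $u$ bounded on the slab, and $\int_M w_i\,dV_\tau=1$ uniformly), to get
\[\int_M f\,(w_1-w_2)(\cdot,s)\,dV_s=\int_M u(\cdot,t)\,(w_1-w_2)(\cdot,t)\,dV_t\le d^t_{W_1}(\mu_{1,t},\mu_{2,t}),\]
where the last step uses Definition~\ref{def:W1} with $u(\cdot,t)$ as a bounded 1-Lipschitz test function. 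Taking supremum over $f$ yields $d^s_{W_1}(\mu_{1,s},\mu_{2,s})\le d^t_{W_1}(\mu_{1,t},\mu_{2,t})$, which is the monotonicity.

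For the ``in particular'' statement I would specialize to $w_i(\cdot,\tau)=H(x_i,t_i,\cdot,\tau)$ on $(-\infty,t_1]$; the normalization $\int_M w_i\,dV_\tau=1$ is \eqref{E301b}, so the general monotonicity yields the monotonicity of $d^t_{W_1}(v_{x_1,t_1;t},v_{x_2,t_2;t})$. For the upper bound $\le d_{t_1}(x_1,x_2)$, I would pass to the limit $\tau\nearrow t_1$ so that $v_{x_1,t_1;\tau}\to\delta_{x_1}$ weakly, and then use the pairing identity once more, now with the Lipschitz test function $y\mapsto d_{t_1}(x_1,y)$ (here Remark~\ref{rem:iden2}(i) with the heat kernel gradient bound from \cite[Lemma~18]{LW20} controls the gradient for $\tau>t_1$) to bound the limit by $d_{t_1}(x_1,x_2)$. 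The main obstacle is precisely this integrability check on a non-compact manifold: the argument must avoid any direct appeal to compactness and rely only on the uniform gradient and sup-norm bounds for $u$ together with the probability-measure normalization of $w_i$, so that the cutoff mechanism behind Proposition~\ref{prop:iden1} absorbs all the non-compactness.
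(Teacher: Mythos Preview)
Your approach is essentially identical to the paper's: both argue via Kantorovich--Rubinstein duality, flow a bounded $1$-Lipschitz test function forward with the heat equation, invoke Lemma~\ref{lem:301}(i) to preserve the Lipschitz bound, and use the pairing invariance $\int uw_i\,dV_s=\int uw_i\,dV_t$ to conclude. The only cosmetic difference is that the paper cites the pairing identity from \cite[Proposition~1]{LW20} rather than Proposition~\ref{prop:iden1}, and takes the test function smooth from the outset (so that Lemma~\ref{lem:301}(i) applies directly at the initial time), which you should also do or else insert a routine approximation step.

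Your treatment of the ``in particular'' final bound is more tangled than necessary. The monotonicity applied to $w_i(\cdot,s)=H(x_i,t_i,\cdot,s)$ on $[t,t_1)$ already gives $d^t_{W_1}(v_{x_1,t_1;t},v_{x_2,t_2;t})\le d^{t_1}_{W_1}(\delta_{x_1},v_{x_2,t_2;t_1})$ by letting the upper endpoint tend to $t_1$; there is no need to re-invoke Remark~\ref{rem:iden2} or heat-kernel gradient bounds at that stage. In the case $t_1=t_2$ (which is how the bound is used later in the paper, e.g.\ in Lemma~\ref{lem:402} and Proposition~\ref{prop:Hcent1a}) the right-hand side is exactly $d_{t_1}(x_1,x_2)$ and you are done.
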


\begin{proof}
Let $t_1 \leq t_2$, $t_1, t_2 \in [a,b]$ and consider a bounded function $u_1 \in C^\infty(M)$ with $\sup_M |\nabla u_1(\cdot, t_1)| \le 1$. Suppose $u$ is the unique bounded heat solution on $M \times [t_1,t_2]$ starting from $u_1$. Then it follows from Lemma \ref{lem:301} (i) that 
\begin{align*}
\sup_M |\na u(\cdot, t)| \le 1
\end{align*}
for any $t \in [t_1,t_2]$. Clearly, we have
\begin{align*}
\int_M u \, d\mu_{1,t_1} - \int_M u \, d\mu_{2,t_1}
&= \int_M u(x,t_1) w_1(x,t_1) \, dV_{t_1}(x) - \int_M u(x,t_1) w_2(x,t_1) \, dV_{t_1}(x) \\
&= \int_M u(x,t_2) w_1(x,t_2) \, dV_{t_2}(x) - \int_M u(x,t_2) w_2(x,t_2) \, dV_{t_2}(x) \\
&= \int_M u \, d\mu_{1,t_2} - \int_M u \, d\mu_{2,t_2} \leq d^{t_2}_{W_1} (\mu_{1,t_2}, \mu_{2,t_2}). 
\end{align*}
Here, we have used \cite[Proposition 1]{LW20} for the second equality. By taking the supremum over all such $u_1$, one obtains
\[ d^{t_1}_{W_1} (\mu_{1,t_1}, \mu_{2,t_1}) \leq d^{t_2}_{W_1} (\mu_{1,t_2}, \mu_{2,t_2}).\]
\end{proof}

Next, we recall the following definition from \cite[Definition 3.1]{Bam20a}.

\begin{defn}[Variance] \label{Def_Variance}
The variance between two probability measures $\mu_1, \mu_2$ on a Riemannian manifold $(M,g)$ is defined as
\[ \emph{Var} (\mu_1, \mu_2) := \int_M \int_M d^2 (x_1, x_2) d\mu_1 (x_1) d\mu_2 (x_2). \]
In the case $\mu_1 = \mu_2 = \mu$, we write
\[ \emph{Var} (\mu) = \emph{Var} (\mu, \mu) = \int_M \int_M d^2 (x_1, x_2) d\mu (x_1) d\mu (x_2). \]
We also define $\emph{Var}_t$ as the variance with respect to the metric $g(t)$.
\end{defn}

For some basic properties of the variance, we refer the readers to~\cite[Lemma 3.2]{Bam20a}. Next, we prove the following results which originate from~\cite[Corollary 3.7, Corollary 3.8]{Bam20a}. Before that, we first prove the following maximum principle on the product manifold (cf.~\cite{Andrews}~\cite{Brendle} for related survey).  

\begin{thm}[Maximum principle on the product]
\label{T302}
Let $(M^n, g(t))_{t<1}$ be a Ricci flow associated with a Ricci shrinker. Given any closed interval $[a,b] \subset (-\infty,1)$ and a function $u$ on $M \times M \times [a,b]$ such that
\begin{align} 
(\partial_t -\Delta_x-\Delta_y) u(x,y,t) \le 0.
\label{E305xa}
\end{align}
Suppose that
\begin{align} 
\int_a^b \int_{M \times M} u^2_{+}(x,y,t)e^{-2f(x,t)-2f(y,t)}\,dV_t(x)dV_t(y)\,dt < \infty.
\label{E305xb}
\end{align}
If $u(\cdot,a) \le c$, then $u(\cdot, b) \le c$.
\end{thm}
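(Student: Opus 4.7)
The plan is to reduce Theorem \ref{T302} to its one-factor analogue Theorem \ref{T201} by realizing $M \times M$ as the underlying manifold of a higher-dimensional Ricci shrinker. To begin, one checks that
\[
(\widetilde M, \widetilde g, \widetilde f) := \bigl(M \times M,\; g \oplus g,\; f(x) + f(y)\bigr)
\]
is itself a Ricci shrinker of dimension $2n$: both the Ricci curvature and the Hessian split as direct sums over the two factors, so the shrinker equation \eqref{E100} is inherited from each copy; and since $R$ and $|\nabla f|^2$ are additive on the product, the normalization \eqref{E101} holds for $\widetilde f$. Because $\nabla \widetilde f(x,y) = (\nabla f(x),\nabla f(y))$, the flow generated by $\tfrac{1}{1-t}\nabla \widetilde f$ is the product $\widetilde \psi^t = \psi^t \times \psi^t$, and the associated Ricci flow from \eqref{E201ax} on $\widetilde M$ is
\[
\widetilde g(t) = g(t) \oplus g(t), \qquad \widetilde f(x,y,t) = f(x,t) + f(y,t).
\]

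Next, since the Laplacian on the Riemannian product satisfies $\widetilde \Delta = \Delta_x + \Delta_y$, the hypothesis \eqref{E305xa} reads exactly $\widetilde \square u \le 0$ on $\widetilde M \times [a,b]$, where $\widetilde \square := \partial_t - \widetilde \Delta$. Moreover, because $d\widetilde V_t = dV_t(x)\,dV_t(y)$ and $\widetilde f(\cdot,t) = f(x,t) + f(y,t)$, the integrability condition \eqref{E305xb} is precisely
\[
\int_a^b \int_{\widetilde M} u_+^2(\cdot,t)\, e^{-2\widetilde f(\cdot,t)}\, d\widetilde V_t\, dt < \infty.
\]
Applying Theorem \ref{T201} to the $2n$-dimensional Ricci flow $(\widetilde M, \widetilde g(t))_{t<1}$ with the initial bound $u(\cdot,a) \le c$ yields $u(\cdot,b) \le c$ directly, as required.

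The main (and essentially only) point to verify is that the product construction legitimately falls under the hypotheses of Theorem \ref{T201}. This amounts to checking that $(\widetilde M, \widetilde g)$ is complete (clear) and that $\widetilde f$ is correctly normalized (immediate from the additivity above). The proof of Theorem \ref{T201} given in \cite{LW20} is dimension-independent, relying only on the cutoff function $\phi^r$ from \eqref{E204} and the quadratic growth \eqref{E203} of the potential; these ingredients are inherited by $(\widetilde M, \widetilde g, \widetilde f)$ with no modification (for instance, one may simply take the product cutoff $\phi^r(x,t)\phi^r(y,t)$, whose properties follow from \eqref{E205a}--\eqref{E205d}). Hence no new analytic work on $\widetilde M$ is required, and the result follows at once from the product reduction.
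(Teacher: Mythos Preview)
Your proof is correct and amounts to the same argument as the paper's: the paper says to rerun the proof of Theorem~\ref{T201} (i.e., \cite[Theorem 6]{LW20}) on $M\times M$ with the product cutoff $(\phi^r(x)\phi^r(y))^2$ and weight $e^{-2f(x,t)-2f(y,t)}$, which is exactly what falls out of your observation that $(M\times M,\,g\oplus g,\,f(x)+f(y))$ is itself a normalized Ricci shrinker so that Theorem~\ref{T201} applies verbatim. Your packaging is slightly cleaner conceptually, but the underlying computation is identical.
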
 
\begin{proof}
The proof follows almost verbatim from \cite[Theorem 6]{LW20}, except that we multiply \eqref{E305xa} by $u_+(x,y,t) (\phi^r(x)\phi^r(y))^2e^{-2f(x,t)-2f(y,t)}$ and do the integration. 
Since no other new ingredient is needed, we omit the details here.
\end{proof}

\begin{prop} \label{prop:302}
Under the same assumptions as in Proposition \ref{prop:301}, if we further assume $w_1(\cdot, b)$ and $w_2(\cdot, b)$ have compact supports, then
\begin{align*}
\emph{Var}_t(\mu_{1,t}, \mu_{2,t})+H_nt
\end{align*}
is increasing for $t \in [a,b]$, where $H_n:=(n-1) \pi^2/2+4$. Moreover, for any $x_1,x_2 \in M$,
\begin{align*}
\emph{\text{Var}}_t(v_{x_1,b;t},v_{x_2,b;t})+H_nt
\end{align*}
is increasing for $t \le b$. In particular,
\begin{align*}
\emph{\text{Var}}_t(v_{x_1,b;t},v_{x_2,b;t}) \le d_{b}^2(x_1,x_2)+H_n(b-t) \quad \text{and} \quad \emph{\text{Var}}_t(v_{x,b;t}) \le H_n(b-t).
\end{align*}
\end{prop}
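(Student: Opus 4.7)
The plan is to mirror \cite[Corollary 3.7]{Bam20a} by combining a curvature-free distance-distortion inequality on the product with the product maximum principle just established in Theorem \ref{T302}. The geometric input is the barrier-sense bound
\[
(\partial_t - \Delta_x - \Delta_y)\, d_t^2(x,y) \;\geq\; -H_n
\]
valid on $M \times M \times [a,b]$. This comes from Hamilton's distance-distortion estimate $\partial_t d_t \geq -\int_0^{d_t}\text{Ric}(\dot\gamma,\dot\gamma)\,ds$ along a minimizing geodesic from $x$ to $y$, combined with a symmetric application of the Laplacian comparison at the two endpoints; the Ricci contributions from $\partial_t$ cancel those coming from $\Delta_x + \Delta_y$, so that only a dimensional constant $(n-1)\pi^2/2$ remains, and the extra $-4$ reflects $|\nabla_x d|^2 + |\nabla_y d|^2 = 2$ almost everywhere.

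Granting this PDE inequality, the formal time-derivative of the variance reads
\[
\frac{d}{dt}\,\text{Var}_t(\mu_{1,t},\mu_{2,t}) \;=\; \int_M\!\int_M \bigl[(\partial_t-\Delta_x-\Delta_y)\, d_t^2(x,y)\bigr]\, w_1(x,t)\,w_2(y,t)\, dV_t(x)\, dV_t(y),
\]
which one derives from $\square^* w_i = 0$, the identity $\partial_t(w_i\, dV_t) = -\Delta w_i\, dV_t$, integration by parts in each factor, and the unit-mass condition $\int w_i\, dV_t = 1$. Substituting the distance bound yields $\frac{d}{dt}\,\text{Var}_t \geq -H_n$, which is exactly the monotonicity of $\text{Var}_t + H_n t$.

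The main obstacle is to make this formal calculation rigorous in the noncompact setting, and this is precisely where Theorem \ref{T302} enters. The approach is to insert the product cutoff $\phi^r(x,t)\phi^r(y,t)$ as in the proofs of Proposition \ref{prop:iden1} and Lemma \ref{lem:302}, estimate the resulting boundary terms via \eqref{E205a}--\eqref{E205d} together with the gradient-of-weight bound $\int_a^b \int_M |\nabla w_i|^2/w_i\, dV_t\, dt < \infty$ from \cite[Lemma 9]{LW20} (which applies precisely because $w_i(\cdot,b)$ has compact support), and let $r \to \infty$. The same compact-support hypothesis, together with the heat-kernel upper bound in \cite{LW20} and the quadratic growth of $f$ from Lemma \ref{L201}, gives enough Gaussian decay in $x$ and $y$ to verify the $e^{-2f(x,t)-2f(y,t)}$-weighted $L^2$ condition \eqref{E305xb} required by Theorem \ref{T302}. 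Since $d_t^2$ is merely Lipschitz rather than smooth, one replaces it by smooth upper barriers in a neighborhood of each cut-locus point before applying the product maximum principle.

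Finally, the ``in particular'' statement is obtained by specializing the first half to $w_i(\cdot,t) = H(x_i,b,\cdot,t)$, approximating each conjugate heat kernel by smooth compactly supported initial data at $t=b$ and passing to the limit as in Remark \ref{rem:iden2}(ii). The monotonicity together with the boundary values $\text{Var}_b(\delta_{x_1},\delta_{x_2}) = d_b^2(x_1,x_2)$ and $\text{Var}_b(\delta_x) = 0$ yields the two stated upper bounds.
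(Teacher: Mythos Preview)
Your overall picture is right: the Bamler barrier inequality $(\partial_t-\Delta_x-\Delta_y)\,d_t^2\ge -H_n$, product cutoffs $\phi^r(x)\phi^r(y)$, and the gradient-energy bound $\int\int |\nabla w_i|^2/w_i<\infty$ are exactly the ingredients the paper uses. But you have misidentified where Theorem~\ref{T302} enters, and you have skipped the device that makes the argument go through.

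The paper does \emph{not} differentiate $\int\!\!\int d_t^2\,w_1w_2\,dV_tdV_t$ directly. Instead, for each $[c,d]\subset[a,b]$ it introduces the auxiliary smooth function
\[
u(x,y,t):=\int_M\!\!\int_M H(x,t,z,c)H(y,t,w,c)\,d_c^2(z,w)\,dV_c(z)dV_c(w)-H_n(t-c),
\]
which solves $(\partial_t-\Delta_x-\Delta_y)u=-H_n$ exactly, with $u(\cdot,c)=d_c^2$. Theorem~\ref{T302} is then applied to $u-d_t^2$ to obtain $u\le d_t^2$ on $[c,b]$; this is the \emph{only} place T302 is used, and the weighted $L^2$ condition \eqref{E305xb} is checked for $u$ and for $d_t^2$ separately (the former via Lemma~\ref{lem:302}, the latter via Lemma~\ref{L201}, Lemma~\ref{L202}). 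Once $u\le d_t^2$ is known, the cutoff computation is performed on the \emph{smooth} integrand $u\,w_1w_2\,\phi^r_x\phi^r_y$, yielding an exact identity whose limit as $r\to\infty$ reads $\int\!\!\int u\,w_1w_2|_c^d=-H_n(d-c)$; combined with $u(\cdot,c)=d_c^2$ and $u(\cdot,d)\le d_d^2$ this gives the monotonicity.

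Your plan to ``replace $d_t^2$ by smooth upper barriers near each cut-locus point before applying the product maximum principle'' is not a substitute for this: local barriers let you interpret the differential inequality pointwise, but they do not give you a single smooth function against which to integrate $w_1w_2$ over all of $M\times M\times[c,d]$. The heat extension $u$ is precisely the global object that resolves this. Finally, for the conjugate-heat-kernel case the paper does not approximate by compactly supported data; it repeats the same cutoff computation directly, using the bound $\int_a^{b-\ep}\int_M |\nabla w_i|^2/w_i\le C\log\ep^{-1}$ from \cite[Lemma~23]{LW20} in place of \cite[Lemma~9]{LW20}, and then lets $\ep\to 0$.
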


\begin{proof}
For any $[c,d] \subset [a,b]$, we set $u \in C^0(M \times M \times [c,b]) \cap C^{\infty}(M \times M \times (c,b])$ be the solution to the following heat equation
\begin{align*}
(\partial_t-\Delta_x-\Delta_y) u=-H_n, \quad u(\cdot, c)=d_c^2.
\end{align*}
Indeed, by the existence of the heat kernel, one may define
\begin{align}\label{E305a}
u(x,y,t):= \int_M \int_M H(x,t,z,c) H(y,t,w,c) d_c^2(z,w)\,dV_c(z)dV_c(w)-H_n(t-c). 
\end{align}
We first show \eqref{E305a} is well-defined. In fact, it is clear that
\begin{align}\label{E305b}
&\int_M \int_M H(x,t,z,c) H(y,t,w,c) d_c^2(z,w)\,dV_c(z)dV_c(w) \notag \\
\le& 2 \lc \int_M H(x,t,z,c) d_c^2(z,p) \,dV_c(z)+ \int_M H(y,t,w,c) d_c^2(w,p) \,dV_c(w) \rc
\end{align}
and the convergence of the last two integrals follows from \cite[Corollary 5]{LW20}.

On the other hand, it follow from \cite[Theorem 3.5]{Bam20a} that
\begin{align}\label{E305c}
(\partial_t-\Delta_x-\Delta_y) d_t^2(x,y) \ge -H_n.
\end{align}

Combining \eqref{E305a} and \eqref{E305c}, we claim that $u(x,y,t) \le d_t^2(x,y)$ for any $t \in [c,b]$. Indeed, this follows from the maximum principle Theorem \ref{T302} as long as the condition \eqref{E305xb} is satisfied. First, notice that
\begin{align}
&\int_c^b \int_{M \times M} d^4_t(x,y)e^{-2f(x,t)-2f(y,t)}\,dV_t(x)dV_t(y)\,dt \notag \\
\le& 8 \int_c^b \int_{M \times M} \lc d_t^4(x,p)+d_t^4(y,p) \rc e^{-2f(x,t)-2f(y,t)}\,dV_t(x)dV_t(y)\,dt. \label{E305d}
\end{align}

From Lemma \ref{L201} and Lemma \ref{L202}, it is clear that \eqref{E305d} is bounded. In addition, it follows from \eqref{E305b} that
\begin{align}
& \int_c^b \int_{M \times M} \lc u(x,y,t)+H_n(t-c) \rc^2 e^{-2f(x,t)-2f(y,t)}\,dV_t(x)dV_t(y)\,dt \notag\\
\le & 8 \int_c^b \int_{M \times M} \lc \int_M H(x,t,z,c) d_c^2(z,p) \,dV_c(z) \rc^2 e^{-2f(x,t)-2f(y,t)}\,dV_t(x)dV_t(y)\,dt \notag\\
&+8 \int_c^b \int_{M \times M} \lc \int_M H(y,t,w,c) d_c^2(w,p) \,dV_c(w) \rc^2 e^{-2f(x,t)-2f(y,t)}\,dV_t(x)dV_t(y)\,dt \notag\\
\le & 8 \int_c^b \int_{M \times M} \int_M H(x,t,z,c) d_c^4(z,p) e^{-2f(x,t)-2f(y,t)}\,dV_c(z)dV_t(x)dV_t(y)\,dt \notag \\
&+8 \int_c^b \int_{M \times M} \int_M H(y,t,w,c) d_c^4(w,p) e^{-2f(x,t)-2f(y,t)}\,dV_c(w)dV_t(x)dV_t(y)\,dt, \label{E305e}
\end{align}
where we have used Cauchy-Schwarz inequality for the last inequality. From Lemma \ref{lem:302}, we obtain
\begin{align*}
&\int_c^b \int_{M \times M} \int_M H(x,t,z,c) d_c^4(z,p) e^{-2f(x,t)-2f(y,t)}\,dV_c(z)dV_t(x)dV_t(y)\,dt \\
\le & \int_c^b \int_{M} \int_M \int_M H(x,t,z,c) d_c^4(z,p) e^{-f(x,t)-2f(y,t)}\,dV_t(x) dV_c(z)dV_t(y)\,dt \\
\le & \int_c^b \int_{M} \int_M \lc\frac{1-t}{1-c} \rc^{\frac n 2}d_c^4(z,p) e^{-f(z,c)-2f(y,t)}\, dV_c(z)dV_t(y)\,dt\\
\le & \int_c^b \int_{M} \int_M d_c^4(z,p) e^{-f(z,c)-2f(y,t)}\, dV_c(z)dV_t(y)\,dt <\infty
\end{align*}
by Lemma \ref{L201} and Lemma \ref{L202}. Similarly, the second term in \eqref{E305e} is also bounded. Therefore, we have proved that $u(x,y,t) \le d_t^2(x,y)$ for any $t \in [c,b]$.

By our assumption, $w_1(\cdot, b)$ and $w_2(\cdot, b)$ have compact supports, then it follows \cite[Lemma 8, Lemma 9]{LW20} that
\begin{align} \label{E306a}
w_i(x,t) \le C \bar w(x,t)
\end{align}
for any $c \le t \le b$ and 
\begin{align}\label{E306b}
\int_c^b \int_M \frac{|\na w_i|^2}{w_i} \,dV_t dt \le C
\end{align}
for some constant $C>0$.

Next, we set $w_1=w_1(x,t)$, $w_2=w_2(y,t)$, $\phi^r_x=\phi^r(x)$ and $\phi^r_y=\phi^r(y)$, then we compute
\begin{align}
& \partial_t \int_M \int_M u w_1 w_2 \phi^r_x \phi^r_y\,dV_t(x)dV_t(y) \notag \\
=& \int_M \int_M (\partial_t-\Delta_x-\Delta_y)(u\phi^r_x \phi^r_y) w_1 w_2\,dV_t(x)dV_t(y) \notag\\
=& \int_M \int_M \lc -H_n \phi^r_x \phi^r_y-u(\Delta_x \phi^r_x \phi^r_y+\Delta_y \phi^r_y \phi^r_x)\rc w_1 w_2\,dV_t(x)dV_t(y) \notag \\
&+2 \int_M \int_M (\Delta_x \phi^r_x+ \la \na \phi^r_x, \na w_1 \ra) u\phi^r_y w_2+(\Delta_y \phi^r_y+ \la \na \phi^r_y, \na w_2 \ra) u\phi^r_y w_1 \,dV_t(x)dV_t(y). \label{E306c}
\end{align}

From \eqref{E306b}, we have
\begin{align}
&\lc \int_c^b \int_M \int_M |\na \phi^r_x||\na w_1||u|\phi^r_y w_2 \,dV_t(x) dV_t(y) dt \rc^2 \notag\\
\le& \lc \int_c^b \int_M \int_M |\na \phi^r_x|^2 u^2 (\phi^r_y)^2 w_1 w_2 \,dV_t(x) dV_t(y) dt \rc \lc \int_c^b \int_M \frac{|\na w_1|^2}{w_1} \,dV_t dt \rc \notag\\
\le& C \int_c^b \int_M \int_M |\na \phi^r_x|^2 u^2 w_1 w_2 \,dV_t(x) dV_t(y) dt. \label{E306d}
\end{align}
Similarly, we have
\begin{align}
&\lc \int_c^b \int_M \int_M |\na \phi^r_y||\na w_2||u|\phi^r_x w_1 \,dV_t(x) dV_t(y) dt \rc^2 \notag\\
\le& C \int_c^b \int_M \int_M |\na \phi^r_y|^2 u^2 w_1 w_2 \,dV_t(x) dV_t(y) dt. \label{E306e}
\end{align}

Combining \eqref{E306a}, \eqref{E306c}, \eqref{E306d}, \eqref{E306e} and the fact that $-H_n(t-c)\le u \le d_t^2(x,y)$, we conclude by letting $r \to \infty$ that
\begin{align} \label{E306f}
\int_M \int_M u w_1 w_2 \,dV_d(x)dV_d(y)-\int_M \int_M u w_1 w_2 \,dV_c(x)dV_c(y)=-H_n(d-c).
\end{align}
Since $u \le d_t^2(x,y)$, it follows from \eqref{E306f} and the definition of the variance that
\begin{align*}
\text{Var}_d(\mu_{1,d}, \mu_{2,d})+H_nd \ge \text{Var}_c(\mu_{1,c}, \mu_{2,c})+H_nc.
\end{align*}

Now, we assume $w_i=H(x_i,b,\cdot,\cdot)$ for $i=1,2$. Then it follows from \cite[Lemma 23]{LW20} that 
\begin{align}\label{E307a}
\int_a^{b-\ep} \int_M \frac{|\na w_i|^2}{w_i} \,dV_t dt \le C \log \ep^{-1}.
\end{align}
Therefore, one can use the same arguments as above, thanks to \eqref{E307a} and \cite[Corollary 5]{LW20}, to conclude that \eqref{E306f} still holds if $[c,d] \subset [a,b-\ep]$. Since $\ep$ is arbitrary, we immediately show that
\begin{align*}
\text{Var}_t(v_{x_1,b;t},v_{x_2,b;t})+H_n t
\end{align*}
is increasing for any $t \le b$.
\end{proof}

Next, we recall the definition of $H$-center, where the conjugate heat kernel measure is concentrated.

\begin{defn}[$H$-center] \label{def:Hcenter}
Given a constant $H>0$, a point $(z,t) \in M \times (-\infty,1)$ is called an $H$-center of $(x_0,t_0)\in M \times (-\infty,1)$ if $t \le t_0$ and
\begin{align*}
\emph{\text{Var}}_t(\delta_z,v_{x_0,t_0;t})\le H(t_0-t).
\end{align*}
In particular, we have
\begin{align} \label{eq:hdis}
d_{W_1}^t(\delta_z,v_{x_0,t_0;t})\le \sqrt{H(t_0-t)}.
\end{align}

\end{defn}

From Proposition \ref{prop:302}, the following result is immediate; see \cite[Proposition 3.12]{Bam20a}.

\begin{prop} \label{prop:303}
Let $(M^n,g(t))_{t<1}$ be the Ricci flow associated with a Ricci shrinker. Given $(x_0, t_0) \in M \times (-\infty,1)$ and $t \leq t_0$ there is at least one point $z \in M$ such that $(z,t)$ is an $H_n$-center of $(x_0, t_0)$ and for any two such points $z_1, z_2 \in M$ we have $d_t (z_1, z_2) \leq 2 \sqrt{H_n (t_0 -t)}$.
\end{prop}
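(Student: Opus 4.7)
The plan is to combine the variance bound from Proposition \ref{prop:302} with a simple averaging argument for existence, and with the Kantorovich duality for uniqueness.

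For existence, I would fix $(x_0,t_0)$ and $t \le t_0$, and introduce the function
\begin{equation*}
F(z) := \text{Var}_t(\delta_z, v_{x_0,t_0;t}) = \int_M d_t^2(z,y)\,dv_{x_0,t_0;t}(y).
\end{equation*}
By Fubini and Proposition \ref{prop:302}, integrating $F$ against $v_{x_0,t_0;t}$ itself recovers $\text{Var}_t(v_{x_0,t_0;t})$, which is bounded by $H_n(t_0-t)$. Since $v_{x_0,t_0;t}$ is a probability measure, a Markov-type argument shows $\inf_{z\in M} F(z) \le H_n(t_0 - t)$: otherwise $F(z) > H_n(t_0-t)$ everywhere would force $\int F\,dv_{x_0,t_0;t} > H_n(t_0-t)$, a contradiction (applied to $H_n(t_0-t)+\epsilon$ for each $\epsilon > 0$ and then $\epsilon \to 0$). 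The infimum is actually attained since $F$ is continuous and proper on $M$—from $d_t(z,y) \ge d_t(z,p) - d_t(p,y)$ we get $F(z) \ge \frac{1}{2} d_t^2(z,p) - \int d_t^2(p,y)\,dv_{x_0,t_0;t}(y)$, and the last integral is finite by Proposition \ref{prop:302} applied with $x_2 = p$. Any minimizer $z$ gives $F(z) \le H_n(t_0-t)$, i.e., $(z,t)$ is an $H_n$-center.

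For the near-uniqueness, suppose $(z_1,t)$ and $(z_2,t)$ are both $H_n$-centers. By Cauchy–Schwarz,
\begin{equation*}
d_{W_1}^t(\delta_{z_i}, v_{x_0,t_0;t}) = \int_M d_t(z_i,y)\,dv_{x_0,t_0;t}(y) \le \sqrt{F(z_i)} \le \sqrt{H_n(t_0-t)},
\end{equation*}
as noted in \eqref{eq:hdis}. Since $d_{W_1}^t(\delta_{z_1}, \delta_{z_2}) = d_t(z_1,z_2)$, the triangle inequality for the Wasserstein distance yields
\begin{equation*}
d_t(z_1,z_2) \le d_{W_1}^t(\delta_{z_1}, v_{x_0,t_0;t}) + d_{W_1}^t(v_{x_0,t_0;t}, \delta_{z_2}) \le 2\sqrt{H_n(t_0-t)}.
\end{equation*}

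The only non-cosmetic point is the attainment (or the existence of an approximate minimizer) of $\inf F$; I don't expect this to be a real obstacle thanks to the quadratic growth of $F$ at infinity, which in turn follows once the measure $v_{x_0,t_0;t}$ has a finite second moment—already guaranteed by Proposition \ref{prop:302}. Everything else is a direct application of what has been proved, so the whole argument should be a few lines long.
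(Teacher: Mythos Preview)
Your argument is correct and is precisely the standard one: the paper does not spell out a proof but declares the result ``immediate'' from Proposition~\ref{prop:302} and cites \cite[Proposition~3.12]{Bam20a}, whose proof is exactly the averaging/minimization argument for existence together with the Cauchy--Schwarz bound $d_{W_1}^t(\delta_{z_i},v_{x_0,t_0;t})\le\sqrt{\text{Var}_t(\delta_{z_i},v_{x_0,t_0;t})}$ and the $W_1$-triangle inequality for near-uniqueness. One cosmetic remark: your citation of ``Proposition~\ref{prop:302} applied with $x_2=p$'' to justify finiteness of $\int d_t^2(p,y)\,dv_{x_0,t_0;t}(y)$ is slightly off, since that proposition compares two conjugate heat kernel measures based at the same time; but the finiteness follows immediately from $\text{Var}_t(v_{x_0,t_0;t})\le H_n(t_0-t)$ together with the triangle inequality, so the argument stands.
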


The following result ensures that the conjugate heat kernel measure is concentrated around an $H$-center; see \cite[Proposition 3.13]{Bam20a}.

\begin{prop} \label{prop:304}
If $(z,t)$ is an $H$-center of $(x_0,t_0)$, then for any $L>0$,
\begin{align} \label{E308a}
v_{x_0,t_0;t} \lc B_t(z,\sqrt{LH (t_0-t)}) \rc \ge 1-\frac{1}{L}.
\end{align}
\end{prop}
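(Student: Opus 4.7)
The plan is to recognize this as a standard Chebyshev-type (Markov) inequality applied to the squared distance function, using the variance bound that comes for free from the definition of an $H$-center.

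First I would unpack Definition \ref{def:Hcenter}. Since $(z,t)$ is an $H$-center of $(x_0,t_0)$, the measure $\mu := v_{x_0,t_0;t}$ satisfies
\begin{align*}
\int_M d_t^2(z,y)\, d\mu(y) = \text{Var}_t(\delta_z, v_{x_0,t_0;t}) \le H(t_0-t),
\end{align*}
where I used that $\text{Var}(\delta_z, \nu) = \int d^2(z,\cdot)\, d\nu$ from Definition \ref{Def_Variance}.

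Next, I would set $r := \sqrt{LH(t_0-t)}$ and $A := M \setminus B_t(z,r)$. On $A$ the integrand $d_t^2(z,y)$ is at least $r^2 = LH(t_0-t)$, so
\begin{align*}
H(t_0-t) \ge \int_A d_t^2(z,y)\, d\mu(y) \ge LH(t_0-t)\cdot \mu(A).
\end{align*}
Dividing through by $LH(t_0-t)$ (handling the trivial case $t=t_0$, where $\mu=\delta_{x_0}=\delta_z$ and the inequality is immediate, separately) gives $\mu(A) \le 1/L$, which rearranges to \eqref{E308a}.

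There is no real obstacle here; the entire content of the statement is Chebyshev's inequality, and all the work has already been done upstream in Proposition \ref{prop:302} and Proposition \ref{prop:303}, which guarantee the variance bound defining an $H$-center. The only minor care point is ensuring the integrals make sense (which they do because $\mu$ is a probability measure by \eqref{E301b} and the finite second moment follows from the $H$-center condition itself).
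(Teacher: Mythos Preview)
Your argument is correct; this is precisely the Chebyshev/Markov inequality applied to $d_t^2(z,\cdot)$ under the probability measure $v_{x_0,t_0;t}$, using the second-moment bound built into Definition~\ref{def:Hcenter}. The paper does not supply its own proof here but simply cites \cite[Proposition~3.13]{Bam20a}, whose argument is exactly the one you wrote, so there is nothing to add.
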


Combining the above Proposition with~\cite[Theorem 14]{LW20}, we obtain the following integral bound for the conjugate heat kernel; see also \cite[Theorem 3.14]{Bam20a}.

\begin{prop} \label{prop:305}
If $(z,t)$ is an $H_n$-center of $(x_0, t_0)$, then for all $r \geq 0$ and $\ep >0$ we have
\begin{align*}
v_{x_0,t_0;t}\big( M \setminus B_t(z, r) \big) \le C(n,\ep) \exp \bigg( - \frac{r^2}{(4+\ep)(t_0 - t)} \bigg).
\end{align*}
\end{prop}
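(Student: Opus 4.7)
The assertion upgrades the mass concentration of Proposition~\ref{prop:304}, which only gives the polynomial tail $1/L$, to a Gaussian tail in $r^2/(t_0-t)$ with the sharp constant $4+\epsilon$. The upgrade must come from the pointwise heat kernel Gaussian upper bound of \cite[Theorem 14]{LW20}, and the $H_n$-center hypothesis enters to allow us to recenter the Gaussian from the base point $x_0$ to $z$ with negligible loss.

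My approach is to estimate the exponential moment
\[
M(\lambda) \;:=\; \int_M H(x_0,t_0,y,t)\, \exp\!\left(\frac{\lambda\, d_t(y,z)}{\sqrt{t_0-t}}\right) dV_t(y),
\]
and then apply Chebyshev's inequality, which yields
\[
v_{x_0,t_0;t}(M\setminus B_t(z,r)) \;\le\; M(\lambda)\, \exp\!\left(-\frac{\lambda r}{\sqrt{t_0-t}}\right).
\]
If I can show $M(\lambda)\le C(n,\epsilon)\exp\!\bigl((1+\epsilon'')\lambda^2\bigr)$ on a suitable range of $\lambda$ with $\epsilon''\to 0$ as $\epsilon\to 0$, then optimizing with $\lambda=r/\bigl(2(1+\epsilon'')\sqrt{t_0-t}\bigr)$ produces the claimed Gaussian factor with constant arbitrarily close to $4$, which can be written as $4+\epsilon$ after relabeling constants.

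To bound $M(\lambda)$, I would first reduce to the regime $r\ge \Lambda\sqrt{t_0-t}$ for some $\Lambda=\Lambda(n,\epsilon)$, the complementary range being trivial since $v_{x_0,t_0;t}$ is a probability measure and the absorbed constant can be made large. Next, decompose the region of integration into the ball $B_t(z,\sqrt{t_0-t})$ and the dyadic annuli
\[
A_k \;:=\; \bigl\{\, y\in M : 2^{k-1}\sqrt{t_0-t}\le d_t(y,z) < 2^{k}\sqrt{t_0-t}\,\bigr\}, \qquad k\ge 1.
\]
On $B_t(z,\sqrt{t_0-t})$ the exponential factor is $\le e^{\lambda}$ and the mass is $\le 1$. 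On each $A_k$, apply the Gaussian upper bound of \cite[Theorem 14]{LW20} together with the volume bound of Lemma~\ref{L202} (or its consequence about concentric balls); since $(z,t)$ is an $H_n$-center, the pointwise Gaussian on $H$ (naturally written in terms of $d_t(x_0,y)$) can be replaced by one in $d_t(z,y)$ modulo a perturbation in the constant which feeds into $\epsilon''$, as the Wasserstein-closeness of $\delta_z$ and $v_{x_0,t_0;t}$ from Definition~\ref{def:Hcenter} controls the displacement. Each annulus then contributes a factor $\exp\!\bigl(\lambda\cdot 2^k-c_\epsilon 4^{k}\bigr)$, and the geometric series is dominated, as desired, by $\exp\!\bigl((1+\epsilon'')\lambda^2\bigr)$.

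The principal obstacle is achieving the \emph{sharp} Gaussian constant $4+\epsilon$ rather than a weaker constant such as $8+\epsilon$. This demands that the heat kernel bound from \cite[Theorem 14]{LW20} be used with its optimal exponent, and that the polynomial-in-$r/\sqrt{t_0-t}$ corrections arising from the volumes of the annuli and from the recentering step be carefully absorbed into an arbitrarily small perturbation of the exponent. The $H_n$-center hypothesis is precisely what allows this recentering; without it one could not pass from a Gaussian around $x_0$ to one around $z$ while preserving the sharp constant.
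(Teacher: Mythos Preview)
Your proposal has a genuine gap rooted in a misidentification of \cite[Theorem 14]{LW20}. You treat it as a \emph{pointwise} Gaussian upper bound on $H(x_0,t_0,y,t)$, but in fact it is a Davies-type \emph{two-set} concentration estimate: for measurable sets $A,B$ and a parameter $\sigma>0$,
\[
v_{x_0,t_0;t}(A)\,v_{x_0,t_0;t}(B)^{1/\sigma}\ \le\ \exp\!\left(-\frac{d_t(A,B)^2}{(4+4\sigma)(t_0-t)}\right).
\]
At this point in the paper there is no pointwise Gaussian upper bound available; the only on-diagonal bound is Theorem~\ref{thm:heatupper1}, which has no spatial decay, and the sharp Gaussian bound (Theorem~\ref{thm:heatupper}) is proved in Section~4 \emph{using} Proposition~\ref{prop:305} itself (see \eqref{E410xb}). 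Hence your dyadic-annulus scheme, which relies on a pointwise bound plus a volume estimate, would be circular. Moreover, the volume bound you invoke (Lemma~\ref{L202}) is only for balls centered at $p$, not at an arbitrary $H_n$-center $z$; the general non-expanding estimate (Theorem~\ref{thm:volume4}) also comes later. Finally, your recentering step from $x_0$ to $z$ does not follow from the $H_n$-center hypothesis alone: $d_{W_1}^t(\delta_z,v_{x_0,t_0;t})\le \sqrt{H_n(t_0-t)}$ says nothing about $d_t(x_0,z)$ without additional assumptions (cf.\ Lemma~\ref{lem:402}, which needs $d_t(p,x_0)\le K$).

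The paper's proof sidesteps all of this: it applies the two-set estimate directly with $A=M\setminus B_t(z,r)$ and $B=B_t(z,\sqrt{2H_n(t_0-t)})$, both already centered at $z$, so no recentering is needed. The $H_n$-center hypothesis enters only through Proposition~\ref{prop:304} with $L=2$, which gives $v_{x_0,t_0;t}(B)\ge 1/2$ and hence $v_{x_0,t_0;t}(B)^{-8/\ep}\le 2^{8/\ep}=C(n,\ep)$. The shift $r\mapsto r-\sqrt{2H_n(t_0-t)}$ in the exponent is then absorbed by passing from $4+\ep/2$ to $4+\ep$. The sharp constant $4+\ep$ thus comes for free from the sharp constant already present in the two-set estimate, not from any delicate optimization.
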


\begin{proof}
We apply \cite[Theorem 14]{LW20} for $A=M \setminus B_t(z, r)$, $B=B_t(z,\sqrt{2H_n(t_0-t)})$ and $\sigma=\ep/8$ to obtain
\begin{align*}
v_{x_0,t_0;t}\big( M \setminus B_t(z, r) \big) \le& v^{-\frac{8}{\ep}}_{x_0,t_0;t}\big( B_t(z,\sqrt{2H_n(t_0-t)}) \big) \exp \bigg( {- \frac{\big(r - \sqrt{2H_n (t_0-t)} \big)_+^2}{(4+\ep/2)(t_0-t)} }\bigg) \\
\le& C(n,\ep) \exp \bigg( - \frac{r^2}{(4+\ep)(t_0 - t)} \bigg),
\end{align*}
where we have used \eqref{E308a} for $L=2$ and $H=H_n$.
\end{proof}

In order to obtain the estimates on the Nash entropy, we first generalize the improved gradient estimate \cite[Theorem 4.1]{Bam20a} to our setting. We define the following antiderivative of the $1$-dimensional heat kernel:
\begin{equation} \label{eq_erf}
\Phi(x) = \int_{-\infty}^x (4\pi)^{-1/2} e^{-t^2/4} \,dt.
\end{equation}
Notice that $\Phi_t(x):=\Phi ( t^{-1/2} x)$ is a solution to the $1$-dimensional heat equation with initial condition $\chi_{[0, \infty)}$.

\begin{thm} \label{thm:T303}
Let $(M^n,g(t))_{t<1}$ be the Ricci flow associated with a Ricci shrinker. Given $[a,b] \subset (-\infty,1)$ and a solution $u \in C^\infty (M \times [a,b])$ to the heat equation $\square u =0$ and a constant $T \geq 0$, suppose that $u$ only takes values in $(0,1)$ and $| \nabla (\Phi_T^{-1} ( u (\cdot , a) ))| \leq 1$ if $T > 0$. Then $| \nabla (\Phi^{-1}_{T+t - a} ( u(\cdot, t) ))| \leq 1$ for all $t \in [a, b]$.
\end{thm}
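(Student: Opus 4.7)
Introduce the auxiliary function $v := \Phi_\tau^{-1}(u)$ on $M \times [a,b]$, where $\tau := T + t - a$, so that $u = \Phi(v/\sqrt{\tau})$ and the desired conclusion becomes $|\nabla v| \le 1$ on $M \times [a,b]$. The strategy is to derive a scalar PDE for $v$, feed it into the Bochner identity to obtain one for $V := |\nabla v|^2$, and then invoke the Ricci-shrinker maximum principle (Theorem \ref{T202}).

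\textbf{Step 1 (Evolution of $v$).} Using the identities $\Phi'(x) = (4\pi)^{-1/2} e^{-x^2/4}$ and $\Phi''(x) = -(x/2)\Phi'(x)$, and applying the chain rule to $u = \Phi(v/\sqrt{\tau})$ with $\partial_t \tau = 1$, the equation $\square u = 0$ translates (after multiplying through by $\sqrt{\tau}/\Phi'(v/\sqrt{\tau})$) into
\[\square v = \frac{v\bigl(1-|\nabla v|^2\bigr)}{2\tau}.\]

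\textbf{Step 2 (Bochner).} Under the Ricci flow, $\square V = 2\langle \nabla v,\nabla \square v\rangle - 2|\emph{Hess}\, v|^2$: the Ricci contributions coming from $\partial_t g^{ij}$ and from the usual Bochner formula cancel exactly. Substituting the expression for $\square v$ from Step 1, one obtains
\[\square(V-1) + \frac{v}{\tau}\langle \nabla v,\nabla(V-1)\rangle \;=\; -\frac{V(V-1)}{\tau} \;-\; 2|\emph{Hess}\, v|^2.\]

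\textbf{Step 3 (Maximum principle).} Set $q := V - 1$ and $X := -v\nabla v/\tau$, so that the previous line reads $\square q - \langle \nabla q, X\rangle = -Vq/\tau - 2|\emph{Hess}\, v|^2$, which is nonpositive whenever $q\ge 0$. To obtain a global differential inequality, compose with a smooth convex nondecreasing $\psi\colon\R\to[0,\infty)$ vanishing on $(-\infty,0]$. Then
\[\square\psi(q) - \langle \nabla \psi(q),X\rangle \;\le\; \psi'(q)\bigl[\square q - \langle \nabla q,X\rangle\bigr] \;\le\; 0 \quad \text{on } M\times[a,b],\]
using $\psi'' \ge 0$ (which kills the good $-\psi''(q)|\nabla q|^2$ term) and $\psi'(q)\ge 0$. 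When $T>0$, the hypothesis gives $\psi(q)(\cdot,a)\equiv 0$; Theorem \ref{T202} applied to $\psi(q)$ with drift $X$ then forces $\psi(q)\equiv 0$ on $M\times[a,b]$. Letting $\psi$ increase to $(\cdot)_+$ along a suitable sequence yields $q\le 0$, i.e. $|\nabla v|\le 1$. The case $T=0$ is obtained by running the $T>0$ statement with shifted initial time $a+\ep$ and small $T=\ep$, using parabolic smoothness of $u$ to verify the initial gradient bound for $\ep$ small, and then letting $\ep\to 0^+$.

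\textbf{Main obstacle.} The nontrivial point is not the tensorial computation but the justification of the maximum principle on a noncompact shrinker. Theorem \ref{T202} requires the drift $X = -v\nabla v/\tau$ to be a bounded vector field on $M\times[a,b]$, and the test function $\psi(q)$ to satisfy the exponential growth bound $\psi(q) \le Ke^{kf}$ with $k<1$. Since $v = \sqrt{\tau}\,\Phi^{-1}(u)$ may blow up wherever $u$ approaches $0$ or $1$, neither bound is automatic from $u \in (0,1)$ alone. The plan is to extract these bounds from the heat-kernel representation of $u$ (Theorem \ref{thm301}), the gradient estimate Lemma \ref{lem:301}, and the Gaussian asymptotics of $\Phi^{-1}$ near its endpoints, truncating by $\phi^r$ from \eqref{E204} and passing to $r\to\infty$ with the help of the shrinker volume and $F$-estimates of Lemmas \ref{L201} and \ref{L202}. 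This noncompact-tail analysis is the main difference from Bamler's compact argument.
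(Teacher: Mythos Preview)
Your Steps 1--2 are correct and match the paper's computation (the paper records the identity for $\square|\nabla h|^2$ in the equivalent form \eqref{E309d}). Your smoothing $\psi(q)$ in Step 3 is harmless but unnecessary; the paper works with $(|\nabla h|^2-1)_+$ directly.

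The gap is in your ``main obstacle'' paragraph. You correctly identify that Theorem~\ref{T202} needs the drift $X=-v\nabla v/\tau$ bounded and the solution sub-exponential in $f$, but your plan to achieve this---heat-kernel representation, Gaussian asymptotics of $\Phi^{-1}$, cutoffs $\phi^r$, and a limit $r\to\infty$---is vague and may be hard to close: without further input there is no reason $v=\sqrt{\tau}\,\Phi^{-1}(u)$ or $\nabla v$ should be bounded, so you would in effect be trying to run a maximum principle with unbounded drift, which Theorem~\ref{T202} does not cover.

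The paper sidesteps this entirely with a one-line trick you are missing: replace $u$ by $(1-2\ep)u+\ep$. By linearity this is still a heat solution, now valued in $[\ep,1-\ep]$, so $v=\Phi_\tau^{-1}(u)$ is uniformly bounded ($|v|\le C_2$). When $T>0$, the hypothesis $|\nabla v(\cdot,a)|\le 1$ together with $|v|\le C_2$ gives $\sup_M|\nabla u(\cdot,a)|<\infty$ via the chain rule; Lemma~\ref{lem:301}(i) then propagates this to all of $M\times[a,b]$, and the chain rule back gives $|\nabla v|\le C_3$ globally. Now both $v$ and $\nabla v$ are bounded, so the drift $X$ is bounded and $(|\nabla v|^2-1)_+$ is bounded, hence Theorem~\ref{T202} applies immediately with no cutoffs at all. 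Finally let $\ep\searrow 0$. For $T=0$ your idea of shifting to $a+\ep$ is essentially what the paper does, using the gradient bound $|\nabla u|\le C_1/\sqrt{t-a}$ from \cite[Lemma~18]{LW20} (valid once $u\in[\ep,1-\ep]$) to check that $|\nabla\Phi_T^{-1}(u(\cdot,a+\ep))|\to 0$ as $T\searrow 0$.
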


\begin{proof}
We may assume that $u$ takes values in $(\ep,1-\ep)$. Indeed, we can consider $(1-2\ep)u+\ep$ instead and let $\ep \searrow 0$. With the extra assumption, it follows from \cite[Lemma 18]{LW20} that
\begin{align} \label{E309a}
|\na u| \le \frac{C_1}{\sqrt{t-a}}
\end{align}
on $M \times (a,b]$. It is clear from the definition of $\Phi_t$ that $\sup_M |\na (\Phi_T^{-1} ( u (\cdot , a+\ep) ))| \to 0$ if $T \searrow 0$. Therefore, we only need to prove the case for $T>0$ and then let $T \searrow 0$ and $\ep \searrow 0$.

Now, we set $u(x,t)=\Phi_{T+t-a} \circ h(x,t)$. It follows from the definition of $\Phi_t$ that
\begin{align} \label{E309b}
|h| \le C_2
\end{align}
on $M \times [a,b]$. Moreover, since $|\na h(\cdot, a)| \le 1$, it follows from \eqref{E309b} and Lemma \ref{lem:301}(i) that
\begin{align} \label{E309c}
|\na h| \le C_3
\end{align}
on $M \times [a,b]$. By direct computation, see \cite[Theorem 4.1]{Bam20a} for details, we have
\begin{align} \label{E309d}
\square |\na h|^2= -2|\text{Hess }h|^2-\frac{1}{T+t-a} \la \na h^2, \na |\na h|^2 \ra+\frac{1}{2(T+t-a)}(1-|\na h|^2) |\na h|^2.
\end{align}
Therefore, if we set $v=(|\na h|^2-1)_+$, then it follows from \eqref{E309d} that
\begin{align*} 
\square v +\frac{1}{T+t-a} \la \na h^2, \na v \ra \le 0.
\end{align*}
Since $|\na h^2|$ and $v$ are uniformly bounded on $M \times [a,b]$ by \eqref{E309b} and \eqref{E309c}, it follows from Theorem \ref{T202} that $v \le 0$ on $M \times [a,b]$. In other words,  $|\na h| \le 1$ on $M \times [a,b]$. 
Thus the proof is complete.
\end{proof}

With the help of Theorem \ref{thm:T303}, one can follow verbatim as \cite[Proposition 4.2]{Bam20a} and \cite[Proposition 3.4]{MZ21} to obtain the following estimate.

\begin{thm}\label{thm:T304}
Let $(M^n,g(t))_{t<1}$ be the Ricci flow associated with a Ricci shrinker and $[s,t] \subset (-\infty,1)$. Then for any $x\in M$, $1\leq p<\infty$ and measurable subset $X\subset M$, we have
\begin{equation*}
(t-s)^{\frac{p}{2}}\int_{X}\left(\frac{\left|\nabla_xH(x,t,\cdot,s)\right|}{H(x,t,\cdot,s)}\right)^p\,dv \leq C(n,p)v\left(X\right)\left(-\log\left(\frac{v\left(X\right)}{2}\right)\right)^{\frac{p}{2}},
\end{equation*}
where $dv=H(x,t,\cdot,s)\,dV_s$ is the conjugate heat kernel measure. Moreover, for any $x\in M$ and $w\in T_xM$ with $|w|_{t}=1$, there holds that
\begin{equation} \label{eqn:int}
(t-s)\int_{M}\left(\frac{\partial_w H(x,t,\cdot,s)}{H(x,t,\cdot,s)}\right)^2\,dv \leq\frac{1}{2}.
\end{equation}
In particular, we have
\begin{equation} \label{eqn:int2}
(t-s)\int_{M}\left|\frac{\na_x H(x,t,\cdot,s)}{H(x,t,\cdot,s)}\right|^2\,dv \leq\frac{n}{2}.
\end{equation}
\end{thm}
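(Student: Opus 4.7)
The plan is to follow the strategy of \cite[Proposition 4.2]{Bam20a} and \cite[Proposition 3.4]{MZ21}, reducing the estimate to the improved gradient propagation in Theorem \ref{thm:T303} via duality. For any measurable $\phi\colon M\to[0,1]$, the function $u_\phi(x',t'):=\int_M H(x',t',y,s)\phi(y)\,dV_s(y)$ is a bounded smooth heat solution on $M\times(s,t]$ with values in $[0,1]$ by Theorem \ref{thm301}. I approximate $\phi$ by smooth functions valued in $(\delta,1-\delta)$, use the smoothing estimate \cite[Lemma 18]{LW20} to bound $|\na u_\phi(\cdot,s+\eta)|$ at an intermediate time $s+\eta$, and apply Theorem \ref{thm:T303} with initial time $s+\eta$ and parameter $T$, letting $T,\eta,\delta\to 0^+$ in this order. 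The result is the pointwise gradient bound
\begin{align*}
|\na_x u_\phi(x,t)|\le (4\pi(t-s))^{-1/2}\exp\!\lc-\frac{\Phi^{-1}_{t-s}(u_\phi(x,t))^2}{4(t-s)}\rc.
\end{align*}

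Now fix a unit vector $w\in T_xM$. Applying the above with $\phi=\chi_Y$ for a measurable $Y\subset M$ and noting that $\partial_w u_\phi(x,t)=\int_Y\partial_wH(x,t,y,s)\,dV_s(y)$ gives
\begin{align*}
\int_Y\partial_w H(x,t,y,s)\,dV_s(y)\le (4\pi(t-s))^{-1/2}\exp\!\lc-\frac{\Phi^{-1}_{t-s}(v(Y))^2}{4(t-s)}\rc,
\end{align*}
where $v=v_{x,t;s}$. Set $h(y):=\partial_wH(x,t,y,s)/H(x,t,y,s)$ and $Y_\lambda:=\{y\in X:h(y)\ge\lambda\}$; since $\int_{Y_\lambda}\partial_w H\,dV_s\ge\lambda\,v(Y_\lambda)$, the above yields a pointwise upper bound on $V(\lambda):=v(Y_\lambda)$. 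Feeding this into the layer-cake identity $\int_X h_+^p\,dv=p\int_0^\infty\lambda^{p-1}V(\lambda)\,d\lambda$ reduces the proof to a one-dimensional Gaussian computation: using the asymptotics $\Phi^{-1}(\alpha)^2\sim -4\log\alpha$ as $\alpha\to 0^+$ and splitting the integral around an optimal threshold produces $(t-s)^{p/2}\int_X h_+^p\, dv\le C(n,p)v(X)(-\log(v(X)/2))^{p/2}$. Applying the same argument with $w$ replaced by $-w$ controls $h_-$ as well, and summing the $p$-th powers of $|\partial_{e_i}H/H|$ over an orthonormal basis $\{e_i\}$ of $T_xM$ gives the full $L^p$ statement. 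For \eqref{eqn:int} and \eqref{eqn:int2}, one specializes to $p=2$ and $X=M$ and carries out the one-dimensional integration exactly to obtain the sharp constants $1/2$ and $n/2$ respectively.

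The main obstacle is not the one-dimensional Gaussian computation itself but rather justifying the limiting procedures rigorously in the non-compact Ricci shrinker setting, since the argument of \cite{Bam20a} assumes compactness so that boundary terms at infinity do not arise. I need to verify that $u_\phi$ satisfies the hypotheses of Theorem \ref{thm:T303} on every subinterval $[s+\eta,t]$, which itself invokes Theorem \ref{T202} for a function whose growth is controlled by $e^{kf}$ with $k<1$; and that smooth approximations of $\chi_Y$ pass to the limit while preserving the pointwise gradient bound. Both points rest on the heat kernel integrability \eqref{E301a}, the quadratic growth of $f$ from Lemma \ref{L201}, and the weighted maximum principles of Section 2, all of which have been developed in the excerpt.
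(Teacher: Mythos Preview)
Your proposal is correct and follows exactly the approach the paper indicates: the paper's proof is simply the sentence ``follow verbatim as \cite[Proposition 4.2]{Bam20a} and \cite[Proposition 3.4]{MZ21}'' once Theorem~\ref{thm:T303} is available, and your outline is precisely that argument spelled out in detail. Your attention to the non-compact justifications (approximation by smooth $(\delta,1-\delta)$-valued data, intermediate time $s+\eta$, and reliance on the weighted maximum principles of Section~2) is appropriate and matches how the paper has set things up.
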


Another application of Theorem \ref{thm:T303} is the following $L^p$-Poincar\'e inequality; see \cite[Theorem 11.1]{Bam20a}.

\begin{thm}[$L^p$-Poincar\'e inequality] \label{thm:poin2}
Let $(M^n, g(t))_{t<1}$ be a Ricci flow associated with a Ricci shrinker. Then for $p \ge 1$ and any $[s,t] \subset (-\infty,1)$ we have
\begin{align*} 
\int_M u^p \,dv_s \le C(p) (t-s)^{\frac{p}{2}} \int_M |\na u|^p\,dv_s,
\end{align*}
for any $u \in W^{1,p}(M,dv_s)$ with $\int_M u\, dv_s=0$. Here, $dv_s(y)=H(x,t,y,s)dV_s(y)$. One may choose $C(1)=\sqrt{\pi}$ and $C(2)=2$.
\end{thm}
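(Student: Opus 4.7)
The plan is to extend $u$ to a heat equation solution on $M\times[s,t]$ and compute the time derivative of the associated $L^p$ quantity with respect to the conjugate heat kernel measure, adapting the classical semigroup approach to the shrinker setting. First, by a standard density argument I may assume $u$ is smooth and bounded with $|\nabla u|$ bounded, and by Theorem \ref{thm301} there is a bounded heat solution $U\in C^\infty(M\times[s,t])$ with $U(\cdot,s)=u$. Since $H(x,t,\cdot,\cdot)$ is a conjugate heat kernel, Proposition \ref{prop:iden1}(i) combined with Remark \ref{rem:iden2}(i) implies that the map $s'\mapsto\int_M U(\cdot,s')\,H(x,t,\cdot,s')\,dV_{s'}$ is constant on $[s,t]$. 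Its value at $s'=s$ is $\int u\,dv_s=0$ and at $s'=t$ it equals $U(x,t)$, so $U(x,t)=0$.

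For the key case $p=2$, set $F(s'):=\int_M U^2 H\,dV_{s'}$. Using $\partial_{s'}U=\Delta U$, the identity $\partial_{s'}H=-\Delta H+RH$ coming from $\square^* H=0$, and $\partial_{s'}dV_{s'}=-R\,dV_{s'}$, a direct computation combined with integration by parts yields $F'(s')=-2\int_M|\nabla U|^2 H\,dV_{s'}$. The integration by parts is made rigorous by inserting the cutoff $\phi^r$ from \eqref{E204} and invoking the estimates \eqref{E205a}--\eqref{E205d}, in the same manner as in the proof of Lemma \ref{lem:301}. Integrating this over $[s,t]$ and using $F(t)=U(x,t)^2=0$ gives $\int u^2\,dv_s = 2\int_s^t\int_M|\nabla U|^2 H\,dV_{s'}\,ds'$. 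Now Lemma \ref{lem:301}(ii) applied with $w = H(x,t,\cdot,\cdot)$ (which satisfies \eqref{E304aa} thanks to \eqref{E301a} and \eqref{E301b}) shows that $s'\mapsto\int|\nabla U|^2 H\,dV_{s'}$ is non-increasing on $[s,t]$ and hence bounded above by its value $\int|\nabla u|^2\,dv_s$ at $s'=s$. This produces $\int u^2\,dv_s\le 2(t-s)\int|\nabla u|^2\,dv_s$, giving the sharp constant $C(2)=2$.

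For general $p>1$ the identical scheme delivers $\partial_{s'}\int|U|^p H\,dV_{s'} = -p(p-1)\int|U|^{p-2}|\nabla U|^2 H\,dV_{s'}$, and combining this identity with H\"older's inequality together with a monotonicity estimate for $\int|\nabla U|^p H\,dV_{s'}$ (derived from an $L^p$ Bochner calculation mirroring \eqref{E303c}) yields the desired bound. The endpoint $p=1$, with sharp constant $C(1)=\sqrt{\pi}$, is best viewed through the lens of the gradient estimate Theorem \ref{thm:T303}: it reduces to the one-dimensional Gaussian $L^1$-Poincar\'e inequality transported along the heat flow, with the Gaussian constant $\sqrt{\pi/2}$ scaled by the variance $2(t-s)$ of the reference heat kernel. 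The principal obstacle throughout is the technical justification of differentiation under the integral and integration by parts on the non-compact $M$; this is handled uniformly via the cutoff family $\phi^r$, the weighted $L^2$ bounds from Section 2, and the fact that $|\nabla U|$ remains bounded along the flow by Lemma \ref{lem:301}(i). A secondary difficulty when $1<p<2$ is the integrability of $|U|^{p-2}|\nabla U|^2 H$ near the zero set of $U$, which is resolved by approximating $|U|^p$ with $(U^2+\delta^2)^{p/2}$ and letting $\delta\searrow 0$.
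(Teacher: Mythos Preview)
Your $p=2$ argument is correct and self-contained, and it is genuinely different from the paper's route. The paper invokes the log-Sobolev inequality for $dv_s$ established in \cite[Theorem~13]{LW20} and then quotes the standard implication log-Sobolev $\Rightarrow$ Poincar\'e (\cite[Theorem~22.17]{Vil08}); your direct Bakry--\'Emery style semigroup computation (differentiating $\int U^2\,dv_{s'}$, using the monotonicity of $\int|\nabla U|^2\,dv_{s'}$ from Lemma~\ref{lem:301}(ii), and integrating) avoids that detour entirely. Both arguments produce $C(2)=2$, but yours is more elementary in that it does not rely on the log-Sobolev machinery.

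For $p\neq 2$ the paper simply cites \cite[Theorem~11.1]{Bam20a}, whose proof is built on the improved gradient estimate (Theorem~\ref{thm:T303} here). Your proposal works for $p\ge 2$: the H\"older step $\int|U|^{p-2}|\nabla U|^2\,dv_{s'}\le G(s')^{1-2/p}K(s')^{2/p}$, together with the monotonicity of $K(s')=\int|\nabla U|^p\,dv_{s'}$ (valid since the Bochner--Kato inequality gives $\square|\nabla U|^p\le 0$), yields an integrable ODE for $G^{2/p}$ and the explicit constant $C(p)=(2(p-1))^{p/2}$. But for $1<p<2$ the H\"older exponents $p/(p-2)$ and $p/2$ are no longer conjugate (the first is negative), so the step fails as written and your sketch leaves this range uncovered. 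Your one-line description of $p=1$ is the right intuition---Bamler's argument does reduce, via Theorem~\ref{thm:T303}, to the one-dimensional Gaussian Poincar\'e inequality---but the actual mechanism (a coarea/level-set argument using $|\nabla\Phi_{t-s}^{-1}(v)|\le 1$ for heat-evolved indicators) is not carried out. In short: your proposal is complete and sharp for $p\ge 2$ and offers a pleasant alternative there, while for $1\le p<2$ you are in effect deferring to the same $\Phi$-estimate that underlies Bamler's proof without filling in the details.
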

\begin{proof}
The proof for $p \ne 2$ follows verbatim from \cite[Theorem 11.1]{Bam20a}. Only the last statement for $p=2$ needs to be proved. It follows from \cite[Theorem 13]{LW20} that the probability measure $dv_s$ satisfies the log-Sobolev inequality with the constant $\frac{1}{2(t-s)}$. It is a standard fact that the log-Sobolev condition implies the Poincar\'e inequality; see \cite[Theorem 22.17]{Vil08}.
\end{proof}

Next, we recall the definitions of the Nash entropy and $\WW$-entropy based at $(x_0,t_0)$.

\begin{defn}\label{def:nash}
Given a Ricci flow $(M^n,g(t))_{t<1}$ associated with a Ricci shrinker and a point $(x_0,t_0) \in M \times (-\infty,1)$, let
\begin{align*} 
dv=dv_{x_0,t_0;t}(x)=(4\pi \tau)^{-\frac n 2} e^{-b(x,t)}\,dV_t=H(x_0,t_0,x,t)\,dV_t
\end{align*}
where $\tau=t_0-t$. Then Perelman’s $\WW$-entropy and the Nash entropy based at $(x_0,t_0)$ are respectively defined as
\begin{align} 
\WW_{(x_0,t_0)}(\tau)&=\int_M \lc \tau(2\Delta b-|\na b|^2+R)+b-n \rc\,dv, \label{E310a} \\
\NN_{(x_0,t_0)}(\tau)&=\int_M b\,dv-\frac{n}{2}. \label{E310b}
\end{align}
\end{defn}

Now, we prove some basic properties of $\NN$ and $\WW$.

\begin{prop} \label{prop:nash1}
The following properties hold with Definition \ref{def:nash}.
\begin{enumerate}[label=\textnormal{(\alph{*})}]
\item $\WW_{(x_0,t_0)}(0)=0$ and for any $\tau_0>0$,
\begin{align} 
\WW_{(x_0,t_0)}(\tau_0)=-2\int_0^{\tau_0} \tau\int_M\left|Rc+\emph{Hess }b-\frac{g}{2\tau} \right|^2 \,dvd\tau. \label{E311a}
\end{align}
In particular, $\WW_{(x_0,t_0)}(\tau)$ is nonpositive and decreasing.

\item $\NN_{(x_0,t_0)}(0)=0$ and for any $\tau_0>0$,
\begin{align} 
\NN_{(x_0,t_0)}(\tau_0)&=\frac{1}{\tau_0} \int_0^{\tau_0} \WW_{(x_0,t_0)}(\tau)\,d\tau \ge \WW_{(x_0,t_0)}(\tau_0). \label{E311b}
\end{align}

\item For any $0<\tau_1 \le \tau_2$,
\begin{align} 
\NN_{(x_0,t_0)}(\tau_1)-\frac{n}{2}\log\lc \frac{\tau_2}{\tau_1} \rc \le \NN_{(x_0,t_0)}(\tau_2) \le \NN_{(x_0,t_0)}(\tau_1). \label{E311c}
\end{align}
\end{enumerate}
\end{prop}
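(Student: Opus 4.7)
The plan is to follow Perelman's classical evolution argument, but with all integrations and integration-by-parts carefully justified in the non-compact Ricci shrinker setting using the cutoff functions $\phi^r$, the heat kernel gradient estimate from Theorem~\ref{thm:T304}, and the identities of Proposition~\ref{prop:iden1}. Throughout, write $w(\cdot,t)=H(x_0,t_0,\cdot,t)=(4\pi\tau)^{-n/2}e^{-b}$ and $\tau=t_0-t$; using $\square^{*}w=0$ one derives the pointwise equation $\partial_\tau b=-\Delta b+|\nabla b|^{2}-R+\tfrac{n}{2\tau}$.

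For part (a), I would set $v=\bigl[\tau(2\Delta b-|\nabla b|^{2}+R)+b-n\bigr]w$. A direct (pointwise) computation—identical to Perelman's—yields
\[
\square^{*}v=-2\tau\Bigl|\mathrm{Rc}+\mathrm{Hess}\,b-\tfrac{1}{2\tau}g\Bigr|^{2}w.
\]
Multiplying by $\phi^{r}$, integrating over $M\times[\tau_1,\tau_0]$, and using $\square^{*}(\phi^{r}v)=\phi^{r}\square^{*}v-v\square\phi^{r}-2\langle\nabla v,\nabla\phi^{r}\rangle$ together with \eqref{E205a}–\eqref{E205d}, Lemma~\ref{L201}, and the Gaussian-type decay of $w$ and $|\nabla b|\le C(1+\sqrt{f/\tau})$ coming from $|\nabla H|/H$ estimates in Theorem~\ref{thm:T304}, the cutoff error vanishes as $r\to\infty$. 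This gives \eqref{E311a} on $[\tau_1,\tau_0]$; then $\mathcal{W}(0)=0$ is obtained by letting $\tau_1\searrow 0$ using the Euclidean heat-kernel asymptotics of $H$ (which force $b\to d^{2}/(4\tau)$ concentrated in a vanishing neighborhood of $x_0$, and hence the three integrals $\tau\int R\,dv$, $\int(b-n)dv$, and $\tau\int|\nabla b|^{2}dv$ each tend to $0$). Monotonicity and nonpositivity of $\mathcal{W}$ are then immediate from \eqref{E311a}.

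For part (b), I would directly compute $\partial_\tau\mathcal{N}$. Using $\partial_\tau(w\,dV)=\Delta w\,dV$ (from $\square^{*}w=0$ and $\partial_\tau dV=R\,dV$), one finds, after an $L^2$-justified integration by parts (the term $\int b\,\Delta w\,dV=\int\Delta b\,w\,dV$ is legitimized by Proposition~\ref{prop:iden1} applied to $u=b$, $w=w$, since $b$ grows like $f$ and $\int w(1+|\nabla b|)\,dV_t$ is finite by Theorem~\ref{thm:T304} and Lemma~\ref{lem:302}), that
\[
\partial_\tau\mathcal{N}=\int\bigl(2\Delta b-|\nabla b|^{2}+R\bigr)dv-\tfrac{n}{2\tau}.
\]
Comparing with $\mathcal{W}=\tau\int(2\Delta b-|\nabla b|^{2}+R)dv+\mathcal{N}-\tfrac{n}{2}$ gives the key identity $\mathcal{W}=\partial_\tau(\tau\mathcal{N})$. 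Integrating from $0$ to $\tau_0$ and checking $\tau\mathcal{N}(\tau)\to0$ as $\tau\searrow 0$ (again from the heat-kernel asymptotics, since $\mathcal{N}(\tau)\to0$) yields $\tau_0\mathcal{N}(\tau_0)=\int_0^{\tau_0}\mathcal{W}\,d\tau$. The inequality $\mathcal{N}(\tau_0)\ge\mathcal{W}(\tau_0)$ then follows from the fact that $\mathcal{W}$ is decreasing.

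For part (c), the upper bound $\mathcal{N}(\tau_2)\le\mathcal{N}(\tau_1)$ follows from the formula $\tau\partial_\tau\mathcal{N}=\mathcal{W}-\mathcal{N}\le0$ (by (b)). For the lower bound, the integration-by-parts identity $\int(2\Delta b-|\nabla b|^{2})dv=\int|\nabla b|^{2}dv\ge0$ (again justified via Proposition~\ref{prop:iden1}) together with $R\ge0$ gives $\partial_\tau\mathcal{N}\ge-\tfrac{n}{2\tau}$, and integrating from $\tau_1$ to $\tau_2$ produces the claim.

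The main obstacle is the non-compactness: every step requires controlling the cutoff error from $\phi^{r}$, which means showing that $v$, $bw$, $|\nabla b|^{2}w$, $Rw$ and their gradients decay fast enough at spatial infinity. The key inputs that make this work are the quadratic growth of $f$ from Lemma~\ref{L201} together with the $L^{p}$-bounds on $|\nabla b|=|\nabla_{y}H|/H$ supplied by Theorem~\ref{thm:T304}; the behavior at $\tau=0$, which requires the short-time heat-kernel asymptotics to conclude $\mathcal{W}(0)=\mathcal{N}(0)=0$ and $\tau\mathcal{N}(\tau)\to 0$, is the second delicate point but is handled in the same way as in the compact case once one has local heat kernel bounds in the regular region (cf. the setup preceding Theorem~\ref{thm:T304}).
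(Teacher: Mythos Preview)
Your overall strategy is correct and is essentially the same as the paper's: Perelman's pointwise identities plus a cutoff argument to justify the integrations by parts on the non-compact shrinker. The paper cites \cite[Remark~6]{LW20} for (a), carries out the $\phi^r$-truncated version of your $\mathcal{W}=\partial_\tau(\tau\mathcal{N})$ computation for (b), and refers to \cite{Bam20a} for (c).

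However, the specific tools you invoke to kill the cutoff errors are the wrong ones. First, Theorem~\ref{thm:T304} bounds $|\nabla_x H(x,t,\cdot,s)|/H$, the gradient in the \emph{first} spatial variable, whereas $|\nabla b|=|\nabla_y H(x_0,t_0,y,s)|/H$ is the gradient in the second variable; these are different objects and Theorem~\ref{thm:T304} says nothing about the latter. Second, Proposition~\ref{prop:iden1} requires $\square u=0$, which fails for $u=b$ (indeed $\square b=\tau^{-1}(b-u-n/2)\ne0$), and in any case that proposition establishes time-invariance of $\int uw\,dV_t$, not a spatial integration-by-parts identity like $\int b\,\Delta w=\int(\Delta b)w$. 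So neither citation actually controls the boundary terms you need.

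The paper handles these errors differently: the term $\int \tau b\,\square\phi^r\,dv$ is controlled using the quadratic upper bound on $b$ (from \cite[Theorem~15, (203)]{LW20}, giving \eqref{E312b}) together with the Gaussian concentration of $dv$ (\cite[Theorem~19]{LW20}, giving \eqref{E312a}); the term $\int\tau\langle\nabla b,\nabla\phi^r\rangle\,dv$ is handled by Cauchy--Schwarz and the finiteness of $\int_0^{\tau_0}\int_M\tau^2|\nabla b|^2\,dv\,d\tau$, which is \cite[Lemma~25]{LW20}. Note that you cannot use Corollary~\ref{cor:301} here to bound $\int|\nabla b|^2\,dv$, since that corollary is derived from Proposition~\ref{prop:nash1}(b) and would be circular. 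With these substitutions your argument goes through.
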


\begin{proof}
Given $(x_0,t_0)$ and $\tau$, we first prove that $\NN_{(x_0,t_0)}(\tau)$ and $\WW_{(x_0,t_0)}(\tau)$ are well-defined. In the following, all constants $C_i>1$ depend on $(x_0,t_0)$, $\tau$ and the given Ricci shrinker. 

It follows from \cite[Theorem 19]{LW20} that for any $r \ge 1$,
\begin{align} 
\int_{d_t(x_0,x) \ge r\sqrt{\tau}} dv_t(x) \le C_1 e^{-\frac{r^2}{8}}. \label{E312a}
\end{align}
Therefore, there exists $C_2>1$ such that
\begin{align} 
\int_{d_t(p,x) \ge r} dv_t(x) \le C_2 e^{-\frac{r^2}{C_2}} \label{E312aa}
\end{align}
if $r \ge C_2$. In addition, it follows from \cite[Theorem 15, Formula (203)]{LW20} that
\begin{align} 
\boldsymbol{\mu} \le b(x,t) \le -3\boldsymbol{\mu}+\frac{d_{t_0}^2(x_0,x)}{3\tau}+\frac{4\tau}{3(1-t_0)^2}F(x,t_0). \label{E312ab}
\end{align}
From \eqref{E312ab} and Lemma \ref{L201}, there exists $C_3>1$ such that
\begin{align} 
-C_3 \le b(x,t) \le C_3(1+F(x,t_0)). \label{E312ac}
\end{align}
Since $F$ is decreasing with respect to $t$ by \eqref{E202a}, it follows from \eqref{E312ac} and Lemma \ref{L201} that
\begin{align*} 
b(x,t) \le C_3(1+F(x,t)) \le C_4(1+d_t^2(p,x)). 
\end{align*}
for some $C_4 \ge C_3$. Consequently, we obtain
\begin{align} 
|b(x,t)| \le C_4(1+d_t^2(p,x)). \label{E312b}
\end{align}
Combining \eqref{E312aa} and \eqref{E312b}, we can estimate
\begin{align} 
\int_M |b(x,t)|\,dv_t(x) \le& C_4+C_4 \int_M d_t^2(p,x)\,dv_t(x) \notag \\
=& C_4+C_4 \int_{d_t(p,x) \le C_2} d_t^2(p,x)\,dv_t(x)+C_4 \sum_{k=1}^{\infty} \int_{2^{k-1}C_2\le d_t(p,x) \le 2^kC_2} d_t^2(p,x)\,dv_t(x) \notag \\
\le & C_4+C_4C_2^2+C_4\sum_{k=1}^{\infty }(2^k C_2)^2 C_2e^{-2^{2k-2}C_2} <\infty. \label{E312bb}
\end{align}
Therefore, it follows from the definition \eqref{E310b} that $\NN_{(x_0,t_0)}(\tau)$ is finite. Now, the fact that $\WW_{(x_0,t_0)}(\tau)$ is well-defined follows from Perelman's differential Harnack inequality \cite[Theorem 21]{LW20}.

(a): The identity \eqref{E311a} follows from \cite[Remark 6]{LW20}. Notice that the integral in \eqref{E311a} is always finite by \cite[Lemma 30]{LW20}. In particular, $\WW_{(x_0,t_0)}(0)=\lim_{\tau \searrow 0}\WW_{(x_0,t_0)}(\tau)=0$.

(b): We fix $r \gg 1$ and compute
\begin{align} 
& \partial_{\tau} \lc \tau \int_M b \phi^r \,dv \rc-\frac{n}{2} \notag \\
=& \int_M b \phi^r \,dv-\tau \int_M \square(b\phi^r)\,dv-\frac{n}{2} \notag \\
=& \int_M \lc \tau(2\Delta b-|\na b|^2+R)\phi^r+b\phi^r+\tau b\square \phi^r-2\tau \la \na b, \na \phi^r \ra-\frac{n}{2}(1+\phi^r) \rc\,dv, \label{E312c}
\end{align}
where we have used the fact that $\square b=-2\Delta b+|\na b|^2-R+\dfrac{n}{2\tau}$. For $\tau_0>0$, we integrate \eqref{E312c} from $0$ to $\tau_0$ and obtain
\begin{align} 
& \tau_0 \lc \int_M b \phi^r \,dv -\frac{n}{2} \rc \notag \\
=& \int_0^{\tau_0} \int_M \lc \tau(2\Delta b-|\na b|^2+R)\phi^r+b\phi^r+\tau b\square \phi^r-2\tau \la \na b, \na \phi^r \ra-\frac{n}{2}(1+\phi^r) \rc\,dv d\tau, \label{E312d}
\end{align}
where we have used \eqref{E312a} and \eqref{E312b}. On the one hand, it follows from \eqref{E205d}, \eqref{E312a} and \eqref{E312b} that
\begin{align} \label{E312e}
\lim_{r \to \infty} \int_0^{\tau_0}\int_M \tau |b||\square \phi^r| \, dv d\tau=0.
\end{align}
On the other hand, we estimate
\begin{align} \label{E312f}
\int_0^{\tau_0}\int_M \tau |\na b||\na \phi^r| \, dv d\tau \le Cr^{-\frac{1}{2}} \tau_0^2 \lc \int_0^{\tau_0}\int_M \tau^2 |\na b|^2 \, dv d\tau \rc^2.
\end{align}
Since the last integral is finite by \cite[Lemma 25]{LW20}, it follows from \eqref{E312f} that
\begin{align} \label{E312g}
\lim_{r \to \infty} \int_0^{\tau_0}\int_M \tau |\na b||\na \phi^r| \, dv d\tau=0.
\end{align}
Combining \eqref{E312d}, \eqref{E312e} and \eqref{E312g}, if we let $r \to \infty$, then
\begin{align*}
\tau_0 \lc \int_M b \,dv -\frac{n}{2} \rc=\int_0^{\tau_0} \int_M \lc \tau(2\Delta b-|\na b|^2+R)+b-n \rc\,dv d\tau,
\end{align*}
which is exactly \eqref{E311b}. Notice that the last inequality in \eqref{E311b} follows from the fact that $\WW_{(x_0,t_0)}(\tau)$ is decreasing. Moreover, it follows from \eqref{E311b} and $\WW_{(x_0,t_0)}(0)=0$ that $\NN_{(x_0,t_0)}(0)=0$.

(c): The inequality \eqref{E311c} follows exactly the same as \cite[Proposition 5.2 (5.7)]{Bam20a} and we omit the proof.
\end{proof}

\begin{cor} \label{cor:301}
Under the same assumptions, we have
\begin{align} 
&\int_M (|\na b|^2+R)dv \le \frac{n}{2\tau}. \label{E313a} \\
&\int_M \lc b-\NN_{(x_0,t_0)}(\tau)-\frac{n}{2} \rc^2\,dv \le n. \label{E313b} 
\end{align}
\end{cor}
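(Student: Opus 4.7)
The approach is to reduce both inequalities to Perelman's entropy identity for $\WW$, combined with the monotonicity $\WW \le \NN$ already recorded in Proposition~\ref{prop:nash1}(b), and the $L^2$-Poincar\'e inequality from Theorem~\ref{thm:poin2}.

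First I would establish the identity
\begin{equation*}
\WW_{(x_0,t_0)}(\tau) = \tau \int_M (|\nabla b|^2 + R)\, dv + \NN_{(x_0,t_0)}(\tau) - \frac{n}{2}.
\end{equation*}
Starting from the definition \eqref{E310a}, this reduces to verifying $\int_M 2\Delta b\, dv = 2\int_M |\nabla b|^2\, dv$. Since $dv = H(x_0,t_0,\cdot,t)\,dV_t$ and $\nabla H = -H\nabla b$, this is a formal integration by parts, but on the non-compact manifold a cutoff argument is required. I would use the cutoff $\phi^r$ from \eqref{E204} exactly as in the proof of Proposition~\ref{prop:nash1}(b), controlling the error terms by the Gaussian decay bound \eqref{E312a}, the quadratic growth \eqref{E312b} for $b$, and the integrability $\int \tau^2|\nabla b|^2\,dv\,d\tau<\infty$ from \cite[Lemma 25]{LW20}. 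Rearranging the displayed identity and invoking $\WW \le \NN$, which is the last inequality in \eqref{E311b}, then gives \eqref{E313a}.

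For the second estimate \eqref{E313b}, set $c := \NN_{(x_0,t_0)}(\tau) + \frac{n}{2} = \int_M b\, dv$ by \eqref{E310b}, so that $u := b-c$ has mean zero with respect to the probability measure $dv$. Applying Theorem~\ref{thm:poin2} with $p=2$ and the explicit constant $C(2)=2$ yields
\begin{equation*}
\int_M (b-c)^2\, dv \le 2\tau \int_M |\nabla b|^2\, dv \le 2\tau \cdot \frac{n}{2\tau} = n,
\end{equation*}
where the last step uses \eqref{E313a} (dropping the nonnegative scalar curvature term). This is exactly \eqref{E313b}.

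The only genuine obstacle is the integration-by-parts step used to derive the $\WW$--$\NN$ identity above; once it is in place, both estimates follow in a few lines. Since the required tail decay, quadratic growth of $b$, and $L^2$-bounds on $\nabla b$ have already been assembled in the proof of Proposition~\ref{prop:nash1}(b), the cutoff argument here is essentially routine and the bulk of the work is simply bookkeeping the same error terms.
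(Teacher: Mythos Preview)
Your proposal is correct and follows essentially the same route as the paper: both reduce \eqref{E313a} to the inequality $\WW_{(x_0,t_0)}(\tau)\le\NN_{(x_0,t_0)}(\tau)$ together with the integration by parts $\int_M (2\Delta b-|\nabla b|^2)\,dv=\int_M|\nabla b|^2\,dv$, and both derive \eqref{E313b} from the Poincar\'e inequality with $C(2)=2$. The only notable difference is in how the cutoff error $\int_M\langle\nabla b,\nabla\phi^r\rangle\,dv$ is handled. You invoke the spacetime bound $\int\tau^2|\nabla b|^2\,dv\,d\tau<\infty$ from \cite{LW20}, but this only yields finiteness of the slice integral $\int_M|\nabla b|^2\,dv$ for a.e.\ $\tau$, not for the fixed $\tau$ under consideration, so some additional continuity argument would be needed. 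The paper instead uses a self-absorbing Young inequality $2|\nabla b||\nabla\phi^r|\le\ep|\nabla b|^2\phi^r+\ep^{-1}|\nabla\phi^r|^2/\phi^r$, which requires no a priori bound on $\nabla b$: the $\ep$-term is absorbed into the left side, and the remaining term vanishes by \eqref{E205a} as $r\to\infty$, then $\ep\searrow 0$. This closes the argument directly at every $\tau$.
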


\begin{proof}
From the fact that $\NN_{(x_0,t_0)}(\tau) \ge \WW_{(x_0,t_0)}(\tau)$, we conclude that
\begin{align}\label{E314a} 
\lim_{r \to \infty}\int_M (2\Delta b-|\na b|^2)\phi^r+R\,dv \le \frac{n}{2\tau},
\end{align}
where we have used the differential Harnack inequality \cite[Theorem 21]{LW20}. From integration by parts, we have
\begin{align}\label{E314b} 
\int_M (2\Delta b-|\na b|^2)\phi^r\,dv =\int_M |\na b|^2\phi^r-2\la \na b, \na \phi^r \ra\,dv.
\end{align}
In addition, we can estimate
\begin{align} \label{E314c} 
2\int_M |\na b||\na \phi^r|\,dv \le \int_M |\na b||\na \phi^r|\,dv \le \int_M \ep|\na b|^2 \phi^r+\ep^{-1} \frac{|\na \phi^r|^2}{\phi^r}\,dv
\end{align}
Therefore, it follows from \eqref{E205a}, \eqref{E314a}, \eqref{E314b} and \eqref{E314c} that
\begin{align*}
\int_M (1-\ep)|\na b|^2+R\,dv \le \frac{n}{2\tau}.
\end{align*}
By letting $\ep \searrow 0$, we obtain \eqref{E313a}. 

Now, it follows from the Poincar\'e inequality Theorem \ref{thm:poin2} and \eqref{E313a} that
\begin{align*}
\int_M \lc b-\NN_{(x_0,t_0)}(\tau)-\frac{n}{2} \rc^2\,dv \le 2\tau \int |\na b|^2 \,dv \le n
\end{align*}
and \eqref{E313b} is proved.
\end{proof}

\begin{rem} \label{rem:ibp}
From the proof of \eqref{E313a}, $\WW$ can be rewritten as
\begin{align*} 
\WW_{(x_0,t_0)}(\tau)=\int_M \lc \tau(|\na b|^2+R)+b-n \rc\,dv,
\end{align*}
which agrees with the original definition of Perelman \emph{\cite[Formula (3.1)]{Pe1}}.
\end{rem}

\begin{cor} \label{cor:302}
Let $(M^n,g(t))_{t<1}$ be the Ricci flow associated with a Ricci shrinker $(M^n,g,f) \in \MM(A)$, then
\begin{align} \label{E315a}
0 \ge \NN_{(x_0,t_0)}(\tau) \ge \WW_{(x_0,t_0)}(\tau) \ge \boldsymbol{\mu} \ge -A
\end{align}
for any $(x_0,t_0) \in M \times (-\infty,1)$ and $\tau>0$. In particular, given a Ricci shrinker, the Nash entropy is always uniformly bounded.
\end{cor}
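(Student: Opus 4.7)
The plan is to verify each of the four inequalities in \eqref{E315a} as a direct consequence of results established above; no essentially new work is needed beyond assembling them in the correct order.

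First I would dispatch the two extreme inequalities. The lower bound $\boldsymbol{\mu} \ge -A$ is immediate from Definition \ref{dfn:201}, since we have assumed $(M^n,g,f) \in \MM(A)$. For the upper bound $\NN_{(x_0,t_0)}(\tau) \le 0$, I would combine Proposition \ref{prop:nash1}(a), which asserts $\WW_{(x_0,t_0)}(\tau) \le 0$ for all $\tau > 0$, with the averaging identity \eqref{E311b}: since $\NN_{(x_0,t_0)}(\tau_0) = \tau_0^{-1}\int_0^{\tau_0}\WW_{(x_0,t_0)}(\tau)\,d\tau$, the nonpositivity of $\WW$ passes directly to $\NN$. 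Meanwhile, the middle inequality $\NN_{(x_0,t_0)}(\tau) \ge \WW_{(x_0,t_0)}(\tau)$ is recorded as the second assertion of \eqref{E311b} in Proposition \ref{prop:nash1}(b); it follows from the monotonicity of $\WW$ combined with the same averaging formula.

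The only step with any content is $\WW_{(x_0,t_0)}(\tau) \ge \boldsymbol{\mu}$. For this, I would first rewrite $\WW$ using Remark \ref{rem:ibp} in Perelman's original form
\begin{align*}
\WW_{(x_0,t_0)}(\tau)=\int_M \lc \tau(|\na b|^2+R)+b-n \rc\,dv,
\end{align*}
and then observe that, since $dv = (4\pi\tau)^{-n/2}e^{-b}\,dV_t$ is a probability measure, the normalization $\int (4\pi\tau)^{-n/2}e^{-b}\,dV_t = 1$ required by the log-Sobolev functional is automatic. By \cite[Theorem 1]{LW20}, the entropy $\boldsymbol{\mu}$ is the optimal log-Sobolev constant of $(M^n,g)$ at \emph{every} scale; and because $\boldsymbol{\mu}$ is invariant along the associated Ricci flow $g(t) = (1-t)(\psi^t)^* g$, one can apply this sharp log-Sobolev inequality at the time slice $g(t_0-\tau)$ with scale $\tau$ to the admissible test function $b$. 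This yields $\WW_{(x_0,t_0)}(\tau) \ge \boldsymbol{\mu}$ and closes the chain.

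There is no serious obstacle: the corollary is essentially a synthesis of results already in hand. The only subtlety is confirming that $b$ is an admissible test function for the log-Sobolev inequality, i.e.\ that the various integrals are finite and $dv$ has unit mass; this is controlled by the bounds \eqref{E312aa}, \eqref{E312b} and \eqref{E313a} developed earlier in the section, which were already used to show that $\NN_{(x_0,t_0)}(\tau)$ and $\WW_{(x_0,t_0)}(\tau)$ are well defined.
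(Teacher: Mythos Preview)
Your proposal is correct and follows essentially the same route as the paper. The paper's proof is even terser than yours: it only addresses the inequality $\WW_{(x_0,t_0)}(\tau) \ge \boldsymbol{\mu}$, citing \cite[Theorem 20]{LW20} for the quadratic growth of $b$ (so that $u$ with $u^2=(4\pi\tau)^{-n/2}e^{-b}$ lies in $W_*^{1,2}(M)$) and then invoking \cite[Theorem 1]{LW20} to get $\WW_{(x_0,t_0)}(\tau) \ge \boldsymbol{\mu}(g(t_0-\tau),\tau) \ge \boldsymbol{\mu}$. Your justification of admissibility via \eqref{E312b} and \eqref{E313a} is an equally valid (and arguably more self-contained) alternative to the paper's direct citation of Theorem~20 of \cite{LW20}.
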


\begin{proof}
For fixed $(x_0,t_0)$ and $\tau>0$, it follows from \cite[Theorem 20]{LW20} that $b$ increases quadratically. Therefore, it is easy to see the function $u$, defined by $u^2=(4\pi \tau)^{-\frac n 2} e^{-b}$, belongs to $W_*^{1,2}(M)$ defined in \cite[(92)]{LW20}. From \cite[Theorem 1]{LW20}, we immediately conclude that
\begin{align*}
\WW_{(x_0,t_0)}(\tau) \ge \boldsymbol{\mu}(g(t_0-\tau),\tau) \ge \boldsymbol{\mu} \ge -A.
\end{align*}
\end{proof}

Following \cite{Bam20a}, we use the notation
\begin{align*}
\NN_s^*(x,t):=\NN_{(x,t)}(t-s).
\end{align*}
Similar to \cite[Theorem 5.9]{Bam20a}, we have

\begin{thm}\label{thm:T305}
Let $(M^n,g(t))_{t<1}$ be the Ricci flow associated with a Ricci shrinker. Then for any $s<t<1$, the following properties hold.
\begin{enumerate}[label=\textnormal{(\roman{*})}]
\item $\NN_s^*$ is a Lipschitz function with Lipschitz constant $\sqrt{\dfrac{n}{2(t-s)}}$.

\item In the distribution sense, we have
\begin{align} \label{E316a}
-\frac{n}{2(t-s)} \le \square \NN_s^* \le 0.
\end{align}
\end{enumerate}
\end{thm}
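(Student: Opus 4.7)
The plan is to derive both (i) and (ii) from a single representation of $\NN_s^*$ as a differential entropy. Writing $b(\cdot,t)=-\log H(x,t,\cdot,s)-(n/2)\log(4\pi(t-s))$ and $dv=H(x,t,\cdot,s)\,dV_s$ in \eqref{E310b} gives
\begin{align*}
\NN_s^*(x,t) = -\int_M H\log H\,dV_s - \frac{n}{2}\log(4\pi(t-s)) - \frac{n}{2}.
\end{align*}
All subsequent manipulations will be carried out formally in $(x,t)$ and justified rigorously by inserting the cutoff $\phi^r$ of \eqref{E204}, integrating by parts, and letting $r\to\infty$ via \eqref{E205a}--\eqref{E205d} together with the tail estimates \eqref{E312aa}--\eqref{E312bb}, in the style of the proof of Proposition \ref{prop:nash1}.

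For (i), fix $(x,t)$ and a unit vector $w\in T_xM$. Differentiating $\int_M H(x,t,\cdot,s)\,dV_s=1$ in $x$ yields $\int_M \partial_w H\,dV_s=0$, so any constant $c=c(x,t)$ may be subtracted from $\log H$:
\begin{align*}
\partial_w \NN_s^*(x,t) = -\int_M(\log H - c)\,\frac{\partial_w H}{H}\,dv.
\end{align*}
Choose $c=\int_M\log H\,dv$ and apply Cauchy--Schwarz. The factor $\int_M(\partial_w H/H)^2\,dv$ is bounded by $1/(2(t-s))$ by \eqref{eqn:int}. For the factor $\int_M(\log H - c)^2\,dv$, the $L^2$-Poincar\'e inequality in the $y$-variable (Theorem \ref{thm:poin2} with $p=2$) gives the bound $2(t-s)\int_M|\na_y\log H|^2\,dv$; since $\na_y\log H=-\na b$, \eqref{E313a} then yields the upper bound $n$. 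Multiplying the two factors gives $|\partial_w \NN_s^*|\le\sqrt{n/(2(t-s))}$, which is the claimed Lipschitz estimate.

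For (ii), the forward heat equation $\square_{(x,t)} H=0$ combined with $\int_M\partial_t H\,dV_s=0=\int_M\Delta_x H\,dV_s$ (obtained by differentiating $\int_M H\,dV_s=1$) yields, after a cutoff-based integration by parts analogous to \eqref{E304b},
\begin{align*}
\square_{(x,t)}\left(-\int_M H\log H\,dV_s\right) = \int_M \left|\frac{\na_x H}{H}\right|^2 dv.
\end{align*}
Adding the contribution $\square_{(x,t)}(-(n/2)\log(4\pi(t-s)))=-n/(2(t-s))$ gives the key identity
\begin{align*}
\square \NN_s^*(x,t) = \int_M \left|\frac{\na_x H(x,t,y,s)}{H(x,t,y,s)}\right|^2 dv(y) - \frac{n}{2(t-s)}.
\end{align*}
The upper bound $\square \NN_s^*\le 0$ is then exactly \eqref{eqn:int2}, while the lower bound $\square \NN_s^*\ge-n/(2(t-s))$ is immediate from the nonnegativity of the first term.

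The main obstacle is the rigorous justification of the differentiation under the integral sign and of the integrations by parts, since $M$ is non-compact and both $\na_x$ and $\na_y$ derivatives of $H$ must be controlled simultaneously. These are handled by the cutoff method systematically developed in Section 3, using the quadratic growth of $b$, the Gaussian decay of $v_{x,t;s}$, and the pointwise gradient estimates of Theorem \ref{thm:T304}. Once the pointwise identity for $\square \NN_s^*$ is established, smoothness of $\NN_s^*$ in $(x,t)$ follows from smoothness of $H$ and dominated convergence, so the bounds in (ii) actually hold classically rather than only distributionally.
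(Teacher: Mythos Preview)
Your proposal is correct and follows essentially the same route as the paper: both prove (i) by Cauchy--Schwarz together with \eqref{eqn:int} and the Poincar\'e-based variance bound \eqref{E313b}, and both obtain (ii) from the identity $\square\NN_s^*=\int_M|\na_x H/H|^2\,dv-n/(2(t-s))$ combined with \eqref{eqn:int2}, the rigorous implementation being via the cutoff $\phi^r$ (the paper packages this as a truncated functional $\NN^r:=\int b\phi^r\,dv-n/2$ and proves $\NN^r\to\NN_s^*$ in $C^0_{\mathrm{loc}}$). One caveat: your final sentence claiming the bounds hold classically is premature at this stage---the paper establishes only the distributional statement here and defers the classical upgrade to Corollary~\ref{cor:404}, after the Gaussian heat-kernel decay of Theorem~\ref{thm:heatupper3} becomes available to justify differentiation under the integral sign.
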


\begin{proof}
Without loss of generality, we assume $s=0$ and consider $t \in (0,1)$. We first define the following modified Nash entropy:
\begin{align} \label{E316b}
\NN^r=\NN^r(x,t):=\int b \phi^r \,dv-\frac{n}{2},
\end{align}
where, as before, $b=b_{(x,t)}(y,0)=-\frac{n}{2}\log(4\pi t)-\log H(x,t,y,0)$ and $dv=H(x,t,y,0)\,dV_0(y)$.

\textbf{Claim}: $\NN^r$ converges to $\NN_0^*$ in $C^0_{\text{loc}}$ on $M \times (0,1)$, as $r \to \infty$.

\emph{Proof of Claim}: Given a spacetime compact set $K \subset M \times (0,1)$, all constants $C_i>1$ below depends only $K$ and the Ricci shrinker.

Similar to \eqref{E312aa}, there exists $C_1>1$ such that
\begin{align} \label{E316c}
\int_{d_0(p,y) \ge r} dv(y) \le C_1 e^{-\frac{r^2}{C_1}}
\end{align}
for any $r \ge C_1$. From the same argument leading to \eqref{E312b}, we have
\begin{align} \label{E316d}
|b_{(x,t)}(y,0)| \le C_2(1+d_0^2(p,y)).
\end{align}

Combining \eqref{E316c}, \eqref{E316d} and the fact that $\text{supp}(\phi^r) \cap M \times \{0\} \subset \{ C_3 r \le d_0^2(p,\cdot) \le C_4 r\}$, it is easy to show as \eqref{E312bb} that
\begin{align} \label{E316e}
\lim_{r \to \infty} \int_M |b_{(x,t)}(y,0)|(1-\phi^r(y))\,dv(y)=0
\end{align}
uniformly for $(x,t) \in K$. From \eqref{E316e},  the Claim is proved.

Next, for any vector $w \in T_x M$ with $|w|_t=1$ we compute
\begin{align} 
\partial_w \NN^r(x,t)=& \int_M \left\{ (\partial_w b)H \phi^r+b(\partial_w H)\phi^r \right\} dV_0 \notag \\
=& \int_M \left\{ -(\partial_w H) \phi^r+b(\partial_w H)\phi^r \right\} dV_0 =: I+II, \label{E317a}
\end{align}
where $H=H(x,t,y,0)$. Notice that 
\begin{align*}
\int_M H \phi^r \,dV_0=\int_M H(x,t,y,0) \phi^r(y)\,dV_0(y)
\end{align*}
is the heat solution starting from $\phi^r$. Therefore, it follows from \eqref{E303a} and \eqref{E205a} that
\begin{align} \label{E317b}
|I| \le \left| \na_x \int_M H(x,t,y,0) \phi^r(y)\,dV_0(y) \right| \le Cr^{-\frac 1 2}.
\end{align}
Next, we estimate
\begin{align*}
II=\int_M b \frac{\partial_w H}{H} \phi^r \,dv =\int_M \lc b-\NN_0^*-\frac{n}{2} \rc \frac{\partial_w H}{H} \phi^r \,dv- \lc \NN_0^*+\frac{n}{2} \rc I.
\end{align*}
Therefore, we have
\begin{align} \label{E317c}
|II| \le& \lc \int_M \lc b-\NN_0^*-\frac{n}{2} \rc^2\,dv \rc^{\frac 1 2} \lc \int_M \lc \frac{\partial_w H}{H} \rc^2 \,dv \rc^{\frac 1 2}+Cr^{-\frac{1}{2}} \le \sqrt{\frac{n}{2t}}+Cr^{-\frac{1}{2}},
\end{align}
where we have used \eqref{eqn:int}, \eqref{E313b} and \eqref{E317b}. Combining \eqref{E317a}, \eqref{E317b} and \eqref{E317c}, we obtain
\begin{align} \label{E317d}
\left| \na_x \NN^r(x,t) \right| \le \sqrt{\frac{n}{2t}}+Cr^{-\frac{1}{2}}.
\end{align}

Since $\NN^r$ converges to $\NN_0^*$ locally uniformly by the Claim, we immediately conclude from \eqref{E317d} that $\NN_0^*$ is $\sqrt{\frac{n}{2t}}$-Lipschitz.

Next, by direct computation, we have
\begin{align} 
\square \NN^r(x,t)=\int_M \left| \frac{\na_x H}{H} \right |^2 \phi^r \,dv-\frac{n}{2t} \int_M \phi^r\,dv. \label{E318a}
\end{align}
Combining \eqref{eqn:int2}, \eqref{E318a} and the Claim, it follows immediately that
$$
-\frac{n}{2t} \le \square \NN_0^* \le 0
$$
in the distribution sense.
\end{proof}

\begin{rem}
Later, we will show that the conclusions in Theorem \ref{thm:T305} hold in the classical sense once we know the decay of the conjugate heat kernel; see Corollary \ref{cor:404}.
\end{rem}

As an application of Theorem \ref{thm:T305}, we prove the following oscillation of the Nash entropy.

\begin{cor} \label{cor:303}
For any $x_1,x_2 \in M$ and $s<t^* \le t_1,t_2 <1$, we have
\begin{align} 
\NN_s^*(x_1,t_1)-\NN_s^*(x_2,t_2) \le \sqrt{\frac{n}{2(t^*-s)}}d^{t^*}_{W_1}\lc v_{x_1,t_1;t^*},v_{x_2,t_2;t^*} \rc+\frac{n}{2} \log \lc\frac{t_2-s}{t^*-s} \rc. \label{E319}
\end{align}
In particular, if $s<t^*=t_2 \le t_1<1$, then
\begin{align} 
\NN_s^*(x_1,t_1)-\NN_s^*(x_2,t_2) \le \sqrt{\frac{n}{2(t_2-s)}}d^{t_2}_{W_1}\lc v_{x_1,t_1;t_2}, \delta_{x_2} \rc. \label{E319a}
\end{align}
If we further assume $(x_2,t_2)$ is an $H_n$-center of $(x_1,t_1)$, then
\begin{align} 
\NN_s^*(x_1,t_1)-\NN_s^*(x_2,t_2) \le \sqrt{\frac{nH_n(t_1-t_2)}{2(t_2-s)}}. \label{E319b}
\end{align}
\end{cor}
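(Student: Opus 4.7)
The plan is to route through an auxiliary time slice $t^*$ and compare $\NN_s^*(x_1,t_1)$ and $\NN_s^*(x_2,t_2)$ via the spatial averages of $\NN_s^*(\cdot,t^*)$ against the conjugate heat kernel measures $v_{x_i,t_i;t^*}$. Three ingredients from earlier in the paper will be used: the subsolution bound $\square \NN_s^* \le 0$ (Theorem \ref{thm:T305}(ii)), the Lipschitz estimate for $\NN_s^*(\cdot,t^*)$ with constant $\sqrt{n/(2(t^*-s))}$ (Theorem \ref{thm:T305}(i)), and the concavity of the Boltzmann entropy under mixtures of probability densities. The special cases \eqref{E319a} and \eqref{E319b} fall out of \eqref{E319} by taking $t^*=t_2$ (so that $v_{x_2,t_2;t_2}=\delta_{x_2}$ and the log term vanishes) and then applying \eqref{eq:hdis} for an $H_n$-center, so I focus on the general inequality.

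First I would establish the two bounds
\begin{align*}
\NN_s^*(x_1,t_1) \le \int_M \NN_s^*(\cdot,t^*)\,dv_{x_1,t_1;t^*}, \qquad \int_M \NN_s^*(\cdot,t^*)\,dv_{x_2,t_2;t^*} \le \NN_s^*(x_2,t_2) + \frac{n}{2}\log\frac{t_2-s}{t^*-s}.
\end{align*}
For the first, pairing $\square \NN_s^* \le 0$ against the conjugate heat kernel and using the standard identity $\frac{d}{dr}\int \NN_s^*(\cdot,r)\,dv_{x_1,t_1;r} = \int \square \NN_s^*\,dv \le 0$ shows that the integral is non-increasing on $(s,t_1]$; evaluating at $r=t_1$ and $r=t^*$ yields the inequality. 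Since Theorem \ref{thm:T305}(ii) is only distributional, this calculation should be carried out on smooth mollifications of $\NN_s^*$ (for instance the $\NN^r$ from the proof of Theorem \ref{thm:T305}, or a time convolution with the heat kernel) and then passed to the limit using the uniform bound $\boldsymbol{\mu}\le \NN_s^* \le 0$ from Corollary \ref{cor:302} and the decay \eqref{E312a} of $dv_{x_1,t_1;r}$. For the second bound, I rewrite
$$\NN_{(x,t^*)}(t^*-s) = -\tfrac{n}{2}\log(4\pi(t^*-s)) - \tfrac{n}{2} - \int_M H(x,t^*,y,s)\log H(x,t^*,y,s)\,dV_s(y),$$
and note that, by the semigroup identity, $H(x_2,t_2,\cdot,s)$ is the mixture of the probability densities $\{H(x,t^*,\cdot,s)\}_{x\in M}$ with mixing distribution $dv_{x_2,t_2;t^*}(x)$. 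Concavity of $\rho \mapsto -\int \rho\log\rho\,dV$ (Jensen's inequality for $-\log$) then bounds the average of the $\NN_{(x,t^*)}(t^*-s)$ against $dv_{x_2,t_2;t^*}$ by the analogous quantity for the mixture, and reconciling the two prefactors at $\tau=t^*-s$ and $\tau=t_2-s$ produces the logarithmic correction $\frac{n}{2}\log\frac{t_2-s}{t^*-s}$.

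Combining the two bounds yields
\begin{align*}
\NN_s^*(x_1,t_1) - \NN_s^*(x_2,t_2) \le \int_M \NN_s^*(\cdot,t^*)\,d(v_{x_1,t_1;t^*} - v_{x_2,t_2;t^*}) + \frac{n}{2}\log\frac{t_2-s}{t^*-s}.
\end{align*}
Because $\NN_s^*(\cdot,t^*)$ is bounded and $\sqrt{n/(2(t^*-s))}$-Lipschitz on $(M,g(t^*))$, Kantorovich--Rubinstein duality applied to Definition \ref{def:W1} bounds the first term on the right by $\sqrt{n/(2(t^*-s))}\cdot d^{t^*}_{W_1}(v_{x_1,t_1;t^*},v_{x_2,t_2;t^*})$, which completes the proof of \eqref{E319}. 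The main obstacle is the concavity step: one has to set up the semigroup/mixture decomposition so that Jensen's inequality is applied in the correct direction, and the $(4\pi\tau)^{-n/2}$ normalizations in the definition of $\NN$ must be tracked carefully in order to extract the logarithmic correction cleanly.
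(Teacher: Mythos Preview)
Your argument is correct and follows the same three-step strategy as the paper (which defers to \cite[Corollary 5.11]{Bam20a}): bound $\NN_s^*(x_1,t_1)$ above via the subsolution property $\square \NN_s^* \le 0$, bound the $(x_2,t_2)$ side below, subtract, and apply Kantorovich--Rubinstein with the Lipschitz estimate; your suggestion to carry this out on $\NN^r$ and let $r\to\infty$ is exactly what the paper does. The only minor variation is in your second step: you use concavity of the Boltzmann entropy for mixtures, whereas the paper's route integrates the lower bound $\square \NN_s^* \ge -\frac{n}{2(t-s)}$ from Theorem~\ref{thm:T305}(ii) against $dv_{x_2,t_2;r}$ over $r\in[t^*,t_2]$, which yields the same $\frac{n}{2}\log\frac{t_2-s}{t^*-s}$ correction directly and sidesteps the Fubini/integrability check that your Jensen argument needs.
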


\begin{proof}
The proof follows verbatim from \cite[Corollary 5.11]{Bam20a}. The only difference is that we consider $\NN^r$ as defined in \eqref{E316b} instead and let $r \to \infty$.
\end{proof}

\section{Heat kernel estimates}
Throughout this section, we assume $(M^n,g(t))_{t<1}$ is the Ricci flow associated with a Ricci shrinker in $\MM(A)$. First, we recall the following no-local-collapsing theorem proved in \cite[Theorem 22]{LW20}.

\begin{thm}\label{thm:volume1}
For any $x \in M$ and $t<1$, if $R \le r^{-2}$ on $B_t(x,r)$, then
\begin{align}\label{E400a}
|B_t(x,r)|_t \ge c e^{\boldsymbol{\mu}} r^n
\end{align}
for some constant $c=c(n)>0$.
\end{thm}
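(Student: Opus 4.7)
The proof follows Perelman's no-local-collapsing strategy adapted to the Ricci shrinker setting via the optimal log-Sobolev constant $\boldsymbol{\mu}$ established in \cite[Theorem 1]{LW20}. That theorem says: for every $\tau > 0$ and every compactly supported smooth $u$ on $(M,g(t))$ with $\int_M u^2 \,dV_t = 1$, one has
\begin{align*}
\int_M \lc 4\tau|\na u|^2 + \tau R u^2 - u^2 \log u^2 \rc dV_t - \frac{n}{2}\log(4\pi\tau) - n \ge \boldsymbol{\mu}.
\end{align*}
The plan is to substitute into this inequality a test function concentrated on $B_t(x,r)$, with $\tau = r^2$, and read off a lower bound on $V := |B_t(x,r)|_t$.

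Concretely, I would take a Lipschitz cutoff $\phi$ on $(M,g(t))$ with $\phi \equiv 1$ on $B_t(x,r/2)$, support in $B_t(x,r)$, and $|\na \phi|_{g(t)} \le C/r$, and set $u := \phi/\|\phi\|_{L^2(g(t))}$. The hypothesis $R \le r^{-2}$ on $B_t(x,r)$ controls the scalar-curvature contribution by $\int \tau R u^2 \,dV_t \le 1$; the gradient contribution is $\int 4\tau |\na u|^2 \,dV_t \le C(n) V/\|\phi\|_2^2$; and for the entropy, writing $-u^2 \log u^2 = u^2 \log \|\phi\|_2^2 + u^2(-\log \phi^2)$ and using $-s\log s \le e^{-1}$ on $[0,1]$ gives $\int -u^2 \log u^2 \,dV_t \le \log \|\phi\|_2^2 + e^{-1} V/\|\phi\|_2^2$. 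Substituting and rearranging yields
\begin{align*}
\boldsymbol{\mu} + n + \frac{n}{2}\log(4\pi r^2) \le C(n) \frac{V}{\|\phi\|_2^2} + \log \|\phi\|_2^2 + 1.
\end{align*}

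The favorable case is $|B_t(x,r/2)|_t \ge V/2$, in which $\|\phi\|_2^2 \ge V/2$, so $V/\|\phi\|_2^2 \le 2$ and $\log \|\phi\|_2^2 \le \log V$. The inequality then collapses to $\log V \ge \boldsymbol{\mu} + n\log r - C(n)$, i.e.\ $V \ge c(n) e^{\boldsymbol{\mu}} r^n$, which is precisely the desired conclusion. The main obstacle I expect is the opposite case, where the mass of $B_t(x,r)$ is concentrated in the annulus $B_t(x,r) \setminus B_t(x,r/2)$ and $\|\phi\|_2^2 \ll V$, so that the term $V/\|\phi\|_2^2$ cannot be absorbed. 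I would address this by a standard point-picking iteration: when $|B_t(x,r/2)|_t < V/2$, the annulus contains a point $x'$ around which a suitable sub-ball $B_t(x',r')$ exhibits a more favorable inner/outer volume ratio, and on which $R \le r^{-2} \le (r')^{-2}$ still holds for $r' \le r$. Iterating the dichotomy finitely many times lands in the favorable case at some scale, and chaining the resulting estimates back to the original scale yields $V \ge c(n) e^{\boldsymbol{\mu}} r^n$.
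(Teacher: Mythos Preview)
The paper does not give a proof of this statement; it simply recalls it from \cite[Theorem~22]{LW20}. Your outline---plugging a cutoff supported in $B_t(x,r)$ into the optimal log-Sobolev inequality of \cite[Theorem~1]{LW20} at scale $\tau=r^2$, and then reducing to a volume-doubling situation by iteration---is exactly the standard Perelman argument and is how the cited result is proved.

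One remark on your iteration step: the threshold $1/2$ is too generous for a clean fixed-center scheme, and your moving-center alternative is vague (you need $B_t(x',r')\subset B_t(x,r)$ to retain the bound $R\le r^{-2}\le (r')^{-2}$, which forces $r'\le r-d_t(x,x')$ and makes ``chaining back'' awkward). The standard version keeps the center at $x$ and uses threshold $3^{-n}$: set $r_j=2^{-j}r$; if the doubling $|B_t(x,r_{j+1})|_t\ge 3^{-n}|B_t(x,r_j)|_t$ fails for all $j<k$, then $|B_t(x,r_k)|_t<3^{-nk}V$. Since $|B_t(x,\rho)|_t/\rho^n\to\omega_n$ as $\rho\to 0$, there is a first $k$ at which doubling holds; your log-Sobolev step then gives $|B_t(x,r_k)|_t\ge c(n)e^{\boldsymbol{\mu}}r_k^n$, and combining,
\[
V>3^{nk}\,|B_t(x,r_k)|_t\ge 3^{nk}c(n)e^{\boldsymbol{\mu}}(2^{-k}r)^n=(3/2)^{nk}c(n)e^{\boldsymbol{\mu}}r^n\ge c(n)e^{\boldsymbol{\mu}}r^n.
\]
With this adjustment your proposal is complete and matches the cited proof.
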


One can improve \eqref{E400a} by using the Nash entropy. Based on the Lipschitz property of the Nash entropy, we can follow the same proof of \cite[Theorem 6.1]{Bam20a} to obtain the following result. Notice that by \eqref{E315a}, \eqref{E400b} is stronger than \eqref{E400a}

\begin{thm}\label{thm:volume2}
For any $x \in M$ and $t<1$, if $R \le r^{-2}$ on $B_t(x,r)$, then
\begin{align}\label{E400b}
|B_t(x,r)|_t \ge c \exp\lc \NN_{x,t}(r^2) \rc r^n
\end{align}
for some constant $c=c(n)>0$.
\end{thm}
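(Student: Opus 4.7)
I plan to follow \cite[Theorem 6.1]{Bam20a} essentially verbatim, using the Lipschitz property of the Nash entropy from Theorem \ref{thm:T305}(i) as the crucial new input. The proof of Theorem \ref{thm:volume1} ultimately rests on Perelman's global log-Sobolev inequality $\boldsymbol{\mu}(g(t'),\tau) \ge \boldsymbol{\mu} \ge -A$, applied at time $t' = t$ and scale $\tau = r^2$ to a cutoff test function adapted to the ball $B_t(x,r)$. To obtain the sharper bound involving $\NN_{(x,t)}(r^2)$ instead of $\boldsymbol{\mu}$, one upgrades this to a \emph{localized} log-Sobolev inequality in which the effective entropy constant at scale $r^2$ near the base point $(x,t)$ is the pointed Nash entropy rather than the global $\boldsymbol{\mu}$.

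More concretely, the strategy runs as follows. Let $s = t - r^2$. By Theorem \ref{thm:T305}(i), for every $y \in B_t(x,r)$ we have
\[
|\NN_{(y,t)}(r^2) - \NN_{(x,t)}(r^2)| \le \sqrt{n/2} \cdot \frac{d_t(x,y)}{r} \le \sqrt{n/2},
\]
so the pointed Nash entropy at scale $r^2$ is essentially constant (up to a dimensional additive error) on the ball $B_t(x,r)$. Combining this with the scalar curvature condition $R \le r^{-2}$ on $B_t(x,r)$, one tests the log-Sobolev inequality against a cutoff $\chi$ supported in $B_t(x,r)$ with $\chi \equiv 1$ on $B_t(x,r/2)$ and $|\na \chi|_t \le C/r$, and replaces $\boldsymbol{\mu}$ by the local quantity $\NN_{(x,t)}(r^2)$ via the Lipschitz bound above. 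The standard Perelman argument then yields $|B_t(x,r)|_t \ge c \exp(\NN_{(x,t)}(r^2)) r^n$, which is the desired improvement.

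The main obstacle is technical rather than conceptual: the non-compactness of $M$ forces us to verify that all the integrals involved, which are manifestly convergent in the compact setting of \cite[Theorem 6.1]{Bam20a}, remain well-defined on a Ricci shrinker. As elsewhere in Section 3, this is handled by introducing the cutoff $\phi^r$ from \eqref{E204} and passing to the limit $r \to \infty$, exploiting the quadratic growth of $F$ (Lemma \ref{L201}), the Gaussian concentration of the conjugate heat kernel measure (Proposition \ref{prop:305} together with the moment-type bounds in the proof of Corollary \ref{cor:301}), and the volume growth estimate of Lemma \ref{L202}. Since these ingredients are already in place, the adaptation of Bamler's argument to our setting is routine and the details may be omitted.
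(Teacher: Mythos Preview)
Your proposal is correct and matches the paper's approach exactly: the paper also states that the result follows the proof of \cite[Theorem 6.1]{Bam20a} verbatim once the Lipschitz bound on $\NN_s^*$ (Theorem \ref{thm:T305}(i)) is available, with non-compactness handled by the standard $\phi^r$-cutoff technique. One minor caution: your sketch of Bamler's mechanism as ``replace $\boldsymbol{\mu}$ by $\NN_{(x,t)}(r^2)$ in the Perelman log-Sobolev test'' is a bit loose---the Lipschitz bound does not by itself give $\WW[\chi]\ge \NN_{(x,t)}(r^2)$ for a cutoff $\chi$---but since you are deferring to \cite{Bam20a} for the actual computation this does not affect the validity of the plan.
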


By using \eqref{E313b}, we also have the following volume estimate around an $H_n$-center by following the same proof of \cite[Theorem 6.2]{Bam20a}.

\begin{thm}\label{thm:volume 3}
For any $x \in M$ and $t<1$, if $(z,t-r^2)$ is an $H_n$-center of $(x,t)$, then
\begin{align}\label{E400c}
|B_{t-r^2}(z,r)|_{t-r^2} \ge c \exp\lc \NN_{x,t}(r^2) \rc r^n
\end{align}
for some constant $c=c(n)>0$ and any $r \ge 0$.
\end{thm}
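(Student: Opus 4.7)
The plan is to follow the argument of \cite[Theorem 6.2]{Bam20a}. The strategy combines two concentration facts for the conjugate heat kernel measure $dv = dv_{x,t;t-r^2}$: spatial concentration around the $H_n$-center $z$ from Proposition \ref{prop:304}, and concentration of the density $b = -\log H - \tfrac{n}{2}\log(4\pi r^2)$ around its $v$-mean $\NN_{x,t}(r^2) + \tfrac{n}{2}$ from \eqref{E313b}. The starting point is the identity $H(x,t,\cdot, t-r^2) = (4\pi r^2)^{-n/2} e^{-b}$, which gives $dV_{t-r^2} = (4\pi r^2)^{n/2} e^{b}\,dv$ and hence
\begin{align*}
|B_{t-r^2}(z,r)|_{t-r^2} \;=\; (4\pi r^2)^{n/2}\!\int_{B_{t-r^2}(z,r)} e^{b}\,dv.
\end{align*}
A uniform lower bound on this integral over a suitable subset will yield the desired inequality.

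Concretely, I would apply Proposition \ref{prop:304} with $L=4$ and $H=H_n$ to get $v\bigl(B_{t-r^2}(z,\,2\sqrt{H_n}\,r)\bigr) \ge 3/4$. Simultaneously, Chebyshev's inequality applied to \eqref{E313b} gives $v\bigl(\{b \ge \NN_{x,t}(r^2) + \tfrac{n}{2} - 2\sqrt{n}\}\bigr) \ge 3/4$. Intersecting these two sets produces a measurable set $A \subset B_{t-r^2}(z,\,2\sqrt{H_n}\,r)$ with $v(A) \ge 1/2$ on which the pointwise lower bound $b \ge \NN_{x,t}(r^2) + \tfrac{n}{2} - 2\sqrt{n}$ holds. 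Substituting into the identity above gives
\begin{align*}
|B_{t-r^2}(z,\,2\sqrt{H_n}\,r)|_{t-r^2} \;\ge\; \tfrac{1}{2}(4\pi r^2)^{n/2}\, e^{\NN_{x,t}(r^2) + n/2 - 2\sqrt{n}} \;\ge\; c(n)\,e^{\NN_{x,t}(r^2)}\,r^n,
\end{align*}
which is the conclusion up to a purely dimensional change in the ball radius.

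The main obstacle lies in reconciling the fact that the natural concentration radius produced by the argument is $2\sqrt{H_n}\,r$ rather than $r$; since $H_n=(n-1)\pi^2/2+4$ depends only on $n$, the factor $(2\sqrt{H_n})^{-n}$ is absorbed into the constant $c(n)$ in the final estimate. All the analytic inputs needed — existence and concentration of $H_n$-centers, the second moment bound \eqref{E313b} on $b$, and the explicit representation of $H$ in terms of $b$ — have been established earlier in Section 3 in the Ricci shrinker setting, so no new technique is required beyond a faithful transcription of \cite[Theorem 6.2]{Bam20a}.
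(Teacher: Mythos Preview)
Your approach is exactly the one the paper intends: it cites \cite[Theorem 6.2]{Bam20a} verbatim, and the inputs you invoke (Proposition~\ref{prop:304} and the variance bound \eqref{E313b}) are precisely those used there. The body of your argument is correct and yields
\[
|B_{t-r^2}(z,\,2\sqrt{H_n}\,r)|_{t-r^2}\;\ge\;c(n)\,e^{\NN_{x,t}(r^2)}\,r^n.
\]

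The gap is in your final paragraph. A volume \emph{lower} bound on the larger ball $B(z,2\sqrt{H_n}\,r)$ does not imply a lower bound on the smaller ball $B(z,r)$; monotonicity of volume goes the wrong way, and there is no doubling estimate available at this point (Theorem~\ref{thm:volume4} comes later and depends on this result through Theorem~\ref{thm:heatupper2}). So the sentence ``the factor $(2\sqrt{H_n})^{-n}$ is absorbed into $c(n)$'' is not a valid step. In fact, the argument cannot reach radius $r$: Proposition~\ref{prop:304} only gives useful concentration on $B(z,\sqrt{LH_n}\,r)$ for $L>1$, and $H_n>1$, so the radius is always strictly larger than $r$.

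This is not a defect in your method but rather an imprecision in the stated radius. Bamler's Theorem~6.2 is stated with radius $\sqrt{2H_n}\,r$, and that is the form actually used downstream (the proof of Theorem~\ref{thm:heatupper2} follows \cite[Theorem 7.1]{Bam20a}, which only invokes the $\sqrt{2H_n}\,r$ version). Your argument, stopped at the displayed inequality above, proves exactly what is needed; simply drop the claimed reduction to radius $r$.
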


Next, we recall the following upper bound estimate of the heat kernel proved in \cite[Theorem 15]{LW20}, which has already been used in the last section. 

\begin{thm}\label{thm:heatupper1}
For any $x,y \in M$ and $s<t<1$, we have
\begin{align}\label{E401}
H(x,t,y,s) \le \frac{e^{-\boldsymbol{\mu}}}{\lc 4\pi (t-s)\rc^{\frac n 2}}.
\end{align}
\end{thm}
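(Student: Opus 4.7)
The plan is to derive the pointwise heat kernel upper bound from the uniform log--Sobolev inequality of \cite[Theorem 1]{LW20}, which provides $\boldsymbol{\mu}(g(r),\tau)\ge\boldsymbol{\mu}$ at every $r<1$ and every scale $\tau>0$, via the classical Davies--Gross ultracontractivity argument adapted to an evolving noncompact metric.

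First, I would fix $y\in M$, $s<1$, and set $u(x,r):=H(x,r,y,s)$ for $r\in(s,1)$, so that $\square u=0$, $u>0$, and $\int_M u\,dV_r\le 1$ by \eqref{E301a}. Since $H(x,t,y,s)\le\sup_{x\in M}u(x,t)$, it suffices to prove $\|u(\cdot,t)\|_{L^\infty(g(t))}\le e^{-\boldsymbol{\mu}}/[4\pi(t-s)]^{n/2}$. The plan is to track $\|u(\cdot,r)\|_{L^{p(r)}}$ for a smooth increasing exponent $p:(s,t]\to[1,\infty]$ with $p(s^+)=1$ and $p(t^-)=\infty$. Differentiating this norm using $\partial_r u=\Delta u$ and $\partial_r dV_r=-R\,dV_r$ produces
\begin{align*}
\frac{d}{dr}\int_M u^p\,dV_r = \int_M \bigl[-p(p-1)u^{p-2}|\nabla u|^2 - Ru^p + p'(r)u^p\log u\bigr]\,dV_r.
\end{align*}
After the substitution $w:=u^{p/2}$ this becomes a combination of $\int|\nabla w|^2$, $\int Rw^2$ and $\int w^2\log w^2$. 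Applying the Perelman--type log--Sobolev inequality on $(M,g(r))$ with constant $\boldsymbol{\mu}$ at a matched scale $\tau(r)>0$ chosen to absorb the gradient and scalar--curvature terms, and making the Gross choice $p(r)=(t-s)/(t-r)$, yields a differential inequality for $F(r):=\|u(\cdot,r)\|_{L^{p(r)}}$ whose integration from $s^+$ to $t$ produces exactly the claimed prefactor $e^{-\boldsymbol{\mu}}/[4\pi(t-s)]^{n/2}$, using $F(s^+)\le 1$ from \eqref{E301a}.

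The main obstacle is making this formal computation rigorous on the noncompact shrinker. Because $u(\cdot,s)=\delta_y$ is singular, I would run the argument on $[s+\ep,t]$ for $\ep>0$ small (where $u(\cdot,s+\ep)$ is smooth and bounded, and $\|u(\cdot,s+\ep)\|_{L^1}\le 1$) and let $\ep\searrow 0$. Because integration by parts is needed in a geometry that itself is evolving and unbounded, I would carry out every computation against the cutoff $\phi^r$ from \eqref{E204}, control the boundary terms using the derivative bounds \eqref{E205a}--\eqref{E205d} together with the weighted volume estimates of Lemma \ref{L201} and Lemma \ref{L202}, and then pass $r\to\infty$. The delicate point is keeping $u^{p(r)}$ in a weighted $L^2$ space so that the Gaussian decay of $u$ inherited from Lemma \ref{lem:302} dominates the growing exponent $p(r)$; the maximum principle Theorem \ref{T201} is what licenses the required a priori integrability along $[s+\ep,t]$.
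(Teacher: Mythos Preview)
Your proposal is correct and follows the same route as the cited reference: the paper does not reprove this statement but merely recalls it from \cite[Theorem 15]{LW20}, and that proof is precisely the Davies--Gross ultracontractivity argument you outline, driven by the uniform log--Sobolev bound $\boldsymbol{\mu}(g(r),\tau)\ge\boldsymbol{\mu}$ of \cite[Theorem 1]{LW20}. Your identification of the noncompactness issues (cutoffs $\phi^r$, starting at $s+\ep$) and of the key tool $R\ge 0$ needed to match the scalar-curvature coefficient against Perelman's functional is exactly what the argument in \cite{LW20} requires.
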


Instead of using the entropy $\boldsymbol{\mu}$, one can include the Nash entropy and obtain the following result; see \cite[Theorem 7.1]{Bam20a}.

\begin{thm}\label{thm:heatupper2}
For any $x,y \in M$ and $s<t<1$, we have
\begin{align}\label{E401xa}
H(x,t,y,s) \le \frac{C(n)}{(t-s)^{\frac n 2}} \exp \lc -\NN_{x,t}(t-s) \rc.
\end{align}
\end{thm}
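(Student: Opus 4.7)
The inequality \eqref{E401xa} strengthens Theorem \ref{thm:heatupper1} by replacing the global log-Sobolev constant $e^{-\boldsymbol{\mu}}$ with the source-point-adapted $e^{-\NN_{(x,t)}(t-s)}$; this is sharper because $\NN_{(x,t)}(t-s) \ge \boldsymbol{\mu}$ by Corollary \ref{cor:302}. The plan is to generalize Bamler's proof of \cite[Theorem 7.1]{Bam20a} from the compact setting to Ricci flows associated with Ricci shrinkers, using the estimates assembled in the preceding sections. Setting $\tau = t-s$ and writing $H(x,t,y,s) = (4\pi\tau)^{-n/2} e^{-b(y,s)}$, the assertion \eqref{E401xa} is equivalent to the pointwise lower bound $b(y,s) \ge \NN_{(x,t)}(\tau) - C(n)$ for every $y \in M$.

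The first step is to exploit the semigroup identity \eqref{E301} at the intermediate time $\rho = s + \tau/2$:
\[
H(x,t,y,s) = \int_M H(z,\rho,y,s)\, dv_{x,t;\rho}(z).
\]
By Proposition \ref{prop:303} there exists an $H_n$-center $(\bar{z}, \rho)$ of $(x,t)$, and by Proposition \ref{prop:305} the probability measure $dv_{x,t;\rho}$ is concentrated in $B_\rho(\bar{z}, C\sqrt{\tau})$ up to Gaussian-decaying tails in the complement.

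The core idea is that on this concentration ball, the oscillation estimate Corollary \ref{cor:303} controls $\NN_{(z,\rho)}(\rho-s)$ by $\NN_{(x,t)}(\tau)$: the $H_n$-center condition together with \eqref{E319b} yields $\NN_{(z,\rho)}(\rho-s) \ge \NN_{(x,t)}(\tau) - C(n)$ uniformly for $z \in B_\rho(\bar{z}, C\sqrt{\tau})$. Consequently, an on-diagonal version of \eqref{E401xa} applied at base point $(z,\rho)$ on the shorter time interval $[s,\rho]$ gives
\[
H(z,\rho,y,s) \le C(n)\,\tau^{-n/2} \exp\!\lc -\NN_{(x,t)}(\tau) \rc
\]
for all such $z$. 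Integrating against $dv_{x,t;\rho}$ and using that the tail $M \setminus B_\rho(\bar{z}, C\sqrt{\tau})$ contributes only a dimensional constant by Proposition \ref{prop:305} delivers \eqref{E401xa} at $(y,s)$.

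The principal obstacle is the apparent circularity: the on-diagonal bound used in the previous step is itself a special case of \eqref{E401xa}. Following Bamler, this is resolved by a bootstrap iterating the semigroup splitting with geometrically shrinking time steps, seeded by the crude bound \eqref{E401}; the finite oscillation of the Nash entropy across each step, furnished by the Lipschitz property of Theorem \ref{thm:T305} and Corollary \ref{cor:303}, converts the $\boldsymbol{\mu}$-dependence into $\NN_{(x,t)}(\tau)$ with only a dimensional cost at each stage. In our non-compact setting, every integration must be justified using the quadratic decay of $b$ from \eqref{E312ab} and the Gaussian tail \eqref{E312aa} of the conjugate heat kernel measure, together with the integrability bounds from Corollary \ref{cor:301}; these manipulations follow the pattern established in Proposition \ref{prop:iden1} and Lemma \ref{lem:302}.
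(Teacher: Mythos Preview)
Your bootstrap does not actually improve the constant. Trace one step: with $\rho=s+\tau/2$ and the assumed bound $H(z,\rho,y,s)\le Z(\rho-s)^{-n/2}e^{-\NN_{(z,\rho)}(\rho-s)}$, the Nash oscillation estimate \eqref{E319b} together with the Lipschitz bound from Theorem~\ref{thm:T305} gives, for $z$ in the concentration ball around the $H_n$-center $\bar z$,
\[
-\NN_{(z,\rho)}(\rho-s)\le -\NN_{(x,t)}(\tau)+C(n),
\]
and the tail is controlled by Proposition~\ref{prop:305}. Integrating against $dv_{x,t;\rho}$ therefore yields
\[
H(x,t,y,s)\le \frac{2^{n/2}\,e^{C(n)}\,Z}{\tau^{n/2}}\,e^{-\NN_{(x,t)}(\tau)},
\]
so the new constant is $2^{n/2}e^{C(n)}Z>Z$. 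Iterating with geometrically shrinking steps only compounds this factor; seeded with \eqref{E401} you obtain $C(n)^k e^{-\boldsymbol{\mu}}$ after $k$ steps, which diverges rather than converging to a dimensional constant. The semigroup identity together with Nash oscillation transfers the base point of the entropy but carries no mechanism that drives $Z$ downward.

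The paper's argument (following \cite[Theorem~7.1]{Bam20a}) is a genuine $Z\mapsto Z/2$ improvement: one assumes the bound holds with some $Z$ and shows that if $Z\ge\bar Z(n)$ then it also holds with $Z/2$. The ingredient that produces this improvement---and which is absent from your outline---is the volume lower bound at $H_n$-centers, Theorem~\ref{thm:volume 3} (inequality \eqref{E400c}): if $(z,s)$ is an $H_n$-center of $(x,t)$ then $|B_s(z,r)|_s\ge c(n)e^{\NN_{(x,t)}(r^2)}r^n$. This lower volume bound, combined with the assumed upper bound on $H$ and a mass constraint, is what forces the contradiction when $Z$ is large; the entropy oscillation \eqref{E319} is used to compare Nash entropies at different base points, but cannot by itself shrink the constant. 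You should incorporate \eqref{E400c} and recast the argument as an improvement of the optimal $Z$, as the paper indicates.
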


\begin{proof}
The proof follows almost the same as \cite[Theorem 7.1]{Bam20a}. The main idea is to improve the bound $Z$ of the estimate
\begin{align*}
H(x,t,y,s) \le \frac{Z}{(t-s)^{\frac n 2}} \exp \lc -\NN_{x,t}(t-s) \rc.
\end{align*}
Notice that such $Z$ always exists by \eqref{E401} and \eqref{E315a}, which may depend on the Ricci shrinker. Thanks to \eqref{E319} and \eqref{E400c}, we can follow the same argument as in \cite[Theorem 7.1]{Bam20a} to improve $Z$ to be $Z/2$, if $Z \ge \bar Z(n)$. 
\end{proof}

With the help of Theorem \ref{thm:T304}, Corollary \ref{cor:303} and Theorem \ref{thm:heatupper2}, we obtain the following gradient estimate of the heat kernel as \cite[Theorem 7.5]{Bam20a}, which improves \cite[Lemma 18]{LW20}.

\begin{thm}\label{thm:gra}
For any $x,y \in M$ and $s<t<1$, then
\begin{align}\label{E401xb}
\frac{|\na_x H|(x,t,y,s)}{H(x,t,y,s)} \le \sqrt{\frac{C}{t-s}} \sqrt{\log \lc \frac{C \exp \lc -\NN_{x,t}(t-s) \rc}{(t-s)^{\frac n 2} H(x,t,y,s)} \rc}
\end{align}
for some constant $C=C(n)>0$. 
\end{thm}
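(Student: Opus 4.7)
The plan is to adapt Bamler's proof of \cite[Theorem 7.5]{Bam20a} to the Ricci shrinker setting. The strategy is first to establish a Hamilton-type pointwise gradient estimate for a positive bounded heat solution $u$ on $M \times [a,t]$: if $0 < u \le U$ on this parabolic strip, then
\begin{equation*}
\frac{|\na u(x,t)|}{u(x,t)} \le \sqrt{\frac{C(n)}{t-a}}\sqrt{1 + \log \lc \frac{U}{u(x,t)} \rc}.
\end{equation*}
We then apply it to $u(x',t') := H(x',t',y,s)$ on the strip $[a,t]$ with $a := (s+t)/2$, so that $t - a = (t-s)/2$.

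To prove the general Hamilton-type estimate I would use Theorem \ref{thm:T303} via the ansatz $u/U = \Phi_{T + t' - a}(h)$. The parameter $T \ge 0$ is chosen so that $|\na h(\cdot, a)| \le 1$; the initial-time gradient estimate $|\na \log u(\cdot, a)| \le C/\sqrt{a-s}$ from \cite[Lemma 18]{LW20}, applied to $u(\cdot, a) = H(\cdot, a, y, s)$, guarantees that such a $T$, comparable to $a-s$, exists. Theorem \ref{thm:T303} then propagates $|\na h| \le 1$ up to $t' = t$, and the asymptotic expansion $\Phi^{-1}(v) \sim -2\sqrt{\log(1/v)}$ as $v \searrow 0$ combined with $\Phi'(\xi) = (4\pi)^{-1/2} e^{-\xi^2/4}$ converts this into the displayed pointwise bound on $|\na u|/u$.

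The main work is then to choose $U$ so as to recover the sharp Nash-entropy form \eqref{E401xb}, with $\exp(-\NN_{x,t}(t-s))$ rather than the cruder $\exp(-\boldsymbol{\mu})$. Theorem \ref{thm:heatupper2} provides a pointwise upper bound $U(x',t') = C(n) (t'-s)^{-n/2} \exp(-\NN_{x',t'}(t'-s))$ that depends on the basepoint, whereas the Hamilton estimate wants a uniform $U$ on $M \times [a, t]$. The Lipschitz bound on the Nash entropy from Corollary \ref{cor:303} controls the variation of $\NN_{x',t'}(t'-s)$ in $x'$: for $x'$ within Wasserstein distance of order $\sqrt{t-s}$ of $x$, $\NN_{x',t'}(t'-s)$ differs from $\NN_{x,t}(t-s)$ by at most an additive constant depending on $n$, using also the monotonicity in $\tau$ from Proposition \ref{prop:nash1}(c). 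For $x'$ far from $x$, the heat kernel itself is already small, by the integral estimates of Theorem \ref{thm:T304} and Proposition \ref{prop:305}.

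The main obstacle is exactly this tension between needing a uniform $U$ and wanting the sharp factor $\exp(-\NN_{x,t}(t-s))$. As in the proof of Theorem \ref{thm:heatupper2}, the resolution is an iteration-contradiction argument: if \eqref{E401xb} fails with some large constant, one locates a worse point nearby and iterates until reaching a regime where local analysis, using the initial-time gradient estimate and the $L^2$ integral bound \eqref{eqn:int2}, produces a contradiction. In the shrinker setting one must, as usual, confine the iteration to a compact spacetime region where the scalar curvature is bounded so that the local estimates apply uniformly; the argument then concludes as in \cite{Bam20a}.
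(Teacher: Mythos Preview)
Your plan starts in the right place—a Hamilton-type inequality via Theorem~\ref{thm:T303}—but both the setup and the sharpening step have gaps. The detour through $T>0$ is unnecessary: Theorem~\ref{thm:T303} with $T=0$ on $[(s+t)/2,t]$, applied to $H(\cdot,\cdot,y,s)/(2U)$ with $U$ the supremum of $H(\cdot,\cdot,y,s)$ over that strip, already yields $|\nabla_x H|/H\le C(n)(t-s)^{-1/2}\sqrt{\log(2U/H)}$ with no initial gradient hypothesis at all. Moreover, the bound $|\nabla_x\log H(\cdot,a,y,s)|\le C/\sqrt{a-s}$ that you attribute to \cite[Lemma~18]{LW20} is simply false: already for the Euclidean heat kernel one has $|\nabla_x\log H|=|x-y|/(2(a-s))$, which is unbounded in $x$. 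Whatever \cite[Lemma~18]{LW20} provides must itself carry a logarithmic factor (it is exactly the estimate that Theorem~\ref{thm:gra} sharpens), so it cannot bootstrap a choice of $T$ in the way you describe.

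The more serious gap is the passage from the global $U$ to the basepoint quantity $A=C(n)(t-s)^{-n/2}e^{-\NN_{x,t}(t-s)}$. The iteration-contradiction scheme you propose has no clear mechanism. In Theorems~\ref{thm:heatupper2} and~\ref{thm:heatupper} the reproduction formula converts a large \emph{value} of $H$ at $(x,t)$ into a large value at an earlier base point $(x_1,t_1)$, which is what drives the iteration. For the gradient, the semigroup identity reads $\nabla_xH(x,t,y,s)=\int\nabla_xH(x,t,z,t_1)\,H(z,t_1,y,s)\,dV_{t_1}(z)$: the base point $(x,t)$ stays fixed and only the target moves, so there is no evident way to manufacture a new base point with a worse gradient ratio. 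Nor does a pointwise violation of~\eqref{E401xb} at a single $y$ contradict the $L^2$ bound~\eqref{eqn:int2}, which is an integral over $y$. The paper's route, following \cite[Theorem~7.5]{Bam20a}, is direct and involves no such iteration: after the Hamilton step one uses Theorem~\ref{thm:heatupper2} at the maximizing point together with Corollary~\ref{cor:303} to bound $\log(U/H(x,t,y,s))$ by a dimensional multiple of $\log\big(C(n)A/H(x,t,y,s)\big)$.
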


With the gradient estimate \eqref{E401xb}, one obtains the following non-expanding estimate as \cite[Theorem 8.1]{Bam20a}. Notice that \eqref{E401xc} generalizes the global volume estimate Lemma \ref{L202}.

\begin{thm}\label{thm:volume4}
For any $x \in M$, $t<1$ and $r \ge 0$, we have
\begin{align}\label{E401xc}
|B_t(x,r)|_t \le C(n) \exp\lc \NN_{x,t}(r^2) \rc r^n \le C(n)r^n.
\end{align}
\end{thm}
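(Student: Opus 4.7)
The plan is to adapt the strategy of \cite[Theorem 8.1]{Bam20a}. Fix $(x_0,t_0) \in M \times (-\infty,1)$ and $r>0$, set $s := t_0 - r^2$, and write
\[
M_0 \;:=\; C_1(n)\, r^{-n}\exp\bigl(-\NN_{x_0,t_0}(r^2)\bigr),
\]
which, by Theorem \ref{thm:heatupper2}, bounds $H(x_0,t_0,y,s)$ pointwise for every $y \in M$. Using the $\sqrt{n/(2r^2)}$-Lipschitz property of $\NN_s^*$ from Theorem \ref{thm:T305}, the same upper bound — enlarged by a factor $e^{\sqrt{n/2}}$ — holds for $H(x,t_0,y,s)$ uniformly in $x \in B_{t_0}(x_0,r)$ and $y \in M$; call this enlarged constant $M_0^+ := C_2(n) M_0$. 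The end-game of the proof will be to find a single $y^* \in M$ such that $H(\cdot,t_0,y^*,s)$ is bounded below by $c(n)M_0$ throughout $B_{t_0}(x_0,r)$, for then
\[
1 \;\ge\; \int_M H(x,t_0,y^*,s)\,dV_{t_0}(x) \;\ge\; c(n)M_0\,|B_{t_0}(x_0,r)|_{t_0}
\]
by \eqref{E301a}, yielding $|B_{t_0}(x_0,r)|_{t_0}\le C(n)\,r^n\exp(\NN_{x_0,t_0}(r^2))$, and the second inequality of the theorem follows from Corollary \ref{cor:302}.

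The main obstacle is to produce a point $y^* \in M$ at which the conjugate heat kernel is comparable to its maximum upper bound, i.e.
\[
H(x_0,t_0,y^*,s)\;\ge\;c_3(n)\,M_0.
\]
I will extract such a point \emph{from the definition of the Nash entropy itself}. Recall from Definition \ref{def:nash} that
\[
\NN_{x_0,t_0}(r^2) \;=\; -\tfrac{n}{2}\log(4\pi r^2) \;+\; \int_M (-\log H(x_0,t_0,y,s))\,dv(y) \;-\; \tfrac{n}{2},
\]
where $dv = H(x_0,t_0,\cdot,s)\,dV_s$ is a probability measure by \eqref{E301b}. If hypothetically $H(x_0,t_0,y,s) \le c\,M_0$ for \emph{every} $y \in M$, then $-\log H \ge -\log(c M_0) = -\log c - \log C_1 + n\log r + \NN_{x_0,t_0}(r^2)$; integrating against the probability measure $dv$ and cancelling $\NN_{x_0,t_0}(r^2)$ on both sides yields $c \ge \exp(-\tfrac{n}{2}(1+\log(4\pi)))/C_1(n) =: c_3(n)$. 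Contrapositively, taking $c$ strictly smaller than $c_3(n)$ produces the desired $y^*$.

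The remaining step is standard gradient propagation: Theorem \ref{thm:gra}, combined with the uniform upper bound $M(x) \le M_0^+$, shows that $h(x) := \sqrt{\log(M_0^+/H(x,t_0,y^*,s))}$ satisfies $|\nabla h|_{g(t_0)} \le \sqrt{C_4(n)}/(2r)$. Integrating this Lipschitz bound along a minimizing $g(t_0)$-geodesic from $x_0$ to $x \in B_{t_0}(x_0,r)$ (which stays in the closed ball by minimality) gives
\[
h(x) \;\le\; h(x_0) + \tfrac{1}{2}\sqrt{C_4(n)} \;\le\; \sqrt{\log(C_2(n)/c_3(n))} + \tfrac{1}{2}\sqrt{C_4(n)},
\]
so $H(x,t_0,y^*,s) \ge c_6(n)\,M_0$ throughout $B_{t_0}(x_0,r)$, completing the proof. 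The argument is clean because the Nash-entropy identity handles the non-compactness for free; no a priori volume bound or iterative bootstrap over scales is needed, and the only subtle input beyond the Gaussian upper bound is the gradient estimate of Theorem \ref{thm:gra}.
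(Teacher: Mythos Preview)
Your proposal is correct and follows essentially the same route as the paper, which simply invokes \cite[Theorem~8.1]{Bam20a}: use the Nash entropy identity (equivalently, Jensen applied to $\log H$ against the probability measure $dv$) to locate a point $y^*$ where the conjugate heat kernel is comparable to its maximal upper bound from Theorem~\ref{thm:heatupper2}, then propagate this lower bound over $B_{t_0}(x_0,r)$ via the gradient estimate of Theorem~\ref{thm:gra} combined with the Lipschitz control on $\NN_s^*$, and finally integrate against $dV_{t_0}$ using \eqref{E301a}. The only cosmetic point is to take $M_0^+$ \emph{strictly} larger than the pointwise upper bound (e.g.\ $M_0^+ = 2e^{\sqrt{n/2}}M_0$) so that $h>0$ and the chain rule for $\nabla h$ is valid along the entire geodesic.
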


Before we prove more refined heat kernel estimates, we first prove a series of lemmas.

\begin{lem}[Distance comparison]\label{lem:401}
For any $\delta \in (0,1)$, there exists a constant $L_1=L_1(n,\delta)>1$ such that
\begin{align}\label{E402a}
d_t(x,p) \le d_s(x,p)+L_1\le \frac{L_1}{\sqrt{1-t}}(d_t(x,p)+1)+L_1^2
\end{align}
for any $x \in M$ and $-\delta^{-1} \le s \le t<1$.
\end{lem}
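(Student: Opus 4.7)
The plan is to deduce both inequalities from the quadratic growth estimate \eqref{E203} combined with the \emph{opposite} time-monotonicity of $F$ and $f$ along the associated Ricci flow. Throughout I use that, under the hypothesis $-\delta^{-1}\le s\le t<1$, both $1-s$ and $1-t$ are bounded by $1+\delta^{-1}$, so all constants depending only on $n$ and $\delta$ are under control. Write $\bar\tau=1-t$ as usual.

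For the first inequality $d_t(x,p)\le d_s(x,p)+L_1$, I will use \eqref{E202a}, which gives $\partial_t F=-\bar\tau R\le 0$ since $R\ge 0$ on any Ricci shrinker. Hence $F(x,\cdot)$ is non-increasing, so $F(x,t)\le F(x,s)$ for $s\le t$. Chaining the lower bound on $F(x,t)$ with the upper bound on $F(x,s)$ from \eqref{E203} gives
\begin{equation*}
(d_t(x,p)-5n(1-t)-4)_+ \le 2\sqrt{F(x,t)} \le 2\sqrt{F(x,s)} \le d_s(x,p)+\sqrt{2n(1-s)},
\end{equation*}
and the first inequality follows with an additive constant depending only on $n$ and $\delta$.

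For the second inequality I need to bound $F(x,s)$ above in terms of $F(x,t)$, which is not directly possible since $F$ decreases in $t$. The key is the opposite monotonicity of $f$: by \eqref{E202xa}, $\partial_t f=|\nabla f|^2\ge 0$, so $f(x,s)\le f(x,t)$ for $s\le t$, hence
\begin{equation*}
F(x,s)=(1-s)\,f(x,s)\le (1-s)\,f(x,t)=\frac{1-s}{1-t}\,F(x,t).
\end{equation*}
Combining this with the lower bound on $F(x,s)$ and the upper bound on $F(x,t)$ in \eqref{E203} yields
\begin{equation*}
d_s(x,p)-5n(1-s)-4 \le \sqrt{\tfrac{1-s}{1-t}}\,\bigl(d_t(x,p)+\sqrt{2n(1-t)}\bigr).
\end{equation*}
Absorbing all $(n,\delta)$-controlled quantities into constants gives $d_s(x,p)\le \frac{C(n,\delta)}{\sqrt{1-t}}(d_t(x,p)+1)+C(n,\delta)$; choosing $L_1=L_1(n,\delta)$ large enough that $L_1^2-L_1$ dominates this additive constant and $L_1\ge C(n,\delta)$ delivers the second inequality in the precise form stated.

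\textbf{Main obstacle.} There is no genuine analytic obstacle—the proof is a direct consequence of the explicit shrinker identities \eqref{E202xa}, \eqref{E202a} together with Lemma~\ref{L201}. The conceptual content lies entirely in exploiting the asymmetry between $F$ (decreasing in $t$) and $f$ (increasing in $t$): the former gives the clean additive comparison $d_t\le d_s+L_1$, while the latter, at the cost of a factor $\sqrt{(1-s)/(1-t)}$, produces the complementary bound, which necessarily degenerates like $(1-t)^{-1/2}$ as $t\nearrow 1$.
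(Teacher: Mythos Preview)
Your proof is correct and follows essentially the same route as the paper: both derive the two-sided comparison $\frac{1-t}{1-s}F(x,s)\le F(x,t)\le F(x,s)$ and then plug into Lemma~\ref{L201}. The only minor difference is that for the lower bound on $F(x,t)$ the paper integrates the differential inequality $\partial_t F=-(1-t)R\ge -F/(1-t)$ (from \eqref{E202a} and \eqref{E202c}), whereas you obtain the identical bound more directly from the monotonicity of $f$ via \eqref{E202xa}.
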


\begin{proof}
From \eqref{E202a} and \eqref{E202c}, we have
\begin{align*}
-\frac{F}{1-t} \le \partial_t F=-(1-t) R \le 0.
\end{align*}
Therefore, for any $x \in M$, 
\begin{align}\label{E402b}
\frac{1-t}{1-s}F(x,s) \le F(x,t) \le F(x,s).
\end{align}
Consequently, \eqref{E402a} follows from the combination of Lemma \ref{L201} and \eqref{E402b}.
\end{proof}

As an application of the distance comparison, we have the following lower bound of the heat kernel.

\begin{thm}\label{thm:lower}
For any $K>1$, $\delta \in (0,1)$ and $A>0$, there exists a constant $C=C(n,K,\delta,A)>1$ satisfying the following property.

Suppose $-\delta^{-1} \le s <t \le 1-\delta$ and $d_t(x,p) \le K$, then 
\begin{align}\label{RA18a}
H(x,t,y,s) \ge \frac{C^{-1}}{(t-s)^{\frac n 2}} \exp \lc-\frac{d_s^2(x,y)}{C^{-1}(t-s)} \rc.
\end{align}
\end{thm}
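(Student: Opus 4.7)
The strategy is to invoke Perelman's reduced-distance lower bound on the heat kernel, which -- as the paper's remark following Theorem \ref{thm:103} indicates -- is essentially the content of the Gaussian lower bound already contained in \cite{LW20} in another form, and then to upper-bound the reduced distance by evaluating the $\mathcal{L}$-functional on a suitable test curve. Letting $\ell_{(x,t)}(\cdot,\cdot)$ denote Perelman's reduced distance based at $(x,t)$ and setting $\tau:=t-s$, the monotonicity of Perelman's reduced volume (adapted to the Ricci shrinker setting) yields
\[
H(x,t,y,s) \ge \frac{1}{(4\pi\tau)^{n/2}}\exp\bigl(-\ell_{(x,t)}(y,\tau)\bigr).
\]
The task then reduces to proving $\ell_{(x,t)}(y,\tau) \le C_1 + C_2\, d_s^2(x,y)/\tau$ with $C_1, C_2$ depending only on $n, K, \delta, A$.

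To estimate $\ell$, I will take $\gamma:[0,\tau]\to M$ to be a minimizing $g(s)$-geodesic from $x$ to $y$ parameterized with constant $g(s)$-speed $d_s(x,y)/\tau$, and plug it into
\[
\ell_{(x,t)}(y,\tau) \le \frac{1}{2\sqrt{\tau}}\int_0^{\tau}\sqrt{\tau'}\bigl(R(\gamma(\tau'),\,t-\tau') + |\dot\gamma(\tau')|^2_{g(t-\tau')}\bigr)\,d\tau'.
\]
The critical input is a distance-dependent scalar-curvature estimate valid throughout the relevant compact subset of spacetime: the shrinker identity $R+|\nabla f|^2 = f$ gives $R\le f$, the quadratic $f$-growth from Lemma \ref{L201} yields $f\le C(1+d(\cdot,p)^2)$, and the scaling relation $g(t') = (1-t')(\psi^{t'})^* g$ translates this to $R_{g(t')}(\cdot) \le C(n,\delta)(1+d_{t'}^2(\cdot,p))$ uniformly for $t'\in[-\delta^{-1},1-\delta]$. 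By Lemma \ref{lem:401} the curve $\gamma$ stays within distance $K+L_1+d_s(x,y)+O(1)$ of $p$ for every intermediate time, so $R$ along $\gamma$ is bounded by $C(1+d_s^2(x,y))$. The kinetic-energy term contributes $\lesssim d_s^2(x,y)/\tau$ once one accounts for the bounded metric distortion on the same compact region (which in turn is controlled by a Ricci two-sided bound derived from the shrinker equation $Rc+\emph{Hess}\,f = g/2$ and the $f$-bound).

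Assembling the contributions, the $R$-part of $\ell$ is at most $C\tau(1+d_s^2(x,y))$ and the kinetic part at most $Cd_s^2(x,y)/\tau$, so $\ell_{(x,t)}(y,\tau) \le C_1 + C\tau d_s^2(x,y) + C d_s^2(x,y)/\tau$; since $\tau \le \delta^{-1}+1$, the cross-term $\tau d_s^2$ is absorbed into $d_s^2/\tau$ at the cost of enlarging constants, yielding the desired bound on $\ell$. Substituting into the reduced-distance lower bound proves \eqref{RA18a}. The main obstacle is the scalar-curvature control along $\gamma$ in the absence of any a priori curvature bound on the Ricci shrinker itself; this is resolved precisely by the shrinker identity $R\le f$, which converts the curvature question into a purely distance-based question then handled by the standard $f$-growth estimates of Lemma \ref{L201} together with the time-slice distance comparison of Lemma \ref{lem:401}.
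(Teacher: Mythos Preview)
Your reduced-distance strategy is reasonable and the scalar-curvature contribution to $\ell$ is handled correctly via $(1-t')^2 R \le F$ from \eqref{E202c} together with Lemma~\ref{L201} and Lemma~\ref{lem:401}. The genuine gap is in the kinetic-energy term. You assert that $|\dot\gamma(\tau')|^2_{g(t-\tau')}$ is comparable to $|\dot\gamma(\tau')|^2_{g(s)}$ via ``a Ricci two-sided bound derived from the shrinker equation $Rc+\mathrm{Hess}\,f=g/2$ and the $f$-bound,'' but this inference fails: the shrinker equation only trades $Rc$ for $\mathrm{Hess}\,f$, and there is no a~priori bound on $\mathrm{Hess}\,f$---the entire paper operates without any curvature assumption, and general Ricci shrinkers may have unbounded sectional (hence Ricci) curvature. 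Even on a fixed compact region the $|Rc|$-bound would depend on the particular shrinker, not on $n,K,\delta,A$ alone; and since your $g(s)$-geodesic $\gamma$ sweeps a region of $g(s)$-diameter comparable to $d_s(x,y)$, the metric-distortion ratio $|\dot\gamma|^2_{g(t-\tau')}/|\dot\gamma|^2_{g(s)}$ could grow uncontrollably with $d_s(x,y)$, destroying the bound $\ell \lesssim 1 + d_s^2(x,y)/\tau$.

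The paper avoids this problem by quoting an existing lower bound (formula (203) of \cite{LW20}),
\[
H(x,t,y,s)\ \ge\ \frac{C_1^{-1}}{(t-s)^{n/2}}\exp\!\Big({-}\frac{d_t^2(x,y)}{3(t-s)}-\frac{4(t-s)}{3(1-t)^2}\,F(y,t)\Big),
\]
whose exponent contains $d_t^2(x,y)$ (not $d_s^2$) and the potential $F(y,t)$. Both quantities are then controlled by $C(1+d_s^2(x,y))$ using only the one-sided comparison $d_t(\cdot,p)\le d_s(\cdot,p)+L_1$ from Lemma~\ref{lem:401} (routed through $p$, using $d_s(x,p)\le C(n,K,\delta)$) and the monotonicity $F(\cdot,t)\le F(\cdot,s)$ from \eqref{E202a}---no curvature input is needed anywhere. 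If you wish to repair your approach, you would need a test curve for $\mathcal L$ whose kinetic energy is controllable without Ricci bounds; a $g(s)$-geodesic does not do this.
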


\begin{proof}
In the proof, all constants $C_i>1$ depend on $n,K,\delta$ and $A$.

It follows from \cite[Formula (203)]{LW20} that
\begin{align}\label{RA18b}
H(x,t,y,s) \ge \frac{C_1^{-1}}{(t-s)^{\frac n 2}} \exp \lc-\frac{d_t^2(x,y)}{3(t-s)}-\frac{4(t-s)}{3(1-t)^2}F(y,t) \rc.
\end{align}
From \eqref{E402a}, we have
\begin{align}\label{RA18c}
d_t^2(x,y) \le 2(d_t^2(x,p)+d_t^2(p,y)) \le C_2(d_s^2(x,p)+d_s^2(p,y)+1) \le C_3(d_s^2(x,y)+1),
\end{align}
where we have used $d_s(x,p) \le C(d_t(x,p)+1) \le C(K+1)$ by \eqref{E402a}.

In addition, since $F$ is decreasing with respect to $t$,
\begin{align}\label{RA18d}
F(y,t) \le F(y,s) \le C_4(d_s^2(p,y)+1) \le C_5(d_s^2(x,y)+1),
\end{align}
by Lemma \ref{L201}. Combining \eqref{RA18b}, \eqref{RA18c} and \eqref{RA18d}, it is easy to see \eqref{RA18a} holds for some $C$.
\end{proof}

\begin{lem}\label{lem:402}
For any $K>1$, $\delta \in (0,1)$ and $A>0$, there exist constants $L_2=L_2(n,K,\delta,A)>1$ and $L_3=L_3(n,\delta, A)>1$ satisfying the following property.

Suppose $-\delta^{-1} \le s <t \le 1-\delta$ and $d_t(p,x) \le K$, then for any $H_n$-center $(z,s)$ of $(x,t)$, we have
\begin{align}\label{E403a}
d_s(x,z) \le L_2 \sqrt{t-s}
\end{align}
and 
\begin{align}\label{E403aa}
d_s(z,p) \le d_t(x,p)+L_3 \sqrt{t-s}. 
\end{align}
\end{lem}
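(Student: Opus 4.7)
The plan is to prove (i) first and then deduce (ii) from it using Wasserstein monotonicity applied with $p$ in the role of $x$. For (i), the key step will be to bound the second moment
\[M := \int_M d_s^2(x,y)\, dv_{x,t;s}(y) \le C(n,K,\delta,A)(t-s).\]
Once this is in hand, combining with the $H_n$-center variance bound $\int d_s^2(z,y)\,dv_{x,t;s}\le H_n(t-s)$ and the elementary inequality $d_s^2(x,z)\le 2\int d_s^2(x,y)\,dv_{x,t;s}+2\int d_s^2(z,y)\,dv_{x,t;s}$ yields $d_s^2(x,z)\le (2C+2H_n)(t-s)$, i.e.\ \eqref{E403a} with $L_2=\sqrt{2C+2H_n}$.

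To produce the moment bound I will combine four ingredients: the Gaussian lower bound on $H$ from Theorem \ref{thm:lower} (which applies under our hypothesis $d_t(x,p)\le K$), the uniform upper bound $H\le C(n,A)(t-s)^{-n/2}$ from Theorem \ref{thm:heatupper2}, the volume floor $|B_s(z,\sqrt{t-s})|_s\ge c\,e^{-A}(t-s)^{n/2}$ from Theorem \ref{thm:volume 3} with $r^2=t-s$, and the Gaussian concentration of $v_{x,t;s}$ around $z$ from Proposition \ref{prop:305}. These tools together let one relate the behaviour of $v_{x,t;s}$ near $x$ and near $z$ and rule out the possibility that $d_s(x,z)\gg\sqrt{t-s}$: the upper bound on $H$ combined with the concentration estimate forces most of the mass of $v_{x,t;s}$ to lie in $B_s(z,O(\sqrt{t-s}))$, whose volume is controlled from below by Theorem \ref{thm:volume 3}, while the Gaussian lower bound on $H$ forces the same mass to satisfy a Gaussian-type density estimate in $d_s(x,\cdot)$. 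Quantifying the incompatibility when $d_s(x,z)$ is large gives the desired bound on $M$.

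For (ii), I apply (i) with $p$ in place of $x$: since $d_t(p,p)=0\le 1$, Theorem \ref{thm:lower} is available with $K=1$, so (i) in that case reads $d_s(p,z_p)\le L_3'\sqrt{t-s}$ for any $H_n$-center $(z_p,s)$ of $(p,t)$, with $L_3'=L_3'(n,\delta,A)$ independent of $K$. The Wasserstein monotonicity of Proposition \ref{prop:301}, applied to the conjugate heat flows $v_{x,t;\cdot}$ and $v_{p,t;\cdot}$ starting from the common time $t$, gives $d_{W_1}^s(v_{x,t;s},v_{p,t;s})\le d_t(x,p)$. Using the $H_n$-center Wasserstein estimate \eqref{eq:hdis} at both $z$ and $z_p$ and the triangle inequality for $W_1$,
\[d_s(z,z_p)=d_{W_1}^s(\delta_z,\delta_{z_p})\le \sqrt{H_n(t-s)}+d_t(x,p)+\sqrt{H_n(t-s)},\]
and one more triangle inequality $d_s(z,p)\le d_s(z,z_p)+d_s(z_p,p)$ then delivers \eqref{E403aa} with $L_3=L_3'+2\sqrt{H_n}$, which indeed depends only on $n,\delta,A$.

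The main obstacle is the moment bound in (i): in the compact Ricci flow setting of \cite{Bam20a} one could appeal directly to a Gaussian upper bound on $H$, but such a bound has not yet been established at this point of the paper, so the argument for (i) must route through the pointwise Gaussian lower bound on $H$ together with the concentration and volume estimates near the $H_n$-center. Once (i) is secured, (ii) follows by routine manipulations with Wasserstein monotonicity and the triangle inequality.
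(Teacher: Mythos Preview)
Your argument for \eqref{E403aa} is correct and coincides with the paper's: apply the first part with $x=p$ (so $K$ may be taken to be $1$ and $L_3'$ depends only on $n,\delta,A$), then combine the monotonicity of $d_{W_1}$ with the $H_n$-center bound \eqref{eq:hdis} via the triangle inequality.

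For \eqref{E403a}, however, there is a genuine gap. The four ingredients you list do not combine to give the second-moment bound, nor to rule out $d_s(x,z)\gg\sqrt{t-s}$. The concentration estimate (Proposition~\ref{prop:305}) and the volume bound (Theorem~\ref{thm:volume 3}) are both statements \emph{about the ball around $z$}; the Gaussian lower bound on $H$ gives only a pointwise \emph{lower} density, which on $B_s(z,L\sqrt{t-s})$ merely says $H\ge$ (something exponentially small in $D^2$) when $d_s(x,z)=D\sqrt{t-s}$---consistent with any value of $D$. Likewise, the pointwise upper bound $H\le C(t-s)^{-n/2}$ carries no information about $d_s(x,\cdot)$. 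So no ``incompatibility'' arises from your list when $D$ is large.

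What is actually needed is a definite lower bound on $v_{x,t;s}\big(B_s(x,O(\sqrt{t-s}))\big)$, i.e.\ concentration around $x$. The paper obtains this directly from \cite[Theorem~19]{LW20}, which (under the hypothesis $d_t(p,x)\le K$, $-\delta^{-1}\le s<t\le 1-\delta$) gives
\[
v_{x,t;s}\big(M\setminus B_s(x,r\sqrt{t-s})\big)\le C(n,K,\delta,A)\,e^{-r^2/5},
\]
and then intersects this with the concentration around $z$ from Proposition~\ref{prop:305}. Your route can be repaired, but the volume floor must be taken at $x$, not at $z$: use Lemma~\ref{lem:401} to bound $d_s(x,p)$, hence $R\le C'(n,K,\delta)$ on a ball $B_s(x,r_0)$, apply no-local-collapsing (Theorem~\ref{thm:volume1}) to get $|B_s(x,r_0)|_s\ge c\,r_0^n$, and then integrate the Gaussian lower bound of Theorem~\ref{thm:lower} over this ball to obtain $v_{x,t;s}(B_s(x,r_0))\ge c''(n,K,\delta,A)$ with $r_0\le\sqrt{t-s}$. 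Comparing with Proposition~\ref{prop:305} then forces $d_s(x,z)\le L_2\sqrt{t-s}$. Note also that bounding the moment $M$ and bounding $d_s(x,z)$ are equivalent (each controls the other up to $H_n(t-s)$), so the detour through $M$ is unnecessary.
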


\begin{proof}
Since $d_t(p,x) \le K$, it follows from \cite[Theorem 19]{LW20} that 
\begin{align}\label{E403b}
v_{x,t;s} \lc M \setminus B_s(x,r\sqrt{t-s}) \rc \le C_2 \exp \lc -\frac{r^2}{5} \rc
\end{align}
for any $r \ge 1$ and $C_2=C_2(n, K,\delta, A)>0$. On the other hand, by Proposition \ref{prop:305}, we have
\begin{align}\label{E403c}
v_{x,t;s} \lc M \setminus B_s(z, r\sqrt{t-s}) \rc \le C(n) \exp \lc - \frac{r^2}{5} \rc
\end{align}
for any $r \ge 0$.
Combining \eqref{E403b} and \eqref{E403c}, \eqref{E403a} follows immediately. 

If we assume $(z',s)$ to be an $H_n$-center of $(p,t)$, then \eqref{E403a} indicates that
\begin{align}\label{E403ab}
d_s(z',p) \le C_3(n, \delta, A) \sqrt{t-s}.
\end{align}
Then it follows from Proposition \ref{prop:301} and \eqref{eq:hdis} that
\begin{align*}
d_s(z,p) \le& d_s(z,z')+d_s(z',p) \\
\le& d_{W_1}^s(\delta_z,\delta_{z'})+C_3 \sqrt{t-s} \\
\le& d_{W_1}^s(v_{x,t;s},v_{p,t;s})+d_{W_1}^s(\delta_z,v_{x,t;s})+d_{W_1}^s(\delta_{z'},v_{p,t;s})+C_3 \sqrt{t-s} \notag\\
\le& d_t(x,p)+2\sqrt{H_n(t-s)}+C_3 \sqrt{t-s}.
\end{align*}
Therefore, \eqref{E403aa} holds for $L_3=C_3+2\sqrt{H_n}$.
\end{proof}

Next, we prove the following rough heat kernel estimate. 

\begin{prop}\label{prop:401}
For any $K>1$, $\delta \in (0,1)$ and $A>0$, there exists a constant $L_4=L_4(n,K,\delta,A)>1$ satisfying the following property.

Suppose $-\delta^{-1} \le s <t \le 1-\delta$ and $d_s(x,p)+d_s(y,p) \le K$, then 
\begin{align}\label{E404a}
H(x,t,y,s) \le \frac{L_4}{(t-s)^{\frac n 2}} \exp \lc-\frac{d_s^2(x,y)}{L_4(t-s)} \rc.
\end{align}
\end{prop}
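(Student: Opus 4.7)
The strategy is a dichotomy on $d := d_s(x,y)$ versus $\sqrt{t-s}$. First, by Lemma~\ref{lem:401} the hypothesis $d_s(x,p)\le K$ implies $d_t(x,p) \le K+L_1$, which places us in the regime where Lemma~\ref{lem:402} applies. Fix $M_0=M_0(n,K,\delta,A)$ to be chosen.

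In the near regime $d^2 \le M_0(t-s)$, the Gaussian factor $\exp(-d^2/(L_4(t-s)))$ is bounded below, so it suffices to prove the on-diagonal bound $H(x,t,y,s) \le C(n,A)/(t-s)^{n/2}$. This is immediate from Theorem~\ref{thm:heatupper2} together with the uniform Nash-entropy bound $\NN_{x,t}(t-s) \ge \boldsymbol{\mu} \ge -A$ supplied by Corollary~\ref{cor:302}.

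In the far regime $d^2 \ge M_0(t-s)$, let $(z,s)$ be an $H_n$-center of $(x,t)$. Lemma~\ref{lem:402} gives $d_s(x,z)\le L_2\sqrt{t-s}$, so choosing $M_0 \ge (4L_2)^2$ forces $d_s(z,y) \ge 3d/4$, and Proposition~\ref{prop:305} delivers the integrated Gaussian tail
\begin{align*}
\int_{M\setminus B_s(z,3d/4)} H(x,t,y',s)\,dV_s(y') \le C(n)\exp\lc-\frac{d^2}{C(t-s)}\rc.
\end{align*}
To convert this into a pointwise bound at $y$, I would apply the semigroup identity at the midpoint $m=(s+t)/2$,
\begin{align*}
H(x,t,y,s)=\int_M H(x,t,w,m)H(w,m,y,s)\,dV_m(w),
\end{align*}
and split the $w$-domain at $A := B_m(\bar z, d/4)$, where $(\bar z,m)$ is an $H_n$-center of $(x,t)$ at time $m$. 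On $M\setminus A$, Proposition~\ref{prop:305} applied at time $m$ together with the on-diagonal bound for $H(w,m,y,s)$ yields the desired contribution $\frac{C}{(t-s)^{n/2}}\exp(-d^2/(C(t-s)))$.

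The main obstacle is the near piece: one has $H(x,t,w,m)\le C/(t-s)^{n/2}$ on $A$ by Theorem~\ref{thm:heatupper2}, but the required Gaussian tail $\int_A H(w,m,y,s)\,dV_m(w) \le C\exp(-d^2/(C(t-s)))$ is not a direct consequence of Proposition~\ref{prop:305}, which only governs conjugate (backward) heat kernels rather than the forward kernel $H(\cdot,m,y,s)$. I expect to resolve this by noting that on $A$ the distance distortion in Lemma~\ref{lem:401} gives $d_s(w,y)\ge d/4 - L_1$, and then reinterpreting $H(w,m,y,s)\,dV_s(y)=dv_{w,m;s}(y)$: for each $w\in A$, the $H_n$-center of $(w,m)$ lies within $L\sqrt{t-s}$ of $w$ (by Lemma~\ref{lem:402} applied to $(w,m)$), hence far from $y$, so Proposition~\ref{prop:305} applied to $v_{w,m;s}$ gives integrated Gaussian decay which, combined with the forward gradient estimate Theorem~\ref{thm:gra} for $H(\cdot,m,y,s)$, can be converted to the required pointwise estimate on $A$. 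Summing the two contributions closes the argument.
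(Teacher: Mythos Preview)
Your near-regime argument and the $M\setminus A$ contribution in the far regime are fine. The gap is in the $A$-piece, and it is a real one: the combination of Proposition~\ref{prop:305} and Theorem~\ref{thm:gra} does not yield the bound you need.

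Concretely, what you require is either the pointwise estimate $H(w,m,y,s)\le C(t-s)^{-n/2}\exp(-d^2/(C(t-s)))$ for each $w\in A$, or the integrated bound $\int_A H(w,m,y,s)\,dV_m(w)\le C\exp(-d^2/(C(t-s)))$. Proposition~\ref{prop:305} applied to $v_{w,m;s}$ gives, for each fixed $w$, an integral bound in the \emph{second} spatial variable: $\int_{M\setminus B_s(z_w,r)} H(w,m,y',s)\,dV_s(y')\le C\exp(-r^2/(5(m-s)))$. This is an $L^1$-in-$y'$ estimate at the single time $s$, not a pointwise value at $y$ and not an integral in $w$. Theorem~\ref{thm:gra} bounds $|\nabla_w\log H(\cdot,m,y,s)|$, so it can propagate a pointwise bound in $w$ to nearby $w$; but you have no pointwise bound to start from. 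If you instead feed the gradient estimate into $\int_M H(w,m,y,s)\,dV_m(w)\le 1$ via ``large at $w_0$ implies large on a ball,'' you only recover the on-diagonal bound $H\le C(m-s)^{-n/2}$, never a Gaussian. (A secondary issue: Lemma~\ref{lem:401} compares only distances to $p$, so it does not give $d_s(w,y)\ge d/4-L_1$; you would need the local distance distortion from \cite[Theorem~18]{LW20}.)

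The paper closes this gap differently. Working at every intermediate time $l\in[s,(s+t)/2]$, it uses the Davies-type bound \cite[Theorem~14]{LW20} for the pair $B_l(x,\sqrt{t-s})$, $B_l(y,\sqrt{t-s})$, together with the lower bound $v_{x,t;l}(B_l(x,\sqrt{t-s}))\ge c(n,A)$ coming from \cite[Theorem~17]{LW20} and Theorem~\ref{thm:volume1}, to obtain $\int_{B_l(y,\sqrt{t-s})} H(x,t,z,l)\,dV_l(z)\le C\exp(-d^2/(C(t-s)))$ for a \emph{range} of $l$. This spacetime $L^1$ bound is upgraded to $L^2$ via \eqref{E401} and then fed into the parabolic mean value inequality \cite[Lemma~4.2]{BZ17} for the conjugate heat solution $H(x,t,\cdot,\cdot)$ to produce the pointwise bound at $(y,s)$. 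The essential missing ingredient in your outline is precisely such an integral-to-pointwise mechanism for the conjugate heat equation; the gradient estimate in the forward variable cannot substitute for it.
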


\begin{proof}
Without loss of generality, we assume $s=0$. In the proof, all constants $C_i$ depend on $n,K,\delta$ and $A$. 

Given $0<t \le 1-\delta$ and $x,y\in M$ with $d_0(x,p)+d_0(y,p) \le K$, we set $d:=d_0(x,y)$. It follows from Lemma \ref{lem:401} that
\begin{align}\label{E404b}
d_l(x,p)+d_l(y,p) \le C_1
\end{align}
for any $l\in [0,t]$. Therefore, it follows from the local distance distortion estimate \cite[Theorem 18]{LW20} that there exists $C_2>1$ such that if $d \ge C_2 \sqrt{t}$,
\begin{align}\label{E404c}
C_2^{-1} d \le d_l(x,y) \le C_2 d
\end{align}
for any $l \in [0,t]$. Notice that if $d \le C_2 \sqrt{t}$, \eqref{E404a} follows immediately from \eqref{E401}. Consequently, we may assume $d \ge C_2 \sqrt{t}$ and hence \eqref{E404c} holds.

For any $l \in [0,t/2]$, we apply \cite[Theorem 14]{LW20} for sets $B_l(x,\sqrt{t})$, $B_l(y,\sqrt{t})$ and parameter $\sigma=1$ to obtain
\begin{align}\label{E404d}
v_{x,t;l} \lc B_l(x,\sqrt{t}) \rc v_{x,t;l} \lc B_l(y,\sqrt{t}) \rc \le \exp \lc -\frac{(d_l(x,y)-2\sqrt{t})_+^2}{16t} \rc \le C_3\exp \lc -\frac{d^2}{C_3 t} \rc
\end{align}
for some $C_3>1$, where we have used \eqref{E404c}. In addition, for any $l \in [0,t/2]$ and $d_l(x,z) \le \sqrt{t}$, it follows from \cite[Theorem 18]{LW20} that $d_t(x,z) \le C_4 \sqrt{t}$. Therefore, it follows from \cite[Theorem 17]{LW20} that
\begin{align*}
H(x,t,z,l) \ge C_5^{-1} t^{-\frac{n}{2}}
\end{align*}
and hence
\begin{align}\label{E404e}
v_{x,t;l} \lc B_l(x,\sqrt{t}) \rc \ge C_5^{-1} t^{-\frac{n}{2}} |B_l(x,\sqrt{t})| \ge C_6^{-1},
\end{align}
where we have used the fact that $R$ is bounded on $B_l(x,\sqrt{t})$ and the no-local-collapsing Theorem \ref{thm:volume1}.

Combining \eqref{E404d} and \eqref{E404e}, we obtain for any $l \in [0,t/2]$ that
\begin{align}\label{E404f}
\int_{B_l(y,\sqrt{t})} H(x,t,z,l) \,dV_l(z) \le C_7\exp \lc -\frac{d^2}{C_3 t} \rc.
\end{align}
In light of \eqref{E401},   for any $l \in [0,t/2]$, the above inequality implies that
\begin{align}\label{E404g}
\int_{B_l(y,\sqrt{t})} H^2(x,t,z,l) \,dV_l(z) \le \frac{C_8}{t^{\frac n 2}}\exp \lc -\frac{d^2}{C_3 t} \rc.
\end{align}
Integrating $l$ from $0$ to $t/2$, we have
\begin{align}\label{E404h}
\int_0^{\frac t 2} \int_{B_l(y,\sqrt{t})} H^2(x,t,z,l) \,dV_l(z)ds \le \frac{C_9}{t^{\frac n 2-1}}\exp \lc -\frac{d^2}{C_3 t} \rc.
\end{align}
Consequently, the desired heat kernel estimate \eqref{E404a} follows from \eqref{E404h} and a parabolic mean value inequality \cite[Lemma 4.2]{BZ17}. Here, \cite[Lemma 4.2]{BZ17} can be applied in our setting since the key ingredient is the existence of a nice local cutoff function, which is constructed in \cite[Theorem 1.3]{BZ17} (see also Proposition \ref{prop:cutoff}). Once the existence of the local cutoff function is guaranteed, one can follow verbatim the proof of \cite[Lemma 4.2]{BZ17} to obtain the mean value inequality.
\end{proof}

We immediately obtain the following result by combining Lemma \ref{lem:402} and Proposition \ref{prop:401}.

\begin{cor}\label{cor:401}
For any $K>1$, $\delta \in (0,1)$ and $A>0$, there exists a constant $L_5=L_5(n,K,\delta,A)>1$ satisfying the following property.

Suppose $-\delta^{-1} \le s <t \le 1-\delta$ and $d_t(x,p)+d_s(y,p) \le K$, then for any $H_n$-center $(z,s)$ of $(x,t)$, we have
\begin{align}\label{E405a}
H(x,t,y,s) \le \frac{L_5}{(t-s)^{\frac n 2}} \exp \lc-\frac{d_s^2(z,y)}{L_5(t-s)} \rc.
\end{align}
\end{cor}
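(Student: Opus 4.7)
The plan is to chain Lemma \ref{lem:402} and Proposition \ref{prop:401} together, using an $H_n$-center $(z,s)$ as an intermediate ``base point'' in the spatial argument of the heat kernel. Since $d_t(x,p) \le K$, Lemma \ref{lem:402} provides $d_s(x,z) \le L_2\sqrt{t-s}$ and $d_s(z,p) \le d_t(x,p)+L_3\sqrt{t-s}$, where $L_2,L_3$ depend only on $n, K, \delta, A$. By the triangle inequality,
\begin{align*}
d_s(x,p) \le d_s(x,z)+d_s(z,p) \le K+(L_2+L_3)\sqrt{t-s}.
\end{align*}
Since $t-s \le 1-\delta+\delta^{-1}$ is uniformly bounded, combining with the hypothesis $d_s(y,p) \le K$ gives a bound $d_s(x,p)+d_s(y,p) \le K'$ for some $K'=K'(n,K,\delta,A)$.

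Next, I would invoke Proposition \ref{prop:401} with $K'$ in place of $K$ to obtain
\begin{align*}
H(x,t,y,s) \le \frac{L_4'}{(t-s)^{n/2}}\exp\!\lc -\frac{d_s^2(x,y)}{L_4'(t-s)}\rc,
\end{align*}
where $L_4'=L_4(n,K',\delta,A)$ depends ultimately only on $n,K,\delta,A$. The final step is to replace $d_s(x,y)$ by $d_s(z,y)$ in the exponential. From $d_s(x,z) \le L_2\sqrt{t-s}$ and the elementary inequality $(a-b)_+^2 \ge a^2/2 - b^2$, we have
\begin{align*}
d_s^2(x,y) \ge (d_s(z,y)-L_2\sqrt{t-s})_+^2 \ge \tfrac{1}{2}d_s^2(z,y)-L_2^2(t-s).
\end{align*}
Substituting this into the exponential absorbs the additive term $L_2^2(t-s)$ into the prefactor, giving the desired bound with $L_5:=\max\{2L_4',\, L_4' e^{L_2^2/L_4'}\}$.

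There is essentially no obstacle beyond bookkeeping; all the analytic content has already been isolated in Lemma \ref{lem:402} (which positions the $H_n$-center close to $x$ and controls its distance to $p$) and Proposition \ref{prop:401} (which supplies the Gaussian upper bound once both spatial endpoints are in a bounded region with respect to $p$ at time $s$). The only mildly delicate point is ensuring that the implicit constant when passing from $K$ to $K'$ remains a function of $n,K,\delta,A$ alone, which is immediate from the explicit dependence stated in Lemma \ref{lem:402}.
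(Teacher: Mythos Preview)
Your proof is correct and follows essentially the same route as the paper: bound $d_s(x,p)+d_s(y,p)$ by a constant depending only on $n,K,\delta,A$, apply Proposition~\ref{prop:401} to get the Gaussian bound in terms of $d_s(x,y)$, then use \eqref{E403a} to swap $d_s(x,y)$ for $d_s(z,y)$. The only cosmetic difference is that the paper bounds $d_s(x,p)$ directly via the distance comparison \eqref{E402a} (Lemma~\ref{lem:401}), whereas you route through the $H_n$-center using both conclusions of Lemma~\ref{lem:402}; either works.
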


\begin{proof}
From Lemma \ref{L201} and \eqref{E402a}, we have $d_s(x,p)+d_s(y,p) \le C$ for some $C=C(n, K, \delta)>0$. Then \eqref{E405a} follows from \eqref{E403a} and \eqref{E404a}.
\end{proof}

Next, we prove the following technical result.

\begin{lem}\label{lem:tech}
There exists a positive constant $\bar Q=\bar Q(n) > 0$ satisfying the following property.

Suppose $x,y \in M$, $T \in (0,1)$ and there exists an $H_n$-center $(z,0)$ of $(x,T)$ such that
\begin{align}\label{E406a}
H(x,T,y,0) \ge Q\frac{\exp \lc -\NN_0^*(x,T) \rc}{T^{\frac n 2}} \exp \lc -\frac{d_0^2(z,y)}{QT} \rc
\end{align}
for some $Q \ge \bar Q$. Then for any $H_n$-center $(z',T_1)$ of $(x,T)$, there exist a point $x_1 \in M$ and an $H_n$-center $(z_1,0)$ of $(x_1,T_1)$ such that 
\begin{align}\label{E406b}
d_{T_1}(x_1,z') \le \frac{10}{\sqrt{Q}} d_0(z,y) 
\end{align}
and
\begin{align}\label{E406c}
H(x_1,T_1,y,0) \ge Q_1 \frac{\exp \lc -\NN_0^*(x_1,T_1) \rc}{T_1^{\frac n 2}} \exp \lc -\frac{d_0^2(z_1,y)}{Q_1 T_1} \rc,
\end{align}
where $T_1=T/8$ and $Q_1=2Q$.
\end{lem}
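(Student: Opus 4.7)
The plan is to argue by contradiction using the semigroup decomposition
\[
H(x,T,y,0) \;=\; \int_M H(w,T_1,y,0)\,dv_{x,T;T_1}(w).
\]
Set $d := d_0(z,y)$, $r := \tfrac{10}{\sqrt{Q}}\,d$, and $A := B_{T_1}(z',r)$. Suppose no admissible $x_1 \in A$ satisfies \eqref{E406c}; i.e.\ for every $w \in A$ and every $H_n$-center $(z_w,0)$ of $(w,T_1)$,
\[
H(w,T_1,y,0) \;<\; 2Q\,\frac{\exp(-\NN_0^*(w,T_1))}{T_1^{n/2}}\exp\!\big(-d_0^2(z_w,y)/(2QT_1)\big).
\]
I will bound the contributions from $A$ and $M\setminus A$ separately and obtain a contradiction with \eqref{E406a}.

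For $w \in A$, I need to compare $\NN_0^*(w,T_1)$ and $d_0(z_w,y)$ against $\NN_0^*(x,T)$ and $d$. Since $(z',T_1)$ is an $H_n$-center of $(x,T)$ and $T-T_1 = 7T_1$, Corollary~\ref{cor:303} (specifically \eqref{E319b}) gives $\NN_0^*(x,T) - \NN_0^*(z',T_1) \le \sqrt{7nH_n/2}$, and then the Lipschitz bound in Theorem~\ref{thm:T305}(i) yields
\[
\NN_0^*(x,T) - \NN_0^*(w,T_1) \;\le\; C_1(n)\big(1 + r/\sqrt{T_1}\big).
\]
For the distance, I iterate the triangle inequality and the Wasserstein monotonicity (Proposition~\ref{prop:301}) through $\delta_z \leftarrow v_{x,T;0} \leftarrow v_{w,T_1;0} \leftarrow \delta_{z_w}$, using the $H_n$-center property at each step, to get $d_0(z_w,z) \le r + C_2(n)\sqrt{T_1}$ and hence $d_0(z_w,y) \ge d - r - C_2\sqrt{T_1}$.

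Inserting these into the (assumed) negation bounds the near integral by
\[
\int_A H(\,\cdot\,,T_1,y,0)\,dv_{x,T;T_1} \;\le\; 2Q\,T_1^{-n/2}e^{-\NN_0^*(x,T)}\,e^{C_1(1+r/\sqrt{T_1})}\,\exp\!\bigl(-(d-r-C_2\sqrt{T_1})_+^2/(2QT_1)\bigr).
\]
Since $T_1 = T/8$, the factor $(d-r)^2/(2QT_1) = 4(d-r)^2/(QT) \ge 3\,d^2/(QT)$ once $Q$ is large enough that $r = 10d/\sqrt{Q} \le d/10$. For the far piece, Proposition~\ref{prop:305} applied to the $H_n$-center $(z',T_1)$ of $(x,T)$ gives $v_{x,T;T_1}(M\setminus A) \le C(n)\exp\!\big(-r^2/(30 T_1)\big)$, while Theorem~\ref{thm:heatupper2} contributes a uniform bound $H(w,T_1,y,0) \le C(n)T_1^{-n/2}\exp(-\NN_0^*(w,T_1))$; after replacing $\NN_0^*(w,T_1)$ by $\NN_0^*(x,T)$ via the same oscillation bound, the exponent $r^2/(30T_1) = 100 d^2/(30Q T_1)$ again dominates $d^2/(QT)$. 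Choosing $\bar Q = \bar Q(n)$ large enough makes each of the two pieces strictly less than $\tfrac12$ of the lower bound in \eqref{E406a}, which is the desired contradiction. Any $x_1 \in A$ for which the bound \eqref{E406c} holds then automatically satisfies $d_{T_1}(x_1,z') \le r = 10d_0(z,y)/\sqrt{Q}$.

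The main obstacle is reconciling the polynomial loss $T^{-n/2}/T_1^{-n/2} = 8^{-n/2}$ with the mere factor $2$ improvement in $Q_1 = 2Q$: this forces both the Nash-entropy correction $e^{C_1 r/\sqrt{T_1}}$ (arising because the Lipschitz constant $\sqrt{n/(2T_1)}$ blows up on small scales) and the cross-term $(d-r)^2$ vs.\ $d^2$ to be controlled simultaneously. The choice $r = 10 d/\sqrt{Q}$ is tuned precisely so that, for $Q \ge \bar Q(n)$ large, $(d-r)^2/(2QT_1)$ exceeds $d^2/(QT)$ by a definite margin $\sim d^2/(QT)$, absorbing both the $8^{n/2}$ loss and the additive corrections $C_1 r/\sqrt{T_1} \le 10 C_1 d/\sqrt{QT_1}$.
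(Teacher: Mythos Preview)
Your overall strategy --- a contradiction argument driven by the semigroup decomposition --- is essentially a repackaging of the paper's direct approach (which instead isolates a set $V=\{H(\cdot,T_1,y,0)\ge a/2\}$, shows $V\cap B\ne\emptyset$, and then verifies \eqref{E406c} at any such point), and your near-piece estimate is fine. However, your treatment of the far piece has a genuine gap. You write that after invoking Theorem~\ref{thm:heatupper2} you can replace $\NN_0^*(w,T_1)$ by $\NN_0^*(x,T)$ ``via the same oscillation bound'' and then simply multiply by $v_{x,T;T_1}(M\setminus A)$. The oscillation bound, however, reads
\[
-\NN_0^*(w,T_1)\;\le\;-\NN_0^*(x,T)+C(n)\Bigl(1+T_1^{-1/2}\,d_{T_1}(w,z')\Bigr),
\]
and on $M\setminus A$ the distance $d_{T_1}(w,z')$ is not bounded by $r$; it can be arbitrarily large. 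Hence $\exp(-\NN_0^*(w,T_1))$ is \emph{not} uniformly controlled by a multiple of $\exp(-\NN_0^*(x,T))$ on $M\setminus A$, and the product bound you quote does not follow (and since $\bar Q$ is supposed to depend only on $n$, you cannot fall back on $\NN_0^*\ge\boldsymbol{\mu}$ either). The cure is exactly what the paper does in \eqref{E407xd}: split $M\setminus A$ into dyadic annuli $\{2^{k-1}r\le d_{T_1}(\cdot,z')\le 2^k r\}$; on each annulus the Nash-entropy correction contributes a factor $\exp(C\,T_1^{-1/2}2^k r)$ while Proposition~\ref{prop:305} gives mass at most $C\exp\bigl(-(2^{k-1}r)^2/(5(T-T_1))\bigr)$; the quadratic decay beats the linear growth and the sum collapses to the desired $C\exp(-c\,d^2/(QT))$.

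A second, smaller omission: you never record that $d^2/(QT)$ is large. Confronting the hypothesis \eqref{E406a} with Theorem~\ref{thm:heatupper2} gives $d^2/(QT)\ge\log(Q/C(n))$ (the paper's \eqref{E407aa}); this is precisely what lets the margin ``$\sim d^2/(QT)$'' you invoke in the last paragraph absorb the $8^{n/2}$ loss and the correction $C_1 r/\sqrt{T_1}\asymp C_1\sqrt{d^2/(QT)}$.
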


\begin{figure}
\begin{center}
\psfrag{A}[c][c]{\color{green}{$(x,T)$}}
\psfrag{B}[c][c]{$(y,0)$}
 \psfrag{C}[c][c]{\color{green}{$(z,0)$}}
 \psfrag{D}[c][c]{\color{green}$(z', T_1)$}
 \psfrag{E}[c][c]{\color{red}$(x_1,T_1)$}
 \psfrag{F}[c][c]{\color{red}$(z_1,0)$}
 \psfrag{TA}[c][c]{$t=T$}
 \psfrag{TB}[c][c]{$t=T_1$}
 \psfrag{TC}[c][c]{$t=0$}
 \psfrag{BB}[c][c]{\color{brown}$B$}
\includegraphics[width=0.8 \columnwidth]{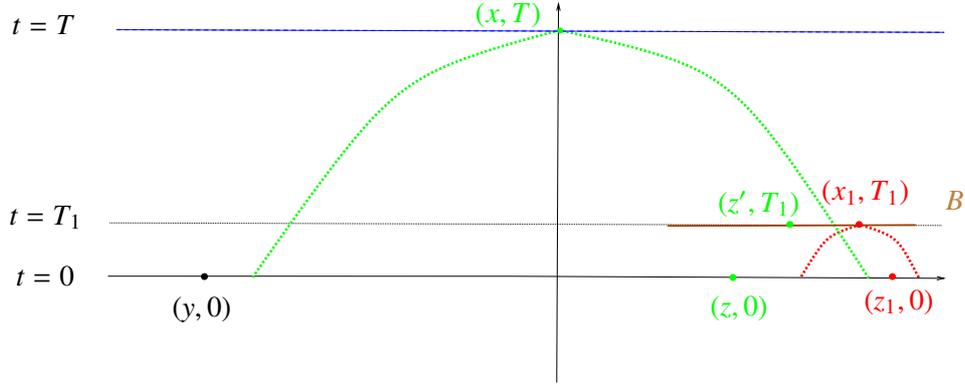}
\caption{Find a new point with improved lower bound}
\label{fig:pointselecting}
\end{center}
\end{figure}

\begin{proof}
In the proof, all constants $C_i>1$ depend only on $n$. We set
\begin{align*}
d:=d_0(z,y), \quad a:=H(x,T,y,0), \quad v_t:=v_{x,T;t},\quad \text{and}\quad V:=\left \{w\in M \mid H(w,T_1,y,0) \ge \frac{a}{2} \right\}.
\end{align*}
Notice that by \eqref{E406a} and \eqref{E401xa}, we have
\begin{align*}
C_1 \frac{\exp \lc -\NN_0^*(x,T) \rc}{T^{\frac n 2}} \ge a \ge \sqrt{Q} \cdot \sqrt{Q} \frac{\exp \lc -\NN_0^*(x,T) \rc}{T^{\frac n 2}} \exp \lc -\frac{d^2}{QT} \rc.
\end{align*}
Thus if  $\bar Q$ is sufficiently large, we have $Q>\bar{Q}>C_1^2$ and derive from the above inequality that
\begin{align} \label{E407aa}
\frac{d^2}{T} \ge \frac{Q \log Q}{2}.
\end{align}

It follows from the semigroup property \eqref{E301} that
\begin{align}
a=&\int_M H(w,T_1,y,0)\,dv_{T_1}(w) \notag \\
=& \int_{M \setminus V} H(w,T_1,y,0)\,dv_{T_1}(w)+\int_{V} H(w,T_1,y,0)\,dv_{T_1}(w) \notag\\
\le & \frac{a}{2} v_{T_1}(M \setminus V)+\int_{V} H(w,T_1,y,0)\,dv_{T_1}(w) \notag\\
\le& \frac{a}{2}+C_1 T_1^{-\frac{n}{2}} \int_{V} \exp\lc -\NN_0^*(w,T_1) \rc\,dv_{T_1}(w), \label{E407a}
\end{align}
where we have used \eqref{E401xa} for the last inequality. Moreover, it follows from \eqref{E319b} and the Lipschitz property of $\NN_0^*$ that
\begin{align} \label{E407xa}
-\NN_0^*(z',T_1)\le -\NN_0^*(x,T)+ C_2\sqrt{\frac{T-T_1}{T_1}} \le -\NN_0^*(x,T)+ C_3
\end{align}
and
\begin{align} \label{E407xb}
-\NN_0^*(w,T_1)\le -\NN_0^*(z',T_1)+\sqrt{\frac{n}{2T_1}}d_{T_1}(w,z') \le -\NN_0^*(x,T)+C_4 T^{-\frac 1 2} d_{T_1}(w,z')+C_4.
\end{align}

Now, we define $B:=B_{T_1}(z',10 Q^{-\frac 1 2} d)$. Then it follows from \eqref{E407xb}that
\begin{align}
&\int_{V} \exp\lc -\NN_0^*(w,T_1) \rc\,dv_{T_1}(w) \notag\\
\le & e^{C_4}\exp\lc -\NN_0^*(x,T) \rc \int_{V} e^{C_4T^{-\frac 1 2}d_{T_1}(w,z')} \,dv_{T_1}(w) \notag\\
\le & e^{C_4}\exp\lc -\NN_0^*(x,T) \rc \lc e^{10C_4T^{-\frac 1 2}Q^{-\frac 1 2}d} v_{T_1}(V\cap B)+\int_{M \setminus B} e^{C_4T^{-\frac 1 2}d_{T_1}(w,z')} \,dv_{T_1}(w) \rc. \label{E407xc}
\end{align}

For a small constant $\beta>0$ to be determined later, it follows from Proposition \ref{prop:305} that
\begin{align}
& \int_{M \setminus B} e^{C_4T^{-\frac 1 2}d_{T_1}(w,z')} \,dv_{T_1}(w) \notag\\
=& \sum_{k=1}^{\infty} \int_{2^{k-1} (10 Q^{-\frac 1 2} d)\le d_{T_1}(w,z') \le 2^{k}(10 Q^{-\frac 1 2} d)} e^{C_4T^{-\frac 1 2}d_{T_1}(w,z')} \,dv_{T_1}(w) \notag\\
\le & \sum_{k=1}^{\infty} e^{C_42^{k} T^{-\frac 1 2}10 Q^{-\frac 1 2} d} \int_{d_{T_1}(w,z') \ge 2^{k-1}(10 Q^{-\frac 1 2} d)} \,dv_{T_1}(w) \notag\\
\le & C_5 \sum_{k=1}^{\infty} \exp \lc C_42^{k} T^{-\frac 1 2}10 Q^{-\frac 1 2} d-\frac{(2^{k-1}10 Q^{-\frac 1 2} d)^2}{5(T-T_1)} \rc \notag\\
\le & C_5 \sum_{k=1}^{\infty} \exp \lc -\frac{(2^{k-1}10 Q^{-\frac 1 2} d)^2}{5T} +C_6\rc \le C_7 \exp \lc -\frac{ 20 d^2}{QT} \rc, \label{E407xd}
\end{align}
where we have used the fact that $\exp \lc -\frac{ 20 d^2}{QT} \rc \le Q^{-10} \ll 1$ by \eqref{E407aa}.

Combining \eqref{E406a}, \eqref{E407a}, \eqref{E407xc} and \eqref{E407xd}, we have
\begin{align} \label{E407c}
QT^{-\frac{n}{2}} \exp \lc -\frac{d^2}{QT} \rc \le C_8 T_1^{-\frac{n}{2}} \lc e^{10C_4T^{-\frac 1 2}Q^{-\frac 1 2}d} v_{T_1}(V\cap B)+\exp \lc -\frac{ 20 d^2}{QT} \rc \rc.
\end{align}
Since $Q$ is large, by \eqref{E407aa} we have
\begin{align*}
Q\exp \lc\frac{ 19 d^2}{QT} \rc \ge Q^{\frac{21}{2}} \ge 2C_8 8^{\frac{n}{2}}.
\end{align*}
Then it is not hard to see from \eqref{E407c} that $v_{T_1}(V\cap B) >0$.
Thus there exists a point $x_1 \in V \cap B$ which satisfies \eqref{E406b}. Then we take an $H_n$-center $(z_1,0)$ of $(x_1,T_1)$. 
The point selecting process is illustrated in Figure~\ref{fig:pointselecting}. 

It follows from Proposition \ref{prop:301} and \eqref{eq:hdis} that
\begin{align}
d_0(z,z_1) =&d_{W_1}^0(\delta_z,\delta_{z_1}) \notag \\
\le& d_{W_1}^0(v_{x,T;0},v_{x_1,T_1;0})+d_{W_1}^0(\delta_z,v_{x,T;0})+d_{W_1}^0(\delta_{z_1},v_{x_1,T_1;0}) \notag\\
\le& d_{W_1}^{T_1}(v_{x,T;T_1}, \delta_{x_1})+\sqrt{H_nT}+\sqrt{H_nT_1}\notag \\
\le &d_{W_1}^{T_1}(v_{x,T;T_1}, \delta_{z'})+d_{T_1}(z',x_1)+\sqrt{H_nT}+\sqrt{H_nT_1}\notag \\
\le &\sqrt{H_n(T-T_1)}+\sqrt{H_nT}+\sqrt{H_nT_1}+10 Q^{-\frac 1 2} d \notag\\
\le & 3\sqrt{H_nT}+10 Q^{-\frac 1 2} d. \label{E407d}
\end{align}
Therefore, we conclude
\begin{align} \label{E407e}
d_0(z_1,y) \ge d-d_0(z,z_1) \ge (1-10 Q^{-\frac 1 2})d-3\sqrt{H_nT}
\end{align}
and hence
\begin{align} \label{E407f}
d^2_0(z_1,y) \ge \frac{(1-10 Q^{-\frac 1 2})^2}{2 }d^2-9H_nT.
\end{align}

Since $x_1 \in V$,  from the definition of $V$ and \eqref{E406a} we have
\begin{align} \label{E407g}
H(x_1,T_1,y,0) \ge \frac{a}{2} \ge Q \frac{\exp \lc -\NN_0^*(x,T) \rc}{2T^{\frac n 2}} \exp \lc -\frac{d^2}{QT} \rc, 
\end{align}
which enables us to claim
\begin{align} \label{E408aa}
Q \frac{\exp \lc -\NN_0^*(x,T) \rc}{2T^{\frac n 2}} \exp \lc -\frac{d^2}{QT} \rc \ge Q_1\frac{\exp \lc -\NN_0^*(x_1,T_1) \rc}{T_1^{\frac n 2}} \exp \lc -\frac{d_0^2(z_1,y)}{Q_1 T_1} \rc.
\end{align}
Indeed, it follows from \eqref{E407xb} that
\begin{align} \label{E408xa}
-\NN_0^*(x_1,T_1) \le -\NN_0^*(x,T)+C_9(1+d(QT)^{-\frac 1 2}).
\end{align}
On the other hand, by \eqref{E407f} we have
\begin{align}
&\exp \lc\frac{d_0^2(z_1,y)}{Q_1 T_1}-\frac{d^2}{QT}-C_9\sqrt{\frac{d^2}{QT}}-C_9 \rc \notag \\
\ge& \exp \lc \frac{1}{QT} \lc 2(1-10Q^{-\frac 1 2})^2-0.9 \rc d^2-\frac{36H_n}{Q}-C_{10}\rc \notag \\
\ge& \exp \lc \frac{d^2}{QT}-\frac{36H_n}{Q}-C_{10}\rc \ge \sqrt{Q} \exp \lc-\frac{36H_n}{Q}-C_{10}\rc \ge 4 \cdot 8^{\frac n 2}, \label{E408xb}
\end{align}
where we have used \eqref{E407aa} for the last inequality. 
As $\bar Q$ is sufficiently large, it is clear that \eqref{E408aa} follows from the combination of  \eqref{E408xa} and \eqref{E408xb}.
Consequently, we obtain \eqref{E406c}. 
\end{proof}

\begin{prop}\label{prop:weak}
For any $x,y \in M$ and $t \in (0,1)$,
\begin{align}\label{E408a}
H(x,t,y,0) \le \bar Q \frac{\exp \lc -\NN_0^*(x,t) \rc}{t^{\frac n 2}} \exp \lc -\frac{d_0^2(z,y)}{\bar Q t} \rc
\end{align}
where $(z,0)$ is any $H_n$-center of $(x,t)$ and $\bar Q$ is the same constant in Proposition \ref{lem:tech}.
\end{prop}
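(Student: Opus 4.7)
The plan is to argue by contradiction using Lemma~\ref{lem:tech} as an iteration engine. If \eqref{E408a} fails at some $(x,y,t)$ and some $H_n$-center $(z_0,0)$, then by the strict monotonicity of $Q \mapsto Q e^{-d_0^2(z_0,y)/(Q t)}$ one can produce a constant $Q_0 \ge \bar Q$ with
\[
H(x,t,y,0) \ge Q_0\, t^{-n/2}\exp\!\lc -\NN_0^*(x,t)\rc\exp\!\lc -d_0^2(z_0,y)/(Q_0 t)\rc.
\]
Applying Lemma~\ref{lem:tech} iteratively then produces spacetime points $(x_k,T_k)$, $H_n$-centers $(z_k,0)$ of $(x_k,T_k)$, and constants $Q_k$ with $T_k = t/8^k$ and $Q_k = 2^k Q_0$, satisfying the analogous inequality at every step. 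The iteration is well-posed because $Q_k \ge Q_0 \ge \bar Q$ for every $k$. As $k\to\infty$, $T_k$ shrinks to $0$ while $Q_k$ blows up; the contradiction should come from the local heat kernel upper bound in Corollary~\ref{cor:401}, provided the sequence $(x_k,T_k)$ stays in a compact region of spacetime.

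The main obstacle, as emphasized in the paper's introduction, is ensuring precisely this compact containment, since no global Gaussian upper bound is available a priori. I would track $d_{T_k}(x_k,p)$ and $d_0(z_k,y)$ simultaneously. Inequality \eqref{E406b} gives $d_{T_{k+1}}(x_{k+1},z_{k+1}') \le 10\,Q_k^{-1/2}\,d_0(z_k,y)$, where $(z_{k+1}',T_{k+1})$ is an $H_n$-center of $(x_k,T_k)$; Lemma~\ref{lem:402} controls $d_{T_{k+1}}(z_{k+1}',p) \le d_{T_k}(x_k,p)+L_3\sqrt{T_k-T_{k+1}}$; and the estimate in \eqref{E407d} yields $d_0(z_{k+1},z_k) \le 3\sqrt{H_n T_k}+10\,Q_k^{-1/2}d_0(z_k,y)$. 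Since $T_k$ and $Q_k^{-1/2}$ both decay geometrically, telescoping these three bounds gives uniform control $d_0(z_k,y) \le C_1$ and then $d_{T_k}(x_k,p) \le K$ for some $K$ depending only on $n$, $x$, $y$, $t$, and $Q_0$.

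With compactness in hand, fix any $\delta \in (0,1)$ with $t \le 1-\delta$, and invoke Corollary~\ref{cor:401} with parameters $K$, $\delta$, $A$ to obtain
\[
H(x_k,T_k,y,0) \le L_5\, T_k^{-n/2}\exp\!\lc -d_0^2(z_k,y)/(L_5\, T_k)\rc
\]
for every $k$. Combining this upper bound with the iterated lower bound and using $-\NN_0^*(x_k,T_k) \le A$ from Corollary~\ref{cor:302} produces
\[
Q_k\, e^{-A} \le L_5\,\exp\!\lc \frac{d_0^2(z_k,y)}{T_k}\lc Q_k^{-1}-L_5^{-1}\rc\rc.
\]
As soon as $Q_k \ge L_5$, the exponent on the right is nonpositive, forcing $Q_k \le L_5\, e^{A}$. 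But $Q_k = 2^k Q_0 \to \infty$, so this is the desired contradiction, and \eqref{E408a} must hold. The only nontrivial part of the argument is the geometric telescoping that keeps $(x_k,T_k)$ in a bounded region; everything else is a straightforward combination of Lemma~\ref{lem:tech}, Corollary~\ref{cor:401}, and Corollary~\ref{cor:302}.
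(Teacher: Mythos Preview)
Your proposal is correct and follows essentially the same route as the paper: a contradiction argument iterating Lemma~\ref{lem:tech} to produce the sequence $(x_k,T_k,z_k,z_k')$ with $T_k=8^{-k}T$ and $Q_k=2^kQ_0$, followed by the telescoping bounds on $d_0(z_k,y)$ (via \eqref{E407d}) and on $d_{T_k}(x_k,p)$ (via \eqref{E406b} and \eqref{E403aa}) to confine the sequence to a compact region, and finally the comparison against Corollary~\ref{cor:401} to force $Q_k$ bounded. Two minor remarks: the monotonicity of $Q\mapsto Qe^{-d^2/(Qt)}$ is not needed, since you may simply take $Q_0=\bar Q$; and in the final step you can use the sharper $-\NN_0^*(x_k,T_k)\ge 0$ rather than $\ge -A$, though either suffices.
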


\begin{proof}
Suppose otherwise, there exist $x,y \in M$, $T \in (0,1)$ and an $H_n$-center $(z,0)$ of $(x,T)$ such that
\begin{align}\label{E408b}
H(x,T,y,0) \ge \bar Q \frac{\exp \lc -\NN_0^*(x,T) \rc}{T^{\frac n 2}} \exp \lc -\frac{d_0^2(z,y)}{\bar Q T} \rc.
\end{align}
Now, we define $Q_k=2^k \bar Q$ and $T_k:=8^{-k}T$ for $k \in \mathbb N$. If we set $x_0=x$ and $z_0=z$, then we claim there are sequences $x_k,z'_k$ and $z_k$ satisfying
\begin{enumerate}[label=\textnormal{(\alph{*})}]
\item $(z_k',T_k)$ is an $H_n$-center of $(x_{k-1},T_{k-1})$.

\item $(z_k,0)$ is an $H_n$-center of $(x_k,T_k)$.

\item $d_{T_k}(x_k,z'_k) \le 10Q_{k-1}^{-\frac{1}{2}} d_0(z_{k-1},y)$.

\item $d_0(z_k,z_{k-1}) \le 3\sqrt{H_n T_{k-1}}+10 Q_{k-1}^{-\frac 1 2} d_0(z_{k-1},y)$.

\item We have the heat kernel estimate
\begin{align}\label{E408c}
H(x_k,T_k,y,0) \ge Q_k \frac{\exp \lc -\NN_0^*(x_k,T_k) \rc}{T_k^{\frac n 2}} \exp \lc -\frac{d_0^2(z_k,y)}{Q_k T_k} \rc.
\end{align}
\end{enumerate}

The existence of $x_k,z'_k$ and $z_k$ satisfying (a)-(e) is obtained by Lemma~\ref{lem:tech} and an inductive argument. Notice that (d) is guaranteed by \eqref{E407d}.

\textbf{Claim}: $b_k:=d_{T_k}(x_k,p)$ is uniformly bounded.

\emph{Proof of the Claim}: We set $d_k:=d_0(z_k,y)$ for $k \in \mathbb N$. It follows from (d) that
\begin{align}\label{E408d}
d_k \le d_{k-1}+d_0(z_k,z_{k-1}) \le \lc 1+10 Q_{k-1}^{-\frac 1 2} \rc d_{k-1}+3\sqrt{H_n T_{k-1}}.
\end{align}
Therefore, it is easy to derive from \eqref{E408d} and the definitions of $Q_k$ and $T_k$ that
\begin{align}\label{E408e}
d_k \le K_1<\infty
\end{align}
for some constant $K_1$ depending on $d_0(z,y),T,\bar Q$ and $n$. From (c) and \eqref{E408e}, we have
\begin{align}\label{E408f}
d_{T_k}(x_k,z_k') \le 10Q_{k-1}^{-\frac{1}{2}} d_{k-1} \le 10K_1 Q_{k-1}^{-\frac{1}{2}}.
\end{align}
Moreover, since $(z_k',T_k)$ is an $H_n$-center of $(x_{k-1},T_{k-1})$, it follows from \eqref{E403aa} that
\begin{align}\label{E408g}
d_{T_k}(z_k',p) \le d_{T_{k-1}}(x_{k-1},p)+L_3 \sqrt{T_{k-1}-T_k} \le b_{k-1}+L_3 T_{k-1}^{\frac 1 2},
\end{align}
where $L_3=L_3(n,\delta, A)>0$ for some fixed constant $\delta \in (0,1)$ with $T \le 1-\delta$. Combining \eqref{E408f} and \eqref{E408g}, we obtain
\begin{align}\label{E408h}
b_k \le b_{k-1}+10K_1 Q_{k-1}^{-\frac{1}{2}}+L_3 T_{k-1}^{\frac 1 2}.
\end{align}
From \eqref{E408h}, it is clear that $b_k$ is uniformly bounded, and the Claim is proved.

Thanks to the Claim, we can apply Corollary \ref{cor:401} to obtain an upper bound of heat kernel, which contradicts the lower bound \eqref{E408c} when $k$ is sufficiently large. 
\end{proof}

Now, we state the main theorem of this section regarding the heat kernel upper bound, which generalizes and slightly improves \cite[Theorem 7.2]{Bam20a}.

\begin{thm}\label{thm:heatupper}
$(M^n,g(t))_{t<1}$ is the Ricci flow associated with a Ricci shrinker. For any $\ep>0$, there exists a constant $C = C(n,\ep) >0$ such that
\begin{align}\label{E409a}
H(x,t,y,s) \le \frac{C \exp \lc -\NN_{(x,t)}(t-s) \rc}{(t-s)^{\frac n 2}} \exp \lc -\frac{d_s^2(z,y)}{(4+\ep)(t-s)} \rc,
\end{align}
for any $s<t<1$ and any $H_n$-center $(z,s)$ of $(x,t)$.
\end{thm}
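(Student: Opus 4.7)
\textbf{Proof proposal for Theorem \ref{thm:heatupper}.}
The plan is to bootstrap from the weak Gaussian bound in Proposition \ref{prop:weak} (with the non-optimal constant $\bar Q$) to the refined constant $(4+\ep)$ by iterating the semigroup property and exploiting the sharp $(4+\ep)$ Gaussian decay of the conjugate heat kernel measure in Proposition \ref{prop:305}. Set $s=0$ without loss of generality, and fix $(x,t)$ with $H_n$-center $(z,0)$. The key identity is
\begin{align*}
H(x,t,y,0) \;=\; \int_M H(x,t,w,\tau)\, H(w,\tau,y,0)\, dV_\tau(w) \;=\; \int_M H(w,\tau,y,0)\, dv_{x,t;\tau}(w),
\end{align*}
for any $\tau\in (0,t)$, where the probability measure $dv_{x,t;\tau}$ concentrates around its $H_n$-center $(z',\tau)$.

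The idea is to choose $\tau=\alpha t$ for some $\alpha$ close to $1$ (so that $t-\tau$ is small) and to split the integration at a radius $r$ comparable to $d_0(z,y)$ from $z'$. On the ball $B_\tau(z',r)$ we estimate $H(w,\tau,y,0)$ using the best currently-known bound with some constant $Q\ge \bar Q$, where the $H_n$-center of $(w,\tau)$ at time $0$ stays close to $z$ by the $W_1$-monotonicity of Proposition \ref{prop:301} (the drift is controlled by $\sqrt{H_n(t-\tau)}+\sqrt{H_n\tau}$ and by $d_\tau(w,z')\le r$). On the complement $M\setminus B_\tau(z',r)$ we use the sharp Gaussian tail of $dv_{x,t;\tau}$ from Proposition \ref{prop:305}, which contributes a factor like $\exp(-r^2/((4+\ep/2)(t-\tau)))$. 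Balancing the two regions and tracking the Nash entropy oscillation via Corollary \ref{cor:303} (which adds a negligible additive term for $\tau$ close to $t$), a careful accounting yields a new bound with the constant $Q$ replaced by some $Q'<Q$, with $Q'\to 4+\ep$ as the iteration proceeds, provided $Q$ stays above some threshold $\bar C(n,\ep)$. To control the prefactor, one follows the same doubling-contradiction structure as in Lemma \ref{lem:tech}: if the sharp bound fails at $(x,t)$, produce a point $(x_1,\tau)$ with an even worse bound, and iterate.

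As in Proposition \ref{prop:weak}, the critical nontrivial step is to keep the iterated sequence of points $x_k$ and their $H_n$-centers $z_k$ inside a fixed compact set so that Corollary \ref{cor:401} applies to yield the final contradiction. This uses the distance comparison Lemma \ref{lem:401}, the control on $H_n$-centers from Lemma \ref{lem:402} and \eqref{E403aa}, and a geometric-series estimate on the accumulated displacement $d_0(z_k,z_{k-1})$ of the form $C\sqrt{T_{k-1}}+CQ_{k-1}^{-1/2} d_{k-1}$, which converges since $T_k\searrow 0$ and $Q_k\to\infty$. The main obstacle, beyond the bookkeeping of constants, is ensuring that the improvement per step in the exponent is genuine: the factor $4+\ep$ arises precisely because Proposition \ref{prop:305} holds with $(4+\ep)$ rather than $4$, so one must absorb the loss $\ep/2$ per step into the freedom of choosing $\alpha$ and $r$, and show the loss does not accumulate across the (finitely many) iteration steps needed to drive $Q$ below the threshold $\bar C(n,\ep)$. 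Once a contradiction with Corollary \ref{cor:401} is reached, the sharp Gaussian bound \eqref{E409a} follows.
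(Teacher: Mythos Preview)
Your strategy has a concrete gap: taking the intermediate time $\tau=\alpha t$ with $\alpha$ close to $1$ produces no improvement in the exponent constant. On the inner ball $B_\tau(z',r)$ you apply the current bound with constant $Q$ to $H(w,\tau,y,0)$; since $\tau\approx t$ and the $H_n$-center of $(w,\tau)$ at time $0$ differs from $z$ by at most $r+O(\sqrt t)$, the resulting exponent is $\frac{(d-r)^2}{Q\alpha t}$, which is no better than $\frac{d^2}{Qt}$ when $\alpha\approx 1$. The sharp $(4+\ep)$ tail from Proposition~\ref{prop:305} only enters through the outer region, whose contribution was already subdominant. So the dominant term never improves and the iteration stalls at $Q=\bar Q$; it cannot be driven toward $4+\ep$.

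The paper goes in the opposite direction and uses a \emph{single step}, not an iteration. One takes $\tau=T_\theta=\theta T$ with $\theta$ \emph{small} (set $\theta_1=\theta^{1/3}$), chooses the ball $B=B_{T_\theta}(z',(1-\theta_1)d)$, and uses Proposition~\ref{prop:305} with constant $(4+\ep/3)$ to show $V\cap B\neq\emptyset$. For $x_1\in V\cap B$ with $H_n$-center $(z_1,0)$ one has $d_0(z_1,y)\gtrsim\theta_1 d$, and now applies the already-proved weak bound of Proposition~\ref{prop:weak} at $(x_1,T_\theta)$: the exponent is
\[
\frac{d_0^2(z_1,y)}{\bar Q\,T_\theta}\;\gtrsim\;\frac{\theta_1^2 d^2}{\bar Q\,\theta_1^3 T}\;=\;\frac{d^2}{\bar Q\,\theta_1 T},
\]
which dominates $\frac{d^2}{(4+\ep)T}$ once $\theta_1<(4+\ep)/\bar Q$, contradicting $H(x_1,T_\theta,y,0)\geq a/2$. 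No further iteration or appeal to Corollary~\ref{cor:401} is needed --- that compactness work was already absorbed into Proposition~\ref{prop:weak}. Note too that taking $\theta$ small forces the Nash-entropy drift in \eqref{E410xa} to be of order $\theta^{-1/2}$, which is a second reason the argument cannot be iterated: one step is all you get, and it suffices.
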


\begin{proof}
Without loss of generality, we assume $s=0$. The proof is a modification of the proof of Lemma~\ref{lem:tech} and all constants $C_i>1$ depend on $n$ and $\ep$.

Suppose otherwise, there exist $x,y \in M$, $T \in (0,1)$, $\ep>0$ and an $H_n$-center $(z,0)$ of $(x,T)$ such that 
\begin{align}\label{E410a}
H(x,T,y,0) \ge Q\frac{\exp \lc -\NN_0^*(x,T) \rc}{T^{\frac n 2}} \exp \lc -\frac{d_0^2(z,y)}{(4+\ep)T} \rc,
\end{align}
where $Q$ is a large constant determined later. We also set $\theta \in (0,1)$ as a small parameter and $\theta_1^3=\theta$.
Define 
\begin{align*}
&d:=d_0(z,y), \quad a:=H(x,T,y,0), \quad v_t:=v_{x,T;t},\quad T_{\theta}:=\theta T,\\
&V:=\left \{w\in M \mid H(w,T_{\theta},y,0) \ge \frac{a}{2} \right\}.
\end{align*}
From \eqref{E410a} and \eqref{E401xa}, we have
\begin{align}\label{E410b}
\exp \lc \frac{d^2}{T} \rc \ge \left\{ C_1^{-1} Q \right\}^{4+\epsilon}.
\end{align}
Now, we assume $(z',T_{\theta})$ is an $H_n$-center of $(x,T)$ and set $B:=B_{T_{\theta}}(z',(1-\theta_1)d)$. Similar to \eqref{E407xb}, we have
\begin{align} \label{E410xa}
-\NN_0^*(w,T_{\theta}) \le -\NN_0^*(x,T)+C_2 \theta^{-\frac 1 2}T^{-\frac 1 2} d_{T_{\theta}}(w,z')+C_2 \theta^{-\frac 1 2}.
\end{align}
By the same argument as \eqref{E407xd}, we apply Proposition \ref{prop:305} for $\ep/4$ to obtain if $\theta<\bar \theta(\ep)$,
\begin{align}
& \int_{M \setminus B} e^{C_2 \theta^{-\frac 1 2} T^{-\frac 1 2}d_{T_{\theta}}(w,z')} \,dv_{T_{\theta}}(w) \notag\\
\le & C_3 \sum_{k=1}^{\infty} \exp \lc C_2 2^{k} \theta^{-\frac 1 2} T^{-\frac 1 2}(1-\theta_1) d-\frac{(2^{k-1}(1-\theta_1) d)^2}{(4+\ep/4)(1-\theta)T} \rc \notag\\
\le & C_3 \sum_{k=1}^{\infty} \exp \lc -\frac{(2^{k-1}(1-\theta_1) d)^2}{(4+\ep/3)(1-\theta)T} +C_4\theta^{-1}\rc \notag\\ 
\le & C_5 \exp \lc -\frac{((1-\theta_1) d)^2}{(4+\ep/3)(1-\theta)T} +C_4\theta^{-1}\rc. \label{E410xb}
\end{align}
Similar to \eqref{E407c}, we obtain
\begin{align}
QT^{-\frac{n}{2}} \exp \lc -\frac{d^2}{(4+\ep)T} \rc \le C_6 T_{\theta}^{-\frac{n}{2}} \lc e^{C_2 \theta^{-\frac 1 2}T^{-\frac 1 2}(1-\theta_1)d} v_{T_{\theta}}(V\cap B)+e^{C_6 \theta^{-1}}\exp \lc -\frac{((1-\theta_1) d)^2}{(4+\ep/3)(1-\theta)T}\rc \rc. \label{E410c}
\end{align}
We claim that $v_{T_{\theta}}(V\cap B) >0$. Indeed, it follows from \eqref{E410b} that
\begin{align*}
Q\exp \lc \lc \frac{ (1-\theta_1)^2}{(4+\ep/3)(1-\theta)}-\frac{1}{(4+\ep)} \rc \frac{d^2}{T} \rc \ge Q \exp \lc \frac{c(\ep) d^2}{T} \rc \ge Q^{1+c(\ep)} C_1^{-c(\ep)} \ge 2 C_6 \theta^{-\frac n 2}e^{C_6 \theta^{-1}},
\end{align*}
where $c(\ep)>0$ depends only on $\ep>0$ and we choose $\theta<\bar \theta(\ep)$ and $Q$ sufficiently large. Therefore, the claim follows from \eqref{E410c}.

We choose a point $x_1 \in V \cap B$ and an $H_n$-center $(z_1,0)$ of $(x_1,T_{\theta})$. Similar to \eqref{E407d}, we have
\begin{align}\label{E410d}
d_0(z,z_1) \le 3\sqrt{H_nT}+(1-\theta_1)d
\end{align}
and hence
\begin{align}\label{E410e}
d_0(z_1,y) \ge \theta_1 d-3\sqrt{H_nT}.
\end{align}
Moreover, as \eqref{E408xa}, we have by \eqref{E410xa},
\begin{align} \label{E408ea}
-\NN_0^*(x_1,T_{\theta}) \le -\NN_0^*(x,T)+C_2 \theta^{-\frac 1 2}(T^{-\frac 1 2}d+1).
\end{align}

Now, by virtue of Proposition~\ref{prop:weak} and the definition of $V$, we have
\begin{align} \label{E410f}
\bar Q  \exp \lc -\frac{d_0^2(z_1,y)}{\bar Q T_{\theta}} \rc \ge H(x_1,T_{\theta},y,0) \cdot T^{\frac{n}{2}} \cdot \exp \lc \NN_0^*(x_1,T_{\theta}) \rc \ge \frac{1}{2}Q  \exp \lc -\frac{d^2}{(4+\ep)T} \rc.
\end{align}
Since $d_0^2(z_1,y) \ge \theta_1^2 d^2/2-9H_nT$ from \eqref{E410e}, it follows from \eqref{E408ea} and \eqref{E410f} that
\begin{align} 
& Q\exp \lc \lc \frac{1}{2\theta_1 \bar Q}-\frac{1}{(4+\ep)}-1 \rc \frac{d^2}{T}-C_7 \theta^{-1} \rc \notag \\
\le& Q\exp \lc \lc \frac{1}{2\theta_1 \bar Q}-\frac{1}{(4+\ep)} \rc \frac{d^2}{T}-C_2 \theta^{-\frac 1 2}T^{-\frac 1 2}d \rc \le 2 \bar Q \theta^{-\frac n 2} \exp \lc \frac{9H_n }{\bar Q \theta} \rc, \label{E410g}
\end{align}
provided that $\theta \le \bar \theta(\ep, \bar Q)$. However, \eqref{E410g} is impossible by \eqref{E410b} if $Q$ is sufficiently large.

In sum, we obtain a contradiction and \eqref{E409a} holds.
\end{proof}

Combining Lemma \ref{lem:402} and Theorem \ref{thm:heatupper}, we have the following estimate, which improves \cite[Theorem 20]{LW20}.

\begin{thm}\label{thm:heatupper3}
For any $K>1$, $\delta \in (0,1)$ and $A>0$, there exists a constant $C=C(n,K,\delta,A)>1$ satisfying the following property.

Suppose $-\delta^{-1} \le s <t \le 1-\delta$ and $d_t(x,p) \le K$, then 
\begin{align}\label{E411a}
H(x,t,y,s) \le \frac{C}{(t-s)^{\frac n 2}} \exp \lc-\frac{d_s^2(x,y)}{C(t-s)} \rc.
\end{align}
\end{thm}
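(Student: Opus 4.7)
The plan is to combine the Gaussian estimate centered at the $H_n$-center (Theorem~\ref{thm:heatupper}) with the distance bound between $x$ and its $H_n$-center (Lemma~\ref{lem:402}). Since $\boldsymbol{\mu}\ge -A$, Corollary~\ref{cor:302} gives $\NN_{(x,t)}(t-s)\ge -A$, so $\exp(-\NN_{(x,t)}(t-s))\le e^A$ can be absorbed into the constant.

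First, fix $\ep=1$ (any fixed $\ep>0$ works) in Theorem~\ref{thm:heatupper} and pick an arbitrary $H_n$-center $(z,s)$ of $(x,t)$. Under the assumption $d_t(x,p)\le K$ together with $-\delta^{-1}\le s<t\le 1-\delta$, Lemma~\ref{lem:402} provides a constant $L_2=L_2(n,K,\delta,A)$ with
\begin{align*}
d_s(x,z)\le L_2\sqrt{t-s}.
\end{align*}
Combining with Theorem~\ref{thm:heatupper} and Corollary~\ref{cor:302}, we obtain
\begin{align*}
H(x,t,y,s)\le \frac{C_1 e^A}{(t-s)^{n/2}}\exp\!\lc-\frac{d_s^2(z,y)}{5(t-s)}\rc,
\end{align*}
with $C_1=C_1(n)$.

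Next, I convert the Gaussian factor in $d_s(z,y)$ to one in $d_s(x,y)$ using Young's inequality. For any $\eta\in(0,1)$, the triangle inequality and $(a-b)^2\ge (1-\eta)a^2-(\eta^{-1}-1)b^2$ yield
\begin{align*}
d_s^2(z,y)\ge \lc d_s(x,y)-L_2\sqrt{t-s}\rc_+^2 \ge (1-\eta)\,d_s^2(x,y)-(\eta^{-1}-1)L_2^2(t-s).
\end{align*}
Plugging this in,
\begin{align*}
H(x,t,y,s)\le \frac{C_1 e^A\,\exp\!\lc\frac{(\eta^{-1}-1)L_2^2}{5}\rc}{(t-s)^{n/2}}\exp\!\lc-\frac{(1-\eta)\,d_s^2(x,y)}{5(t-s)}\rc.
\end{align*}
Choosing $\eta=1/2$ and setting $C:=\max\{10,\,C_1 e^A e^{L_2^2/5}\}$, the estimate \eqref{E411a} follows, with $C$ depending only on $n,K,\delta,A$.

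There is no real obstacle here; the theorem is a packaging of already established results. The only subtle point is ensuring that one carries the constants correctly: Theorem~\ref{thm:heatupper}'s constant is dimensional, Lemma~\ref{lem:402}'s $L_2$ depends on $(n,K,\delta,A)$, and Corollary~\ref{cor:302} converts the Nash entropy exponential into an $e^A$ factor, after which Young's inequality transfers the Gaussian decay from $z$ to $x$ at the cost of enlarging the constant in the denominator of the exponent from $5$ to $10$ (or any fixed number strictly greater than $4$).
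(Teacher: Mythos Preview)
Your proof is correct and follows exactly the route the paper indicates: the paper simply states that Theorem~\ref{thm:heatupper3} is obtained by ``combining Lemma~\ref{lem:402} and Theorem~\ref{thm:heatupper}'', and your argument fills in precisely those details, together with the Nash entropy bound from Corollary~\ref{cor:302} and a standard Young-type inequality to transfer the Gaussian center from $z$ to $x$.
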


\begin{rem} \label{rem:poten}
Given $(x_0,t_0) \in M \times (-\infty,1)$, if we set $H(x_0,t_0,y,s)=(4\pi(t_0-s))^{-\frac n 2}e^{-b(y,s)}$, then it follows from Theorem \ref{thm:lower} and Theorem \ref{thm:heatupper3} that $b(y,s)$ increases quadratically.
\end{rem}

Combining \eqref{E411a} and the standard regularity theory of the parabolic equation (cf. \cite{Fri08}), we have the following derivative estimate of higher orders.

\begin{cor}\label{cor:403}
Given $(x_0,t_0) \in M \times (-\infty,1)$ and $s_0<t_0$, there exists a small parabolic neighborhood $P=B_{t_0}(x_0,r) \times [t_0-r^2,t_0+r^2]$ such that for any $m_1,m_2 \in \mathbb N$
\begin{align}\label{E411aa}
|\partial_t^{m_1} \na_x^{m_2}H(x,t,y,s_0)| \le \frac{1}{r^{2m_1+m_2}} \cdot \frac{Q}{(t_0-s_0)^{\frac n 2}} \cdot \exp \lc-\frac{d_{s_0}^2(x_0,y)}{Q(t_0-s_0)} \rc
\end{align}
for some constant $Q>1$ and any $(x,t) \in P$ and $y \in M$.
\end{cor}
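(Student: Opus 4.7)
The plan is to combine the Gaussian upper bound in Theorem \ref{thm:heatupper3} with interior parabolic Schauder-type estimates. Since we are differentiating only in the forward variables $(x,t)$, the key point is that the Gaussian factor $\exp\!\big(-d_{s_0}^2(x_0,y)/(Q(t_0-s_0))\big)$ is a function of $y$ alone (once we fix the center $x_0$) and can therefore be pulled out of any local parabolic regularity estimate as a uniform constant on the neighborhood.

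First I would choose a small parabolic neighborhood $P'=B_{t_0}(x_0,2r)\times[t_0-4r^2,t_0+4r^2]$ contained in $M\times(s_0,1)$ on which the metric $g(t)$ is smooth with all geometric quantities (curvature, injectivity radius, distance function relative to $p$) under uniform control; since the associated Ricci flow is smooth on $M\times(-\infty,1)$, such an $r>0$ exists and $P'$ is precompact. In particular, on $P'$ we have a uniform upper bound $d_t(x,p)\le K$ for some $K$ depending on $x_0,t_0,r$, so Theorem \ref{thm:heatupper3} (with $\delta$ chosen from $t_0+4r^2<1$ and $s_0\ge -\delta^{-1}$) applies and gives
\begin{align*}
H(x,t,y,s_0)\le \frac{C_0}{(t-s_0)^{\frac n2}}\exp\!\lc-\frac{d_{s_0}^2(x,y)}{C_0(t-s_0)}\rc
\end{align*}
for every $(x,t)\in P'$ and $y\in M$, with $C_0$ depending on the data at $(x_0,t_0)$. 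Using the global distance comparison Lemma \ref{lem:401} together with the local distortion on $P'$, the distance $d_{s_0}(x_0,x)$ is uniformly bounded, so $d_{s_0}^2(x,y)\ge \tfrac12 d_{s_0}^2(x_0,y)-C_1$. This allows me to absorb the $x$-dependence into the constant and rewrite the estimate as
\begin{align*}
H(x,t,y,s_0)\le \frac{Q_0}{(t_0-s_0)^{\frac n2}}\exp\!\lc-\frac{d_{s_0}^2(x_0,y)}{Q_0(t_0-s_0)}\rc=:G(y)
\end{align*}
uniformly for $(x,t)\in P'$, with $Q_0$ enlarged to absorb the factor $(t-s_0)/(t_0-s_0)\in[c,C]$.

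Next, the function $u(x,t):=H(x,t,y,s_0)$ satisfies the linear parabolic equation $\square u=0$ on $P'$, whose coefficients are smooth and whose geometric data (metric and its covariant derivatives of all orders) are uniformly bounded on $\overline{P'}$. By standard interior Schauder and $L^\infty$-to-$C^k$ parabolic regularity (see, e.g., Friedman's parabolic regularity applied in local coordinates on $B_{t_0}(x_0,2r)$), for any $m_1,m_2\in\mathbb{N}$ there is a constant $C(m_1,m_2)$ depending on $n,m_1,m_2$ and the geometric data on $P'$ such that
\begin{align*}
\sup_{P}|\partial_t^{m_1}\nabla_x^{m_2}u|\le \frac{C(m_1,m_2)}{r^{2m_1+m_2}}\sup_{P'}|u|,
\end{align*}
where the $r$-powers come from parabolic scaling between the inner neighborhood $P=B_{t_0}(x_0,r)\times[t_0-r^2,t_0+r^2]$ and the outer $P'$. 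Since the right-hand side is $\le C(m_1,m_2)r^{-(2m_1+m_2)}G(y)$ with $G(y)$ independent of $(x,t)$, this yields precisely \eqref{E411aa} after enlarging $Q$ to absorb $C(m_1,m_2)$ (note that the statement only claims existence of some $Q$, so this absorption is allowed; alternatively one may read the conclusion as a family of estimates, one for each $(m_1,m_2)$).

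The only non-routine point is verifying that the local parabolic regularity can be invoked with constants depending solely on the local geometry of $P'$: this is where precompactness of $P'$ inside $M\times(-\infty,1)$ and the smoothness of the associated Ricci flow are used. Once the neighborhood $P'$ is fixed so that curvature, injectivity radius, and all metric derivatives are bounded there, everything reduces to classical interior regularity for a uniformly parabolic equation with smooth coefficients, and the Gaussian factor rides along as a constant because $y$ and the center $(x_0,t_0)$ are frozen throughout.
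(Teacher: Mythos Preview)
Your proposal is correct and follows essentially the same approach as the paper: combine the Gaussian upper bound of Theorem~\ref{thm:heatupper3} with standard interior parabolic regularity (the paper cites \cite{Fri08}) on a small neighborhood where the geometry is controlled. The paper's proof is essentially one sentence, and your write-up supplies the details—in particular the step recentering $d_{s_0}(x,y)$ at $x_0$ via Lemma~\ref{lem:401} and the triangle inequality—that the paper leaves implicit.
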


Note that when $(y,s_0)$ is fixed, $H(x,t,y,s_0)$ is a heat solution.  The scale $r$ in the above Corollary is small constant much less than the curvature radius at $(x_0,t_0)$. 
Then inequality (\ref{E411aa}) can be obtained by dominated convergence theorem. 
It indicates that one can take differentiation under the integral sign if the integrand involves the heat kernel in many cases. As an application, we can follow the same proof as in Theorem \ref{thm:T305} to estimate $|\na \NN_s^*|$ and $\square \NN_s^*$ without using $\phi^r$. Therefore, one obtains

\begin{cor}\label{cor:404}
The Nash entropy $\NN_s^*(x,t)$ is smooth on $M \times (s,1)$ satisfying
\begin{align*}
|\na \NN_s^*| \le \sqrt{\frac{n}{2(t-s)}}\quad \text{and} \quad -\frac{n}{2(t-s)} \le \square \NN_s^* \le 0
\end{align*}
in the classical sense.
\end{cor}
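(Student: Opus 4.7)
The plan is to repeat the calculations of Theorem \ref{thm:T305} but drop the cutoff function $\phi^r$ throughout. The obstruction that forced the cutoff in Theorem \ref{thm:T305} was that without it, neither smoothness of $\NN_s^*$ nor the interchange of differentiation and integration was available at face value. This is now remedied by Corollary \ref{cor:403}, which provides a Gaussian-in-$y$ pointwise bound on $H(x,t,y,s)$ and its $(x,t)$-derivatives of all orders, uniform on any compact spacetime neighborhood of $(x_0,t_0) \in M \times (s,1)$, together with Remark \ref{rem:poten}, which shows that $b(\cdot,s)$ grows at most quadratically in $y$. The combination dominates every integrand we need, and smoothness of $\NN_s^*$ on $M \times (s,1)$ follows immediately from dominated convergence applied at every order of derivative.

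For the gradient bound I would start from
\begin{align*}
\NN_s^*(x,t) = -\tfrac{n}{2}\log(4\pi(t-s)) - \tfrac{n}{2} - \int_M H \log H \, dV_s,
\end{align*}
differentiate in $x$ under the integral sign, and use $\int_M H \, dV_s \equiv 1$ (so that $\int_M \nabla_x H \, dV_s = 0$) to insert the constants $\NN_s^*(x,t)+\tfrac{n}{2}$ and derive
\begin{align*}
\partial_w \NN_s^* = \int_M \bigl(b - \NN_s^* - \tfrac{n}{2}\bigr) \frac{\partial_w H}{H} \, dv
\end{align*}
for any unit tangent vector $w$ at $x$. Cauchy--Schwarz then combines \eqref{E313b}, which controls the first factor in $L^2(dv)$ by $\sqrt n$, with \eqref{eqn:int}, which controls the second factor in $L^2(dv)$ by $1/\sqrt{2(t-s)}$, to give $|\partial_w \NN_s^*| \le \sqrt{n/(2(t-s))}$, hence the stated gradient estimate.

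For the sub-heat-operator inequalities I would compute $\square(bH)$ directly, using $\square H = 0$, the identity $\square(\log H) = |\nabla_x H|^2/H^2$, and the product rule $\square(fg) = g\,\square f + f\,\square g - 2\langle \nabla f, \nabla g\rangle$. A short calculation gives
\begin{align*}
\square(bH) = \frac{|\nabla_x H|^2}{H} - \frac{n}{2(t-s)} H.
\end{align*}
Integrating against the $(x,t)$-independent volume form $dV_s$ and using $\int_M H \, dV_s = 1$ yields
\begin{align*}
\square \NN_s^*(x,t) = \int_M \left|\frac{\nabla_x H}{H}\right|^2 dv - \frac{n}{2(t-s)},
\end{align*}
from which the upper bound $\square \NN_s^* \le 0$ follows from \eqref{eqn:int2} and the lower bound $\square \NN_s^* \ge -n/(2(t-s))$ is immediate from nonnegativity of the integrand. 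The only substantive step in the whole argument is the justification of bringing the derivative inside the integral, which is precisely the gap in Theorem \ref{thm:T305} that Corollary \ref{cor:403} and Remark \ref{rem:poten} together close.
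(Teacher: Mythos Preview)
Your proposal is correct and follows essentially the same route as the paper: the paper's proof of Corollary~\ref{cor:404} is the one-line instruction to ``follow the same proof as in Theorem~\ref{thm:T305}\ldots\ without using $\phi^r$,'' with Corollary~\ref{cor:403} (and implicitly Remark~\ref{rem:poten}) supplying the domination needed to differentiate under the integral, and your explicit computations of $\partial_w\NN_s^*$ and $\square\NN_s^*$ reproduce \eqref{E317a}--\eqref{E317c} and \eqref{E318a} stripped of the cutoff. One small point worth noting when you justify the second-order differentiation: the term $|\nabla_x H|^2/H$ appearing in $\nabla_x^2(H\log H)$ is not obviously dominated by combining the Gaussian upper bound of Corollary~\ref{cor:403} for $|\nabla_x H|$ with the Gaussian lower bound of Theorem~\ref{thm:lower} for $H$ alone (the exponents need not cooperate), but Theorem~\ref{thm:gra} gives $|\nabla_x H|^2/H \le C\tau^{-1} H\log(C'/H)$, which together with the quadratic growth of $b$ and the Gaussian decay of $H$ yields an integrable majorant uniform on compact $(x,t)$-sets.
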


We end this section by proving the following hypercontractivity; see \cite[Theorem 12.1]{Bam20a}.

\begin{thm} \label{thm:ultra}
Suppose that $(x_0, t_0) \in M \times (-\infty,1)$ and $0 < \tau_1 < \tau_2$. Let $u \in C^2(M \times [t_0 - \tau_2 ,t_0 - \tau_1])$ be a nonnegative function satisfying $\square u \le 0$ and having at most polynomial spatial growth in the sense that
\begin{align} \label{E412a}
|u(x,t)| \le m(d^m_t(p,x)+1)
\end{align}
for some $m \in \mathbb N$. If $ 1< q_0 \leq p_0 < \infty$ with
\[ \frac{\tau_2}{\tau_1} \geq \frac{p_0-1}{q_0-1}, \]
then for $dv_t:=dv_{x_0,t_0;t}$,
\begin{equation} \label{E412xa}
\lc \int_Mu^{p_0} dv_{t_0 - \tau_1} \rc^{1/p_0} \leq \lc \int_M u^{q_0} dv_{t_0 - \tau_2} \rc^{1/q_0}. 
\end{equation}
\end{thm}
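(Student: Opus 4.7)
The strategy is the classical Gross-type argument: interpolate the exponent in time and show that a suitable time-dependent $L^p$ norm is non-increasing, with Perelman's logarithmic Sobolev inequality for the conjugate heat kernel measure supplying the critical estimate. First, I would reduce to the case when $u > 0$ by replacing $u$ with $u + \delta$ for $\delta > 0$ and letting $\delta \searrow 0$ at the end (using dominated convergence, justified by the polynomial growth assumption \eqref{E412a} combined with the Gaussian decay of $dv_t$ following from Remark~\ref{rem:poten}). Next, define
\[
p(t) := 1 + \frac{(p_0-1)\tau_1}{t_0 - t}, \qquad t \in [t_0 - \tau_2, t_0 - \tau_1],
\]
so that $p(t_0-\tau_1) = p_0$, $p'(t) = (p(t)-1)/(t_0-t) > 0$, and, by the hypothesis $\tau_2/\tau_1 \ge (p_0-1)/(q_0-1)$, $p(t_0-\tau_2) = 1 + (p_0-1)\tau_1/\tau_2 \le q_0$.

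The main object is $\Psi(t) := \bigl(\int_M u^{p(t)}\, dv_t \bigr)^{1/p(t)}$. I will show $\Psi$ is non-increasing on $[t_0-\tau_2, t_0-\tau_1]$. Setting $F(t) := \int u^{p(t)} dv_t$, using $\partial_t dv_t = -\Delta v\, dV_t$ (where $dv_t = v\, dV_t$ and $\square^* v = 0$), and integrating by parts formally with $w := u^{p/2}$, one obtains
\[
F'(t) = p \int u^{p-1} (\square u)\, dv_t + p' \int u^p \log u \, dv_t - \frac{4(p-1)}{p}\int |\nabla w|^2 dv_t.
\]
Since $\square u \le 0$ and $u \ge 0$, the first term is non-positive, and a direct calculation yields
\[
\frac{d}{dt}\log \Psi \le \frac{p'(t)}{p(t)^2\, F(t)} \left\{ \int w^2 \log \frac{w^2}{F}\, dv_t - \frac{4(p-1)}{p'}\int |\nabla w|^2\, dv_t\right\}.
\]
By the choice of $p(t)$, the coefficient $4(p-1)/p' = 4(t_0 - t)$. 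Perelman's sharp logarithmic Sobolev inequality for the conjugate heat kernel measure $dv_t$, which follows from \cite[Theorem 1, Theorem 13]{LW20} (cf.\ the proof of Theorem~\ref{thm:poin2}), gives exactly
\[
\int w^2 \log \frac{w^2}{F}\, dv_t \le 4(t_0-t)\int |\nabla w|^2\, dv_t,
\]
so the bracket is non-positive and $\Psi'(t) \le 0$. Thus $\Psi(t_0-\tau_1) \le \Psi(t_0-\tau_2)$, and Jensen's inequality on the probability measure $dv_{t_0-\tau_2}$ (using $p(t_0-\tau_2) \le q_0$) upgrades this to \eqref{E412xa}.

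The principal technical obstacle is justifying the differentiation under the integral sign and the integration by parts in the non-compact setting. I would handle this by inserting the cutoff $\phi^r$ from \eqref{E204}, carrying out all computations for $\int u^p \phi^r\, dv_t$, and then passing to the limit $r \to \infty$. The required uniform integrability comes from three ingredients: the polynomial growth \eqref{E412a} of $u$; the quadratic growth of the heat kernel potential $b(y,s)$ from Remark~\ref{rem:poten} (giving Gaussian decay of $v(y,t)$); and the cutoff estimates \eqref{E205a}--\eqref{E205d}, which control $|\nabla \phi^r|$ and $|\square \phi^r|$. The latter ensures that the boundary contributions in the integration by parts vanish in the limit. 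The finiteness of $\int |\nabla w|^2 dv_t = \tfrac{p^2}{4}\int u^{p-2}|\nabla u|^2 dv_t$ on any $[t_0-\tau_2, t_0-\tau_1 - \epsilon]$ follows from a gradient estimate on $u$ (for $u = u+\delta$, positivity gives local bounds by parabolic regularity, and combining with \eqref{E412a} and the Gaussian weight gives integrability); one finishes by letting $\epsilon \searrow 0$, then $r \to \infty$, then $\delta \searrow 0$.
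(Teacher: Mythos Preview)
Your proposal is correct and follows essentially the same approach as the paper: the Gross hypercontractivity scheme with the time-dependent exponent $p(t)$ solving $p'=(p-1)/\tau$, the log-Sobolev inequality for $dv_t$ from \cite[Theorem 13]{LW20}, and the cutoff $\phi^r$ with \eqref{E205a}--\eqref{E205d} to justify the non-compact integration by parts. The only cosmetic differences are that the paper normalizes $p$ to hit $q_0$ at $t_0-\tau_2$ (and then uses Jensen at $t_0-\tau_1$), whereas you normalize at the other endpoint, and the paper works directly with $u\ge 0$ rather than first passing to $u+\delta$.
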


\begin{proof}
Without loss of generality, we assume $t_0=0$. We set $p=p(t)=1+\tau_2(q_0-1)|t|^{-1}$ for $t<0$. Notice that $p(-\tau_2)=q_0$ and $p(-\tau_1) \ge p_0$ by our assumption.

When $t<0$, direct calculation shows that
\begin{align}
&\partial_t \int_M u^{p} \phi^r \,dv_t -\int_M u^{p} \square \phi^r dv_t  \notag \\
=& \int_M \lc \dot{p} u^p \log u+pu^{p-1} \square u-p(p-1)|\na u|^2 u^{p-2} \rc\phi^r-2 \la \na u^p,\na \phi^r \ra \,dv_t \notag \\
\le& \frac{\dot{p}}{p}\int_M \phi^r u^p \log u^p\,dv_t-\frac{p-1}{p}\int_M \frac{|\na u^p|^2}{u^p} \phi^r \,dv_t-2\int_M \la \na u^p,\na \phi^r \ra \,dv_t.\label{E412b}
\end{align}
Moreover, we have for any $\ep \ll 1$,
\begin{align}
2 \int_M |\na u^p|| \na \phi^r| \,dv_t \le \ep \int_M \frac{|\na u^p|^2}{u^p} \phi^r \,dv_t+\ep^{-1} \int_M u^p |\na \phi^r|^2 \,dv_t. \label{E412c}
\end{align}
Combining \eqref{E412b} and \eqref{E412c}, we have
\begin{align}
&\partial_t \lc \int_M u^{p} \phi^r \,dv_t \rc^{\frac 1 p} \notag \\
=&\frac{1}{p}\lc \int_M u^{p} \phi^r \,dv_t \rc^{\frac 1 p-1} \lc \partial_t \int_M u^{p} \phi^r \,dv_t-\frac{\dot{p}}{p} \lc\int_M u^{p} \phi^r \,dv_t \rc \log \lc\int_M u^{p} \phi^r \,dv_t \rc \rc \notag \\
\le&\frac{1}{p}\lc \int_M u^{p} \phi^r \,dv_t \rc^{\frac 1 p-1} \lc \frac{\dot{p}}{p}\int_M \phi^r u^p \log u^p\,dv_t- \frac{\dot{p}}{p} \lc\int_M u^{p} \phi^r \,dv_t \rc\log \lc\int_M u^{p} \phi^r \,dv_t \rc \rc \notag \\
&+\frac{1}{p}\lc \int_M u^{p} \phi^r \,dv_t \rc^{\frac 1 p-1} \lc \lc \ep-\frac{p-1}{p} \rc \int_M \frac{|\na u^p|^2}{u^p} \phi^r \,dv_t+\ep^{-1} \int_M u^p \left\{ |\na \phi^r|^2 + \square \phi^r \right\} dv_t   \rc. \label{E412d}
\end{align}

We integrate \eqref{E412d} from $-\tau_2 $ to $-\tau_1$, let $r \to \infty$ and then let $\ep \to 0$. By Theorem~\ref{thm:heatupper1},~\eqref{E412a},~\eqref{E205a} and~\eqref{E205d}, we obtain
\begin{align}
&\left. \lc \int_M u^{p} \,dv_t \rc^{\frac 1 p} \right |_{-\tau_2}^{-\tau_1} \notag \\
\le&\int_{-\tau_2}^{-\tau_1}\frac{1}{p}\lc \int_M u^{p} \,dv_t \rc^{\frac 1 p-1} \lc \frac{\dot{p}}{p}\int_M u^p \log u^p\,dv_t- \frac{\dot{p}}{p} \lc\int_M u^{p} \,dv_t \rc \log \lc\int_M u^{p} \,dv_t \rc \rc \,dt \notag \\
&+\int_{-\tau_2}^{-\tau_1}\frac{1}{p}\lc \int_M u^{p} \,dv_t \rc^{\frac 1 p-1} \lc -\frac{p-1}{p} \int_M \frac{|\na u^p|^2}{u^p} \,dv_t \rc\,dt. \label{E412e}
\end{align}
Note that the log-Sobolev inequality \cite[Theorem 13]{LW20} implies that
\begin{align*}
\frac{\dot{p}}{p}\int_M u^p \log u^p\,dv_t- \frac{\dot{p}}{p} \lc\int_M u^{p} \,dv_t \rc \log \lc\int_M u^{p} \,dv_t \rc \le \frac{\dot{p}}{p}|t| \int_M \frac{|\na u^p|^2}{u^p} \,dv_t=\frac{p-1}{p} \int_M \frac{|\na u^p|^2}{u^p} \,dv_t.
\end{align*}
Therefore, it follows from \eqref{E412e} that
\begin{align*}
\lc \int_M u^{p_0} \,dv_{-\tau_1} \rc^{\frac{1}{p_0}} \le \lc \int_M u^{p(-\tau_1)} \,dv_{-\tau_1} \rc^{\frac{1}{p(-\tau_1)}} \le \lc \int_M u^{q_0} \,dv_{-\tau_2} \rc^{\frac{1}{q_0}}
\end{align*}
and the proof is complete.
\end{proof}

\begin{rem}
If $u \in C^2(M \times [t_0-\tau_2,t_0-\tau_1])$ and satisfies $\square u=0$ and \eqref{E412a}, then
\begin{equation*}
\lc \int_M|u|^{p_0} dv_{t_0 - \tau_1} \rc^{1/p_0} \leq \lc \int_M |u|^{q_0} dv_{t_0 - \tau_2} \rc^{1/q_0}. 
\end{equation*}
Indeed, one can apply \eqref{E412xa} to $\sqrt{u^2+\ep}$ since $\square \sqrt{u^2+\ep} \le 0$, and let $\ep \to 0$.
\end{rem}

\section{Parabolic neighborhoods and $\ep$-regularity theorem}

In this section, we assume $(M^n,g(t))_{t<1}$ is the Ricci flow associated with a Ricci shrinker in $\MM(A)$.

Given $(x_0,t_0) \in M \times (-\infty,1)$, we first recall the conventional parabolic neighborhoods are defined by
\begin{align}
P(x_0,t_0;S,-T^-,T^+):=&B_{t_0}(x_0,S) \times \lc [t_0-T^-,t_0+T^+]\cap (-\infty,1)\rc \label{E501a} \\
Q(x_0,t_0;S,-T^-,T^+):=&\left\{d_t(x,x_0) \le S,\,t \in [t_0-T^-,t_0+T^+]\cap (-\infty,1) \right\} \label{E501aa}
\end{align}
for any $S,T^{\pm} \ge 0$. Based on the monotonicity of $W_1$-distance in Proposition \ref{prop:301}, we follow \cite{Bam20a} to define the following new parabolic neighborhoods.

\begin{defn}[$P^*$-parabolic neighborhoods] \label{def:pnei}
Suppose that $(x_0, t_0) \in M \times (-\infty,1)$ and $S, T^{\pm} \geq 0$.
The $P^*$-parabolic neighborhood $P^* (x_0, t_0; S, -T^-, T^+) \subset M \times (-\infty,1)$ is defined as the set of points $(x,t) \in M \times (-\infty,1)$ with $t \in [t_0-T^-, t_0+T^+]$ and 
\[ d^{t_0 - T^-}_{W_1} (v_{x_0, t_0; t_0 - T^-}, v_{x,t; t_0 - T^-}) < S. \]
For any $r>0$, we also define
\begin{align*}
P^*(x_0,t_0;r):=&P^*(x_0,t_0;r,-r^2,r^2) \\
P^{*+}(x_0,t_0;r):=&P^*(x_0,t_0;r,0,r^2) \\
P^{*-}(x_0,t_0;r):=&P^*(x_0,t_0;r,-r^2,0).
\end{align*}
Similar definitions are also made for $P^{\pm}$.
\end{defn}

Some basic properties of $P^*$-parabolic neighborhoods can be found in \cite[Proposition 9.4, Corollary 9.6]{Bam20a}. We state the following containment result from \cite[Proposition 9.4 (d)]{Bam20a}.

\begin{lem} \label{lem:ball}
If $A_1, A_2, T_1^\pm, T_2^\pm \geq 0$ and $(x_1, t_1) \in P^* (x_2, t_2; A_2, -T_2^-, T_2^+)$, then
\[ P^*(x_1, t_1; A_1, -T_1^-, T_1^+) \subset P^*(x_2, t_2; A_1 + A_2, - (T_1^- + T_2^-), T_1^+ + T_2^+). \]
\end{lem}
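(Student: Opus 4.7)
\textbf{Proof proposal for Lemma \ref{lem:ball}.}
The plan is to unwind the definition of $P^*$-parabolic neighborhood and reduce everything to a triangle inequality for the $W_1$-Wasserstein distance combined with the backward monotonicity established in Proposition \ref{prop:301}. Let $(x,t)\in P^*(x_1,t_1;A_1,-T_1^-,T_1^+)$; the goal is to verify the two conditions of Definition \ref{def:pnei} at base time $s^*:=t_2-T_2^--T_1^-$, namely the time containment $t\in[t_2-T_2^--T_1^-,t_2+T_2^++T_1^+]$ and the Wasserstein bound $d^{s^*}_{W_1}(v_{x_2,t_2;s^*},v_{x,t;s^*})<A_1+A_2$.

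The time containment is immediate: the hypotheses $t\in[t_1-T_1^-,t_1+T_1^+]$ and $t_1\in[t_2-T_2^-,t_2+T_2^+]$ combine by addition. For the Wasserstein bound, the triangle inequality for $W_1$ (which follows directly from Definition \ref{def:W1} since the supremum of a sum of $1$-Lipschitz functionals is bounded by the sum of the suprema) gives
\[
d^{s^*}_{W_1}(v_{x_2,t_2;s^*},v_{x,t;s^*})\le d^{s^*}_{W_1}(v_{x_2,t_2;s^*},v_{x_1,t_1;s^*})+d^{s^*}_{W_1}(v_{x_1,t_1;s^*},v_{x,t;s^*}).
\]
I would then control each summand by monotonicity. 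Since $t_1\in[t_2-T_2^-,t_2+T_2^+]$ we have $t_2-T_2^-\le\min(t_1,t_2)$, and since $s^*\le t_2-T_2^-$, Proposition \ref{prop:301} (applied with the pair $(x_1,t_1),(x_2,t_2)$, using symmetry in case $t_1>t_2$) gives
\[
d^{s^*}_{W_1}(v_{x_2,t_2;s^*},v_{x_1,t_1;s^*})\le d^{t_2-T_2^-}_{W_1}(v_{x_2,t_2;t_2-T_2^-},v_{x_1,t_1;t_2-T_2^-})<A_2,
\]
where the final strict inequality is the hypothesis $(x_1,t_1)\in P^*(x_2,t_2;A_2,-T_2^-,T_2^+)$. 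Similarly, because $t\in[t_1-T_1^-,t_1+T_1^+]$ we have $t_1-T_1^-\le\min(t,t_1)$, and $s^*\le t_1-T_1^-$, so Proposition \ref{prop:301} yields
\[
d^{s^*}_{W_1}(v_{x_1,t_1;s^*},v_{x,t;s^*})\le d^{t_1-T_1^-}_{W_1}(v_{x_1,t_1;t_1-T_1^-},v_{x,t;t_1-T_1^-})<A_1.
\]
Adding the two estimates completes the proof.

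There is no serious obstacle here; the entire argument is a routine triangle inequality once the base time is chosen correctly. The only point requiring a little care is bookkeeping: one must pick the common base time $s^*=t_2-T_2^--T_1^-$ (the earliest time appearing in the conclusion) and verify that $s^*$ lies below both $t_2-T_2^-$ and $t_1-T_1^-$ so that Proposition \ref{prop:301} applies to each summand. This is exactly what the nonnegativity of $T_1^\pm,T_2^\pm$ and the containment $t_1\in[t_2-T_2^-,t_2+T_2^+]$ guarantee.
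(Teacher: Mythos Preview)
Your proof is correct. The paper does not actually prove this lemma; it simply cites \cite[Proposition 9.4 (d)]{Bam20a}, and your argument via the triangle inequality for $d_{W_1}$ together with the monotonicity of Proposition~\ref{prop:301} is exactly the intended route.
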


We immediately have the following result from the distance comparison Lemma \ref{lem:401}.

\begin{lem} \label{lem:dis2}
Given $\delta \in (0,1)$, $t_0 \in (-\infty,1)$, $T^{\pm} \ge 0$ and $S \ge 0$, there exists a constant $C=C(n,A,\delta)>1$ such that
\begin{align*}
P(p,t_0;S,-T^-,T^+) \subset& Q(p,t_0;C(S+1),-T^{-},T^+) \\
Q(p,t_0;S,-T^-,T^+) \subset& P(p,t_0;C(S+1),-T^{-},T^+)
\end{align*}
provided that $-\delta^{-1} \le t_0-T^- \le t_0+T^+ \le 1-\delta$.
\end{lem}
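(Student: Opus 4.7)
The plan is to reduce everything to the distance comparison in Lemma \ref{lem:401}, which is the only nontrivial tool needed. Both inclusions are symmetric assertions that $d_{t_0}(x,p)$ and $d_t(x,p)$ are comparable (up to an affine constant) uniformly for $(x,t)$ in the spacetime strip $[t_0-T^-, t_0+T^+]$ with $-\delta^{-1}\le t_0-T^-$ and $t_0+T^+\le 1-\delta$. Lemma \ref{lem:401} gives exactly this on every subinterval of $[-\delta^{-1},1)$; the crucial observation is that since $t\le 1-\delta$ throughout, the factor $L_1/\sqrt{1-t}$ appearing in the second inequality of \eqref{E402a} is bounded by $L_1/\sqrt{\delta}$, hence absorbed into the final constant $C$.

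For the first inclusion, take $(x,t)\in P(p,t_0;S,-T^-,T^+)$, so $d_{t_0}(x,p)\le S$. I would split into two cases. If $t_0\le t$, applying \eqref{E402a} with $s=t_0$ yields $d_t(x,p)\le d_{t_0}(x,p)+L_1\le S+L_1$. If $t\le t_0$, applying the second half of \eqref{E402a} with $s=t$ and the larger time $t_0$ gives
\[
d_t(x,p)\le \frac{L_1}{\sqrt{1-t_0}}\bigl(d_{t_0}(x,p)+1\bigr)+L_1^2\le \frac{L_1}{\sqrt{\delta}}(S+1)+L_1^2.
\]
Both cases are bounded by $C(S+1)$ for $C=C(n,\delta)$, which proves $(x,t)\in Q(p,t_0;C(S+1),-T^-,T^+)$.

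For the second inclusion, take $(x,t)\in Q(p,t_0;S,-T^-,T^+)$, so $d_t(x,p)\le S$. Again I split into two symmetric cases. If $t\le t_0$, Lemma \ref{lem:401} with $s=t$ gives $d_{t_0}(x,p)\le d_t(x,p)+L_1\le S+L_1$. If $t_0\le t$, the second half of \eqref{E402a} (applied with $s=t_0$, upper time $t$) yields
\[
d_{t_0}(x,p)\le \frac{L_1}{\sqrt{1-t}}\bigl(d_t(x,p)+1\bigr)+L_1^2\le \frac{L_1}{\sqrt{\delta}}(S+1)+L_1^2,
\]
again using $t\le 1-\delta$. This gives $(x,t)\in P(p,t_0;C(S+1),-T^-,T^+)$.

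I do not expect any real obstacle; the whole argument is a direct bookkeeping exercise after invoking Lemma \ref{lem:401}. The only place where one must be slightly careful is keeping track of which time ($t$ or $t_0$) appears under $\sqrt{1-\cdot}$, and ensuring the hypothesis $-\delta^{-1}\le t_0-T^-$ lets us apply Lemma \ref{lem:401} on the whole strip. Since Lemma \ref{lem:401} only involves $n$ and $\delta$, the constant $C$ ends up depending on $(n,\delta)$, which is absorbed into the asserted $C=C(n,A,\delta)$.
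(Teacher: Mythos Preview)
Your proof is correct and follows exactly the approach indicated in the paper, which simply states that the lemma is immediate from the distance comparison Lemma~\ref{lem:401}; you have merely spelled out the case analysis and the bookkeeping with the constant $L_1/\sqrt{\delta}$ that the paper leaves implicit. Your observation that the resulting constant depends only on $(n,\delta)$ (and not on $A$) is also accurate.
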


In order to investigate the relation between $P^*$-parabolic neighborhoods and conventional ones, we first prove

\begin{prop} \label{prop:Hcent1}
Given $(x_0,t_0) \in M \times (-\infty,1)$ and $r>0$, suppose $R(x_0,t) \le r^{-2}$ for any $t \in [t_0-r^2,t_0]$. Then
\begin{align}
d_{W_1}^{t_0-r^2}(v_{x_0,t_0;t_0-r^2},\delta_{x_0}) \le C(n,A) r. \label{E502a}
\end{align}
\end{prop}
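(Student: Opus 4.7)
The plan is to reduce the $W_1$-distance bound to a geometric distance bound between $x_0$ and an $H_n$-center of $(x_0,t_0)$, and then obtain the latter by comparing an on-diagonal \emph{lower} bound of the heat kernel (furnished by the scalar-curvature hypothesis at $x_0$) with the Gaussian \emph{upper} bound from Theorem~\ref{thm:heatupper}.

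First, by Proposition~\ref{prop:303} there exists an $H_n$-center $(z, t_0-r^2)$ of $(x_0,t_0)$; Definition~\ref{def:Hcenter} gives $d^{t_0-r^2}_{W_1}(v_{x_0,t_0;t_0-r^2}, \delta_z) \le \sqrt{H_n}\,r$, so by the triangle inequality it suffices to show $d_{t_0-r^2}(z,x_0) \le C(n,A)\,r$. Next I would establish the on-diagonal bound
\[ H(x_0,t_0,x_0,t_0-r^2) \ge c(n)\, r^{-n}. \]
Writing $H = (4\pi\tau)^{-n/2}e^{-b}$ with $\tau = t_0-t$, this amounts to $b(x_0, r^2) \le C(n)$. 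Evaluating Perelman's $\mathcal{L}$-length along the constant worldline $\gamma(\tau)\equiv x_0$ and using $R(x_0,t_0-\tau)\le r^{-2}$,
\[ \mathcal{L}(\gamma) \;=\; \int_0^{r^2}\sqrt{\tau}\,R(x_0,t_0-\tau)\,d\tau \;\le\; \tfrac{2}{3}\,r, \]
so the reduced distance satisfies $\ell(x_0,r^2) \le 1/3$. The pointwise comparison $b \le \ell$, which rests on the fact that $(4\pi\tau)^{-n/2}e^{-\ell}$ is a subsolution of $\square^*$ together with the maximum principles of Theorem~\ref{T201}--\ref{T202} (the integrability condition is secured by the decay of $H$ from Theorem~\ref{thm:heatupper1} and Remark~\ref{rem:poten}), then yields $b(x_0,r^2) \le 1/3$, giving the claimed on-diagonal lower bound.

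Finally, Theorem~\ref{thm:heatupper} applied at $(y,s)=(x_0, t_0-r^2)$ with $\epsilon = 1$, combined with $\mathcal{N}_{(x_0,t_0)}(r^2)\ge -A$ from Corollary~\ref{cor:302}, gives
\[ H(x_0,t_0,x_0,t_0-r^2) \;\le\; \frac{C(n)\,e^{A}}{r^n}\,\exp\!\left(-\frac{d^2_{t_0-r^2}(z,x_0)}{5r^2}\right). \]
Comparing the two bounds yields $c(n) \le C(n)e^A\exp(-d^2_{t_0-r^2}(z,x_0)/(5r^2))$, hence $d_{t_0-r^2}(z,x_0) \le C(n,A)\,r$, which is the required inequality.

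The main obstacle is the on-diagonal lower bound: concretely, justifying Perelman's pointwise comparison $b\le \ell$ in our non-compact Ricci-shrinker setting without global curvature bounds. The subsolution property of the reduced-volume density is a local computation, but applying the maximum principle to $(4\pi\tau)^{-n/2}e^{-b}-(4\pi\tau)^{-n/2}e^{-\ell}$ as $\tau\searrow 0$ requires the quadratic growth of $b$ (Remark~\ref{rem:poten}), the Gaussian heat-kernel upper bound (Theorem~\ref{thm:heatupper3}), and the decay of $e^{-\ell}$ at spatial infinity; the ingredients of Sections~2--4 should suffice. A possible alternative, avoiding $\mathcal{L}$-geometry entirely, is to integrate a differential Harnack inequality for $b$ along $\gamma(\tau)\equiv x_0$ directly, but either route requires the same non-compact care.
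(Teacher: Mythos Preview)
Your proposal is correct and follows essentially the same route as the paper: reduce to bounding $d_{t_0-r^2}(z,x_0)$ for an $H_n$-center $z$, obtain the on-diagonal lower bound $H(x_0,t_0,x_0,t_0-r^2)\ge (4\pi r^2)^{-n/2}e^{-1/3}$ via the reduced-distance comparison $b\le\ell$ along the constant worldline, and compare with the Gaussian upper bound of Theorem~\ref{thm:heatupper}. The ``main obstacle'' you flag---justifying $b\le\ell$ in the non-compact shrinker setting---is not actually an obstacle here, since this inequality has already been established as \cite[Theorem~16]{LW20}, which the paper simply cites.
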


\begin{proof}
It follows from \cite[Theorem 16]{LW20} that
\begin{align}\label{E502b}
H(x_0,{t_0},x_0,{t_0-r^2}) \ge \frac{1}{(4\pi r^2)^{\frac n 2}} \exp \lc -l_{(x_0,{t_0})}(x_0,{t_0-r^2}) \rc,
\end{align}
From the definition of $l_{(x_0,{t_0})}(x_0,{t_0-r^2})$ we have
\begin{align} \label{E502c}
l_{(x_0,{t_0})}(x_0,{t_0-r^2}) &\le \frac{1}{2r}\int_{t_0-r^2}^{t_0} \sqrt{{t_0}-s}\,R(x_0,s) \,ds \le \frac{1}{3}.
\end{align}

Combining \eqref{E409a} for $\ep=1$, \eqref{E502b} and \eqref{E502c}, it is clear that
\begin{align*}
d_{t_0-r^2}^2(x_0,z) \le C_1 r^2
\end{align*}
for some constant $C_1=C_1(n,A)$, where $(z,t_0-r^2)$ is an $H_n$-center of $(x_0,t_0)$. Therefore,
\begin{align*}
d_{W_1}^{t_0-r^2}(v_{x_0,t_0;t_0-r^2},\delta_{x_0}) \le d_{W_1}^{t_0-r^2}(v_{x_0,t_0;t_0-r^2},\delta_{z})+d_{t_0-r^2}(x_0,z) \le C_2 r,
\end{align*}
where we have used \eqref{eq:hdis} and $C_2:=\sqrt{H_n}+\sqrt{C_1}$.
\end{proof}

\begin{rem} \label{rem:Hcent}
From the proof, we conclude that \eqref{E502a} also holds for a constant $C=C(n,A,\alpha)$ if we assume
\begin{align*}
R(x_0,t) \le \frac{\alpha}{r^2(t_0-t)}
\end{align*}
for some $\alpha>0$ and any $t \in [t_0-r^2,t_0]$.
\end{rem}

\begin{cor} \label{cor:Hcent1}
For any $s_0<t_0<1$, we have
\begin{align}
d_{W_1}^{s_0}(v_{p,t_0;s_0},\delta_{p}) \le C(n,A) \sqrt{t_0-s_0}. \label{E502xa}
\end{align}
\end{cor}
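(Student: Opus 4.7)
The idea is to iterate Proposition~\ref{prop:Hcent1} on a geometric sequence of sub-intervals of $[s_0,t_0]$ and chain the resulting estimates via the monotonicity of the $W_1$-Wasserstein distance in Proposition~\ref{prop:301}. The crucial input is a universal scalar curvature bound at the base point $p$: since $p$ is the minimum of $f$, $\nabla f(p)=0$, so $\psi^t(p)=p$ for every $t<1$, and combined with $g(t)=(1-t)(\psi^t)^*g$ this yields $R(p,t)=R_g(p)/(1-t)$. Applying Lemma~\ref{L201} at $(p,0)$ gives $f(p)\le n/2$, and $R_g(p)=f(p)$ (from $R+|\nabla f|^2=f$ at a critical point), so $R(p,t)\le (n/2)/(1-t)$ for every $t<1$.

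With this in hand, set $c:=2/n$ and let $K\ge 1$ be the smallest integer with $(1+c)^K(1-t_0)\ge 1-s_0$. Define a decreasing sequence $t_0>t_1>\cdots>t_K=s_0$ by $1-t_{j+1}=(1+c)(1-t_j)$ for $0\le j<K-1$, with the last step terminated so that $t_K=s_0$ (which forces $t_{K-1}-s_0\le c(1-t_{K-1})$). On each sub-interval $[t_{j+1},t_j]$ one then has
\begin{align*}
R(p,t)\le\frac{n/2}{1-t_j}=\frac{1}{c(1-t_j)}\le\frac{1}{t_j-t_{j+1}},
\end{align*}
so Proposition~\ref{prop:Hcent1} applies with $x_0=p$, base time $t_j$ and $r_j=\sqrt{t_j-t_{j+1}}$ and yields
\begin{align*}
d_{W_1}^{t_{j+1}}\bigl(v_{p,t_j;t_{j+1}},\delta_p\bigr)\le C(n,A)\sqrt{t_j-t_{j+1}}.
\end{align*}

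Next, the triangle inequality combined with Proposition~\ref{prop:301}, applied to the pair of conjugate heat kernel measures $v_{p,t_j;\cdot}$ and $v_{p,t_{j+1};\cdot}$ on $(-\infty,t_{j+1}]$ and using $v_{p,s_0;s_0}=\delta_p$, gives
\begin{align*}
d_{W_1}^{s_0}\bigl(v_{p,t_0;s_0},\delta_p\bigr)\le\sum_{j=0}^{K-1}d_{W_1}^{s_0}\bigl(v_{p,t_j;s_0},v_{p,t_{j+1};s_0}\bigr)\le C(n,A)\sum_{j=0}^{K-1}\sqrt{t_j-t_{j+1}}.
\end{align*}

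It remains to evaluate the geometric sum. Since $t_j-t_{j+1}\le c(1+c)^j(1-t_0)$ and $(1+c)^K(1-t_0)\le(1+c)(1-s_0)$ (by minimality of $K$), summing the geometric series and using the elementary inequality $\sqrt{a}-\sqrt{b}\le\sqrt{a-b}$ for $a\ge b\ge 0$ gives
\begin{align*}
\sum_{j=0}^{K-1}\sqrt{t_j-t_{j+1}}\le\frac{\sqrt{c(1+c)}}{\sqrt{1+c}-1}\bigl(\sqrt{1-s_0}-\sqrt{1-t_0}\bigr)+\sqrt{c(1-t_0)}\le C(n)\sqrt{t_0-s_0},
\end{align*}
where in the final step one handles $K=1$ directly (the sum is simply $\sqrt{t_0-s_0}$) and observes that $K\ge 2$ forces $c(1-t_0)<t_0-s_0$. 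Combining gives $d_{W_1}^{s_0}(v_{p,t_0;s_0},\delta_p)\le C(n,A)\sqrt{t_0-s_0}$. The main obstacle is ensuring that the hypothesis of Proposition~\ref{prop:Hcent1} holds at \emph{every} scale simultaneously; this is exactly what the geometric spacing $1-t_{j+1}=(1+c)(1-t_j)$, combined with the universal dimensional bound $R_g(p)\le n/2$, achieves, and is the reason a single-step application of Proposition~\ref{prop:Hcent1} does not suffice when $t_0-s_0$ is large.
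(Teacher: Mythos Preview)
Your proof is correct, but the paper takes a more direct route. The paper observes, as you do, that $R(p,t)=R_g(p)/(1-t)\le (n/2)/(1-t)\le (n/2)/(t_0-t)$; it then invokes Remark~\ref{rem:Hcent}, which says that the conclusion of Proposition~\ref{prop:Hcent1} persists under the weaker hypothesis $R(x_0,t)\le \alpha/(t_0-t)$ (with constant $C=C(n,A,\alpha)$). The point is that in the proof of Proposition~\ref{prop:Hcent1} one only needs a bound on the reduced length along the constant curve, $\frac{1}{2r}\int_{t_0-r^2}^{t_0}\sqrt{t_0-s}\,R(x_0,s)\,ds$, and under the assumption $R(x_0,s)\le\alpha/(t_0-s)$ this integral is exactly $\alpha$. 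So the paper gets the result in one step.

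Your approach instead iterates the unmodified Proposition~\ref{prop:Hcent1} along a geometric partition of $[s_0,t_0]$ and chains the estimates using the $W_1$-monotonicity of Proposition~\ref{prop:301}. This is more elementary in that it does not reopen the proof of Proposition~\ref{prop:Hcent1}, at the cost of a longer argument and the geometric-sum bookkeeping. Your remark that ``a single-step application of Proposition~\ref{prop:Hcent1} does not suffice'' is accurate for the proposition as stated, but Remark~\ref{rem:Hcent} is precisely designed to cover this situation and renders the iteration unnecessary.
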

\begin{proof}
From the self-similarity of the flow, we know that
\begin{align*}
R(p,t)=\frac{R(p,0)}{1-t} \le \frac{n}{2(1-t)} \le \frac{n}{2(t_0-t)}
\end{align*}
for any $t<t_0$. Therefore, the conclusion follows from Proposition \ref{prop:Hcent1} and Remark \ref{rem:Hcent}.
\end{proof}

\begin{prop} \label{prop:Hcent1a}
Given $\delta \in (0,1)$, $t_0 \in (-\infty,1)$, $T^{\pm} \ge 0$ and $S \ge 0$, there exists a constant $C=C(n,A,\delta)>1$ such that
\begin{align*}
Q(p,t_0;S,-T^-,T^+) \subset P^*(p,t_0;S+C,-T^{-},T^+)
\end{align*}
provided that $t_0-T^- \ge -\delta^{-1}$.
\end{prop}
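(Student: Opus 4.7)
The plan is to apply the $W_1$ triangle inequality on $(M, g(s))$ with $s := t_0 - T^-$, using an $H_n$-center of $(x, t)$ as an intermediate atom. Fix $(x, t) \in Q(p, t_0; S, -T^-, T^+)$, so that $d_t(x, p) \le S$, and pick an $H_n$-center $(z, s)$ of $(x, t)$, which exists by Proposition~\ref{prop:303}. Then
\begin{align*}
d^s_{W_1}(v_{p, t_0; s}, v_{x, t; s}) \le d^s_{W_1}(v_{p, t_0; s}, \delta_p) + d_s(p, z) + d^s_{W_1}(\delta_z, v_{x, t; s}).
\end{align*}

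The first term on the right is bounded by $C(n, A)\sqrt{T^-}$ by Corollary~\ref{cor:Hcent1}. The third term is at most $\sqrt{H_n(t - s)}$ by the definition of an $H_n$-center and \eqref{eq:hdis}. For the middle term, \eqref{E403aa} in Lemma~\ref{lem:402} gives $d_s(p, z) \le d_t(x, p) + L_3 \sqrt{t - s} \le S + L_3 \sqrt{t - s}$, with $L_3 = L_3(n, \delta, A)$.

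The uniformity of the resulting constant comes from the hypothesis $t_0 - T^- \ge -\delta^{-1}$: combined with $t < 1$ it forces both $T^- \le 1 + \delta^{-1}$ and $t - s \le 1 - s \le 1 + \delta^{-1}$. Hence every square-root factor above is at most $\sqrt{1 + \delta^{-1}}$, and summing the three bounds produces
\begin{align*}
d^s_{W_1}(v_{p, t_0; s}, v_{x, t; s}) \le S + C(n, A, \delta),
\end{align*}
which is exactly the statement $(x, t) \in P^*(p, t_0; S + C, -T^-, T^+)$.

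The main obstacle is the middle bound on $d_s(p, z)$: the inequality \eqref{E403aa} carries the auxiliary hypothesis $t \le 1 - \delta$, which is not available here. When $t$ is close to $1$ the remedy is to re-derive the same estimate from scratch by applying Corollary~\ref{cor:Hcent1} to $(p, t)$ (valid for all $t < 1$ since self-similarity gives $R(p, t) = R(p, 0)/(1 - t) \le n/(2(1 - t))$, with no upper cutoff on $t$) and then invoking the Wasserstein monotonicity of Proposition~\ref{prop:301} to transfer $d_t(x, p) \le S$ from time $t$ down to time $s$ through two $H_n$-centers, incurring only the additional $O(\sqrt{t - s})$ loss that we have already controlled.
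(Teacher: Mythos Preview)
Your proof is correct. The paper's argument is precisely the direct route that your remedy sketches: it never introduces the $H_n$-center $(z,s)$ of $(x,t)$ at all, but triangles through the measure $v_{p,t;s}$ instead, bounding
\[
d^s_{W_1}(v_{p,t_0;s}, v_{x,t;s}) \le d^s_{W_1}(v_{p,t_0;s}, v_{p,t;s}) + d^s_{W_1}(v_{p,t;s}, v_{x,t;s}),
\]
with the second term $\le d_t(x,p) \le S$ by the $W_1$-monotonicity of Proposition~\ref{prop:301}, and the first term $\le d^s_{W_1}(v_{p,t_0;s},\delta_p) + d^s_{W_1}(\delta_p, v_{p,t;s})$ handled by two applications of Corollary~\ref{cor:Hcent1}. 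So the detour through \eqref{E403aa} and its $t \le 1-\delta$ hypothesis is unnecessary; the fix you propose in your last paragraph \emph{is} the paper's proof, and carrying it out directly is cleaner than first passing through Lemma~\ref{lem:402} and then repairing it.
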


\begin{proof}
For any $(x,t) \in Q(p,t_0;S,-T^-,T^+)$, we have
\begin{align}
& d^{t_0-T^-}_{W_1}(v_{p,t_0;t_0-T^-},v_{x,t;t_0-T^-}) \notag \\
\le & d^{t_0-T^-}_{W_1}(v_{p,t;t_0-T^-},v_{x,t;t_0-T^-})+d^{t_0-T^-}_{W_1}(v_{p,t_0;t_0-T^-},v_{p,t;t_0-T^-}) \notag \\
\le & d_t(x,p)+d^{t_0-T^-}_{W_1}(v_{p,t_0;t_0-T^-},v_{p,t;t_0-T^-}) \le S+d^{t_0-T^-}_{W_1}(v_{p,t_0;t_0-T^-},v_{p,t;t_0-T^-}), \label{E502xxa}
\end{align}
where we have used Proposition \ref{prop:302}. In addition, it follows from Corollary \ref{cor:Hcent1} that
\begin{align}
d^{t_0-T^-}_{W_1}(v_{p,t_0;t_0-T^-},v_{p,t;t_0-T^-}) \le d^{t_0-T^-}_{W_1}(v_{p,t_0;t_0-T^-},\delta_p)+d^{t_0-T^-}_{W_1}(v_{p,t;t_0-T^-},\delta_p) \le C(n,A,\delta). \label{E502xxb}
\end{align}
Therefore, the conclusion follows from \eqref{E502xxa} and \eqref{E502xxb}.
\end{proof}

Next, we recall the following version of the local distance distortion estimate, which can be proved almost exactly as~\cite[Theorem 18]{LW20}; see also~\cite[Section 4.3]{CW17B},~\cite[Theorem 3.1]{CW19} and~\cite[Theorem 1.1]{BZ17}.

\begin{lem} \label{lem:discom}
Given $(x_0,t_0) \in M \times (-\infty,1)$ and $r>0$, suppose $R \le r^{-2}$ on $P^-(x_0,t_0;r)$ \emph{(}resp. $P(x_0,t_0;r)$\emph{)}. Then
\begin{align*}
\rho_1 d_t(x,x_0) \le d_{t_0}(x,x_0) \le \rho_1^{-1} d_t(x,x_0)
\end{align*}
if $d_{t_0}(x,x_0) \le \rho_1 r$ and $t \in [t_0-(\rho_1 r)^2,t_0)$ \emph{(}resp. $t \in [t_0-(\rho_1 r)^2,t_0+(\rho_1 r)^2] \cap (-\infty,1)$\emph{)}, where $\rho_1=\rho_1(n,A) \in (0,1)$. In particular, 
\begin{align} \label{E503a}
\quad P^-(x_0,t_0;\rho_1^2 r)\subset Q^-(x_0,t_0; \rho_1 r) \subset P^-(x_0,t_0;r)
\end{align}
\begin{align} \label{E503aa}
\lc \text{resp.} \quad P(x_0,t_0;\rho_1^2 r)\subset Q(x_0,t_0; \rho_1 r) \subset P(x_0,t_0;r) \rc.
\end{align}
\end{lem}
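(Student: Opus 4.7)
The plan is to adapt the proof of [LW20, Theorem 18], which establishes the backward-only inclusion \eqref{E503a} in the Ricci shrinker setting, and then extend the argument symmetrically to obtain the two-sided version \eqref{E503aa}. The underlying technique is the Bamler--Zhang method [BZ17] for distance distortion under only a scalar curvature bound (see also [CW17B, CW19]), which replaces the classical Ricci bound by a Perelman-type integral argument along minimizing geodesics.

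The first step is to construct a local spacetime cutoff function $\phi$ supported in the relevant parabolic neighborhood $P^-(x_0,t_0;r)$ (resp.\ $P(x_0,t_0;r)$) with $\phi \equiv 1$ on a smaller sub-neighborhood and $|\square \phi| \le C(n) r^{-2}$. This is exactly the content of [BZ17, Theorem 1.3] / Proposition \ref{prop:cutoff}; only the scalar curvature bound $R \le r^{-2}$ is required, because the Laplacian of the distance function under Ricci flow can be dominated by the scalar curvature via Perelman's trick of integrating $\mathrm{Ric}(\gamma', \gamma')$ along a minimizing geodesic against parallel perpendicular vector fields.

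The second step is to use $\phi$ (or equivalently the same geodesic-integration argument) to bound the distance distortion directly. Along a minimizing $\gamma$ joining $x_0$ and $x$ at some reference time, $\partial_t d_t(x,x_0) = -\int_\gamma \mathrm{Ric}(\gamma',\gamma')\,ds$ is controlled by $C(n)/r$ provided $\gamma$ remains in the scalar-curvature-bounded region. Integrating this inequality over $[t_0 - (\rho_1 r)^2, t_0]$ (or the two-sided interval $[t_0 - (\rho_1 r)^2, t_0 + (\rho_1 r)^2]$) yields the comparison $\rho_1 d_t(x,x_0) \le d_{t_0}(x,x_0) \le \rho_1^{-1} d_t(x,x_0)$ for $\rho_1 = \rho_1(n,A)$ sufficiently small. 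The inclusions \eqref{E503a} and \eqref{E503aa} then follow mechanically from this distortion estimate.

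The main obstacle I anticipate is purely technical: one must ensure that the minimizing geodesic at time $t$ does not a priori escape the spatial ball $B_{t_0}(x_0,r)$ where the scalar curvature bound is assumed, since this bound is not available outside the given parabolic neighborhood. This is handled by a standard continuity/bootstrap argument: if we start with $d_{t_0}(x,x_0) \le \rho_1 r$, then for $\rho_1$ small the distortion inequality, valid as long as the geodesic stays in the bounded region, keeps $d_t(x,x_0) \le 2\rho_1 r \ll r$, so the geodesic in fact stays in the bounded region throughout $[t_0 - (\rho_1 r)^2, t_0]$, closing the bootstrap. The two-sided case is identical, run simultaneously in forward and backward time, since the curvature bound is assumed on the full neighborhood $P(x_0,t_0;r)$.
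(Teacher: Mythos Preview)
The paper does not supply its own proof of this lemma; it simply states that the argument is ``almost exactly'' that of \cite[Theorem~18]{LW20}, with further pointers to \cite[Section~4.3]{CW17B}, \cite[Theorem~3.1]{CW19}, and \cite[Theorem~1.1]{BZ17}. Your proposal identifies precisely these sources and correctly summarizes the Bamler--Zhang heat-kernel cutoff mechanism together with the continuity/bootstrap step that keeps the geodesic inside the controlled region, so your approach is the same as the one the paper invokes.
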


Thanks to Proposition \ref{prop:Hcent1} and Lemma \ref{lem:discom}, we have the following result.

\begin{prop} \label{prop:com1}
There exists a constant $\rho_2=\rho_2(n,A) \in (0,1)$ satisfying the following property.
Given $(x_0,t_0) \in M \times (-\infty,1)$ and $r>0$, suppose that $R \le r^{-2}$ on $P(x_0,t_0;r,-(\rho_2 r)^2)$ \emph{(}resp. $P(x_0,t_0;r,-(\rho_2 r)^2,(\rho_2 r)^2)$\emph{)}. Then
\begin{align} \label{E503b}
P^-(x_0,t_0; \rho_2 r) \subset P^{*}(x_0,t_0;r, -(\rho_2 r)^2,0)
\end{align}
\begin{align} \label{E503bb}
\lc \text{resp.} \quad P(x_0,t_0; \rho_2 r) \subset P^*(x_0,t_0;r, -(\rho_2 r)^2,(\rho_2 r)^2) \rc.
\end{align}
\end{prop}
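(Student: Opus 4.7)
The plan is to fix $(x,t)$ in the conventional parabolic neighborhood, set $s := t_0-(\rho_2 r)^2$, and bound the Wasserstein distance $d_{W_1}^s(v_{x_0,t_0;s}, v_{x,t;s})$ by the triangle inequality through two conjugate heat kernel measures and a comparison of delta masses:
\[
d_{W_1}^s(v_{x_0,t_0;s}, v_{x,t;s}) \;\le\; d_{W_1}^s(v_{x_0,t_0;s}, \delta_{x_0}) \;+\; d_s(x_0,x) \;+\; d_{W_1}^s(\delta_x, v_{x,t;s}).
\]
Each of the three terms should be bounded by a constant multiple of $\rho_2 r$, and then $\rho_2$ is chosen small.

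For the first term I would apply Proposition~\ref{prop:Hcent1} at $(x_0,t_0)$ \emph{at scale} $\rho_2 r$; the needed hypothesis is $R(x_0,\tau)\le (\rho_2 r)^{-2}$ for $\tau\in[s,t_0]$, which follows from the assumed bound $R\le r^{-2}$ since $\rho_2\le 1$. This yields $d_{W_1}^s(v_{x_0,t_0;s},\delta_{x_0})\le C(n,A)\rho_2 r$. For the third term I apply the same proposition at $(x,t)$ at scale $\sqrt{t-s}$; since $d_{t_0}(x,x_0)\le \rho_2 r \le r$, the segment $\{x\}\times[s,t]$ sits inside the given parabolic cylinder, so $R(x,\tau)\le r^{-2}$ there, and provided $\rho_2\le 1/\sqrt{2}$ we have $t-s\le 2(\rho_2 r)^2\le r^2$, whence $R\le r^{-2}\le (t-s)^{-1}$. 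This gives $d_{W_1}^s(v_{x,t;s},\delta_x)\le C(n,A)\sqrt{t-s}\le \sqrt{2}\,C(n,A)\rho_2 r$. For the middle term I apply the local distance distortion Lemma~\ref{lem:discom} (with radius $r$, in the one-sided or two-sided form matching the statement): if $\rho_2\le \rho_1$, then $d_{t_0}(x,x_0)\le\rho_2 r\le\rho_1 r$ and $s\in [t_0-(\rho_1 r)^2,t_0]$, so $d_s(x_0,x)\le \rho_1^{-1}d_{t_0}(x_0,x)\le \rho_1^{-1}\rho_2 r$.

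Summing, $d_{W_1}^s(v_{x_0,t_0;s},v_{x,t;s})\le C'(n,A)\rho_2 r$, and choosing $\rho_2=\rho_2(n,A)\in(0,1)$ with $\rho_2\le\min(\rho_1,\,1/\sqrt{2},\,1/(2C'(n,A)))$ makes this strictly smaller than $r$. This verifies membership in the $P^*$-neighborhood as claimed; the forward (two-sided) case is treated identically, using the two-sided form of Lemma~\ref{lem:discom} and the two-sided parabolic cylinder hypothesis.

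The main (mild) obstacle will be the bookkeeping of scales: we must apply Proposition~\ref{prop:Hcent1} at two different scales $\rho_2 r$ and $\sqrt{t-s}$, and in each case verify that the uniform scalar curvature bound on the cylinder implies the required scale-dependent bound. The key observation that makes this work is monotonicity in $r$ of the hypothesis $R\le r^{-2}$: a bound with a larger reference scale automatically gives a bound with any smaller reference scale, so shrinking $\rho_2$ only tightens what we can assume. No additional heat kernel estimates beyond Proposition~\ref{prop:Hcent1} are needed, and the constants $\rho_1,C(n,A)$ from the earlier results feed directly into the threshold defining $\rho_2$.
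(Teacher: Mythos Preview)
Your proposal is correct and follows essentially the same approach as the paper: the same three-term triangle inequality through $\delta_{x_0}$ and $\delta_x$, with Proposition~\ref{prop:Hcent1} bounding the two heat-kernel-to-delta terms and Lemma~\ref{lem:discom} handling $d_s(x_0,x)$. One minor caveat (shared with the paper's own proof) is that invoking Lemma~\ref{lem:discom} ``at radius $r$'' literally requires the scalar bound on all of $P^-(x_0,t_0;r)$ with time-depth $r^2$, whereas only depth $(\rho_2 r)^2$ is assumed; this is harmless because the underlying distortion estimate is local in time and only uses the bound on the interval $[t_0-(\rho_2 r)^2,t_0]$.
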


\begin{proof}
In the proof, all constants $C_i>1$ depend on $n$ and $A$ and $\rho_1$ is from Lemma \ref{lem:discom}. We only prove \eqref{E503b}, and the proof of \eqref{E503bb} is similar. Moreover, we set $0<\tau \ll 1$ to be determined later.

For any $(y,s) \in P^-(x_0,t_0; \tau r)$, it follows from Lemma \ref{lem:discom} that
\begin{align} \label{E503bc}
d_t(y,x_0) \le C_1 \tau r
\end{align}
for any $t \in [t_0-(\tau r)^2,s]$. In particular, $R(y,t) \le r^{-2}$ for any $t \in [t_0-(\tau r)^2,s]$. Therefore, it follows from Proposition \ref{prop:Hcent1} that 
\begin{align} \label{E503bd}
d_{W_1}^{t_0-(\tau r)^2}(v_{y,s;t_0-(\tau r)^2},\delta_{y}) \le C_2 \tau r.
\end{align}
It follows from \eqref{E503bc} and \eqref{E503bd} that
\begin{align*}
&d_{W_1}^{t_0-(\tau r)^2}(v_{y,s;t_0-(\tau r)^2},v_{x_0,t_0;t_0-(\tau r)^2}) \\
\le& d_{W_1}^{t_0-(\tau r)^2}(v_{x_0,t_0;t_0-(\tau r)^2},\delta_{x_0})+d_{W_1}^{t_0-(\tau r)^2}(v_{y,s;t_0-(\tau r)^2},\delta_{y})+d_{t_0-(\tau r)^2}(y,x_0) \le C_3 \tau r<r,
\end{align*}
if $\tau$ is sufficiently small. From this, it is immediate that \eqref{E503b} holds for small $\rho_2$.
\end{proof}
Now, we prove

\begin{prop} \label{prop:Hcent2}
Given $\delta \in (0,1)$, $t_0 \in (-\infty,1)$, $T^{\pm} \ge 0$ and $S \ge 0$, there exists a constant $C=C(n,A,\delta)>1$ such that
\begin{align} \label{E503xxa}
P^*(p,t_0;S,-T^-,T^+) \subset Q(p,t_0; \sqrt{2} S+C,-T^{-},T^+)
\end{align}
provided that $t_0-T^- \ge -\delta^{-1}$. In particular, it implies that $P^*(p,t_0;S,-T^-,T^+)$ is precompact in $M \times (-\infty,1)$ if $t_0+T^+<1$.
\end{prop}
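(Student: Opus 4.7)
Set $s_0:=t_0-T^-$. The hypothesis $s_0\ge -\delta^{-1}$ together with $t<1$ gives $t-s_0\le 1+\delta^{-1}$, so every occurrence of $\sqrt{H_n(t-s_0)}$ or $\sqrt{T^-}$ below absorbs into a constant $C=C(n,A,\delta)$. My plan is to first extract a distance bound at the common earlier time $s_0$ via $H_n$-centers, and then transport it forward to time $t$ by the distance distortion of Lemma~\ref{lem:401}. The final precompactness assertion will be an immediate consequence, since when $t_0+T^+<1$ the time interval is compact in $(-\infty,1)$ and closed metric balls in $(M,g(t))$ are compact by completeness of the Ricci flow.

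Choose an $H_n$-center $(z,s_0)$ of $(x,t)$ and an $H_n$-center $(z_p,s_0)$ of $(p,t_0)$, both existing by Proposition~\ref{prop:303}. Combining the defining bounds
\[
d_{W_1}^{s_0}(\delta_z,v_{x,t;s_0})\le \sqrt{H_n(t-s_0)},\qquad d_{W_1}^{s_0}(\delta_{z_p},v_{p,t_0;s_0})\le \sqrt{H_nT^-}
\]
with the hypothesis $d_{W_1}^{s_0}(v_{p,t_0;s_0},v_{x,t;s_0})<S$ and the estimate $d_{W_1}^{s_0}(\delta_p,v_{p,t_0;s_0})\le C\sqrt{T^-}$ from Corollary~\ref{cor:Hcent1}, several applications of the triangle inequality for $d_{W_1}^{s_0}$ produce $d_{s_0}(z,p)<S+C_1$. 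Lemma~\ref{lem:401} then propagates this to time $t$: since $-\delta^{-1}\le s_0\le t<1$, we have $d_t(z,p)\le d_{s_0}(z,p)+L_1<S+C_2$.

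The main remaining task, and the step I expect to be hardest, is to bound $d_t(x,z)$ by a constant depending only on $n,A,\delta$. The natural tool is Lemma~\ref{lem:402}, which yields $d_{s_0}(x,z)\le L_2\sqrt{t-s_0}$ under the a priori hypothesis $d_t(x,p)\le K$, a relation that is circular in our setting. I would close this via a bootstrap: a coarse initial bound on $d_t(x,p)$ obtained from the Gaussian heat kernel estimates of Proposition~\ref{prop:401} and Corollary~\ref{cor:401}, together with the distance distortion already used, produces some finite $K=K(n,A,\delta,S)$; fed back into Lemma~\ref{lem:402}, this improves to $d_t(x,z)\le C_3(n,A,\delta)$ independent of $S$. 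The triangle inequality $d_t(x,p)\le d_t(x,z)+d_t(z,p)$ then combined with a balanced Cauchy-Schwarz step of the form $(a+b)^2\le 2(a^2+b^2)$ yields the advertised bound $d_t(x,p)\le \sqrt{2}S+C$. The delicate point is tracking the $K$-dependence of $L_2$ in Lemma~\ref{lem:402} carefully enough that the fixed-point iteration terminates at a bound of the required form, uniform in the initial rough estimate.
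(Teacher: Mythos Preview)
Your proposal has a genuine gap in the bootstrap step, and the paper takes a different and cleaner route that avoids it entirely.

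The circularity you identify is real and cannot be closed with the tools you invoke. Lemma~\ref{lem:402}, Proposition~\ref{prop:401}, and Corollary~\ref{cor:401} all carry a hypothesis of the form $d_t(x,p)\le K$ (or $d_s(x,p)+d_s(y,p)\le K$), and their constants $L_2,L_4,L_5$ depend on $K$. So none of them gives you an \emph{initial} bound on $d_t(x,p)$, and there is no fixed-point argument available: you cannot feed in a tentative $K$ and improve it, because you have no starting value. Even granting a starting value, the dependence $L_2=L_2(n,K,\delta,A)$ in Lemma~\ref{lem:402} traces back through \cite[Theorem~19]{LW20} and need not be sublinear in $K$, so the iteration $K\mapsto L_2(K)\sqrt{t-s_0}+S+C'$ is not known to contract. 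Your final remark about a Cauchy--Schwarz step producing the factor $\sqrt 2$ is also off: if you had $d_t(x,z)\le C_3$ and $d_t(z,p)\le S+C_2$, the triangle inequality already gives $d_t(x,p)\le S+C$, which is stronger than $\sqrt 2\,S+C$; the $\sqrt 2$ in the statement has a different origin.

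The paper's argument bypasses all of this by working with the potential $F$ rather than with distances directly. After obtaining, as you do, the bound $d_{s_0}(z,p)\le S+C_2$ for an $H_n$-center $(z,s_0)$ of $(x_1,t_1)$, one uses the global cutoff $\phi^r=\eta(F/r)$: since $|\square\phi^r|\le C(n)r^{-1}$ uniformly (no a priori distance bound needed), integrating $\partial_t\int\phi^r\,dv_t=\int\square\phi^r\,dv_t$ from $s_0$ to $t_1$ gives $\phi^r(x_1,t_1)\ge\int\phi^r\,dv_{s_0}-C r^{-1}$. The set $\{F(\cdot,s_0)\le r\}$ contains a large ball about $z$ by Lemma~\ref{L201}, so the concentration estimate (Proposition~\ref{prop:304}) forces $\int\phi^r\,dv_{s_0}$ close to $1$ when $2\sqrt r=S+C_4$. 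Hence $\phi^r(x_1,t_1)>0$, which means $F(x_1,t_1)\le 2r=(S+C_4)^2/2$; Lemma~\ref{L201} then converts this to $d_{t_1}(p,x_1)\le\sqrt 2\,S+C$. The factor $\sqrt 2$ thus comes from the quadratic comparison $F\approx d_t(\cdot,p)^2/4$ together with the choice of $r$.
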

\begin{proof}
In the proof, all constants $C_i>1$ depend on $n, A$ and $\delta$.
It follows from \eqref{E502xa} that
\begin{align}
d^{t_0-T^-}_{W_1} (\delta_p ,v_{p,t_0;t_0-T^-}) \le C_1. \label{E503xa}
\end{align}
For any $(x_1,t_1) \in P^*(p,t_0;S,-T^-,T^+)$, we assume $(z,t_0-T^-)$ to be an $H_n$-center of $(x_1,t_1)$. By \eqref{E503xa} and the definition of $P^*$ neighborhood, we have
\begin{align}
d_{t_0-T^-} (p,z) \le & d^{t_0-T^-}_{W_1} (\delta_p ,v_{p,t_0;t_0-T^-})+d^{t_0-T^-}_{W_1} (v_{p,t_0;t_0-T^-},\delta_z) \notag \\
\le& d^{t_0-T^-}_{W_1} (\delta_z,v_{x_1,t_1;t_0-T^-})+d^{t_0-T^-}_{W_1} (v_{p,t_0;t_0-T^-},v_{x_1,t_1;t_0-T^-})+C_1 \notag \\
\le& S+C_2. \label{E503xb}
\end{align}

Set $v_t=v_{x_1,t_1;t}$ and compute
\begin{align*}
\partial_t \int_M \phi^r \,dv_t=\int_M \square\phi^r \,dv_t.
\end{align*}
By \eqref{E205d}, we have
\begin{align*}
\phi^r(x_1,t_1) \ge& \int_M \phi^r \,dv_{t_0-T^-}-C(n)r^{-1}(t_1-t_0+T^-) 
\ge \int_{F \le r} 1 \,dv_{t_0-T^-}-C_3 r^{-1}.
\end{align*}
Note that $\phi^r=1$ if $F \leq r$ and $r$ is large.  In light of \eqref{E503xb} and Lemma \ref{L201},  the set $\{F \leq r\}$ contains a large geodesic ball centered at $z$.
Thus by Proposition \ref{prop:304}, the above inequality implies that
\begin{align}
  \phi^r(x_1,t_1) \ge \int_{F \le r} 1 \,dv_{t_0-T^-}-C_3 r^{-1}  \geq \frac12  \label{E503xc}
\end{align}
if $2\sqrt r = S+C_4$. Since $\phi^r$ is supported on $F \le 2r$, we conclude from \eqref{E503xc} that
\begin{align*}
F(x_1,t_1) \le 2r=\frac{(S+C_4)^2}{2}.
\end{align*}
From Lemma \ref{L201} and Lemma \ref{lem:401}, we immediate conclude that
\begin{align*}
d_{t_1}(p,x_1) \le \sqrt{2}S+C_5.
\end{align*}
Now, the last conclusion follows from \eqref{E503xxa} and Lemma \ref{lem:dis2}.
\end{proof}

\begin{cor} \label{cor:501}
Given $(x_0, t_0) \in M \times (-\infty,1)$ and $S, T^{\pm} \geq 0$, $P^* (x_0, t_0; S, -T^-, T^+)$ is precompact in $M \times (-\infty,1)$ if $t_0+T^+<1$.
\end{cor}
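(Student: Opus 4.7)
The plan is to reduce the statement to Proposition \ref{prop:Hcent2} by using the containment Lemma \ref{lem:ball} to swap the center $x_0$ for the base point $p$, at the cost of inflating the spatial radius by a finite amount depending on $d_{t_0}(x_0,p)$.

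First I would observe that at the boundary time $t_0$ we have $v_{x_0,t_0;t_0}=\delta_{x_0}$ and $v_{p,t_0;t_0}=\delta_p$, so
\[
d^{t_0}_{W_1}\bigl(v_{x_0,t_0;t_0},\,v_{p,t_0;t_0}\bigr) \;=\; d_{t_0}(x_0,p).
\]
Setting $A:=d_{t_0}(x_0,p)+1$, this gives $(x_0,t_0)\in P^*(p,t_0;A,0,0)$ by the very definition of the $P^*$-neighborhood.

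Next I would apply Lemma \ref{lem:ball} with $(x_1,t_1)=(x_0,t_0)$, $(x_2,t_2)=(p,t_0)$, $A_2=A$, $T_2^\pm=0$, $A_1=S$, $T_1^\pm=T^\pm$, to conclude
\[
P^*(x_0,t_0;S,-T^-,T^+) \;\subset\; P^*\bigl(p,t_0;\,S+A,\,-T^-,\,T^+\bigr).
\]
Since $t_0+T^+<1$, I can choose $\delta\in(0,1)$ small enough so that $-\delta^{-1}\le t_0-T^-$ and $t_0+T^+\le 1-\delta$. Proposition \ref{prop:Hcent2} then yields a constant $C=C(n,A,\delta)>1$ with
\[
P^*\bigl(p,t_0;\,S+A,\,-T^-,\,T^+\bigr) \;\subset\; Q\bigl(p,t_0;\,\sqrt{2}(S+A)+C,\,-T^-,\,T^+\bigr).
\]

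Finally, applying Lemma \ref{lem:dis2} (valid by our choice of $\delta$), the $Q$-neighborhood on the right is contained in a $P$-neighborhood
\[
P\bigl(p,t_0;\,C',\,-T^-,\,T^+\bigr) \;=\; B_{t_0}(p,C')\times[t_0-T^-,\,t_0+T^+]
\]
for some constant $C'$. Because each time slice $(M,g(t_0))$ is a complete Riemannian manifold, the closed metric ball $\overline{B_{t_0}(p,C')}$ is compact by Hopf--Rinow, and the time interval $[t_0-T^-,\,t_0+T^+]$ is a compact subset of $(-\infty,1)$ since $t_0+T^+<1$. Thus the product is compact in $M\times(-\infty,1)$, and $P^*(x_0,t_0;S,-T^-,T^+)$, being contained in it, is precompact. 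No step is particularly delicate once Lemma \ref{lem:ball} and Proposition \ref{prop:Hcent2} are in hand; the only mild point is choosing $\delta$ compatible with both endpoints of the time interval, which is possible precisely because $t_0+T^+<1$.
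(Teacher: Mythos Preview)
Your proof is correct and follows essentially the same strategy as the paper: use Lemma~\ref{lem:ball} to replace the center $x_0$ by $p$, then invoke Proposition~\ref{prop:Hcent2}. The paper instead records $(x_0,t_0)\in P^*(p,t_0;S',-1,0)$ and cites the precompactness clause of Proposition~\ref{prop:Hcent2} directly rather than unwinding it via Lemma~\ref{lem:dis2} and Hopf--Rinow, but this is cosmetic; one minor point is that your symbol $A:=d_{t_0}(x_0,p)+1$ clashes with the entropy bound $A$ already fixed for $\MM(A)$.
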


\begin{proof}
It is clear that $(x_0,t_0) \in P^* (p, t_0; S', -1,0)$ for some large $S'>0$. Therefore, it follows from Lemma \ref{lem:ball}
\begin{align*}
P^* (x_0, t_0; S, -T^-, T^+) \subset P^* (p, t_0; S+S', -(1+T^-),T^+).
\end{align*}
Therefore, the conclusion follows from Proposition \ref{prop:Hcent2}.
\end{proof}
Next, we recall the following existence of the local cutoff function from \cite[Theorem 1.3]{BZ17}.
\begin{prop} \label{prop:cutoff}
Given $(x_0,t_0) \in M \times (-\infty,1)$ and $r>0$, there exists a constant $\rho_3=\rho_3(n,A) \in (0,1)$ satisfying the following property.

Suppose $R \le r^{-2}$ on $P(x_0,t_0;r,0,-\tau)$ with $0<\tau \le (\rho_3 r)^2$. Then there exists a function $\varphi \in C^{\infty}(M \times [t_0-\tau,t_0])$ with the following properties:
\begin{enumerate}[label=\textnormal{(\alph{*})}]
\item $0 \le \varphi<1$ on $M \times [t_0-\tau,t_0]$.

\item $\varphi>\rho_3$ on $P(x_0,t_0;\rho_3 r,0,-\tau)$.

\item $\varphi=0$ outside $P(x_0,t_0;r,0,-\tau)$.

\item $|\na \varphi| \le r^{-1}$ and $|\partial_t \varphi|+|\Delta \varphi| \le r^{-2}$.

\item $\square \varphi \le 0$ on $M \times [t_0-\tau,t_0]$.
\end{enumerate}
\end{prop}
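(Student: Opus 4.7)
The plan is to construct $\varphi$ by composing a smooth one-variable cutoff with a modified spacetime distance function, combining a Perelman-type barrier estimate for the distance under bounded scalar curvature with a small linear time perturbation. This is the Bamler--Zhang strategy.

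First, under the hypothesis $R \le r^{-2}$ on $P(x_0, t_0; r, 0, -\tau)$, I would establish the barrier-sense inequality
\begin{equation*}
\square\bigl(d_t(\cdot, x_0) + C_0(t_0 - t)/r\bigr) \ge 0 \quad \text{on the set } \{r/8 \le d_t(\cdot, x_0) \le r\},
\end{equation*}
for a suitable $C_0 = C_0(n)$. Following Perelman, one decomposes the $g(t)$-minimal geodesic $\gamma$ from $x_0$ to $x$ into a short initial segment of length $\sim r/16$ and the remainder: on the initial segment, $\int_\gamma Rc(\dot\gamma, \dot\gamma)\, ds$ and the relevant integrals in the index form are controlled crudely in terms of the scalar curvature bound $R \le r^{-2}$; on the remainder, the definite length permits a cut-off Jacobi field estimate that yields $\Delta d_t \le K(n)/r$. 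The local distance distortion estimate (Lemma \ref{lem:discom}) ensures that $\gamma$ stays inside the region where the scalar curvature bound holds, so the estimate closes.

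Second, fix a smooth nonincreasing cutoff $\eta : \mathbb{R} \to [0,1]$ that is $1$ on $(-\infty, 1/8]$ and $0$ on $[1/2, \infty)$, whose shape is tuned so that $|\eta''| \lesssim |\eta'|$ on its transition region, and set $d^\sharp(x,t) := d_t(x, x_0) + C_0(t_0 - t)/r$ and $\tilde\varphi := \eta(d^\sharp/r)$. Choosing $\rho_3$ small so that $C_0 \tau / r^2 \ll 1$ renders the time shift negligible compared with $r$, so that $\tilde\varphi \equiv 1$ on a small parabolic neighborhood of $(x_0, t_0)$ (giving (b)) and $\tilde\varphi \equiv 0$ outside $P(x_0, t_0; r, 0, -\tau)$ (giving (c)). The chain-rule identity
\begin{equation*}
\square \tilde\varphi = \frac{\eta'(d^\sharp/r)}{r}\,\square d^\sharp - \frac{\eta''(d^\sharp/r)}{r^2}\,|\nabla d^\sharp|^2,
\end{equation*}
combined with $\eta' \le 0$, the lower bound $\square d^\sharp \ge 0$ from Step 1, and the control $|\eta''| \lesssim |\eta'|$ (absorbed into $C_0$), yields $\square \tilde\varphi \le 0$ in the barrier sense.

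Finally, to upgrade the Lipschitz $\tilde\varphi$ to the required $C^\infty$ function $\varphi$ with the pointwise derivative bounds in (d) and the classical inequality (e), one applies a short-time parabolic smoothing (for instance, convolution with the heat kernel over a tiny time interval followed by restriction), combined with Hamilton's trick of taking the minimum over finitely many barrier functions at cut-locus points. I expect this smoothing step to be the main obstacle, since one must simultaneously preserve the support condition (c), the interior lower bound (b), and the sign of $\square\varphi$ despite the cut locus of $x_0$ at which $d_t(\cdot, x_0)$ is only Lipschitz; the interplay between the barrier-sense inequality and these pointwise requirements is the technical heart of \cite{BZ17}, which is invoked here as a black box.
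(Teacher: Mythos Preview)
Your proposal has a genuine gap in Step 1. The Perelman-type barrier estimate for $d_t(\cdot,x_0)$ requires pointwise control on the Ricci curvature along an initial segment of the geodesic, not merely on the scalar curvature. The claim that $\int_\gamma Rc(\dot\gamma,\dot\gamma)\,ds$ ``is controlled crudely in terms of the scalar curvature bound $R\le r^{-2}$'' is false in general: $R=\operatorname{tr}(Rc)$ gives no bound on an individual component such as $Rc(\dot\gamma,\dot\gamma)$. Consequently the inequality $\square d^\sharp\ge 0$ is not available from the hypotheses, and the chain-rule computation yielding $\square\tilde\varphi\le 0$ breaks down. Invoking Lemma~\ref{lem:discom} does not help here, since that lemma concerns distance distortion (an integral statement), not the pointwise Laplacian comparison you need.

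The paper's (and indeed Bamler--Zhang's actual) construction bypasses this obstacle entirely by avoiding the distance function. One fixes an auxiliary spacetime point $(y,s)$ slightly in the past and works with the genuine heat solution $K(x,t):=H(x,t,y,s)$, then sets $\psi:=c_1\max\{K-c,0\}$ and takes $\varphi$ to be a smoothing of $\psi^2$. Since $\square K=0$ exactly, the subsolution property (e) is automatic; the support condition (c), the interior lower bound (b), and the derivative estimates (d) follow from the Gaussian two-sided bounds and gradient estimates for $K$ available under only scalar curvature control (via no-local-collapsing and the on-diagonal upper bound). This is why only a scalar curvature hypothesis suffices, and why there is no cut-locus smoothing issue: $K$ is already $C^\infty$.
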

\begin{proof}
We sketch the proof for readers' convenience. In \cite[Theorem 1.3]{BZ17}, $\varphi$ is constructed as the smoothing of $\psi^2$, where 
\begin{align*}
\psi(x,t):=c_1\max \{K(x,t)-c,0\}
\end{align*}
for some constants $c,c_1>0$ on $U \times [t_0-\tau,t_0]$ for some open set $U \subset B_{t_0}(x_0,r)$, where $\psi=0$ on $\partial U \times [t_0-\tau,t_0]$ and can be extended to be $0$ outside $U \times [t_0-\tau,t_0]$. Here, $K(x,t)=H(x,t,y,s)$ for some appropriate $(y,s)$ such that $K(x_0,t_0) \ge (4\pi(t_0-s))^{-\frac n 2}e^{-n/2}$ and $t_0-s$ is sufficiently small. 

The estimates of (a)-(d) follow from \cite[Lemma 20]{LW20}. From the definitions of $\psi$ and $\varphi$, it is clear that (e) also holds. 
\end{proof}
Next, we prove
\begin{prop} \label{prop:Hcent3}
There exists a constant $\rho_4=\rho_4(n,A) \in (0,1)$ satisfying the following property.
Given $(x_0,t_0) \in M \times (-\infty,1)$ and $r>0$, suppose that $R \le r^{-2}$ on $P(x_0,t_0;r,-(\rho_4 r)^2,0)$ \emph{(}resp. $P(x_0,t_0;r,-(\rho_4 r)^2, (\rho_4r)^2)$\emph{)}. Then
\begin{align} \label{E506a}
P^{*-}(x_0,t_0; \rho_4 r) \subset P(x_0,t_0; r, -(\rho_4 r)^2,0)
\end{align}
\begin{align} \label{E506b}
\lc \text{resp.} \quad P^*(x_0,t_0; \rho_4 r) \subset P(x_0,t_0; r, -(\rho_4 r)^2,(\rho_4 r)^2) \rc.
\end{align}
\end{prop}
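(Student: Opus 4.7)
My plan for part (a) is proof by contradiction, combining three ingredients: the $H_n$-center analysis of Propositions \ref{prop:303}--\ref{prop:305}, the cutoff function $\varphi$ from Proposition \ref{prop:cutoff} satisfying $\square\varphi\le 0$ and spatially supported in $B_{t_0}(x_0,r)$, and the distance distortion Lemma \ref{lem:discom} together with Hamilton's distance evolution. Set $\tau=(\rho_4 r)^2$ and $t_1=t_0-\tau$, and suppose $(y,s)\in P^{*-}(x_0,t_0;\rho_4 r)$ with $y\notin B_{t_0}(x_0,r)$; I aim to derive a contradiction. Part (b) will follow by applying this backward argument for $s\le t_0$ together with a symmetric forward argument for $s\ge t_0$, using the two-sided curvature hypothesis at the common earlier time $t_1=t_0-(\rho_4 r)^2$.

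First I would localize both conjugate heat kernels at the common earlier time $t_1$. Since $R(x_0,\cdot)\le r^{-2}$ on $[t_1,t_0]$, the reduced-distance estimate $l_{(x_0,t_0)}(x_0,t_1)\le\rho_4^2/3$ combined with the heat kernel lower bound from \cite[Theorem 16]{LW20} and the Gaussian upper bound Theorem \ref{thm:heatupper} (repeating the proof of Proposition \ref{prop:Hcent1} at scale $\sqrt\tau$) yields $d_{W_1}^{t_1}(v_{x_0,t_0;t_1},\delta_{x_0})\le C_1(n,A)\rho_4 r$. Triangle inequality with the $P^{*-}$ hypothesis then gives $d_{W_1}^{t_1}(v_{y,s;t_1},\delta_{x_0})<(1+C_1)\rho_4 r$, and picking an $H_n$-center $(\tilde z,t_1)$ of $(y,s)$ via Proposition \ref{prop:303} forces $d_{t_1}(\tilde z,x_0)\le C_2\rho_4 r$ with $C_2:=1+C_1+\sqrt{H_n}$. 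I would then require $\rho_4\le\rho_3$ and invoke Proposition \ref{prop:cutoff} to obtain $\varphi\in C^\infty(M\times[t_1,t_0])$ supported in $B_{t_0}(x_0,r)$ with $\varphi(x_0,t_0)>\rho_3$, $|\nabla\varphi|\le r^{-1}$, and $\square\varphi\le 0$. Letting $u$ be the bounded heat solution with $u(\cdot,t_1)=\varphi(\cdot,t_1)$, the maximum principle Theorem \ref{T201} applied to $\square(\varphi-u)\le 0$ gives $\varphi\le u$ on $[t_1,t_0]$, hence $u(x_0,t_0)>\rho_3$. Since $r\varphi(\cdot,t_1)$ is bounded and $1$-Lipschitz at time $t_1$, the $P^{*-}$ Wasserstein bound yields $u(y,s)>\rho_3-\rho_4$, and since $\varphi(\cdot,t_1)\le\chi_{B_{t_0}(x_0,r)}$ we obtain the crucial mass estimate $v_{y,s;t_1}(B_{t_0}(x_0,r))>\rho_3-\rho_4$.

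The final step, and the main obstacle, is to convert these time-$t_1$ estimates into a contradiction with $d_{t_0}(y,x_0)\ge r$. I would first upgrade $d_{t_1}(\tilde z,x_0)\le C_2\rho_4 r$ to $d_{t_0}(\tilde z,x_0)\le C_3\rho_4 r$ via a continuity-in-$t$ bootstrap using Hamilton's evolution $|\partial_t d_t|\le C(n) r^{-2}d_t$ along minimizing geodesics that remain in $B_{t_0}(x_0,r)$, together with Lemma \ref{lem:discom}. The mass bound $v_{y,s;t_1}(B_{t_0}(x_0,r))>\rho_3-\rho_4$ combined with the Gaussian decay of $v_{y,s;t_1}$ away from $\tilde z$ (Proposition \ref{prop:305}) and the Wasserstein monotonicity Proposition \ref{prop:301} should then force $y$ itself to lie close to $x_0$ at time $t_0$. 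The genuine difficulty is that Lemma \ref{lem:402}, which would directly bound $d_{t_1}(y,\tilde z)$ from the $H_n$-center variance, presupposes a global a priori bound $d_s(p,y)\le K$ that is unavailable here; circumventing this requires using the local heat kernel estimate Corollary \ref{cor:401} inside the good region $B_{t_0}(x_0,r)$ to bootstrap the missing distance bound. It is this last bootstrap that ultimately determines how small $\rho_4=\rho_4(n,A)$ must be chosen, relative to $\rho_1$, $\rho_3$, and the constants $C_1,C_2,C_3$.
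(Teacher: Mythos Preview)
Your setup is right: localizing the $H_n$-center of $(y,s)$ near $x_0$ at time $t_1=t_0-(\rho_4 r)^2$ via Proposition~\ref{prop:Hcent1}, deploying the cutoff $\varphi$ of Proposition~\ref{prop:cutoff}, and obtaining $\int_M\varphi(\cdot,t_1)\,dv_{y,s;t_1}=u(y,s)>\rho_3-\rho_4$ are exactly the paper's ingredients (the paper bounds that same integral via Proposition~\ref{prop:304} and the location of the $H_n$-center rather than your Wasserstein argument, but either route works). The gap is in your third step, and it comes from invoking only property~(e), $\square\varphi\le 0$. That yields $\varphi\le u$, which is the useless direction: $u$ is a heat solution with full support, so $u(y,s)>0$ tells you nothing about where $y$ lies. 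Your proposed recovery via Hamilton's distance evolution and Corollary~\ref{cor:401} then requires the a~priori bound on $d_s(p,y)$ that you yourself flag as unavailable, and there is no way to close that bootstrap without already knowing $y\in B_{t_0}(x_0,r)$.

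The paper instead uses property~(d), namely $|\partial_t\varphi|+|\Delta\varphi|\le r^{-2}$, which gives the two-sided bound $|\square\varphi|\le r^{-2}$. With $dv_t=v_{y,s;t}$ one has $\partial_t\int_M\varphi\,dv_t=\int_M\square\varphi\,dv_t\ge -r^{-2}$, and integrating from $t_1$ to $s$ yields
\[
\varphi(y,s)\;\ge\;\int_M\varphi(\cdot,t_1)\,dv_{y,s;t_1}-\rho_4^2\;=\;u(y,s)-\rho_4^2\;>\;\rho_3-\rho_4-\rho_4^2\;>\;0
\]
for $\rho_4$ small. Since $\varphi$ vanishes outside $B_{t_0}(x_0,r)$ at every time in $[t_1,t_0]$ by property~(c), this immediately forces $y\in B_{t_0}(x_0,r)$, and your entire third step becomes unnecessary.
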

\begin{proof}
In the proof, all positive constants $C_i >1$ depend only on $n$ and $A$. Moreover, we set $0<\tau \ll 1$ to be determined later.

For any $(y,s) \in P^{*-}(x_0,t_0; \tau r)$, we assume $(z,t_0-(\tau r)^2)$ to be its $H_n$-center. From Proposition \ref{prop:Hcent1}, we have
\begin{align} 
&\quad d_{t_0-(\tau r)^2}(z,x_0)    \notag\\
&\le d^{t_0-(\tau r)^2}_{W_1}(\delta_{x_0},v_{x_0,s;t_0-(\tau r)^2})+ d^{t_0-(\tau r)^2}_{W_1}(v_{y,s;t_0-(\tau r)^2},v_{x_0,s;t_0-(\tau r)^2})+d^{t_0-(\tau r)^2}_{W_1}(\delta_z,v_{y,s;t_0-(\tau r)^2})  \notag \\
&\le C_1 \tau r. \label{E506c}
\end{align}

We assume $\tau<\rho_3$ and consider the cutoff function $\varphi$ constructed in Proposition \ref{prop:cutoff}. If we set $v_t=v_{y,s;t}$, then by direct computation,
\begin{align*}
\partial_t \int_M \varphi \,dv_t=\int_M \square\varphi \,dv_t \ge -r^{-2},
\end{align*}
where we have used Proposition \ref{prop:cutoff}(d). By integration, we have
\begin{align} \label{E506d}
\varphi(y,s) \ge \int_M \varphi \,dv_{t_0-(\tau r)^2}-\tau.
\end{align}
Notice that $\varphi>\rho_3$ on $P(x_0,t_0;\rho_3 r,0,(\tau r)^2)$. Combining this fact with \eqref{E506c} and Proposition \ref{prop:304}, we conclude that if $\tau$ is sufficiently small,
\begin{align*}
\varphi(y,s) \ge \int_M \varphi \,dv_{t_0-(\tau r)^2}-\tau \ge \frac{\rho_3}{2}>0.
\end{align*}
On the other hand, since $\varphi=0$ outside $P(x_0,t_0;r,-(\tau r)^2)$, we conclude that
\begin{align*}
d_{t_0}(x_0,y) \le r
\end{align*}
and hence \eqref{E506a} holds.
\end{proof}
Next, we recall the definition of the curvature radius.
\begin{defn}[Curvature radius] \label{def:curv}
For any $(x, t) \in M \times (-\infty,1)$, the curvature radii at $(x,t)$ are defined as
\begin{align*}
r_{\emph{Rm}}(x,t):=&\sup \left\{r>0 \mid |Rm| \le r^{-2} \quad \text{on} \quad P(x,t;r) \right \}, \\
r_{\emph{Rm}}^{-}(x,t):=&\sup \left\{r>0 \mid |Rm| \le r^{-2} \quad \text{on} \quad P^-(x,t;r)\right \}, \\
r_{\emph{Rm}}^{s}(x,t):=&\sup \left \{r>0 \mid |Rm| \le r^{-2} \quad \text{on} \quad B_t(x,r)\right \}. \\
\end{align*}
\end{defn}
It is clear from the definition that $r_{\text{Rm}}(x,t) \le r_{\text{Rm}}^-(x,t) \le r_{\text{Rm}}^s(x,t)$. In addition, it follows from Theorem \ref{thm:volume1} and the pseudolocality theorem \cite[Theorem 24]{LW20} on Ricci shrinkers that there exists a constant $C=C(n,A)>1$ such that
\begin{align} \label{E507a}
r_{\text{Rm}}^-(x,t) \le C r_{\text{Rm}}(x,t).
\end{align}

We are in a position to obtain the following $\ep$-regularity theorem; see \cite[Theorem 10.2]{Bam20a}.

\begin{thm}[$\ep$-regularity] \label{thm:epr}
There exists a small constant $\ep=\ep(n)>0$ satisfying the following property.
Given $(x,t) \in M \times (-\infty,1)$ and $r>0$, suppose that $\NN_{(x,t)}(r^2) \ge -\ep$, then $r_{\emph{Rm}}(x,t) \ge \ep r$.
\end{thm}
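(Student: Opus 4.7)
The plan is a contradiction-and-rescale argument in the template of \cite[Theorem 10.2]{Bam20a}. Suppose the theorem fails; then there exist a sequence of Ricci shrinkers with associated flows $(M_i^n, g_i(t))_{t<1}$, spacetime points $(x_i, t_i)$, scales $r_i > 0$, and $\varepsilon_i \searrow 0$ such that $\NN_{(x_i, t_i)}(r_i^2) \ge -\varepsilon_i$ yet $r_{\text{Rm}}(x_i, t_i) < \varepsilon_i r_i$. After a time translation sending $t_i \mapsto 0$ and a parabolic rescaling by $\lambda_i := r_{\text{Rm}}(x_i, 0)^{-1}$, the rescaled flows $\tilde g_i(t) := \lambda_i^2 g_i(\lambda_i^{-2} t)$ are Ricci flows induced by Ricci shrinkers in the sense of Definition \ref{dfn:RA19_2}. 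Since $\NN$ is invariant under parabolic rescaling, we retain $\NN_{(\tilde x_i,0)}(\tilde r_i^2) \ge -\varepsilon_i$ with $\tilde r_i = \lambda_i r_i > \varepsilon_i^{-1} \to \infty$, and by monotonicity \eqref{E311c} this gives $\NN_{(\tilde x_i,0)}(\tau) \ge -\varepsilon_i$ for every fixed $\tau \le \tilde r_i^2$.

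Next I would pass to a limit. By construction $r_{\text{Rm}}(\tilde x_i, 0) = 1$, so $|Rm| \le 1$ on $P(\tilde x_i, 0; 1)$, and Theorem \ref{thm:volume2} provides a uniform injectivity radius lower bound there. To upgrade this local control to a genuine limit, I would invoke the compactness machinery for Ricci flows induced by Ricci shrinkers developed in Section~6 together with the $P^*$-parabolic neighborhood control of Theorem \ref{thm:104} and the heat kernel bounds of Section~4, extracting an $\IF$-convergent subsequence to a limit metric flow $(M_\infty, g_\infty(t))$ pointed at some $(\tilde x_\infty, 0)$. The uniform curvature bound at the base point makes $(\tilde x_\infty, 0)$ a regular point, so convergence is smooth in a fixed parabolic neighborhood of $(\tilde x_\infty, 0)$, and in particular $r_{\text{Rm}}(\tilde x_\infty, 0) \le 1$.

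The Nash entropy passes to the limit on the regular part: for every $\tau > 0$ one obtains $\NN_\infty(\tau) \ge 0$. Combined with $\NN_\infty(\tau) \le 0$ from Proposition \ref{prop:nash1}(b) and $\WW \le 0$, this forces $\NN_\infty \equiv 0$, hence $\WW_\infty \equiv 0$. Applying the identity \eqref{E311a}, we conclude $Rc_\infty + \he b - g/(2\tau) \equiv 0$ almost everywhere, which identifies the limit with the Gaussian shrinker on $\R^n$. But then $r_{\text{Rm}}(\tilde x_\infty, 0) = \infty$, contradicting $r_{\text{Rm}}(\tilde x_\infty, 0) \le 1$.

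The main obstacle is the compactness step: away from $(\tilde x_i, 0)$ one has no a priori curvature bounds, so Hamilton's smooth Cheeger--Gromov theorem is not directly applicable and one must rely on the weaker $\IF$-convergence developed in Section~6 to produce a limit. A second delicate point is arguing that the Nash entropy of the shrinking structure passes to the limit so that the rigidity $\WW \equiv 0 \Rightarrow$ Gaussian can be applied; this requires carefully invoking the concentration estimates of Proposition \ref{prop:304} and Proposition \ref{prop:305} together with the structure theorems for $\IF$-limits alluded to in Theorem \ref{thm:601} and Theorem \ref{thm:602}, so that the limiting base point lies on a sufficiently large regular stratum to support the soliton identity.
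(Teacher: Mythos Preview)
Your outline is missing the point-picking step, and without it the argument is circular. The paper's proof (following \cite[Theorem 10.2]{Bam20a}) does not rescale at the original bad point $(x_i,t_i)$; it first runs a Perelman-style point-picking in $P^{*-}$-neighborhoods to locate a nearby point $(x_i',t_i')$ with $r_{\text{Rm}}(x_i',t_i') \le r_{\text{Rm}}(x_i,t_i)$ and, crucially, $r_{\text{Rm}} \ge r_{\text{Rm}}(x_i',t_i')/10$ on all of $P^{*-}(x_i',t_i'; A\, r_{\text{Rm}}(x_i',t_i'))$ for an arbitrarily large $A$. The termination of this point-picking is exactly where the non-compact setting bites: one uses Lemma~\ref{lem:ball} and the precompactness of $P^*$-neighborhoods (Corollary~\ref{cor:501}) to trap the iteration in a compact spacetime region. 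After rescaling at $(x_i',t_i')$ one has uniform two-sided curvature bounds on parabolic balls of arbitrarily large radius (via Proposition~\ref{prop:Hcent3} to pass from $P^*$ to conventional $P$), so Hamilton's smooth Cheeger--Gromov compactness applies directly and the limit is an honest smooth Ricci flow. The Nash entropy at the new base point is still close to zero by Corollary~\ref{cor:303} (oscillation of $\NN^*$ over $P^*$-balls), and the rigidity forces the limit to be Euclidean, contradicting $r_{\text{Rm}}=1$.

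Your proposed substitute---skip point-picking and take an $\IF$-limit using Theorems~\ref{thm:601} and~\ref{thm:602}---does not work as stated, because Theorem~\ref{thm:602} (in particular the identification $\mathcal R = \mathcal R^*$ and the characterization ``tangent flow Gaussian $\Leftrightarrow$ regular point'') explicitly invokes Theorem~\ref{thm:epr}; this is spelled out in the discussion following Theorem~\ref{thm:602}. So your compactness step feeds the conclusion back into the hypothesis. Even setting circularity aside, with only $r_{\text{Rm}}=1$ at a single point and no control nearby you cannot conclude that the $\IF$-limit base point sits in the regular set, nor that $r_{\text{Rm}}$ is upper semicontinuous under mere $\IF$-convergence---both of which you use. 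The fix is precisely the point-picking: it manufactures the uniform curvature bound that makes smooth convergence, and hence the whole rigidity step, elementary.
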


\begin{proof}
We only sketch the proof as the details can be found in \cite[Theorem 10.2]{Bam20a}. The key step is a point-picking argument in the spacetime with respect to the curvature radius $r_{\text{Rm}}$. More precisely, one needs to show that for any $A>0$ with $10Ar_{\text{Rm}}(x,t) \le 1/2$, there exists a point $(x',t') \in P^{*-}(x',t';10Ar_{\text{Rm}}(x,t))$ such that $r_{\text{Rm}}(x',t') \le r_{\text{Rm}}(x,t)$ and $r_{\text{Rm}} \ge r_{\text{Rm}}(x',t')/10$ on $P^{*-}(x',t';Ar_{\text{Rm}}(x',t'))$. Otherwise, one can iteratively pick a sequence of spacetime points $(x_i,t_i)$ in a compact set of $M \times (-\infty,1)$ satisfying $r_{\text{Rm}}(x_i,t_i) \to 0$. 
In light of Lemma~\ref{lem:ball}, all $(x_i,t_i)$ fall into a given  $P^{*-}$-parabolic neighborhood, which is precompact by Corollary \ref{cor:501}. 
Note that the curvature radius of $(x_i, t_i)$ shrinks by a definite portion in each step, the bounded geometry of a compact set implies that the process must terminate in finite steps, say $(x_k, t_k)=(x',t')$.
Such choice of $(x', t')$ guarantees that it has almost maximal curvature radius in spacetime neighborhood. 
Notice that similar point-picking arguments can be found in \cite[Theorem 10.1]{Pe1} and \cite[Proposition 3.43]{CW17B}.

If the $\ep$-regularity theorem fails, we could obtain a sequence of pointed Ricci flows such that $r_{\text{Rm}}=1$ at the base points after the point-picking and appropriate rescalings.  
Since nearby points have curvature radii uniformly bounded from below, the sequence converges smoothly to a limit Ricci flow which is the Euclidean spacetime by the assumption of the Nash entropy. 
Therefore, $r_{\text{Rm}}=1$ must be violated and we obtain a contradiction.
\end{proof}

Using the $\ep$-regularity theorem, one immediately has the following gap property, following the same proof of \cite[Theorem 3]{LW20}.
\begin{cor} \label{cor:502}
Suppose $(M^n,g,f,p)$ is a non-flat Ricci shrinker. Then
\begin{align*}
\NN_{(p,0)}(\ep^{-2}) < -\ep,
\end{align*}
where $\ep$ is the same constant in Theorem \ref{thm:epr}.
\end{cor}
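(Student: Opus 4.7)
The plan is to argue by contradiction, combining the $\ep$-regularity theorem with the self-similar structure of the shrinker flow, in the same spirit as the proof of~\cite[Theorem 3]{LW20}. Assume for contradiction that $(M^n,g,f,p)$ is a non-flat Ricci shrinker with $\NN_{(p,0)}(\ep^{-2})\ge-\ep$.

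Applying Theorem~\ref{thm:epr} at $(p,0)$ with $r=\ep^{-1}$ gives $r_{\text{Rm}}(p,0)\ge 1$. Now exploit the self-similarity $g(t)=(1-t)(\psi^t)^*g$ with $\psi^t(p)=p$: a parabolic rescaling by $(1-t)^{-1/2}$ of the shrinker flow at $(p,t)$ is isometric (via $\psi^t$) to the shrinker flow at $(p,0)$. Combined with the isometry- and parabolic-rescaling-invariance of the Nash entropy, this yields the scaling identity
\begin{equation*}
\NN_{(p,t)}(\tau)=\NN_{(p,0)}\!\left(\frac{\tau}{1-t}\right),\qquad t<1,\ \tau>0,
\end{equation*}
so that $\NN_{(p,t)}(\tau)\ge-\ep$ whenever $\tau\le\ep^{-2}(1-t)$. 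A second application of Theorem~\ref{thm:epr} at $(p,t)$ with $r=\ep^{-1}\sqrt{1-t}$ produces $r_{\text{Rm}}(p,t)\ge\sqrt{1-t}$ for every $t<0$, which in turn gives $|Rm|_{g(s)}(x)\le(1-t)^{-1}$ on the full parabolic neighborhood $P(p,t;\sqrt{1-t})=B_{g(t)}(p,\sqrt{1-t})\times[2t-1,1]$.

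A direct computation using the self-similarity gives $B_{g(t)}(p,\sqrt{1-t})=(\psi^t)^{-1}(B_g(p,1))$; introducing the time change $u=-\log(1-t)$, the map $(\psi^t)^{-1}$ is the time-$(-u)$ flow of $\nabla f$, i.e.\ the forward $-\nabla f$-flow. Since $f$ is proper with $p$ as its unique minimum (by Lemma~\ref{L201} together with $R+|\nabla f|^2=f$), the forward $-\nabla f$-flow attracts every point of $M$ into $B_g(p,1)$ in finite time, so $(\psi^t)^{-1}(B_g(p,1))$ exhausts $M$ as $t\to-\infty$. Hence every fixed $(x,s)\in M\times(-\infty,1)$ lies in $P(p,t;\sqrt{1-t})$ for all sufficiently negative $t$, and letting $t\to-\infty$ yields $|Rm|_{g(s)}(x)=0$. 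Therefore the associated Ricci flow is flat, which forces $(M,g,f)$ to be the Gaussian soliton, contrary to the non-flat assumption.

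The main obstacle is the exhaustion claim, since it requires knowing that every integral curve of $-\nabla f$ converges to $p$; this is where the structural information about the critical set of $f$ on a non-compact shrinker (unique minimum at $p$ and properness) enters crucially. In the compact case $f$ is constant and this direct exhaustion fails, so one must instead invoke the standard $\boldsymbol\mu$-gap for compact Einstein shrinkers together with $\NN_{(p,0)}(\tau)\to\boldsymbol\mu$ as $\tau\to\infty$ (Corollary~\ref{cor:302} and Proposition~\ref{prop:nash1}(b)). The only other non-trivial ingredient is the Nash entropy scaling identity, which follows cleanly from $\psi^t(p)=p$ together with the isometry- and rescaling-invariance of $\NN$.
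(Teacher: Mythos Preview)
Your exhaustion claim fails in general. On the cylinder shrinker $S^2\times\R$ (with the round factor scaled so that $Rc=g/2$), the potential is $f(q,x)=|x|^2/4+c$ and $\nabla f$ points purely in the $\R$-direction; the forward $-\nabla f$-flow sends $(q,x)$ to $(q,0)$. Since this $S^2$ has diameter $\pi\sqrt2>1$, there are points $(q,0)$ at distance greater than $1$ from $p$, and trajectories through them never enter $B_g(p,1)$. Hence $(\psi^t)^{-1}(B_g(p,1))$ does not exhaust $M$. The ``structural information'' you invoke---that $p$ is the unique critical point of $f$---is simply false; the critical set can be a positive-dimensional submanifold, and neither Lemma~\ref{L201} nor the normalization \eqref{E101} says otherwise. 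Your compact-case fallback is also wrong: $f$ is constant only on \emph{Einstein} shrinkers, not on all compact ones (e.g.\ the Koiso--Cao soliton).

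The paper avoids all of this by never leaving the point $p$ and by going \emph{forward} in time rather than backward. A single application of Theorem~\ref{thm:epr} gives $r_{\text{Rm}}(p,0)\ge 1$, hence $|Rm|_{g(t)}(p)\le 1$ for every $t\in[0,1)$. Self-similarity with $\psi^t(p)=p$ yields $|Rm|_{g(t)}(p)=(1-t)^{-1}|Rm|_g(p)$, and letting $t\to 1^-$ forces $|Rm|_g(p)=0$, contradicting $R>0$ on a non-flat shrinker. No exhaustion, no repeated invocation of $\ep$-regularity, and no compact/non-compact dichotomy are needed. Your backward-time route can be salvaged the same way---fix $(x,s)=(p,0)$ and let $t\to-\infty$---but the forward argument is a one-liner.
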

\begin{proof}
Suppose $\NN_{(p,0)}(\ep^{-2}) \ge -\ep$ and $(M,g)$ is a non-flat Ricci shrinker, it follows from Theorem \ref{thm:epr} that $r_{\text{Rm}}(p,0) \ge 1$.
 In particular, it implies that $|Rm(p,t)| \le 1$ for any $t \in [0,1)$. By the self-similarity of the flow, 
 we have $|Rm|(p,0)=|Rm|(p,t)(1-t)$ and hence $|Rm|(p,0)=0$, which contradicts the fact that $R>0$ for non-flat Ricci shrinkers.
\end{proof}
We conclude this section by stating the following two results, whose proofs are more or less standard. See~\cite[Theorem 10.3, Theorem 10.4]{Bam20a}.
\begin{thm} \label{thm:epr2}
For any $\ep > 0$ there is a $\delta=\delta (\ep) > 0$ such that the following holds.
Given $(x,t ) \in M \times (-\infty,1)$ and $r > 0$, if $\NN_{x,t} (r^2) \geq - \delta$, then 
\[ |Rm| \leq \ep r^{-2} \qquad \text{on} \quad P(x,t; \ep^{-1} r,- (1-\ep) r^2, \ep^{-1} r^2). \]
Moreover, we have $\NN^*_{t-r^2} \geq - \ep$ on $P(x,t; \ep^{-1} r, - (1-\ep) r^2)$.
\end{thm}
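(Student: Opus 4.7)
I would argue by contradiction combined with a blow-up and $\IF$-compactness argument, in the spirit of \cite[Theorem 10.3]{Bam20a}. Suppose the statement fails for some $\ep_0>0$: there exist sequences of Ricci flows associated with Ricci shrinkers in $\MM(A)$, points $(x_i,t_i)$, radii $r_i>0$, and $\delta_i\searrow 0$ with $\NN_{(x_i,t_i)}(r_i^2)\ge -\delta_i$, but either the curvature bound on $P(x_i,t_i;\ep_0^{-1}r_i,-(1-\ep_0)r_i^2,\ep_0^{-1}r_i^2)$ or the Nash-entropy bound on $P(x_i,t_i;\ep_0^{-1}r_i,-(1-\ep_0)r_i^2)$ is violated at some bad point $(y_i,s_i)$. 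After parabolic rescaling by $r_i^{-1}$ and time-shifting $t_i\mapsto 0$, I reduce to the normalized setting $r_i=1$, $t_i=0$, obtaining Ricci flows induced by Ricci shrinkers (Definition \ref{dfn:RA19_2}) in $\MM(A)$ with $\NN_{(x_i,0)}(1)\ge -\delta_i\to 0$.

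The first step is to bound the Nash entropy from below on a fixed parabolic neighborhood, uniformly in $i$. Applying the spacetime oscillation estimate (Corollary \ref{cor:303}) with $t^*=s$, so that the log-correction vanishes, yields
\[
\NN^*_{-1}(x_i,0)-\NN^*_{-1}(y,s)\le \sqrt{\tfrac{n}{2(s+1)}}\,d^{s}_{W_1}(v_{x_i,0;s},\delta_y)
\]
for any $(y,s)$ with $d_0(y,x_i)\le \ep_0^{-1}$ and $s\in[-(1-\ep_0),0]$. The Wasserstein term is controlled uniformly in $i$ via Proposition \ref{prop:301}, the concentration around an $H_n$-center (Proposition \ref{prop:303}), and the distance distortion estimate of Lemma \ref{lem:402}. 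Combined with $\NN\le 0$ (Corollary \ref{cor:302}), this gives a uniform bound $\NN^*_{-1}(y,s)\ge -C(\ep_0,A)$; in particular, applying Theorem \ref{thm:epr} at $(x_i,0)$ with radius $1$ gives $r_{\text{Rm}}(x_i,0)\ge \ep$ for $i$ large, where $\ep$ is the constant from that theorem.

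The second step is blow-up: extract a subsequential limit metric flow $(\XX_\infty,\nu_\infty)$ via the $\IF$-compactness theory developed in Section 6 (Theorem \ref{thm:601}). By monotonicity of $\NN_{(x_\infty,0)}(\tau)$ in $\tau$ (Proposition \ref{prop:nash1}(c)) and the convergence $\NN_{(x_i,0)}(1)\to 0$, the limit must satisfy $\NN_{(x_\infty,0)}(\tau)\equiv 0$ for every $\tau\in(0,1]$. By a rigidity statement for vanishing Nash entropy, extending Corollary \ref{cor:502} to the metric-flow setting, the limit is forced to be the static Gaussian soliton $(\R^n,g_E)$. Smooth convergence in a parabolic neighborhood of $(x_\infty,0)$ then yields $|Rm|(y_i,s_i)\to 0$ and $\NN^*_{-1}(y_i,s_i)\to 0$, contradicting the bad-point assumption.

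The main obstacle is the compactness and rigidity step: one must show that the $\IF$-convergence is smooth on a parabolic neighborhood whose size is uniformly bounded below (of order $\ep_0^{-1}$), and that the identification of the limit as Euclidean requires a rigidity statement for metric flows with vanishing Nash entropy at all scales. Since the full $\IF$-convergence theory is developed only in Section 6, the argument is forward-referencing. A more self-contained alternative is to iterate Theorem \ref{thm:epr} at a dense grid of nearby spacetime points, using Corollary \ref{cor:303} at each step to verify the Nash-entropy hypothesis and Proposition \ref{prop:Hcent3} to transfer between $P$- and $P^*$-parabolic neighborhoods, but this quickly becomes technically intricate.
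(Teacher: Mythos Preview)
The paper does not give a detailed proof here; it refers to \cite[Theorem 10.3]{Bam20a} and calls the argument ``more or less standard''. That standard argument is precisely the iteration you sketch in your final paragraph, not the $\IF$-compactness blow-up you present as your main approach. Once Theorem~\ref{thm:epr} gives $r_{\text{Rm}}(x,t)\ge \ep_0 r$, the scalar curvature is bounded on $P(x,t;\ep_0 r)$, so Proposition~\ref{prop:Hcent1} controls $d^{s}_{W_1}(v_{x,t;s},\delta_x)$ for $s$ in a definite backward window; then Corollary~\ref{cor:303} shows the Nash entropy at any nearby $(y,s)$ is still above $-\ep_0$, and one re-applies Theorem~\ref{thm:epr} there. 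Each step moves a fixed fraction of $r$ in spacetime and loses a computable amount of Nash entropy, so choosing $\delta$ small enough to survive $O(\ep^{-1}/\ep_0)$ steps yields the claim. This is direct and needs none of the Section~6 machinery.

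Your $\IF$-compactness route has, besides the forward-referencing you flag, two concrete gaps. First, Lemma~\ref{lem:402} is not the right tool: its hypothesis $d_t(x,p)\le K$ ties everything to the shrinker's base point $p$, which after your rescaling need not be anywhere near $x_i$. What you actually want is Proposition~\ref{prop:Hcent1}, which works at an arbitrary point once you have a local scalar curvature bound --- and that bound you only obtain \emph{after} invoking Theorem~\ref{thm:epr}, not before. Second, the rigidity step (``$\NN\equiv 0$ on $(0,1]$ forces the Gaussian soliton'') is not justified by Corollary~\ref{cor:502}, which is a statement about smooth non-flat shrinkers, not about $\IF$-limit metric flows; in the limit setting this rigidity is part of the structure theory in \cite{Bam20c} that is logically downstream of results like the one you are trying to prove. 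So the iteration is not merely a fallback: it is the intended and self-contained argument, and it is less intricate than you fear.
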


\begin{thm} \label{thm:epr3}
For any $\ep > 0$ and $Y < \infty$ there is a $\delta=\delta(\ep, Y) > 0$ such that the following holds.
Given $(x,t ) \in M \times (-\infty,1)$ and $r > 0$, suppose that $|Rm| \leq r^{-2}$ on $P^-(x,t; r)$ and $\NN_{x,t} (r^2) \geq - Y$.
Then $\NN_{x,t} ( \delta r^2) \geq - \ep$.
\end{thm}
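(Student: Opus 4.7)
The plan is to argue by contradiction via a rescaling and smooth compactness argument. Suppose the statement fails; then there exist $\ep > 0$, $Y < \infty$, a sequence of Ricci flows $(M_i^n, g_i(t))_{t<1}$ associated with Ricci shrinkers, points $(x_i, t_i) \in M_i \times (-\infty, 1)$, scales $r_i > 0$, and numbers $\delta_i \searrow 0$ satisfying $|Rm_i| \leq r_i^{-2}$ on $P^-(x_i, t_i; r_i)$ and $\NN_{x_i, t_i}(r_i^2) \geq -Y$, while $\NN_{x_i, t_i}(\delta_i r_i^2) < -\ep$.

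I would first parabolically rescale each flow by the factor $(\delta_i r_i^2)^{-1/2}$ and shift time so that $t_i = 0$, producing rescaled Ricci flows $\tilde g_i$ satisfying $|\widetilde{Rm}| \leq \delta_i$ on $\tilde P^-(x_i, 0; \delta_i^{-1/2})$, $\widetilde{\NN}_{x_i, 0}(\delta_i^{-1}) \geq -Y$, and $\widetilde{\NN}_{x_i, 0}(1) < -\ep$. By monotonicity (Proposition \ref{prop:nash1}(c)) the rescaled Nash entropy is bounded below by $-Y$ at all scales $\tau \leq \delta_i^{-1}$, and Theorem \ref{thm:volume2} converts this into uniform noncollapsing on balls of every fixed radius.

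Next I would invoke Hamilton's compactness theorem (using Shi's derivative estimates in the backward parabolic region of bounded curvature) to extract a subsequence converging smoothly on every compact subset of $\{t \leq 0\}$ to a pointed limit Ricci flow $(M_\infty, \tilde g_\infty(t), x_\infty)$ whose curvature vanishes identically. The noncollapsing at all scales inherited from the entropy bounds rules out nontrivial flat quotients, so the limit must be the static Ricci flow on Euclidean $\R^n$, on which the conjugate heat kernel is the standard Gaussian and $\widetilde{\NN}_{x_\infty, 0}(1) = 0$.

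The principal technical obstacle is the continuity of the Nash entropy along this smooth convergence, which would force $\widetilde{\NN}_{x_i, 0}(1) \to 0$ and contradict $\widetilde{\NN}_{x_i, 0}(1) < -\ep$. For this I plan to use Theorem \ref{thm:heatupper} together with Proposition \ref{prop:305} to concentrate the mass of the conjugate heat kernel measure $d\tilde v_{x_i, 0; -1}$ in a ball of bounded radius about an $H_n$-center, whose distance to $x_i$ is uniformly controlled via Lemma \ref{lem:402}. On such a compact region the conjugate heat kernels and their potentials $b_i$ converge smoothly by standard parabolic regularity, while the exponential Gaussian tail from Proposition \ref{prop:305} combined with the quadratic growth of $b_i$ (Remark \ref{rem:poten}) bounds the tail contribution to $\int b_i\, d\tilde v_{x_i,0;-1}$ uniformly in $i$. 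Dominated convergence then transfers the Nash entropy to the limit, yielding the desired contradiction.
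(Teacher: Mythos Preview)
Your approach is the standard one, and it is precisely the route the paper has in mind: the paper does not spell out a proof but simply cites \cite[Theorem~10.4]{Bam20a}, which is exactly this contradiction--rescaling--smooth compactness argument.

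Two of your citations, however, do not work as stated and should be replaced. Lemma~\ref{lem:402} requires $d_t(p,x)\le K$, where $p$ is the base point of the ambient shrinker; after rescaling by $(\delta_i r_i^2)^{-1/2}$ you have no control over this distance, nor over $A=-\boldsymbol{\mu}_i$. The correct tool is Proposition~\ref{prop:Hcent1}: its proof compares the Perelman reduced-volume lower bound for $H(x_i,0,x_i,-1)$ against the upper bound of Theorem~\ref{thm:heatupper}, and the constant that emerges is really $C(n,-\NN_{(x_i,0)}(1))\le C(n,Y)$, not $C(n,A)$. This gives the $H_n$-center at time $-1$ within distance $C(n,Y)$ of $x_i$, uniformly along the sequence. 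Similarly, Remark~\ref{rem:poten} rests on Theorems~\ref{thm:lower} and~\ref{thm:heatupper3}, whose constants depend on $A$ and on $d_t(p,x)$, so you cannot invoke it uniformly in $i$. For the tail of $\int b_i\,dv_i$ you should instead use the $L^2$-bound of Corollary~\ref{cor:301}: since $\int (b_i-\NN_i-\tfrac{n}{2})^2\,dv_i\le n$ and $|\NN_i|\le Y$, one has $\int b_i^2\,dv_i\le C(n,Y)$, and Cauchy--Schwarz together with Proposition~\ref{prop:305} yields
\[
\int_{M_i\setminus B_R} |b_i|\,dv_i \le C(n,Y)\,\big(v_i(M_i\setminus B_R)\big)^{1/2}\longrightarrow 0
\]
uniformly in $i$ as $R\to\infty$. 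With these two substitutions the argument goes through exactly as you outline.
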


\section{Metric flows and $\IF$-convergence}
In previous sections, we have generalized (or slightly improved) the theorems and tools in \cite{Bam20a}. 
Notice that these results also hold for Ricci flows induced by Ricci shrinkers (cf. Definition~\ref{dfn:RA19_2}) since most of them are scaling-invariant. In a few cases, one needs to modify the assumptions correspondingly. For instance, the conditions in Theorem \ref{thm:lower} and Theorem \ref{thm:heatupper3} need to be changed to $-\delta^{-1} \lambda \le t <s \le (1-\delta) \lambda$ and $d_t(x,p) \le K \lambda^{1/2}$, if the Ricci flow associated with a Ricci shrinker is parabolically rescaled by $\lambda>0$.

Based on these results and techniques, one can generalize the theory of $\IF$-convergence in \cite{Bam20b} and \cite{Bam20c} from compact Ricci flows to the setting of Ricci flows induced by Ricci shrinkers. 

Notice that the results in \cite{Bam20b} and \cite{Bam20c} are already generalized by Bamler to Ricci flows with complete time-slices and bounded curvature on compact time-intervals (cf. \cite{Bam21}). In \cite[Appendix A]{Bam21}, some issues in the non-compact case are addressed and can be resolved similarly in the setting of Ricci shrinkers by the results and techniques developed in previous sections. For instance, by Theorem \ref{thm:lower} and Theorem \ref{thm:heatupper3}, it is known that the conjugate heat kernel decays exponentially and the function $b$ induced by the conjugate heat kernel increases quadratically (cf. Remark \ref{rem:poten}). Therefore, the weak splitting maps (cf. \cite[Definition 5.6]{Bam20c}) constructed in \cite[Section 10]{Bam20c} have at most quadratic spatial growth. Moreover, it follows from \cite[Proposition 12.1, Remark 12.3]{Bam20c} that one can construct a bounded strong splitting map with bounded gradient from a given weak splitting map.

At various places in \cite{Bam20c}, one also needs to consider integral $\int u\phi^r $ instead of $\int u $, and take the limit for $r \to \infty$ after all the estimates (e.g., $u=\square |\omega_l|$ in \cite[Lemma 17.37]{Bam20c}). This technique has already appeared multiple times in previous sections. As a showcase, we generalize the integral estimates in \cite[Section 6]{Bam20c} to Ricci flows associated with Ricci shrinkers in Appendix \ref{app:A}. These estimates are frequently used in \cite{Bam20c} and are of independent interest.

Now, we recall the following definition of the metric flow from \cite[Definition 3.2]{Bam20b}.

\begin{defn}[Metric flow] \label{def:mf}
Let $I \subset \R$ be a subset.
A metric flow over $I$ is a tuple of the form 
\begin{equation*}
(\XX , \tf, (d_t)_{t \in I} , (v_{x;s})_{x \in \XX, s \in I, s \leq \tf (x)}) 
\end{equation*}
with the following properties:
\begin{enumerate}[label=(\arabic*)]
\item $\XX$ is a set consisting of points.

\item $\tf : \XX \to I$ is a map called time-function.
Its level sets $\XX_t := \tf^{-1} (t)$ are called time-slices and the preimages $\XX_{I'} := \tf^{-1} (I')$, $I' \subset I$, are called time-slabs.
\item $(\XX_t, d_t)$ is a complete and separable metric space for all $t \in I$.

\item $v_{x;s}$ is a probability measure on $\XX_s$ for all $x \in \XX$, $s \in I$, $s \leq \tf (x)$. For any $x \in \XX$ the family $(v_{x;s})_{s \in I, s \leq \tf (x)}$ is called the conjugate heat kernel at $x$.

\item $v_{x; \tf (x)} = \delta_x$ for all $x \in \XX$.

\item For all $s, t \in I$, $s< t$, $T \geq 0$ and any measurable function $u_s : \XX_s \to [0,1]$ with the property that if $T > 0$, then $u_s = \Phi \circ f_s$ for some $T^{-1/2}$-Lipschitz function $f_s : \XX_s \to \R$ \emph{(}if $T=0$, then there is no additional assumption on $u_s$\emph{)}, the following is true. 
The function
\begin{equation*}
u_t :\XX_t \longrightarrow \R, \qquad x \longmapsto \int_{\XX_s} u_s \, dv_{x;s} 
\end{equation*}
is either constant or of the form $u_t = \Phi \circ f_t$, where $f_t : \XX_t \to \R$ is $(t-s+T)^{-1/2}$-Lipschitz. Here, $\Phi$ is given by \eqref{eq_erf}.

\item For any $t_1,t_2,t_3 \in I$, $t_1 \leq t_2 \leq t_3$, $x \in \XX_{t_3}$ we have the reproduction formula
\[ v_{x; t_1} = \int_{\XX_{t_2}} v_{\cdot; t_1} dv_{x; t_2}, \]
meaning that for any Borel set $S\subset \XX_{t_1}$
\[ v_{x;t_1} (S) = \int_{\XX_{t_2}} v_{y ; t_1} (S) dv_{x; t_2}(y). \]
\end{enumerate}
\end{defn}

Given a metric flow $\XX$ over $I$, we recall the following definitions from \cite[Definition 3.20, 3.30]{Bam20b}.

\begin{defn}[Conjugate heat flow] 
A family of probability measures $(\mu_t \in \mathcal{P} (\XX_t))_{t \in I'}$ over $I' \subset I$ is called a conjugate heat flow if for all $s, t \in I'$, $s \leq t$ we have
\begin{equation*}
\mu_s = \int_{\XX_t} v_{x;s} \, d\mu_t (x). 
\end{equation*}
\end{defn}

\begin{defn}[$H$-Concentration] 
Given a constant $H>0$, a metric flow $\XX$ is called $H$-concentrated if for any $s \leq t$, $s,t \in I$, $x_1, x_2 \in \XX_t$
\begin{equation*}
\emph{\Var} (v_{x_1; s}, v_{x_2; s} ) \leq d^2_t (x_1, x_2) + H (t-s).
\end{equation*}
\end{defn}

Next, we recall the definition of the metric flow pair from \cite[Definition 5.1, 5.2]{Bam20b}. Roughly speaking, two metric flow pairs are equivalent if they are the same in the metric measure sense almost everywhere.

\begin{defn}[Metric flow pair]
A pair $(\XX, (\mu_t)_{t \in I'})$ is called a metric flow pair over $I \subset \R$ if:
\begin{enumerate}
\item $I' \subset I$ with $|I \setminus I'| = 0$.
\item $\XX$ is a metric flow over $I'$.
\item $(\mu_t)_{t \in I'}$ is a conjugate heat flow on $\XX$ with $\text{supp}\,\mu_t = \XX_t$ for all $t \in I'$.
\end{enumerate}
If $J \subset I'$, then we say that $(\XX, (\mu_t)_{t \in I})$ is fully defined over $J$. We denote by $\IF_I^J$ the set of equivalence classes of metric flow pairs over $I$ that are fully defined over $J$. Here, two metric flow pairs $(\XX^i, (\mu^i_t)_{t \in I^{\prime, i}})$, $i = 1,2$, that are fully defined over $J$ are equivalent if there exists an isometry $\phi : \XX^1_{I' } \to \XX^2_{ I' }$ \emph{(cf. \cite[Definition 3.13]{Bam20b})} such that $|I^{\prime, 1} \setminus I'| = |I^{\prime, 2} \setminus I'| = 0$, $(\phi_t )_* \mu^1_t = \mu^2_t$ for all $t \in I'$ and $J \subset I'$.
\end{defn}

We will only consider $I:=(-\infty,0]$ for simplicity. Then for any pointed Ricci flow $(M^n,g(t),x_0)_{t \in I}$ induced by a Ricci shrinker, one can define $(\XX, (\mu_t)_{t \in I})$ as follows. 
\begin{align} \label{eq:example}
\lc \XX:=M \times (I\setminus \{0\}) \sqcup x_0 \times \{0\}, \t:=\text{proj}_{I}, (d_t)_{t \in I}, (v_{x,t;s})_{(x,t) \in M \times I, s\in I, s\le t}, \mu_t:=v_{x_0,0;t}\rc.
\end{align}

Then we have

\begin{prop}
The pair $(\XX, (\mu_t)_{t \in I})$ defined in \eqref{eq:example} is an $H_n$-concentrated metric flow pair that is fully defined over $I$.
\end{prop}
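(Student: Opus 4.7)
The verification naturally splits into four parts: (i) the seven axioms of a metric flow in Definition~\ref{def:mf}; (ii) the conjugate heat flow identity for $(\mu_t)$; (iii) the support condition $\mathrm{supp}\,\mu_t = \XX_t$; and (iv) the $H_n$-concentration. Parts (i)(1)--(5), (i)(7), (ii), (iii) and (iv) are essentially tautological once translated into the Ricci-flow language: $\XX_t = M$ with the Riemannian distance induced by $g(t)$ is complete and separable for $t<0$, while $\XX_0 = \{x_0\}$ is trivially so; each $v_{x,t;s}$ is a probability measure by~(\ref{E301b}) with $v_{x,t;t} = \delta_x$ by construction of the heat kernel; the reproduction formula (axiom~(7)) and the conjugate heat flow identity $\mu_s = \int v_{\cdot;s}\, d\mu_t$ are both direct restatements of the semigroup identity~(\ref{E301}), the second one specialized to $\mu_t = v_{x_0,0;t}$; the full-support condition follows from the strict positivity $H(x_0,0,\cdot,t)>0$ provided by the lower bound in Theorem~\ref{thm:lower} (or by the strong maximum principle) for $t<0$, together with $\mathrm{supp}\,\mu_0 = \{x_0\} = \XX_0$; and the $H_n$-concentration
$$\Var_s(v_{x_1,t;s},v_{x_2,t;s}) \leq d_t^2(x_1,x_2)+H_n(t-s), \qquad x_1,x_2\in\XX_t, \; s\le t,$$
is precisely the last conclusion of Proposition~\ref{prop:302} (which was in turn proved by approximating $\delta_{x_i}$ by smooth compactly supported conjugate heat kernel measures).

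The only axiom requiring real work is axiom~(i)(6). Fix $s<t$ in $I$ and $u_s:\XX_s\to[0,1]$ with $u_s = \Phi\circ f_s$ for a $T^{-1/2}$-Lipschitz $f_s$ when $T>0$ (or merely measurable $u_s$ when $T=0$), and set
$$u_t(x) := \int_{\XX_s} u_s\, dv_{x,t;s}.$$
By Theorem~\ref{thm301}, $u_t$ is the unique bounded heat solution on $M \times [s,t]$ with initial data $u_s$, and parabolic regularity makes it smooth for times strictly greater than $s$. For smooth $f_s$ with $T>0$, write $u(\cdot,s) = \Phi_T\circ(T^{1/2}f_s)$ with $T^{1/2}f_s$ being $1$-Lipschitz; Theorem~\ref{thm:T303} then gives $|\nabla(\Phi_{T+t-s}^{-1}\circ u_t)| \leq 1$, which rescales to $u_t = \Phi\circ f_t$ with $f_t$ being $(T+t-s)^{-1/2}$-Lipschitz, as required. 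The non-smooth Lipschitz case is handled by a short run of the $g(s)$-heat flow to mollify $f_s$ while preserving its Lipschitz constant up to $o(1)$, then passing to the limit via the $L^\infty$-stability of the heat semigroup. The $T=0$ branch is reduced to the $T>0$ case by approximating $u_s$ in $L^1_{\text{loc}}(dV_s)$ by smooth functions of the form $\Phi\circ f_s^{(k)}$ with $f_s^{(k)}$ being $T_k^{-1/2}$-Lipschitz and $T_k\searrow 0$; the corresponding $u_t^{(k)}$ converge pointwise to $u_t$ by dominated convergence against the heat kernel density, and the Lipschitz bound $(t-s+T_k)^{-1/2}$ passes to the limit to give the required $(t-s)^{-1/2}$-Lipschitz $f_t$. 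The alternative that $u_t$ be constant covers the degenerate limiting case where $f_t \equiv \pm\infty$.

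The main obstacle throughout is the non-compactness of $M$, which forces us to invoke the bounded existence/uniqueness theory of Theorem~\ref{thm301} and the weighted maximum principle Theorem~\ref{T202} underlying Theorem~\ref{thm:T303}, rather than to rely on standard compact-manifold arguments; however, these ingredients are already in place, so no new analytic tool is needed to complete the proposition.
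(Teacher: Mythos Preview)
Your proposal is correct and follows essentially the same route as the paper: axioms (1)--(5) are immediate, axiom (7) and the conjugate-heat-flow property come from the semigroup identity~\eqref{E301}, the $H_n$-concentration is Proposition~\ref{prop:302}, and axiom (6) is Theorem~\ref{thm:T303}. You additionally spell out the full-support condition and the approximation steps needed to pass from the smooth setting of Theorem~\ref{thm:T303} to general Lipschitz or measurable initial data; the paper leaves these implicit, but note that the proof of Theorem~\ref{thm:T303} already absorbs both the $T=0$ case and the non-smooth initial data (via the $(1-2\epsilon)u+\epsilon$ truncation and the short-time gradient estimate), so your separate mollification arguments, while not wrong in spirit, are more elaborate than necessary---and you should be careful that a fixed-metric $g(s)$-heat flow need not preserve Lipschitz constants on a Ricci shrinker without a Ricci lower bound, whereas the spacetime heat flow $\square u=0$ does (Lemma~\ref{lem:301}(i)).
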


\begin{proof}
The conditions (1)-(5) in the definition of the metric flow can be easily checked. Condition (6) follows from \eqref{thm:T303} and (7) from the semigroup property \eqref{E301}. The metric flow is $H_n$-concentrated by Proposition \ref{prop:302}.
\end{proof}

Next, we recall the definition of a correspondence between metric flows; see \cite[Definition 5.4]{Bam20b}.

\begin{defn}[Correspondence]
Let $(\XX^i, (\mu^i_t)_{t \in I^{\prime,i}})$ be metric flows over $I$, indexed by some $i \in \mathcal{I}$.
A correspondence between these metric flows over $I''$ is a pair of the form
\begin{equation*}
\CF := \big( (Z_t, d^Z_t)_{t \in I''},(\varphi^i_t)_{t \in I^{\prime\prime,i}, i \in \mathcal{I}} \big), 
\end{equation*}
where:
\begin{enumerate}
\item $(Z_t, d^Z_t)$ is a metric space for any $t \in I''$.
\item $I^{\prime\prime,i} \subset I'' \cap I^{\prime,i}$ for any $i \in \mathcal{I}$.
\item $\varphi^i_t : (\XX^i_t, d^i_t) \to (Z_t, d^Z_t)$ is an isometric embedding for any $i \in \mathcal{I}$ and $t \in I^{\prime\prime,i}$.
\end{enumerate}
If $J \subset I^{\prime\prime,i}$ for all $i \in \II$, we say that $\CF$ is fully defined over $J$.
\end{defn}

Given a correspondence, one can define the $\mathbb F$-distance, see \cite[Definition 5.6, 5.8]{Bam20b}.

\begin{defn}[$\IF$-distance within correspondence] \label{FDWC}
We define the $\IF$-distance between two metric flow pairs within $\CF$ (uniform over $J$),
\[ d_{\IF}^{\,\CF, J} \big( (\XX^1, (\mu^1_t)_{t \in I^{\prime,1}}), (\XX^2, (\mu^2_t)_{t \in I^{\prime,2}}) \big), \] 
to be the infimum over all $r > 0$ with the property that there is a measurable subset $E \subset I''$ with
\[ J \subset I'' \setminus E \subset I^{\prime\prime,1} \cap I^{\prime\prime,2} \]
and a family of couplings $(q_t)_{t \in I'' \setminus E}$ between $\mu^1_t, \mu^2_t$ such that:
\begin{enumerate}[label=(\arabic*)]
\item $|E| \leq r^2$.
\item For all $s, t \in I'' \setminus E$, $s \leq t$, we have
\[ \int_{\XX^1_t \times \XX_t^2} d_{W_1}^{Z_s} ( (\varphi^1_s)_* \nu^1_{x^1; s}, (\varphi^2_s)_* \nu^2_{x^2; s} ) dq_t (x^1, x^2) \leq r. \]
\end{enumerate}
\end{defn}

Notice that (2) above implies that for any $t \in I'' \setminus E$,
\begin{align} \label{App:GW}
d_{GW_1} \big( (\XX^1_t, d^1_t, \mu^1_t), (\XX^2_t, d^2_t, \mu^2_t) \big)
\leq d_{W_1}^{Z_t} ( (\varphi^1_t)_* \mu^1_t , (\varphi^2_t)_* \mu^2_t) \le r.
\end{align}
Here, $d_{GW_1}$ denotes the Gromov-$W_1$-Wasserstein distance, see \cite[Definition 2.11]{Bam20b} for the precise definition.

\begin{defn}[$\IF$-distance] 
The $\IF$-distance between two metric flow pairs (uniform over $J$),
\[ d_{\IF}^{ J} \big( (\XX^1, (\mu^1_t)_{t \in I^{\prime,1}}), (\XX^2, (\mu^2_t)_{t \in I^{\prime,2}}) \big), \] 
is defined as the infimum of
\[ d_{\IF}^{\,\CF, J} \big( (\XX^1, (\mu^1_t)_{t \in I^{\prime,1}}), (\XX^2, (\mu^2_t)_{t \in I^{\prime,2}}) \big), \] 
over all correspondences $\CF$ between $\XX^1, \XX^2$ over $I''$ that are fully defined over $J$.
\end{defn}

With all those definitions, it can be proved (cf. \cite[Theorem 5.13, 5.26]{Bam20b}) that $(\IF_I^J,d_{\IF}^J)$ is a complete metric space, with possible infinite distances.

In addition, $\IF$-convergence implies $\IF$-convergence within a correspondence; see \cite[Theorem 6.12]{Bam20b}. More precisely,

\begin{thm} 
Let $(\XX^i, (\mu^i_t)_{t \in I^{\prime,i}})$, $i \in \IN \cup \{ \infty \}$, be metric flow pairs over $I $ that are fully defined over some $J \subset I$.
Suppose that for any compact subinterval $I_0 \subset I$
\[ d_{\IF}^{J \cap I_0} \big( (\XX^i , (\mu^i_t)_{t \in I_0 \cap I^{\prime,i}}), (\XX^\infty , (\mu^\infty_t)_{t \in I_0\cap I^{\prime,\infty}}) \big) \to 0. \] 
Then there is a correspondence $\CF$ between the metric flows $\XX^i$, $i \in \IN \cup \{ \infty \}$, over $I$ such that
\begin{equation*} 
(\XX^i, (\mu^i_t)_{t \in I^{\prime,i}}) \xrightarrow[i \to \infty]{\quad \IF, \CF, J \quad} (\XX^\infty, (\mu^\infty_t)_{t \in I^{\prime,\infty}})
\end{equation*}
on compact time intervals, in the sense that
\begin{equation*} 
d_{\IF}^{\,\CF, J \cap I_0} \big( (\XX^i, (\mu^i_t)_{t \in I_0 \cap I^{\prime,i}}), (\XX^\infty, (\mu^\infty_t)_{t \in I_0\cap I^{\prime,\infty}}) \big) \to 0
\end{equation*}
for any compact subinterval $I_0 \subset I$.
\end{thm}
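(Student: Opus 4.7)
The strategy is to construct a single global correspondence $\CF$ over $I$ by gluing together the compact-subinterval correspondences supplied by the hypothesis, and then to show that convergence within $\CF$ on each compact subinterval follows from the assembled data.

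First, I would exhaust $I$ by a nested sequence of compact subintervals $I_1 \subset I_2 \subset \cdots$ with $\bigcup_k I_k = I$ (e.g., $I_k = [-k, 0]$ in the case $I = (-\infty,0]$). For each fixed $k$, the hypothesis says
\[
d_{\IF}^{J \cap I_k}\bigl( (\XX^i, (\mu^i_t)), (\XX^\infty, (\mu^\infty_t)) \bigr) \to 0 \quad \text{as } i \to \infty.
\]
Hence, by the definition of $d_{\IF}$, we may select correspondences $\CF^{i,k} = \bigl( (Z^{i,k}_t)_{t \in I^{\prime\prime}_{i,k}}, (\varphi^{i,k,j}_t)_{j \in \{i,\infty\}} \bigr)$ over $I_k$ with
\[
d_{\IF}^{\CF^{i,k},\, J \cap I_k}\bigl( (\XX^i, (\mu^i_t)), (\XX^\infty, (\mu^\infty_t)) \bigr) \le \varepsilon_{i,k},
\]
where $\varepsilon_{i,k} \to 0$ as $i \to \infty$ for every fixed $k$.

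Next, I would assemble a single correspondence $\CF = ((Z_t)_{t \in I}, (\varphi^i_t))$ time-slice by time-slice. For each $t \in I$, set $k(t) := \min\{k : t \in I_k\}$. Define $Z_t$ as the quotient of the disjoint union $\bigsqcup_{i \in \IN \cup \{\infty\}} \XX^i_t$ equipped with the largest pseudometric bounded above by the following \emph{atomic} distances: for finite $i$, $x \in \XX^i_t$, and $y \in \XX^\infty_t$, the distance pulled back from $Z^{i,k(t)}_t$ under $\varphi^{i,k(t),i}_t$ and $\varphi^{i,k(t),\infty}_t$; and for any $x, x' \in \XX^i_t$, the intrinsic distance $d^i_t(x,x')$. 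Quotient by zero-distance pairs to obtain a metric space, and let $\varphi^i_t \colon \XX^i_t \hookrightarrow Z_t$ be the resulting canonical isometric embedding.

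Finally, I would verify that $d_{\IF}^{\CF, J \cap I_0}(\XX^i, \XX^\infty) \to 0$ for every compact $I_0 \subset I$. Picking $K$ with $I_0 \subset I_K$, we have $k(t) \le K$ throughout $I_0$. For each $i$, inherit the exceptional set $E^{i,k}$ and coupling family $(q^{i,k}_t)_{t \in I_k \setminus E^{i,k}}$ from each $\CF^{i,k}$ with $k \le K$, and set $E := I_0 \cap \bigcup_{k \le K} E^{i,k}$; on $I_0 \setminus E$ use the coupling from $\CF^{i,k(t)}$. Because the pushforwards $(\varphi^i_s)_* \nu^i_{x^i;s}$ into $Z_s$ coincide with their pushforwards into $Z^{i,k(s)}_s$ under the canonical inclusion, condition (2) of Definition \ref{FDWC} transfers directly from each $\CF^{i,k}$ to $\CF$, giving
\[
d_{\IF}^{\CF,\, J \cap I_0}(\XX^i, \XX^\infty) \le \max_{1 \le k \le K} \varepsilon_{i,k} \longrightarrow 0.
\]

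The principal technical obstacle will be verifying that the pushout pseudometric does satisfy the triangle inequality and that atomic paths cannot shortcut the internal distance of any single $\XX^i_t$, so that each $\varphi^i_t$ remains an isometric embedding after the quotient; this reduces to a standard fact about gluing metric spaces along correspondences. A secondary subtlety is the measurability of $E$, which is routine since on each $I_0$ only finitely many pieces $E^{i,k}$ are combined, each already measurable.
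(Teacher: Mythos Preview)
The paper does not give its own proof of this statement; it simply quotes \cite[Theorem~6.12]{Bam20b}. So there is no ``paper's proof'' to compare against, and I evaluate your proposal on its own merits.

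Your overall strategy---exhaust $I$ by compact intervals $I_k$, pick near-optimal correspondences $\CF^{i,k}$ on each, and glue into a single correspondence by a star-shaped metric pushout of $\bigsqcup_i \XX^i_t$ through $\XX^\infty_t$---is sound, and such a gluing does produce isometric embeddings. The gap is in the choice $k(t) = \min\{k : t \in I_k\}$. Condition~(2) of Definition~\ref{FDWC} is a statement about \emph{pairs} $(s,t)$ with $s \le t$ in $I_0 \setminus E$: one must control $d_{W_1}^{Z_s}$ integrated against the coupling $q_t$. You take $q_t := q^{i,k(t)}_t$, but with the $I_k$ nested increasing (say $I_k = [-k,0]$) one has $k(s) \ge k(t)$, and when the inequality is strict the time $s$ lies outside $I_{k(t)}$ altogether; the correspondence $\CF^{i,k(t)}$ then supplies no bound at time $s$. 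Even when $s \in I_{k(t)}$, the ambient space $Z_s$ was assembled from $\CF^{i,k(s)}$, not $\CF^{i,k(t)}$, so the $\varepsilon_{i,k(t)}$ estimate (which lives in $Z^{i,k(t)}_s$) does not control $d_{W_1}^{Z_s}$. The claim ``condition~(2) transfers directly'' therefore fails, and the displayed bound $\max_{k \le K} \varepsilon_{i,k}$ is unjustified.

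The standard repair is a diagonal argument: let $k$ depend on $i$ rather than on $t$. Since $\varepsilon_{i,k} \to 0$ as $i \to \infty$ for every fixed $k$, choose $k_i \nearrow \infty$ slowly enough that $\varepsilon_{i,k_i} \to 0$, and for each finite $i$ use $\CF^{i,k_i}$ as the sole source of atomic distances between $\XX^i_t$ and $\XX^\infty_t$ (for $t \in I_{k_i}$) in the construction of $Z_t$. For any compact $I_0$ and all sufficiently large $i$ one then has $I_0 \subset I_{k_i}$, so every pair $s \le t$ in $I_0$ lies in the domain of a \emph{single} correspondence $\CF^{i,k_i}$; the coupling $q_t := q^{i,k_i}_t$ and the metric on $Z_s$ (built from the same $\CF^{i,k_i}$) are compatible, and condition~(2) goes through with bound $\varepsilon_{i,k_i} \to 0$.
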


For a sequence of Ricci flows $(M^n_i,g_i(t)),x_i)_{t \in I}$ induced by Ricci shrinkers, one can use the $\IF$-compactness theorem for metric flow pairs \cite[Corollary 7.5, Theorem 7.6]{Bam20b} to obtain the following result. 

\begin{thm}[$\IF$-compactness] \label{thm:601}
Let $(M_i^n,g_i(t),x_i)_{t \in I}$ be a sequence of pointed Ricci flows induced by Ricci shrinkers with the corresponding metric flow pairs $(\XX^i,(\mu_t^i)_{t \in I})$ as described in \eqref{eq:example}.

After passing to a subsequence, there exists an $H_n$-concentrated metric flow pair $(\XX^\infty, (\mu^\infty_t)_{t \in I})$ for which $\XX^\infty$ is future continuous in the sense of \emph{\cite[Definition 4.25]{Bam20b}} such that the following holds.
There is a correspondence $\CF$ between the metric flows $\XX^i$, $i \in \IN \cup \{ \infty \}$, over $I$ such that on compact time-intervals
\begin{equation} \label{Fconv}
(\XX^i, (\mu^i_t)_{t \in I}) \xrightarrow[i \to \infty]{\quad \IF, \CF \quad} (\XX^\infty, (\mu^\infty_t)_{t \in I}) .
\end{equation}
Moreover, the convergence (\ref{Fconv}) is uniform over any compact $J \subset I$ that only contains times at which $\XX^\infty$ is continuous, see \emph{\cite[Definition 4.25]{Bam20b}}. Notice that $\XX^\infty$ is continuous everywhere except possibly at a countable set of times, by \emph{\cite[Corollary 4.35]{Bam20b}}.
\end{thm}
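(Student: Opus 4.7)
The strategy is a direct application of Bamler's $\IF$-compactness theorem \cite[Theorem 7.6]{Bam20b} to the sequence of metric flow pairs $(\XX^i, (\mu_t^i)_{t \in I})$ constructed in~\eqref{eq:example}. That theorem produces an $\IF$-subsequential limit from any sequence of $H$-concentrated metric flow pairs satisfying suitable uniform mass-concentration bounds, and all required hypotheses can be read off from the a priori estimates developed in Sections 3 and 4.

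First, the uniform concentration hypothesis: by Proposition \ref{prop:302}, each $\XX^i$ is $H_n$-concentrated with $H_n = (n-1)\pi^2/2 + 4$ depending only on $n$, so the concentration constant is uniform in $i$. Second, the tightness that prevents mass from escaping follows from the entropy hypothesis. Since each shrinker lies in $\MM(A)$, Corollary \ref{cor:302} yields $-A \le \NN_{(x_i,0)}(\tau) \le 0$ uniformly in $i$ and $\tau>0$. Combined with Theorem \ref{thm:heatupper} and Proposition \ref{prop:305}, this provides a uniform Gaussian-type decay of $\mu_t^i$ around any of its $H_n$-centers, which implies the uniform integrability of squared distance functions required by Bamler's framework.

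With these inputs in place, \cite[Theorem 7.6]{Bam20b} yields a subsequence, a correspondence $\CF$, and a limit metric flow pair $(\XX^\infty, (\mu_t^\infty)_{t \in I})$ such that $\IF$-convergence within $\CF$ holds on compact time intervals. The limit inherits $H_n$-concentration by the lower semi-continuity of this property under $\IF$-convergence (\cite[Theorem 7.4]{Bam20b}), and the future continuity of $\XX^\infty$ follows from the general structure results \cite[Theorem 4.31, Corollary 4.35]{Bam20b}. Uniform convergence over a compact $J$ consisting of continuity times is then a consequence of \cite[Theorem 6.12]{Bam20b} applied together with Definition \ref{FDWC}, using that the exceptional set $E$ in the definition of $d_{\IF}^{\,\CF,J}$ can be taken disjoint from continuity times.

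The main technical obstacle is that Bamler's compactness theory was originally formulated for compact Ricci flows, where the integrability of $d^2(\cdot, p_i)$ against $\mu_t^i$ is automatic. In the non-compact shrinker setting we must verify this integrability uniformly in $i$; this is precisely what the Gaussian upper bound of Theorem \ref{thm:heatupper3} (via the local distance distortion Lemma \ref{lem:discom} and the uniform Nash entropy bound) and the tail estimate of Proposition \ref{prop:305} provide. As the authors remark at the beginning of Section 6, the scaling-invariance of these estimates also allows the conclusion to extend from Ricci flows associated with shrinkers to those merely induced by shrinkers through time-shifting and rescaling, so the statement as written requires no additional adjustment beyond a rescaling argument.
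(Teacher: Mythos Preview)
Your overall strategy---invoke Bamler's compactness theorem \cite[Theorem 7.6]{Bam20b} after verifying the hypotheses---matches the paper's. However, there is a gap in how you verify the mass-concentration bounds. You write ``Since each shrinker lies in $\MM(A)$'' and then invoke the uniform Nash entropy bound of Corollary~\ref{cor:302}, Theorem~\ref{thm:heatupper}, and Proposition~\ref{prop:305}. But the theorem statement does \emph{not} assume a uniform entropy lower bound; the Ricci shrinkers are arbitrary. Without $\boldsymbol{\mu}_i \ge -A$ uniformly in $i$, none of those three results give uniform constants, and your tightness argument collapses.

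The paper avoids this by observing that the required uniform bound $(\XX^i_t, d^i_t, \mu^i_t) \in \IM_r(V,b)$ follows from $H_n$-concentration \emph{alone}, via \cite[Proposition 4.1]{Bam20b}: the constants $V=1/8$, $r=\sqrt{8H_n|t|}$, and $b(\ep)=\Phi(\ep^{-2}\sqrt{8H_n})/2$ depend only on $n$ and $t$, not on any entropy data. Since Proposition~\ref{prop:302} gives $H_n$-concentration for every Ricci shrinker flow with $H_n$ depending only on $n$, this is enough. So your heavier machinery (Gaussian heat kernel bounds, Nash entropy control) is both unnecessary and, as invoked, unjustified. The fix is simply to replace your second paragraph by a reference to $H_n$-concentration and \cite[Proposition 4.1]{Bam20b}.
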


We sketch the main ideas and steps of Theorem \ref{thm:601} modulo all technical details.
\begin{enumerate} 

\item One needs a characterization of the compactness for a subset in $(\mathbb M, d_{GW_1})$, the isometry classes of all metric measure space $(X,d,\mu)$, where $\mu \in \mathcal P(X)$ with $\text{supp}\,\mu=X$ and $d_{GW_1}$ denotes the Gromov-$W_1$-Wasserstein distance (cf. \cite[Definition 2.11]{Bam20b}). Let $\IM_r(V,b) \subset \IM$ be the subset consisting of $(X,d,\mu)$ satisfying
\begin{equation} 
\text{Var}(\mu) \le Vr^2 \quad \text{and} \quad \mu\lc \left\{ x\in X \mid \mu(D(x,\ep r))<b(\ep) \right\} \rc \le \ep, \quad \forall \ep\in (0,1]. \label{E601}
\end{equation}
Here, $V,r$ are two positive constants and $b:(0,1] \to (0,1]$ is a function. Moreover, $D(x,\ep r)$ denotes a closed ball with center $x$ and radius $\ep r$. It is proved by \cite[Theorem 2.27]{Bam20b} that $\IM_r(V,b)$ is compact.

\item For any metric flow pair $(\XX, (\mu_t)_{t \in I})$ defined in \eqref{eq:example}. It is clear by $H_n$-concentration that $\text{Var}(\mu_t) \le H_n |t|$. It can be proved (cf. \cite[Proposition 4.1]{Bam20b} with $\tau=\frac{\ep^3}{8H_n}$) that for any $t<0$,
\begin{equation} 
(\XX_t,d_t,\mu_t) \in \IM_r(V,b), \label{E602}
\end{equation}
where $V=1/8$, $r=\sqrt{8H_n |t|}$, $b(\ep)=\Phi(\ep^{-2} \sqrt{8H_n})/2$ and $\Phi$ is given by \eqref{eq_erf}. The proof of \eqref{E602} uses Definition \ref{def:mf}(6)(7) in an essential way. Therefore, for any $t \le 0$, $(\XX^i,d^i_t,\mu^i_t)$ subconverges in $GW_1$ to a limit metric measure space.

\item To compare different time-slices of $(\XX, (\mu_t)_{t \in I})$, one considers the function
\begin{equation} 
D(t):=\int_{\XX_t} \int_{\XX_t} d_t \, d\mu_t d\mu_t \label{E603}
\end{equation}
for $t \in I$. It is not hard to prove (cf. \cite[Lemma 4.7]{Bam20b}) that for any $s \le t \in I$,
\begin{equation} 
-\sqrt{H_n(t-s)} \le D(t)-D(s) \le \sqrt{\text{Var}(\mu_t)-\text{Var}(\mu_s)+H_n(t-s)}+2\sqrt{H_n(t-s)}. \label{E604}
\end{equation}
It follows immediately from \eqref{E604} that $D(t)$ is continuous on a complement of a countable subset of $I$. In addition, it is proved (cf. \cite[Theorem 4.31]{Bam20b}) that for any $t_0 \le 0$, $D(t)$ is continuous at $t_0$ is equivalent to the statement that $(\XX_t,d_t,\mu_t)$ is continuous at $t_0$ in the $GW_1$ sense. In this case, one can construct an isometric embedding of $(\XX_t,d_t)$ and $(\XX_{t_0},d_{t_0})$ into a metric space $(Z_t,d^Z_t)$ with an explicit coupling $q_t$ between $\mu_t$ and $\mu_{t_0}$ for $t$ close to $t_0$. Therefore, one concludes that the metric flow $(\XX_t,\mu_t)$ is continuous on $I$ except at a countable set of times.

\item For the sequence $(\XX^i, (\mu^i_t)_{t \in I})$ in \eqref{Fconv}, we consider the limit
\begin{equation} 
D^{\infty}(t):=\lim_{t \to \infty}D^i(t), \label{E605}
\end{equation}
which exists for any $t \in I$ by taking a subsequence. Indeed, by \eqref{E604}, we may assume that $D^{\infty}(t)$ exists for $t \in I\cap \mathbb Q$ and $D^{\infty}(t)-D^{\infty}(s) \ge -\sqrt{H_n(t-s)}$ for any $s, t \in I\cap \mathbb Q$ with $s \le t$, after taking a subsequence if necessary. Therefore, there exists a countable set $S \subset I$ such that $D^{\infty}$ is continuous on $I \setminus S$, by extending the definition of $D^{\infty}$. Moreover, \eqref{E605} holds for any $t \in I \setminus S$. Now, \eqref{E605} also holds for $t \in S$, by further taking a subsequence.

The $\IF$-convergence of $(\XX^i, (\mu^i_t)_{t \in I})$ can be constructed as follows. We assume $D^{\infty}(t)$ is continuous at $I \setminus S$ for a countable set $S$. For a large $k>0$, we take a compact set $I_1 \subset [-k,0] \setminus S$ so that $|[-k,0] \setminus I_1|$ is small. Then $I_1$ is finitely covered by compact intervals $I_{t_i}$ centered at $t_i \in I_1$ such that $|I_{t_i}|$ and the oscillation of all $D^i$ and $D^{\infty}$ on each $I_{t_i}$ are sufficiently small. By steps 1 and 2 above, one can construct a correspondence $\CF_0$ that is fully defined on the finite set $I_0:=\{t_i\}$ between $\XX^i$, so that
\begin{equation} 
d_{\IF}^{\,\CF_0, I_0} \big( (\XX^i, (\mu^i_t)_{t \in I_0}), (\XX^j, (\mu^j_t)_{t \in I_0}) \big)<\ep \label{E606}
\end{equation}
for any $\ep>0$, if $i,j$ are sufficiently large. Then by using the small oscillation of $D^i$ on $I_{t_i}$, one can extend the correspondence $\CF_0$ to $\CF_1$ over $I_1$ so that $(\XX^i, (\mu^i_t)_{t \in I_1})$ forms a Cauchy sequence over $I_1$ in the sense of \eqref{E606} with respect to $d_{\IF}^{\,\CF_1, I_1}$ (cf. \cite[Lemma 7.24]{Bam20b}). By letting $k \to \infty$ and taking a diagonal sequence, we obtain from the completeness of $(\IF_I,d_{\IF})$ a limit metric flow pair $(\XX^\infty, (\mu^\infty_t)_{t \in I \setminus S})$, which has an extended definition for all $t \in I$ by the future completion (cf. \cite[Section 4.4]{Bam20b}) so that $(\XX^\infty, (\mu^\infty_t)_{t \in I})$ is right continuous for $t \in I$. Notice that Definition \ref{def:mf}(1)-(7) for $(\XX^\infty, (\mu^\infty_t)_{t \in I})$ are inherited from $(\XX^i, (\mu^i_t)_{t \in I})$. In addition, one can construct a correspondence $\CF$ so that
\begin{equation*}
(\XX^i, (\mu^i_t)_{t \in I}) \xrightarrow[i \to \infty]{\quad \IF, \CF \quad} (\XX^\infty, (\mu^\infty_t)_{t \in I})
\end{equation*}
on compact time intervals and the convergence is uniform over the set on which $(\XX^\infty, (\mu^\infty_t)_{t \in I})$ is continuous. Moreover, $(\XX^\infty_{t'}, d^{\infty}_{t'}, \mu^\infty_{t'}) \subset \IM_r(V,b)$ as \eqref{E602} and $\text{Var}(\mu^\infty_t) \le H_n |t|$ for any $t \in I$. Notice that $\XX^\infty_0$ consists of a single point from which $\mu^\infty_t$ is the conjugate heat measure. 
\end{enumerate} 

\begin{rem}
In \emph{\cite[Theorem 7.4]{Bam20b}}, a general compactness for a subset $\IF_{I}^J(H,V,b,r) \subset \IF_I^J$ is proved by the same method as described above.
\end{rem}

It follows from \cite[Theorem 8.2, 8.4]{Bam20b} that the limit metric flow pair $(\XX^\infty, (\mu^\infty_t)_{t \in I})$ obtained in \eqref{Fconv} is a length space for any $t \in I$. 
In general, further geometric information contained in $(\XX^\infty, (\mu^\infty_t)_{t \in I})$ is scarce. 
However, if $(M_i^n,g_i(t))$ are induced by Ricci shrinkers in $\MM(A)$, then, in particular, their Nash entropies are uniformly bounded by Corollary \ref{cor:302}. 
In this case, one obtains much more concrete structure theorem regarding the limit metric flow obtained in \eqref{Fconv}; see \cite[Theorem 2.3, 2.4, 2.5, 2.6, 2.46]{Bam20c} and \cite[Theorem 9.31]{Bam20b}.

\begin{thm} \label{thm:602}
Let $(M_i^n,g_i(t),x_i)_{t \in I}$ be a sequence of pointed Ricci flows induced by Ricci shrinkers in $\mathcal M(A)$ and $(\XX^\infty, (\mu^\infty_t)_{t \in I})$ the limit metric flow pair obtained in Theorem \ref{thm:601}. Then the following properties hold.

\begin{enumerate}[label=(\arabic*)]
\item There exists a decomposition
\begin{equation}
\XX^{\infty}_{0}=\{x_{\infty}\},\quad \XX^{\infty}_{t<0}=\mathcal R \sqcup \mathcal S, \label{eq:rsd}
\end{equation}
such that $\mathcal R$ is given by an $n$-dimensional Ricci flow spacetime $(\mathcal R, \tf, \partial^{\infty}_{\tf}, g^{\infty})$, in the sense of \emph{\cite[Definition 9.1]{Bam20b}} and $\emph{\text{dim}}_{\mathcal M^*}(\mathcal S) \le n-2$, where $\emph{\text{dim}}_{\mathcal M^*}$ denotes the $*$-Minkowski dimension in \emph{\cite[Definition 3.42]{Bam20b}}. Moreover, $\mu^{\infty}_t(\MS_t)=0$ for any $t<0$.

\item Every tangent flow $(\XX',(v_{x_{\infty}';t})_{t \le 0})$ at every point $x \in \XX^{\infty}$ is a metric soliton in the sense of \emph{\cite[Definition 3.57]{Bam20b}}. Moreover, $\XX'$ is the Gaussian soliton iff $x \in \mathcal R$. If $x \in \MS$, the singular set of $(\XX',(v_{x_{\infty}';t})_{t \le 0})$ on each $t <0$ has Minkowski dimension at most $n-4$. In particular, if $n=3$, the metric soliton is a smooth Ricci flow associated with a $3$-dimensional Ricci shrinker. If $n=4$, each slice of the metric soliton is a smooth Ricci shrinker orbifold with isolated singularities.

\item $\mathcal R_t=\mathcal R \cap \XX^{\infty}_t$ is open such that the restriction of $d_t$ on $\mathcal R_t$ agrees with the length metric of $g_t$.

\item The convergence (\ref{Fconv}) is smooth on $\mathcal R$, in the following sense. There exists an increasing sequence $U_1 \subset U_2 \subset \ldots \subset \mathcal R$ of open subsets with $\bigcup_{i=1}^\infty U_i = \mathcal R$, open subsets $V_i \subset M_i \times I$, time-preserving diffeomorphisms $\phi_i : U_i \to V_i$ and a sequence $\ep_i \to 0$ such that the following holds:
\begin{enumerate}[label=(\alph*)]
\item We have
\begin{align*}
\Vert \phi_i^* g^i - g^\infty \Vert_{C^{[\ep_i^{-1}]} ( U_i)} & \leq \ep_i, \\
\Vert \phi_i^* \partial^i_{\tf} - \partial^\infty_{\tf} \Vert_{C^{[\ep_i^{-1}]} ( U_i)} &\leq \ep_i, \\
\Vert w^i \circ \phi_i - w^\infty \Vert_{C^{[\ep_i^{-1}]} ( U_i)} &\leq \ep_i,
\end{align*}
where $g^i$ is the spacetime metric induced by $g_i(t)$, and $w^i$ is the conjugate heat kernel defined by $d\mu^i=w^i dg^i$, $i \in \IN \cup \{ \infty \}$.

\item Let $y_\infty \in \mathcal R$ and $y_i \in M_i \times (-\infty,0)$.
Then $y_i$ converges to $y_\infty$ within $\CF$ \emph{(cf. \cite[Definition 6.18]{Bam20b})} if and only if $y_i \in V_i$ for large $i$ and $\phi_i^{-1} (y_i) \to y_\infty$ in $\mathcal R$.

\item If the convergence (\ref{Fconv}) is uniform at some time $t \in I$, then for any compact subset $K \subset \RR_t$ and for the same subsequence we have
\[ \sup_{x \in K \cap U_i} d^Z_t (\varphi^i_t (\phi_i(x)), \varphi^\infty_t (x) ) \longrightarrow 0. \]
\end{enumerate}
\end{enumerate}
\end{thm}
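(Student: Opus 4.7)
The plan is to follow Bamler's strategy from \cite{Bam20c} while verifying that each step goes through in the non-compact setting of Ricci flows induced by Ricci shrinkers in $\MM(A)$. The key point is that the preceding sections have established all the essential analytic inputs: the uniform Nash entropy lower bound $-A$ (Corollary \ref{cor:302}), the $\ep$-regularity theorem (Theorem \ref{thm:epr}) together with its quantitative versions (Theorems \ref{thm:epr2}, \ref{thm:epr3}), Gaussian heat kernel bounds (Theorems \ref{thm:lower}, \ref{thm:heatupper}), and the precompactness of $P^{*}$-parabolic neighborhoods (Corollary \ref{cor:501}). These reduce the proof to a series of essentially local blow-up arguments that are identical to the compact case.

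For (1), the decomposition is defined by declaring $x \in \RR$ whenever the limiting pointed Nash entropy satisfies $\lim_{r \to 0} \NN^{\infty}_{x}(r^2) = 0$, where the limit is monotone by Proposition \ref{prop:nash1}(c). Theorem \ref{thm:epr2} then guarantees smooth convergence of the $g_i$ on a parabolic neighborhood of definite size around approximating points, which endows $\RR$ with the structure of a Ricci flow spacetime in the sense of \cite[Definition 9.1]{Bam20b}. The dimension bound $\mathrm{dim}_{\mathcal M^{*}}(\MS) \le n-2$ is obtained via the quantitative stratification of Bamler-Cheeger-Naber type (cf.~\cite[Section 15]{Bam20c}), where the input is the entropy-radius $\ep$-regularity combined with the oscillation estimate for $\NN$ (Corollary \ref{cor:303}); the only modification needed is that covering arguments must be localized inside $P^{*}$-neighborhoods to guarantee precompactness, which is exactly what Corollary \ref{cor:501} provides. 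The measure statement $\mu^{\infty}_{t}(\MS_{t})=0$ follows from the dimension bound together with upper semicontinuity of $-\NN$.

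For (2), given $x \in \XX^{\infty}$, a tangent flow $\XX'$ is produced by parabolically rescaling the Ricci flows at scales $r_{k} \to 0$ around approximating points and applying the $\IF$-compactness Theorem \ref{thm:601} again. The key rigidity is monotonicity of Perelman's $\WW$-entropy: by \eqref{E311a}, the difference $\NN_{x,t}(2\tau)-\NN_{x,t}(\tau)$ controls $\int \int \tau |Rc + \mathrm{Hess}\, b - g/(2\tau)|^{2}$, and since $\NN$ is uniformly bounded and monotone, these integrals go to zero along the blow-up. In the limit one obtains the soliton equation in a weak sense, which upgrades to a metric soliton in the sense of \cite[Definition 3.57]{Bam20b}. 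The dichotomy (Gaussian at regular points, otherwise singular with $\mathrm{dim}_{\mathcal M^{*}} \le n-4$) is standard once (1) is available and stratification is applied to $\XX'$. The low-dimensional specializations for $n=3$ and $n=4$ use Shi's estimates and Cheeger-Gromov compactness once the singular stratum has codimension $\ge 4$.

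Finally, (3) and (4) are essentially consequences of the smooth convergence underlying (1). Openness of $\RR_{t}$ and agreement of $d_{t}$ with the length metric of $g^{\infty}$ come from the fact that points with $\NN^{\infty}_{x}(r^{2}) > -\ep/2$ have a neighborhood with uniformly bounded curvature on a definite parabolic ball, so length-minimizing curves exist there. The smooth convergence statement is obtained by exhausting $\RR$ by precompact sets $U_{i}$ with uniform curvature bounds on $(g_{i})$-pulled-back neighborhoods (Theorem \ref{thm:epr2}), applying the Cheeger-Gromov-Hamilton theorem to extract the diffeomorphisms $\phi_{i}$, and gluing via a standard center-of-mass construction. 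The main obstacle, as in \cite{Bam20c}, is proving compatibility of the smooth diffeomorphisms $\phi_{i}$ with the isometric embeddings $\varphi^{i}_{t}$ of the correspondence $\CF$; this is handled by a uniqueness argument showing that two points in $\RR$ that coincide under the smooth convergence must also coincide under the $\CF$-correspondence, using the uniform convergence at continuous times together with the Gaussian heat kernel lower bound Theorem \ref{thm:lower} to identify conjugate heat kernel measures. This compatibility step is the most delicate technical point and the one where the non-compactness of the $M_{i}$ requires the precompactness of $P^{*}$-neighborhoods most essentially.
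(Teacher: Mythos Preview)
Your overall strategy is correct and aligns with the paper's approach of transporting Bamler's theory from \cite{Bam20c} to the Ricci shrinker setting, checking that the non-compact issues are handled by the results of the preceding sections. However, there is a genuine gap in your treatment of part (1), and your sketch of (3) misses the actual difficulty.

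For the dimension bound $\dim_{\mathcal M^*}(\MS)\le n-2$, you claim the input is ``the entropy-radius $\ep$-regularity combined with the oscillation estimate for $\NN$.'' This is not enough. Theorem \ref{thm:epr} only tells you that points with Nash entropy close to zero are regular; it says nothing about the dimension of the set where this fails. The covering estimate for the effective strata $\td\MS^{\ep,k}_{\sigma r,\ep r}$ gives that the $k$-th stratum is covered by roughly $\sigma^{-k-\ep}$ many $P^*$-balls, but one still needs to prove that $\MS$ sits inside the effective $(n-2)$-stratum. The missing ingredient is the \emph{splitting-based} $\ep$-regularity (the paper's Proposition \ref{prop:codim}, the parabolic Cheeger--Naber codimension-4 theorem): if $(x_0,t_0)$ is strongly $(n-1,\ep,r)$-split, or strongly $(n-3,\ep,r)$-split and $(\ep,r)$-static, then $r_{\text{Rm}}(x_0,t_0)\ge \ep r$. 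This is what forces regularity on the complement of $\td\MS^{\ep,n-2}$. Your proposal never mentions splitting maps; the paper stresses that in the non-compact setting one must also check that the weak splitting maps built from the potential $b$ have at most quadratic growth (via Theorems \ref{thm:lower} and \ref{thm:heatupper3}) so that they can be upgraded to bounded strong splitting maps.

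For (3), the direction $d_t\le d_{g_t}$ is trivial. The content is the reverse inequality: why don't $d_t$-geodesics in $\XX^\infty_t$ shortcut through $\MS_t$? Your claim that ``length-minimizing curves exist there'' because curvature is bounded on a neighborhood does not address this. The paper's argument shows instead that any $1$-Lipschitz function with respect to $d_{g_t}$ is also $1$-Lipschitz with respect to $d_t$, using the high codimension of $\MS$, future continuity of $\XX^\infty$, and the identification $\mathcal R=\mathcal R^*$.
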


Theorem~\ref{thm:602} is a flow version of the Cheeger-Colding theory (cf.~\cite{CC},~\cite{CN13} and~\cite{CN15}).  Its proof shares similar strategy as its elliptic counterparts. 
Many concepts also have counterparts. 
For example, tangent flow corresponds to tangent space,  metric soliton corresponds to metric cone.  We recall their definitions.  See \cite[Definition 6.55, 3.57]{Bam20b}.

\begin{defn}[Tangent flow] \label{def:tangentf}
Let $\XX$ be a metric flow over $I$ and $x_0 \in \XX_{t_0}$ a point.
We say that a metric flow pair $(\XX', (v'_{x_{\max};t})_{t \in I})$ is a tangent flow of $\XX$ at $x_0$ if there is a sequence of scales $\lambda_k> 0$ with $\lambda_k \to \infty$ such that for any $T > 0$ the parabolic rescalings \[ \big(\XX^{-t_0, \lambda_k}_{[-T,0]}, (v_{x_0;t}^{-t_0, \lambda_k})_{\lambda_k^{-2} t + t_0 \in I' , t \in [-T,0]}\big) \] $\IF$-converge to $(\XX'_{[-T,0]}, (v'_{x_{\max};t})_{t \in [-T,0]})$.
\end{defn}

\begin{defn}[Metric soliton] \label{def:met_soliton}
A metric flow pair $(\XX, (\mu_t)_{t \in I})$ is called a metric soliton if there is a tuple
\[ \big( X, d, \mu, (v'_{x;t} )_{x \in X; t \leq 0} \big) \]
and a map $\phi : \XX \to X$ such that the following holds:
\begin{enumerate}
\item For any $t \in I$, the map $\phi_t : (\XX_t, d_t, \mu_t) \to (X, \sqrt{t} d, \mu)$ is an isometry between metric measure spaces.
\item For any $x \in \XX_t$, $s \in I$ with $s \leq t$, we have $(\phi_s)_* v_{x;s} = v'_{\phi_t (x); \log (s/t)}$.
\end{enumerate}
\end{defn}
Roughly speaking, a metric soliton is a metric flow pair induced by a metric measure space in a shrinking way. In general, a tangent flow of a metric flow may not be a metric soliton. In the setting of Theorem \ref{thm:602}, every tangent flow of $(\XX^\infty, (\mu^\infty_t)_{t \in I})$ is also an $\IF$-limit of a sequence of Ricci flows induced by Ricci shrinkers in $\MM(A)$ (cf. \cite[Theorem 6.58]{Bam20b}).

Notice that the limit metric flow $(\XX^\infty, (\mu^\infty_t)_{t \in I})$ in \eqref{Fconv} always admits a regular-singular decomposition 
\begin{equation*}
\XX^{\infty}_{t<0}=\mathcal R \sqcup \mathcal S, 
\end{equation*}
so that $\mathcal R$ is given by a Ricci flow spacetime (cf. \cite[Definition 9.1]{Bam20b}). The key point is to control the size of the singular part in the appropriate sense. To avoid the distance distortion at different time-slices, one can redefine the Hausdorff and Minkowski dimensions (denoted by $\mathcal H^*$ and $\mathcal M^*$ respectively) by using the $P^*$-parabolic balls instead of the conventional ones; see \cite[Definition 3.41, 3.42]{Bam20b}.

One can control the size of $\mathcal S$ quantitatively. Let $(M^n,g(t))_{t \in I}$ be the Ricci flow induced by a Ricci shrinker in $\MM(A)$. We fix a point $(x_0,t_0) \in M \times I$ and define $\tau=t_0-t$ and $H(x_0,t_0,\cdot,\cdot)=(4\pi\tau)^{-\frac n 2} e^{-b}$. We next recall the following definitions from \cite[Definition 5.1, 5.5, 5.6, 5.7]{Bam20c}, which indicate the extent to which the local geometry around $(x_0,t_0)$ is a Ricci shrinker, Ricci flat space or splitting off an $\R^k$.

\begin{defn}[Almost self-similarity] \label{def:ss}
Let $(M^n,g(t))_{t \in I}$ be the Ricci flow induced by a Ricci shrinker. The point $(x_0,t_0) \in M \times I$ is called $(\ep, r)$-selfsimilar if the following holds:
\begin{align*}
& \int_{t_0 - \ep^{-1} r^2}^{t_0 - \ep r^2} \int_M \tau \Big| Rc + \nabla^2 b - \frac1{2\tau} g \Big|^2 dv_{x_0, t_0; t} dt \leq \ep, \\
& \int_M \big| \tau (2\triangle b - |\nabla b|^2 + R) + b - n - \NN_{x_0,t_0}(r^2) \big| dv_{x_0, t_0; t}
\leq \ep, \quad \forall t \in [t_0 - \ep^{-1} r^2, t_0 - \ep r^2].
\end{align*}
\end{defn}

\begin{defn}[Almost static] \label{def:static}
The point $(x_0,t_0)$ is called $(\ep, r)$-static if the following holds:
\begin{align*}
& r^2 \int_{t_0 - \ep^{-1} r^2}^{t_0 - \ep r^2} \int_M |Rc|^2 dv_{x_0, t_0; t} dt \le \ep, \\
& r^2\int_M R \, dv_{x_0, t_0;t} \leq \ep, \quad \forall t \in [t_0 - \ep^{-1} r^2, t_0 - \ep r^2].
\end{align*}
\end{defn}

\begin{defn}[Weak splitting] \label{def:wpm}
$(x_0, t_0)$ is called weakly $(k, \ep, r)$-split if there exists a vector-valued function $\vec y = (y_1, \ldots, y_k) : M \times [t_0 - \ep^{-1} r^2, t_0 - \ep r^2] \to \R^k$ with the following properties for all $i,j = 1, \ldots, k$:
\begin{enumerate}[label=(\arabic*)]
\item We have
\[ r^{-1} \int_{t_0 - \ep^{-1} r^2}^{t_0 - \ep r^2} \int_M |\square y_i | dv_{x_0, t_0;t} dt \leq \ep. \] \\
\item We have
\[ r^{-2} \int_{t_0 - \ep^{-1} r^2}^{t_0 - \ep r^2} \int_M |\nabla y_i \cdot \nabla y_j - \delta_{ij} | dv_{x_0, t_0;t} dt \leq \ep. \]
\end{enumerate} 
\end{defn}

\begin{defn}[Strong splitting] \label{def:spm}
$(x_0, t_0)$ is called strongly $(k, \ep, r)$-split if there exists a vector-valued function $\vec y = (y_1, \ldots, y_k) : M \times [t_0 - \ep^{-1} r^2, t_0 - \ep r^2] \to \R^k$ with the following properties for all $i,j = 1, \ldots, k$:
\begin{enumerate}[label=(\arabic*)]
\item $y_i$ solves the heat equation $\square y_i = 0$ on $M \times [t_0 - \ep^{-1} r^2, t_0 - \ep r^2]$.
\item We have
\[ r^{-2} \int_{t_0 - \ep^{-1} r^2}^{t_0 - \ep r^2} \int_M |\nabla y_i \cdot \nabla y_j - \delta_{ij} | dv_{x_0, t_0;t} dt \leq \ep. \]
\item \label{Def_strong_splitting_map_3} For all $t \in [t_0 - \ep^{-1} r^2, t_0 - \ep r^2]$ we have
\begin{equation*}
\int_M y_i \, dv_{x_0 t_0; t} = 0. 
\end{equation*}
\end{enumerate} 
\end{defn}

It can be proved (cf. \cite[Proposition 12.1]{Bam20c}) that if $(x_0,t_0)$ is weakly $(k, \ep, r)$-split, then it is strongly $(k, \delta(\ep), r)$-split. With these definitions, one can consider the following quantitative stratification.

\begin{defn} \label{Def_SS_weak}
For $\ep > 0$ and $0 < r_1 < r_2 \leq \infty$ the effective strata
\[ \td \MS^{\ep, 0}_{r_1, r_2} \subset \td \MS^{\ep, 1}_{r_1, r_2} \subset \td \MS^{\ep,2}_{r_1, r_2} \subset \ldots \subset \td \MS^{\ep,n+2}_{r_1, r_2} \subset M \times I \]
are defined as follows:
$(x',t') \in \td \MS^{\ep, k}_{r_1, r_2}$ if and only for all $r' \in (r_1, r_2)$ none of the following two properties hold:
\begin{enumerate}
\item $(x',t')$ is $(\ep, r')$-selfsimilar and weakly $(k+1, \ep, r')$-split.
\item $(x',t')$ is $(\ep, r')$-selfsimilar, $(\ep, r')$-static and weakly $(k-1, \ep, r')$-split.
\end{enumerate}
\end{defn}

By a delicate choice of the covering by $P^*$-parabolic balls, it can be proved, see \cite[Proposition 11.2]{Bam20c}, that for any $0<\sigma<\ep$, there are points $(x_1, t_1), \ldots, (x_N, t_N) \in \td\MS^{\ep,k}_{\sigma r, \ep r} \cap P^* (x_0, t_0; r)$ with $N \leq C(A, \ep) \sigma^{-k-\ep}$ and
\begin{equation}\label{eq:cover}
\td\MS^{\ep, k}_{\sigma r, \ep r} \cap P^* (x_0, t_0; r) \subset \bigcup_{i=1}^N P^* (x_i, t_i; \sigma r).
\end{equation}
Notice that \eqref{eq:cover} can be regarded as a parabolic version of the covering in~\cite{CN13} by Cheeger and Naber.

On the complement of $\td\MS^{\ep,n-2}_{\sigma r, \ep r}$, the following $\ep$-regularity theorem is proved (cf. \cite[Proposition 17.1]{Bam20c}), which can be viewed as a parabolic analogue of 
Cheeger-Naber's codimension $4$ theorem in \cite{CN15}. Roughly speaking, one needs to rule out the tangent flows which are Ricci-flat, and split off an $\R^{n-3}$.

\begin{prop}\label{prop:codim}
There exists a constant $\ep=\ep(n,A)>0$ such that the following holds.
Let $(M^n,g(t))_{t \in I}$ be the Ricci flow induced by a Ricci shrinker in $\MM(A)$. Suppose that $(x_0, t_0)$ is strongly $(n-1, \ep, r)$-split or strongly $(n-3, \ep, r)$-split and $(\ep, r)$-static.
Then $r_{\emph{Rm}} (x_0, t_0) \geq \ep r$.
\end{prop}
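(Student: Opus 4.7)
The plan is to argue by contradiction and compactness, exactly along the blueprint of a parabolic Cheeger--Naber codimension four theorem. Suppose the statement fails; then there exist $\ep_i \searrow 0$, Ricci flows $(M_i^n, g_i(t))_{t \in I_i}$ induced by shrinkers in $\MM(A)$, and points $(x_i,t_i)$ with scales $r_i>0$ satisfying the respective splitting/staticity hypotheses at scale $r_i$ with parameter $\ep_i$, yet $r_{\text{Rm}}(x_i,t_i) < \ep_i r_i$. After parabolic rescaling by $r_i^{-2}$ and shifting time so that $t_i=0$, we may assume $r_i = 1$ and $r_{\text{Rm}}(x_i,0) \to 0$. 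By the $\IF$-compactness Theorem~\ref{thm:601} together with the structure Theorem~\ref{thm:602}, we pass to a subsequential $\IF$-limit $(\XX^\infty, (\mu^\infty_t)_{t \le 0})$ which is $H_n$-concentrated, admits a regular-singular decomposition $\XX^\infty_{t<0} = \RR \sqcup \MS$ with $\dim_{\MM^*}\MS \le n-2$, and converges smoothly on $\RR$.

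The almost-selfsimilar condition in Definition~\ref{def:ss} combined with Proposition~\ref{prop:nash1}(a) forces the limit to be a metric soliton in the sense of Definition~\ref{def:met_soliton}: the $L^2$ norm of the soliton defect $Rc + \nabla^2 b - g/(2\tau)$ against the conjugate heat kernel measure vanishes in the limit, and the almost-equality of the Nash entropy with $\tau(2\Delta b - |\nabla b|^2 + R) + b - n$ identifies it as a shrinking metric soliton via Remark~\ref{rem:ibp} and the $W$-entropy monotonicity \eqref{E311a}. Simultaneously, the strong splitting maps $\vec y^i$ of Definition~\ref{def:spm}, which have uniform quadratic growth bounds coming from the heat kernel estimates of Section~4, subconverge (in $L^2_{\mathrm{loc}}$ against the conjugate heat measure, and smoothly on the regular part $\RR$) to a splitting map $\vec y^\infty$ on $\XX^\infty$ whose gradient is parallel and isometric onto $\R^k$.

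In the strongly $(n-1,\ep,r)$-split case this exhibits $\XX^\infty$ as a metric soliton splitting as $\R^{n-1} \times \XX'$, where $\XX'$ is a one-dimensional shrinking metric soliton; every such object is flat, so $\XX^\infty$ is the Gaussian soliton on $\R^n$. Smooth convergence on $\RR$ together with the $\ep$-regularity Theorem~\ref{thm:epr2} then forces $r_{\text{Rm}}(x_i,0)$ to be bounded away from $0$, contradicting $r_{\text{Rm}}(x_i,0) \to 0$. In the strongly $(n-3,\ep,r)$-split and $(\ep,r)$-static case, the analogous splitting yields $\XX^\infty = \R^{n-3} \times Y^3$, where the static condition (Definition~\ref{def:static}) forces $Rc \equiv 0$ on the regular part of $Y$; since $\dim_{\MM^*}(\MS \cap Y) \le 3-2 = 1$ and any Ricci-flat three-manifold is flat, $Y$ is flat, again yielding a Gaussian limit and the same contradiction.

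The principal technical obstacle is the rigorous passage from strong splitting in the sequence to an isometric splitting of the limit metric flow. This requires controlling the maps $\vec y^i$ uniformly on $P^\ast$-parabolic balls and promoting their weak $L^2$ convergence to smooth convergence on the regular set $\RR$ using the heat kernel bounds of Theorem~\ref{thm:heatupper} and Theorem~\ref{thm:gra}, the $P^\ast$ versus $P$ comparisons of Theorem~\ref{thm:104}, and the cutoff functions of Proposition~\ref{prop:cutoff}. The arguments parallel those of Bamler for compact flows (cf.\ the proof of \cite[Proposition 17.1]{Bam20c}); once the splitting and the soliton identity are established in the limit, the dimension count in three dimensions (a one-dimensional shrinker, or a three-dimensional static metric soliton) collapses the remaining factor to Euclidean space and closes the contradiction.
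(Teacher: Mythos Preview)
The paper does not give its own proof of this proposition; it simply cites \cite[Proposition 17.1]{Bam20c} and offers the one-line heuristic ``one needs to rule out the tangent flows which are Ricci-flat, and split off an $\R^{n-3}$.'' Your blueprint---contradiction, rescaling, $\IF$-compactness via Theorem~\ref{thm:601} and Theorem~\ref{thm:602}, and showing the limit is flat Euclidean---is exactly Bamler's strategy, so in that sense your approach matches the paper's (outsourced) proof.

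There is, however, a genuine error in your argument. You invoke ``the almost-selfsimilar condition in Definition~\ref{def:ss}'' to force the limit $\XX^\infty$ to be a metric soliton, but almost-selfsimilarity is \emph{not} among the hypotheses of Proposition~\ref{prop:codim}. The statement only assumes strong $(k,\ep,r)$-splitting, together with $(\ep,r)$-staticity in the second case; selfsimilarity appears in the effective strata of Definition~\ref{Def_SS_weak} but is dropped in the $\ep$-regularity statement itself. Fortunately the soliton structure is unnecessary. In the $(n-1)$-split case, passing the strong splitting maps to the limit yields $n-1$ heat-equation solutions on $\RR$ with parallel orthonormal gradients; the orthogonal direction is then one-dimensional, so $\RR$ is flat, and the $\R^{n-1}$-invariance together with $\dim_{\MM^*}\MS \le n-2$ forces $\MS = \emptyset$. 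In the $(n-3)$-split static case your use of Definition~\ref{def:static} to get $Rc \equiv 0$ on the three-dimensional factor is correct, and that already gives flatness. So delete the metric-soliton paragraph and argue flatness of $\XX^\infty$ directly from the splitting maps and the static condition; the rest of your outline then goes through.
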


There are many implications of Proposition \ref{prop:codim}. Notice that one has the following decomposition:
\begin{equation*}
\XX^{\infty}_{t<0}=\mathcal R^* \sqcup \mathcal S^*, 
\end{equation*}
where $\mathcal R^* \subset \mathcal R$ is the set of points where the convergence \eqref{Fconv} is smooth as defined in \cite[Section 9.4]{Bam20b}. Since $\MS \subset \MS^*$, one can obtain the estimate of $*$-Minkowski dimension of $\MS$ by that of $\MS^*$ from \eqref{eq:cover} and Proposition \ref{prop:codim} (cf. \cite[Theorem 15.28 (a)]{Bam20c}). Moreover, it can be proved that $\MS^* \cap \XX^{\infty}_t$ has measure $0$ for any $t<0$ (cf. \cite[Theorem 15.28 (b)]{Bam20c}). Therefore, Theorem \ref{thm:602} (1) is obtained.

Since $\MS$ has measure $0$ on each time-slice, one can extend the definition of the Nash entropy on $\XX^{\infty}$. Therefore, the Nash entropy at the base point $x'$ of any tangent flow $(\XX',(v_{x';t})_{t \in I})$ of $(\XX^\infty, (\mu^\infty_t)_{t \in I})$ is a constant. By the relation between the Nash entropy and the almost self-similarity (cf. \cite[Proposition 7.1]{Bam20c}), one concludes that $(\XX',(v_{x';t})_{t \in I})$ is a metric soliton since its regular part admits an incomplete Ricci shrinker and the tangent flow itself is determined by its regular part due to the high codimension of the singular part (cf. \cite[Theorem 15.60, 15.69]{Bam20c}). Moreover, the singular set on each time-slice of $(\XX',(v_{x';t})_{t \in I})$ has Minkowski dimension $\le n-4$ (cf. \cite[Theorem 2.16]{Bam20c}). Furthermore, the fact that $x \in \mathcal R$ iff $\XX'$ is the Gaussian soliton follows from the $\ep$-regularity theorem \ref{thm:epr} and the convergence of the Nash entropies under \eqref{Fconv} (cf. \cite[Theorem 2.11, 2.14]{Bam20c}). Notice that if $n=4$, each time-slice of $(\XX',(v_{x';t})_{t \in I})$ is a smooth orbifold with isolated singularities since each tangent flow at any singular point of $(\XX',(v_{x';t})_{t \in I})$ is a flat cone (cf. \cite[Theorem 2.46]{Bam20c}). Therefore, we obtain Theorem \ref{thm:602} (2).

For Theorem \ref{thm:602} (3), the inequality $d_t \le d_{g_t}$ is clear. The opposite inequality is proved by showing that any $u \in C^0(\mathcal R_t)$ that is $1$-Lipschitz with respect to $d_{g_t}$ is also $1$-Lipschitz with respect to $d_t$ (cf. \cite[Theorem 15.28 (c)]{Bam20c}). The argument uses the high codimension of $\MS$, the fact that $\XX^\infty$ is future continuous at $t$, and the fact that $\mathcal R=\mathcal R^*$, which can be proved by using the $\ep$-regularity theorem and the convergence of the Nash entropies (cf. \cite[Corollary 15.47]{Bam20c}).

Once we know $\mathcal R=\mathcal R^*$, the diffeomorphisms in Theorem \ref{thm:603} (4) can be obtained by patching all local conventional Ricci flows into a Ricci flow spacetime by a center of mass construction (cf. \cite[Theorem 9.31]{Bam20b}). Notice that similar constructions are well-known for the Cheeger-Gromov convergence (cf. Remark 7.7 of~\cite{LLW21} and references therein).  All assertions Theorem \ref{thm:603} (4) are proved by smooth convergence. Therefore, Theorem \ref{thm:602} (4) is obtained.

%Notice that the tangent metric soliton obtained in Theorem \ref{thm:602} (2) is also the $\IF$-limit of a sequence of pointed Ricci flows, obtained by rescaling the original flows $(M_i^n,g_i(t))_{t \in I}$. Therefore, it satisfies the conclusions of Theorem \ref{thm:602} and many other properties, see \cite[Theorem 2.18]{Bam20c}. 

As an application of the theory of $\IF$-convergence, we have the following backward pseudolocality theorem; see \cite[Theorem 2.47]{Bam20c}. 
Earlier backward pseudolocality can be found in~\cite[Corollary 11.6(b)]{Pe1}~\cite[Lemma 4.2]{CW12}~\cite[Theorem 4.7]{CW17B}~\cite[Theorem 1.5]{BZ17}.

\begin{thm}[Backward pseudolocality theorem] \label{thm:603}
For any $n \in \IN$ and $\alpha > 0$ there is an $\ep (n, \alpha) > 0$ such that the following holds.

Let $(M^n,g(t))_{t \in I}$ be a Ricci flow induced by a Ricci shrinker. Given $(x_0, t_0) \in M \times I$ and $r > 0$, if
\begin{equation*}
|B_{t_0}(x_0,r)| \geq \alpha r^n, \qquad |Rm| \leq (\alpha r)^{-2} \quad \text{on} \quad B_{t_0}(x_0,r),
\end{equation*}
then
\[ |Rm| \leq (\ep r)^{-2} \quad \text{on} \quad P(x_0,t_0; (1-\alpha) r, -(\ep r)^2,0). \]
\end{thm}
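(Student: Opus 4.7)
The plan is a contradiction argument via $\IF$-compactness, combining the forward pseudolocality of \cite[Theorem 24]{LW20}, the $\ep$-regularity of Theorem~\ref{thm:epr2}, and the regular-singular structure of the $\IF$-limit from Theorem~\ref{thm:602}.

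Suppose the theorem fails for some $\alpha>0$. Then one can find a sequence of Ricci flows $(M_i^n,g_i(t))$ induced by Ricci shrinkers, points $(x_i,t_i)$, radii $r_i>0$, and $\ep_i\searrow 0$ such that the volume and spatial curvature hypotheses hold at $(x_i,t_i)$ with scale $r_i$, yet there exist $(y_i,s_i)\in P(x_i,t_i;(1-\alpha)r_i,-(\ep_i r_i)^2,0)$ with $|Rm|(y_i,s_i)>(\ep_i r_i)^{-2}$. I would parabolically rescale by $r_i^{-2}$ and time-translate by $-t_i$ so as to assume $r_i=1$ and $t_i=0$, preserving the class of Ricci flows induced by Ricci shrinkers. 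Applying Theorem~\ref{thm:601} would then extract, after passing to a subsequence, an $\IF$-limit $(\XX^\infty,(\mu^\infty_t)_{t\in(-\infty,0]})$ within a correspondence $\CF$; Theorem~\ref{thm:602} provides the regular-singular decomposition $\XX^\infty_{t<0}=\RR\sqcup\MS$ and smooth convergence on $\RR$, and we denote the base point at time $0$ by $x_\infty\in\XX^\infty_0$.

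The crux is to establish that, for any $\delta>0$, there is $\tau=\tau(\alpha,\delta)>0$ (independent of $i$) with $\NN_{x_i,0}(\tau)\ge-\delta$ for all large $i$. Theorem~\ref{thm:volume4} applied to the hypothesis $|B_0(x_i,1)|\ge\alpha$ only yields $\NN_{x_i,0}(1)\ge\log(\alpha/C(n))$, which is not small in general. To boost this to near zero at a smaller scale, I would combine the spatial curvature bound $|Rm|\le\alpha^{-2}$ on $B_0(x_i,1)$ with Bishop--Gromov volume comparison: the rescaled balls $(B_0(x_i,\rho),\rho^{-2}g_i(0))$ converge to the Euclidean unit ball in the smooth Cheeger--Gromov sense as $\rho\to 0$, uniformly in $i$. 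The forward pseudolocality \cite[Theorem 24]{LW20} supplies uniform smooth curvature control on a forward parabolic neighborhood of $(x_i,0)$ of definite size depending only on $\alpha$. Passing this data to the $\IF$-limit and exploiting the tangent-flow rigidity of Theorem~\ref{thm:602}(2), every tangent flow of $\XX^\infty$ at $(x_\infty,0)$ is forced to be the static Gaussian soliton, which is equivalent to $\NN^\infty_{x_\infty,0}(\tau)\to 0$ as $\tau\to 0^+$. The convergence of Nash entropies along $\CF$ (using the Lipschitz property from Corollary~\ref{cor:404} together with smooth convergence on $\RR$) transfers this smallness back to $\NN_{x_i,0}(\tau)$ uniformly in $i$.

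With $\NN_{x_i,0}(\tau)\ge-\delta$ secured for $\delta$ small, Theorem~\ref{thm:epr2} will yield $|Rm|\le\ep_0\tau^{-1}$ on $P(x_i,0;\ep_0^{-1}\sqrt{\tau},-(1-\ep_0)\tau,\ep_0^{-1}\tau)$ with $\ep_0=\ep_0(\alpha)$ chosen so that $\ep_0^{-1}\sqrt{\tau}\ge 1-\alpha$; for $i$ so large that $\ep_i^2<(1-\ep_0)\tau$, the point $(y_i,s_i)$ then lies in this neighborhood, forcing $|Rm|(y_i,s_i)\le\ep_0\tau^{-1}$ and contradicting $|Rm|(y_i,s_i)>\ep_i^{-2}\to\infty$. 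The main obstacle is the middle step: a priori the Nash entropy probes the flow backward in time from $(x_i,0)$, which is precisely the direction in which no curvature control is available; the hard part is routing this information through the $\IF$-limit and a tangent-flow rigidity argument based on the forward pseudolocality and the spatial volume/curvature bounds at $t=0$, so as to identify the local geometry of $\XX^\infty$ near $(x_\infty,0)$ as Euclidean.
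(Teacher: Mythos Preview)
The paper does not supply its own proof of this result; it simply cites \cite[Theorem~2.47]{Bam20c} and asserts that Bamler's argument carries over once the $\IF$-convergence machinery has been extended to Ricci shrinkers. Your overall strategy---contradiction, parabolic rescaling, $\IF$-compactness via Theorem~\ref{thm:601}, and the structure theory of Theorem~\ref{thm:602}---is indeed Bamler's route.

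However, your ``crux'' step has a genuine gap, and you yourself flag it in the last paragraph without resolving it. You base the $\IF$-limit at $(x_i,0)$ over $I=(-\infty,0]$, so $\XX^\infty_0=\{x_\infty\}$ is a single point and the entire forward-pseudolocality region $t\in(0,c^2]$ lies \emph{outside} the domain of the $\IF$-limit. The tangent flow at $x_\infty$ is a purely backward object (a blow-up along $t\nearrow 0$), and you give no mechanism by which forward curvature control or the Cheeger--Gromov convergence of the spatial slice at $t=0$ constrains it. Nothing in your argument excludes the possibility that $\NN^\infty_{x_\infty,0}(\tau)$ stabilises at a strictly negative value as $\tau\to 0^+$, i.e.\ that the tangent soliton at $x_\infty$ is non-Gaussian. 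The regular/singular dichotomy of Theorem~\ref{thm:602}(2) only tells you that Gaussian tangent flow is \emph{equivalent} to regularity; it does not by itself certify regularity of the terminal point.

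The standard fix is to move the base point of the $\IF$-limit forward into the pseudolocality window: take the limit based at $(x_i,\theta)$ for a fixed small $\theta>0$ (after verifying the rescaled flows persist that long). Then the slab $t\in[0,\theta]$, where curvature is uniformly bounded by forward pseudolocality, sits inside the $\IF$-limit domain, convergence there is smooth, and each $(y_i,0)$ converges within $\CF$ to a genuinely interior regular point $y^0_\infty\in\RR$. Now Theorem~\ref{thm:602}(2) legitimately forces $\NN^\infty_{y^0_\infty,0}(\tau)\to 0$, convergence of Nash entropies (cf.\ the discussion around \cite[Corollary~15.47]{Bam20c}) transfers this back to $\NN_{y_i,0}(\tau)\ge-\delta$, and the $\ep$-regularity Theorem~\ref{thm:epr} at $(y_i,0)$ gives a two-sided curvature bound on a parabolic ball of definite radius, contradicting $|Rm|(y_i,s_i)\to\infty$ once $\ep_i$ is small enough.
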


Note that the combination of the above theorem with the forward pseudolocality (cf. Theorem 24 of~\cite{LW20}), we arrive at the two-sided pseudolocality.
Thus Theorem~\ref{thm:106} is proved.

Combining Theorem \ref{thm:603} and \eqref{E507a}, we have
\begin{cor}[Comparison of the curvature radii] \label{cor:601}
There exists a constant $C (n, A) > 1$ such that the following holds.

Let $(M^n,g(t))_{t \in I'}$ be a Ricci flow induced by a Ricci shrinker in $\MM(A)$. Then for any $(x,t) \in M \times I'$,
\begin{equation*}
r_{\emph{Rm}}(x,t) \le r_{\emph{Rm}}^-(x,t) \le r_{\emph{Rm}}^s(x,t) \le C r_{\emph{Rm}}(x,t).
\end{equation*}
\end{cor}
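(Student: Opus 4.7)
The plan is to handle the three inequalities separately. The first two, $r_{\text{Rm}}(x,t) \le r_{\text{Rm}}^-(x,t) \le r_{\text{Rm}}^s(x,t)$, are immediate from the inclusions $B_t(x,r) \subset P^-(x,t;r) \subset P(x,t;r)$ and the definitions. The real content is the last comparison, which I will factor as $r_{\text{Rm}}^s \le C\, r_{\text{Rm}}^- \le C^2\, r_{\text{Rm}}$. The intermediate bound $r_{\text{Rm}}^-(x,t) \le C(n,A)\, r_{\text{Rm}}(x,t)$ is already recorded in \eqref{E507a} as a consequence of forward pseudolocality \cite[Theorem 24]{LW20} combined with the no-local-collapsing Theorem \ref{thm:volume1}, so the only remaining task is to prove $r_{\text{Rm}}^s(x,t) \le C(n,A)\, r_{\text{Rm}}^-(x,t)$. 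This is essentially a repackaging of the backward pseudolocality Theorem \ref{thm:603}.

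To carry out this step, set $r_0 := r_{\text{Rm}}^s(x,t)$, so that $|Rm| \le r_0^{-2}$ on $B_t(x,r_0)$. I will first pass to a slightly smaller radius $r_1 := c_n\, r_0$, where $c_n > 0$ is the purely dimensional constant chosen so that the pointwise bound $|R| \le n(n-1)|Rm|$ upgrades into $R \le r_1^{-2}$ on $B_t(x, r_1)$. The no-local-collapsing Theorem \ref{thm:volume1} then yields a volume lower bound of the form
\begin{equation*}
|B_t(x, r_1)|_t \;\ge\; c(n)\, e^{\boldsymbol{\mu}}\, r_1^n \;\ge\; c(n,A)\, r_1^n,
\end{equation*}
using $\boldsymbol{\mu} \ge -A$. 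At this point both hypotheses of Theorem \ref{thm:603} are available at $(x,t)$ with radius $r_1$ and some small $\alpha = \alpha(n,A) > 0$: the volume bound just produced, and $|Rm| \le r_0^{-2} \le (\alpha\, r_1)^{-2}$ provided $\alpha$ is chosen small enough in terms of $c_n$.

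Applying Theorem \ref{thm:603} then delivers $|Rm| \le (\ep\, r_1)^{-2}$ on the backward parabolic neighborhood $P(x,t;(1-\alpha)r_1, -(\ep\, r_1)^2, 0)$, whence $r_{\text{Rm}}^-(x,t) \ge c(n,A)\, r_0$ by the definition of $r_{\text{Rm}}^-$. Chaining this with \eqref{E507a} yields the desired inequality $r_{\text{Rm}}^s(x,t) \le C(n,A)\, r_{\text{Rm}}(x,t)$. I do not foresee any genuine obstacle here, since the two pseudolocality theorems that do the heavy lifting have already been established (the backward one through the $\IF$-convergence machinery of this section). The only care needed is bookkeeping of parameters: the dimensional constant $c_n$ and the constant $\alpha$ in the hypothesis of backward pseudolocality must be chosen jointly so that the final constants depend only on $n$ and $A$.
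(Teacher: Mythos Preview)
Your proposal is correct and follows exactly the approach the paper intends: the paper's own proof is the single sentence ``Combining Theorem~\ref{thm:603} and \eqref{E507a}'', and you have simply unpacked this by verifying the volume hypothesis of backward pseudolocality via Theorem~\ref{thm:volume1} and then chaining with the forward-pseudolocality consequence \eqref{E507a}. The parameter bookkeeping you flag (choosing $c_n$ and $\alpha$ compatibly) is routine and causes no trouble.
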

Another application is the following integral estimate using the quantitative stratification; see \cite[Theorem 2.28]{Bam20c}.
\begin{thm} \label{thm:604}
Let $(M^n, g(t))_{t<1}$ be a Ricci flow associated with a Ricci shrinker in $\MM(A)$. Then for any $(x_0, t_0) \in M \times (-\infty,1)$, $r > 0$ and $\ep>0$, 
\begin{align}
& \int_{[t_0 - r^2, t_0 + r^2] \cap (-\infty,1)} \int_{P^* (x_0, t_0; r) \cap M \times \{ t \}} |Rm|^{2-\ep}\, dV_t dt \notag \\
\leq & \int_{[t_0 - r^2, t_0 + r^2] \cap (-\infty,1)} \int_{P^* (x_0, t_0; r) \cap M \times \{ t \}} r_{\emph{Rm}}^{-4+2\ep} \, dV_t dt \leq C(n,A, \ep) r^{n-2+2\ep}. \label{eq:int}
\end{align}
\end{thm}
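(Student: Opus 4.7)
The first inequality in \eqref{eq:int} is immediate and pointwise: by Definition~\ref{def:curv} we have $|Rm|(x,t) \le r_{\text{Rm}}^{-2}(x,t)$, and raising to the $(2-\ep)$-th power gives $|Rm|^{2-\ep}\le r_{\text{Rm}}^{-4+2\ep}$. The content is in the second inequality, which I would prove by a parabolic Cheeger--Naber-type quantitative stratification argument, mirroring the compact case of~\cite{Bam20c}. The plan is to decompose the spacetime region $\Omega:=P^*(x_0,t_0;r)\cap \bigl(M\times[t_0-r^2,t_0+r^2]\cap(-\infty,1)\bigr)$ dyadically by curvature radius: for $j\ge 0$, let
\[ A_j := \{ (x,t)\in \Omega : 2^{-j-1} r < r_{\text{Rm}}(x,t) \le 2^{-j} r \}, \]
and reduce the estimate to showing $|A_j|\le C(n,A,\ep)\, r^{n+2}\, 2^{-j(4-\ep)}$, since then a convergent geometric series produces the bound $C\,r^{n-2+2\ep}$.

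The crucial step is to link small curvature radius to membership in a sufficiently deep effective stratum. Concretely, there exist constants $\ep_0=\ep_0(n,A)>0$ and $c=c(n,A)>0$ such that
\[ A_j \subset \td\MS^{\ep_0,\, n-2}_{c\,2^{-j} r,\, \ep_0 r}\cap P^*(x_0,t_0;r)\qquad \text{for all $j$ with $2^{-j}<\ep_0/c$.} \]
Indeed, if $(x,t)$ lies in the complement of the right-hand side, then at some scale $r'\in(c\,2^{-j}r,\ep_0 r)$ the point is either $(\ep_0,r')$-selfsimilar and weakly $(n-1,\ep_0,r')$-split, or $(\ep_0,r')$-selfsimilar, $(\ep_0,r')$-static, and weakly $(n-3,\ep_0,r')$-split; upgrading weak to strong splitting via~\cite[Proposition~12.1]{Bam20c} and applying the $\ep$-regularity Proposition~\ref{prop:codim} yields $r_{\text{Rm}}(x,t)\ge \ep_0 r'> 2^{-j} r$, contradicting $(x,t)\in A_j$ for a suitable choice of $c$. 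Combining this inclusion with the covering estimate~\eqref{eq:cover} at $\sigma=c\,2^{-j}$ produces points $(x_i,t_i)$, $i=1,\dots,N$ with $N\le C(n,A,\ep)\, 2^{j(n-2+\ep)}$ and
\[ A_j\subset \bigcup_{i=1}^N P^*(x_i,t_i;\, c\,2^{-j} r). \]

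To bound the spacetime volume of each $P^*$-ball, apply Proposition~\ref{prop:Hcent2} to see that each time-slice $P^*(x_i,t_i;c\,2^{-j}r)\cap M\times\{t\}$ is contained in a metric ball whose radius is comparable to $2^{-j}r$ (after absorbing the additive constant into $c$ for small scales), and then invoke the non-expanding estimate Theorem~\ref{thm:volume4} to bound its spatial volume by $C(n,A)(2^{-j} r)^n$. Multiplying by the parabolic time-length $(c\,2^{-j}r)^2$ gives spacetime volume at most $C(2^{-j}r)^{n+2}$ per ball, so
\[ |A_j|\le C(n,A,\ep)\, 2^{j(n-2+\ep)}\cdot (2^{-j}r)^{n+2} = C\, r^{n+2}\, 2^{-j(4-\ep)}. \]
Summing $\sum_j (2^{-j-1}r)^{-4+2\ep}|A_j|\le C(n,A,\ep)\, r^{n-2+2\ep}\sum_j 2^{-j\ep}$ yields~\eqref{eq:int}; the finitely many scales with $2^{-j}\ge \ep_0/c$ are handled directly since $r_{\text{Rm}}^{-4+2\ep}$ is bounded there and the total parabolic volume is at most $C r^{n+2}$.

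The main obstacle is making the inclusion $A_j\subset\td\MS^{\ep_0,n-2}_{c\,2^{-j}r,\,\ep_0 r}$ genuinely uniform in $(x_0,t_0)$ and $j$. This requires coordinating (i) the scale-robustness of almost-selfsimilarity, almost-staticity, and weak splitting when passing between nested scales, (ii) the weak-to-strong splitting conversion, and (iii) the $\ep$-regularity Proposition~\ref{prop:codim}; the existence of a single $\ep_0(n,A)$ that simultaneously works in all three rests on the Nash entropy lower bound from Corollary~\ref{cor:302}, which holds throughout $\MM(A)$. Once this is in place, the remaining estimates are standard consequences of the volume and parabolic-neighborhood comparison results already established in Sections~3--5.
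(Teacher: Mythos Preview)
The paper itself does not supply a proof of this theorem; it simply cites \cite[Theorem~2.28]{Bam20c} and asserts that the argument generalizes to the Ricci shrinker setting. Your proposal is a faithful sketch of exactly that argument (the parabolic Cheeger--Naber quantitative stratification combined with the codimension-four $\ep$-regularity of Proposition~\ref{prop:codim}), so in that sense your approach and the paper's intended approach coincide.

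There is, however, one genuine gap in your sketch: the step where you bound the spacetime volume of each $P^*$-ball $P^*(x_i,t_i;c\,2^{-j}r)$ by invoking Proposition~\ref{prop:Hcent2}. That proposition is stated only for $P^*$-neighborhoods centered at the base point $p$, and its conclusion has the form $P^*(p,t_0;S,-T^-,T^+)\subset Q(p,t_0;\sqrt{2}S+C,-T^-,T^+)$ with an \emph{additive} constant $C=C(n,A,\delta)$ determined by the lower time bound $t_0-T^-\ge -\delta^{-1}$. For the small-scale balls $P^*(x_i,t_i;c\,2^{-j}r)$ with $j\to\infty$, the time window stays inside a fixed compact interval, so $C$ is fixed and cannot be ``absorbed into $c$''; the containing $Q$-ball has radius of order $C$, not $2^{-j}r$, and the volume bound you need collapses.

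The correct tool is the scale-invariant volume bound for time-slices of $P^*$-balls, namely \cite[Corollary~9.6]{Bam20a}, which the paper explicitly records as carrying over to this setting (see the sentence preceding Lemma~\ref{lem:ball}). That result gives
\[
\big|P^*(x_i,t_i;\rho)\cap (M\times\{t\})\big|_t \le C(n)\exp\big(\NN_{(x_i,t_i)}(2\rho^2)\big)\,\rho^n \le C(n)\,\rho^n,
\]
using $\NN\le 0$ from Corollary~\ref{cor:302}; multiplying by the time length $2\rho^2$ yields the required spacetime volume $\le C(n)\rho^{n+2}$ with $\rho=c\,2^{-j}r$. With this substitution your dyadic summation goes through exactly as written.
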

As a corollary, we prove
\begin{cor} \label{cor:602}
Let $(M^n,g,f,p)$ be a Ricci shrinker in $\MM(A)$. Then
\begin{align} 
\int_{d(p,\cdot) \le r} |Rm|^{2-\ep} \,dV \le& \int_{d(p,\cdot) \le r} r_{\emph{Rm}}^{-4+2\ep} \,dV \le C r^{n+2\ep-2}, \label{eq:int2}\\
\int_{d(p,\cdot) \ge 1} \frac{|Rm|^{2-\ep}}{d^{n+2\ep-2}(p,\cdot)} \,dV \le& \int_{d(p,\cdot) \ge 1} \frac{r_{\emph{Rm}}^{-4+2\ep}}{d^{n+2\ep-2}(p,\cdot)} \,dV \le C \label{eq:int3}
\end{align}
for any $\ep>0$ and $r \ge 1$, where $r_{\emph{Rm}}(\cdot)=r_{\emph{Rm}}(\cdot,0)$ and $C=C(n,A,\ep)$.
\end{cor}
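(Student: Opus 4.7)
The two leftmost inequalities of \eqref{eq:int2} and \eqref{eq:int3} are immediate from the pointwise bound $|Rm|(\cdot,0) \le r_{\mathrm{Rm}}(\cdot,0)^{-2}$, which follows directly from the definition of the curvature radius. To bound the integrals of $r_{\mathrm{Rm}}^{-4+2\ep}$, the plan is to apply Theorem~\ref{thm:604} at the base point $(p,0)$ with scale $r$, and to convert the resulting spacetime estimate into a weighted spatial estimate at time $0$ via the self-similarity of the associated Ricci flow $g(t) = (1-t)(\psi^t)^*g$. Since $\psi^t$ fixes $p$, one has $r_{\mathrm{Rm}}(x,t) = \sqrt{1-t}\,r_{\mathrm{Rm}}(\psi^t(x),0)$ and $dV_{g(t)} = (1-t)^{n/2}(\psi^t)^*dV_g$, so for any measurable spatial region $A \subset M$,
\[
\int_A r_{\mathrm{Rm}}^{-4+2\ep}(\cdot,t)\,dV_t \;=\; (1-t)^{n/2-2+\ep}\int_{\psi^t(A)} r_{\mathrm{Rm}}^{-4+2\ep}(\cdot,0)\,dV_g.
\]
Taking $A = A_t := P^*(p,0;r) \cap M \times \{t\}$, writing $\rho := r_{\mathrm{Rm}}(\cdot,0)$, and interchanging the order of integration turns Theorem~\ref{thm:604} into
\[
\int_M W_r(y)\,\rho^{-4+2\ep}(y)\,dV_g(y) \;\le\; C(n,A,\ep)\,r^{n-2+2\ep},
\]
where $W_r(y) := \int_{[-r^2,r^2]\cap(-\infty,1)}(1-t)^{n/2-2+\ep}\,\mathbf{1}_{y \in \psi^t(A_t)}\,dt$.

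The key technical step is to identify $\psi^t(A_t)$, up to universal bi-Lipschitz constants, with the $g$-ball $B_g(p, r/\sqrt{1-t})$. Because $\psi^t:(M,g(t))\to (M,(1-t)g)$ is an isometry fixing $p$, any ball of radius $R$ in $g(t)$ about $p$ pulls back to a ball of radius $R/\sqrt{1-t}$ in $g$ about $p$. The comparison of the $P^*$-slice $A_t$ with a time-$t$ metric ball of radius $\sim r$ then follows from the $P^*$-vs-$Q$ ball comparisons in Theorem~\ref{thm:104}(i) (Propositions \ref{prop:Hcent1a} and \ref{prop:Hcent2}) combined with the distance distortion of Lemma~\ref{lem:401}. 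The delicate point is that the constants in Theorem~\ref{thm:104}(i) depend on $\delta$ with $-r^2 \ge -\delta^{-1}$, hence a priori on $r$; this is remedied by running the argument at the rescaled base time $t_0 = 1-r^2$ rather than at $t_0 = 0$, where parabolic self-similarity reduces the matter to a fixed unit scale with universal constants. Granted the identification $\psi^t(A_t) \asymp B_g(p,r/\sqrt{1-t})$, the condition $y \in \psi^t(A_t)$ becomes $t > 1 - r^2/d^2(p,y)$, and the substitution $s = 1-t$ yields the explicit asymptotics
\[
W_r(y) \;\asymp\; r^{n-2+2\ep} \text{ for } d(p,y) \le 1, \qquad W_r(y) \;\asymp\; \frac{r^{n-2+2\ep}}{d(p,y)^{n-2+2\ep}} \text{ for } d(p,y) \ge 1,
\]
with implicit constants depending only on $n$ and $\ep$.

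Substituting these weight asymptotics into the Fubini inequality and dividing through by $r^{n-2+2\ep}$ makes the $r$-dependence cancel and simultaneously produces
\[
\int_{d(p,\cdot)\le 1} \rho^{-4+2\ep}\,dV_g \;+\; \int_{d(p,\cdot)\ge 1}\frac{\rho^{-4+2\ep}}{d(p,\cdot)^{n-2+2\ep}}\,dV_g \;\le\; C,
\]
which is exactly \eqref{eq:int3} together with a unit-ball bound. To deduce \eqref{eq:int2} for $r \ge 1$, I split the integral over $B_g(p,r)$ as $\int_{d\le 1}+\int_{1\le d\le r}$; the first term is absorbed into the constant, and on the annulus the crude bound $d^{n-2+2\ep} \le r^{n-2+2\ep}$ gives
\[
\int_{1\le d\le r} \rho^{-4+2\ep}\,dV_g \;\le\; r^{n-2+2\ep}\int_{1\le d\le r}\frac{\rho^{-4+2\ep}}{d^{n-2+2\ep}}\,dV_g \;\le\; C\,r^{n+2\ep-2}.
\]
The main obstacle throughout is the uniform-in-$r$ bi-Lipschitz identification of $\psi^t(A_t)$ with the metric ball $B_g(p,r/\sqrt{1-t})$; once this is in place, the remainder of the argument is a direct substitution and a change of variable.
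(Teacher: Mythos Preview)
Your approach is correct in spirit and the Fubini reduction at the end is genuinely cleaner than what the paper does, but you have made the setup harder than necessary, and the ``main obstacle'' you identify is self-inflicted.

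The paper applies Theorem~\ref{thm:604} only \emph{once}, at the fixed base point $(p,0)$ with a fixed radius $C_1=C_1(n,A)$ and \emph{forward} time only, using Proposition~\ref{prop:Hcent1a} to get $Q(p,0;1,0,1)\subset P^*(p,0;C_1)$. This yields
\[
\int_0^1\int_{d_t(p,\cdot)<1}r_{\mathrm{Rm}}^{-4+2\ep}\,dV_t\,dt\le C(n,A,\ep),
\]
with no $r$-dependent comparison constants anywhere: since $T^-=0$ and $T^+=1$ are fixed, the additive constant in Proposition~\ref{prop:Hcent1a} is universal. Self-similarity then converts each time-slice integral into $(1-t)^{n/2-2+\ep}m(1/\sqrt{1-t})$ with $m(r)=\int_{d<r}r_{\mathrm{Rm}}^{-4+2\ep}dV$, and the substitution $t=1-r^{-2}$ gives $\int_1^\infty r^{1-n-2\ep}m(r)\,dr\le C$. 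The point is that a \emph{single} application at unit scale, combined with the time integral over $[0,1)$, already scans all spatial scales $\ge 1$; there is no need to apply Theorem~\ref{thm:604} at variable $r$, and hence no need to worry about how the $P^*$-versus-$Q$ constants depend on $r$ or to rescale the base time.

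From $\int_1^\infty r^{1-n-2\ep}m(r)\,dr\le C$, the paper extracts \eqref{eq:int2} and \eqref{eq:int3} via an integration-by-parts together with a subsequence along which $m(r_i)/r_i^{n+2\ep-2}\to 0$. Your Fubini shortcut is more elegant here: writing $m(r)=\int_M\mathbf 1_{d<r}\,\rho^{-4+2\ep}\,dV$ and swapping the order of integration gives directly
\[
\int_1^\infty r^{1-n-2\ep}m(r)\,dr=\frac{1}{n+2\ep-2}\Bigl(\int_{d\le 1}\rho^{-4+2\ep}\,dV+\int_{d>1}\frac{\rho^{-4+2\ep}}{d^{n+2\ep-2}}\,dV\Bigr),
\]
which is \eqref{eq:int3} plus the unit-ball bound in one stroke; \eqref{eq:int2} then follows exactly as you wrote. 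So the optimal argument is the hybrid: the paper's fixed-scale, forward-time application of Theorem~\ref{thm:604} (which removes your obstacle), followed by your Fubini (which removes the paper's subsequence step).
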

\begin{proof}
We consider the Ricci flow $(M,g(t))_{t<1}$ associated with the given Ricci shrinker.
It follows from Proposition \ref{prop:Hcent1a} that
\begin{align*}
Q(p,0;1,0,1) \subset P^*(p,0;C_1,0,1) \subset P^*(p,0;C_1)
\end{align*}
for some constant $C_1=C_1(n,A)>1$. Therefore, it follows from Theorem \ref{thm:604} that with $(x_0,t_0)=(p,0)$ and $r=C_1$ that
\begin{align}
& \int_0^1 \int_{d_t(p,\cdot)<1} |Rm|^{2-\ep}\, dV_t dt \leq C(n,A, \ep). \label{e601}
\end{align}
Since $g(t)=(1-t)(\psi^t)^*g$ and $\psi^t$ is defined by \eqref{E201a}, we have $d_t(x,p)=\sqrt{1-t}d(x^t,p)$ and $|Rm|(x,t)=|Rm|(x^t)/(1-t)$, where $x^t=\psi^t(x)$. Therefore, we have
\begin{align} \label{e602}
\int_{d_t(p,x)<1} |Rm|^{2-\ep}(x,t)\, dV_t(x)=(1-t)^{\frac{n}{2}-2+\ep}\int_{d(x,p)<\frac{1}{\sqrt{1-t}}} |Rm|^{2-\ep}(x)\,dV(x).
\end{align}
By a change of variable with $t=1-r^{-2}$, it follows from \eqref{e601} and \eqref{e602} that
\begin{align}
& \int_1^{\infty} r^{1-2\ep-n} m(r) \,dr \leq C(n,A, \ep), \label{e603}
\end{align}
where
\begin{align*}
m(r):=\int_{d(\cdot,p)<r} |Rm|^{2-\ep} \,dV.
\end{align*}

We claim that there exists a sequence $r_i \to \infty$ such that
\begin{align}
\lim_{i \to \infty} \frac{m(r_i)}{r_i^{n+2\ep-2}}=0. \label{e603xa}
\end{align}
Otherwise, there exists a constant $\delta>0$ such that $m(r) \ge \delta r^{n+2\ep-2}$ for sufficiently large $r$. However, it contradicts \eqref{e603}.

We apply the integration by parts to \eqref{e603} from $1$ to $r_i$ to \eqref{e603} and obtain
\begin{align*}
\int_{1 \le d(p,\cdot) \le r_i} \frac{|Rm|^{2-\ep}}{d^{n+2\ep-2}(p,\cdot)} \,dV \le C(n,A,\ep)+m(r_i) r_i^{2-2\ep+n}.
\end{align*}
By letting $i \to \infty$, we have from \eqref{e603xa} that
\begin{align}\label{e603xb}
\int_{d(p,\cdot) \ge 1} \frac{|Rm|^{2-\ep}}{d^{n+2\ep-2}(p,\cdot)} \,dV \le C(n,A,\ep).
\end{align}
In addition, we have for any $r \ge 1$,
\begin{align*}
r^{2-2\ep-n} \int_{1\le d(p,\cdot) \le r} |Rm|^{2-\ep} \,dV \le \int_{1 \le d(p,\cdot) \le r} \frac{|Rm|^{2-\ep}}{d^{n+2\ep-2}(p,\cdot)} \,dV \le C(n,A,\ep).
\end{align*}
Therefore, for any $r \ge 1$,
\begin{align} \label{e603xc}
\int_{d(p,\cdot) \le r} |Rm|^{2-\ep} \,dV \le C(n,A,\ep) r^{n+2\ep-2},
\end{align}
since $m(1)$ is bounded by \eqref{e603}. In sum, the inequalities involving $|Rm|$ in \eqref{eq:int2} and \eqref{eq:int3} are proved.

Notice that for any $(x_0,t_0) \in M \times (-\infty,1)$, it follows from the definition of $\psi^t$ \eqref{E201a} that
\begin{align*}
\psi^{\theta(t)} \circ \psi^{t_0}=\psi^t,
\end{align*}
where $\theta(t):=\frac{t-t_0}{1-t_0}$. Therefore, for any $t<1$,
\begin{align*}
g(t)=(1-t)(\psi^t)^* g=(1-t_0)(1-\theta(t))(\psi^{t_0})^* (\psi^{\theta(t)})^*g=(1-t_0)(\psi^{t_0})^*g(\theta(t)).
\end{align*}
Therefore,
\begin{align*}
r_{\text{Rm}}(x_0,t_0)=\sqrt{1-t_0} r_{\text{Rm}}(x_0^{t_0},0).
\end{align*}

Now, the conclusion regarding $r_{\text{Rm}}$ can be proved similarly. 
\end{proof}
We end this section by proving a gap property for the volume ratio at infinity.

\begin{cor} \label{cor:603}
Let $(M^n,g,f,p)$ be a Ricci shrinker in $\MM(A)$. Suppose
\begin{align} \label{e604a}
\liminf_{r \to \infty} \frac{|B(p,r)|}{r^n}=0.
\end{align}
Then
\begin{align}
|B(p,r)| \le Cr^{n-2+\ep} \label{e604b}
\end{align}
for any $r\ge 1$ and some $C=C(n,A,\ep)$.
\end{cor}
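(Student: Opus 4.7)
The plan is to derive a nonlinear integral inequality for the volume ratio $V(r):=|B(p,r)|/r^n$ by combining the trace of the shrinker equation \eqref{E100} with the integral curvature bound from Corollary~\ref{cor:602}, and then bootstrap it to a fixed point that yields the required polynomial decay.

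First, I will take the trace of \eqref{E100} to obtain $R+\Delta f=n/2$, integrate over $B(p,r)$, and apply the divergence theorem (for a.e.\ $r$, with the coarea formula handling the cut locus) to get
\[
\tfrac{n}{2}\,|B(p,r)| \;=\; \int_{B(p,r)} R\, dV + \int_{\partial B(p,r)} \partial_\nu f\, dA.
\]
The bound $|\nabla f|\le\sqrt{f}\le(r+\sqrt{2n})/2$ on $\partial B(p,r)$, which follows from \eqref{E101} and Lemma~\ref{L201}, combined with $W'(r)=|\partial B(p,r)|$ a.e., rearranges into
\[
\tilde V'(r) \;\ge\; -\tfrac{2}{(r+\sqrt{2n})^{n+1}} \int_{B(p,r)} R\, dV, \qquad \tilde V(r):=\frac{|B(p,r)|}{(r+\sqrt{2n})^{n}}.
\]

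Next, I will bound $\int_{B(p,r)}R\,dV$ via Corollary~\ref{cor:602} and H\"older. Since $R\le c(n)|Rm|$, Corollary~\ref{cor:602} gives $\int_{B(p,r)}R^{2-\ep'}\,dV\le Cr^{n-2+2\ep'}$ for any $\ep'>0$, whence
\[
\int_{B(p,r)} R\,dV \;\le\; Cr^{(n-2+2\ep')/(2-\ep')}\,|B(p,r)|^{(1-\ep')/(2-\ep')}.
\]
The hypothesis supplies $r_k\to\infty$ with $V(r_k)\to0$, hence $\tilde V(r_k)\to0$. Integrating the differential inequality from $r$ to $r_k$, inserting the H\"older bound (and $|B(p,s)|\le(s+\sqrt{2n})^{n}\tilde V(s)$), and letting $k\to\infty$ produces, after the exponent bookkeeping,
\[
\tilde V(r) \;\le\; C\int_r^{\infty} s^{-\mu}\,\tilde V(s)^{\nu}\,ds, \qquad \mu:=\tfrac{4-3\ep'}{2-\ep'}>1,\quad \nu:=\tfrac{1-\ep'}{2-\ep'}<1.
\]

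Finally, I will bootstrap this integral inequality from the trivial bound $\tilde V\le C(n,A)$ (Lemma~\ref{L202}). Plugging $\tilde V(s)\le C_k s^{-\alpha_k}$ into the right-hand side yields $\tilde V(r)\le C_{k+1}r^{-\alpha_{k+1}}$ with $\alpha_{k+1}=\mu-1+\nu\alpha_k$; since $\nu<1$, the exponents $\alpha_k$ converge monotonically from $\alpha_0=0$ up to $\alpha^{*}=(\mu-1)/(1-\nu)=2-2\ep'$, and the recursion $C_{k+1}=CC_k^{\nu}/(\mu+\alpha_k\nu-1)$ stays uniformly bounded because $\alpha^{*}>0$. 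After finitely many iterations $\tilde V(r)\le Cr^{-(2-2\ep'-\delta)}$ for any $\delta>0$, and choosing $\ep'$ and $\delta$ so that $2\ep'+\delta=\ep$ gives $|B(p,r)|\le Cr^{n-2+\ep}$ for $r\ge R_0$; the range $r\in[1,R_0]$ is immediate from Lemma~\ref{L202}. The main obstacle I expect is the exponent bookkeeping: the sharp fixed point $\alpha^{*}=2-2\ep'$ is locked to the specific $r^{n-2+2\ep'}$-growth in Corollary~\ref{cor:602}, and one must verify $\mu+\alpha_k\nu>1$ throughout (which holds automatically since $\mu>1$ and $\alpha_k\ge 0$) so that the Volterra-type integrals converge at every iteration.
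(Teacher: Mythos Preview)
Your argument is correct and takes a genuinely different route from the paper. The paper argues by contradiction that the hypothesis \eqref{e604a} forces a \emph{global} curvature-radius bound $r_{\text{Rm}}(x)<2$: if $r_{\text{Rm}}(x)\ge 2$ somewhere, the self-similar diffeomorphisms $\psi^t$ carry $B(x,1)$ into an annulus at scale $(1-t)^{-1/2}$, and no-local-collapsing then forces $|B(p,r)|\gtrsim r^{n}$ along a sequence, contradicting \eqref{e604a}. Once $r_{\text{Rm}}<2$ everywhere, the integrand $r_{\text{Rm}}^{-4+2\ep}$ in \eqref{eq:int2} is bounded below by a positive constant, so \eqref{eq:int2} immediately gives $|B(p,r)|\le C r^{n-2+2\ep}$.

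By contrast, you stay entirely on the static manifold: you use only the traced shrinker identity $R+\Delta f=n/2$, the potential bound from Lemma~\ref{L201}, and H\"older against the $|Rm|^{2-\ep}$ estimate of Corollary~\ref{cor:602}, then close via a Volterra-type bootstrap on $\tilde V$. Your approach avoids the flow structure and the $\psi^t$-annulus argument entirely, at the cost of the iteration bookkeeping; the paper's approach is shorter and, as a byproduct, yields the stronger qualitative fact that the curvature radius is globally bounded, which your method does not see directly.
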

\begin{proof}
We claim that $r_{\text{Rm}}(x)<2$ for any $x$. Indeed, if $r_{\text{Rm}}(x) \ge 2$, we have
\begin{align*}
|Rm|(y,t)<1
\end{align*}
for any $ y\in B(x,1)$ and $t<1$. By the same argument as in \cite[Corollary 9]{LW20}, we obtain that
\begin{align} \label{e604c}
\psi^t \lc B(x,1) \rc \subset B \lc p,\frac{c_1}{\sqrt{1-t}} \rc \setminus B\lc p,\frac{c_2}{\sqrt{1-t}} \rc
\end{align}
for $c_1>c_2>0$, if $t$ is sufficiently close to $1$. From the standard distance distortion and Theorem \ref{thm:volume1}, we obtain that
\begin{align} \label{e604d}
|\psi^t \lc B(x,1) \rc| \ge c_3 (1-t)^{-\frac{n}{2}}.
\end{align}
However, \eqref{e604c} and \eqref{e604d} contradict \eqref{e604a}.
Thus the desired inequality~\eqref{e604b} follows immediately from \eqref{eq:int2}.
\end{proof}

\newpage

\appendixpage
\addappheadtotoc
\appendix
\section{Integral estimates for the conjugate heat kernel} 
\label{app:A}

In this appendix, we generalize some integral estimates regarding the conjugate heat kernel from \cite[Section 6]{Bam20c} to Ricci flows associated with Ricci shrinkers. These estimates also hold for Ricci flows induced by Ricci shrinkers since they are scaling-invariant.

Throughout this appendix, let $(M^n,g(t))_{t<1}$ be the Ricci flow associated with a Ricci shrinker in $\MM(A)$. We fix a spacetime point $(x_0,t_0) \in M \times (-\infty,1)$ and set $dv_t=dv_{x_0,t_0;t}$ and $\tau=t_0-t$. Moreover, we define $w=w(x,t)$ and $b=b(x,t)$ by $w=H(x_0,t_0,x,t)=(4\pi(t_0-t))^{-\frac{n}{2}}e^{-b}$.

\begin{lem} \label{lem:A1}
There exists a constant $C=C(n,A)>1$ such that for any $0<\tau_0<\tau_1$,
\begin{align} \label{eq:A1}
\int_{t_0-\tau_1}^{t_0-\tau_0} \int_M  \left\{ |Rc|^2+|\na^2 b|^2 \right\} dv_t\,dt \le C \tau_0^{-1}\lc 1+\log \frac{\tau_1}{ \tau_0} \rc.
\end{align}
\end{lem}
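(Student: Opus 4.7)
The plan is to combine Perelman's monotonicity of the $\WW$-entropy with the classical evolution identity $\square R = 2|Rc|^2$ under the Ricci flow, supplemented by an elementary pointwise inequality to handle $|\na^2 b|^2$. First, Proposition \ref{prop:nash1}(a) together with the uniform bound $0 \ge \WW_{(x_0,t_0)}(\tau) \ge -A$ from Corollary \ref{cor:302} immediately gives
\ba
\int_{\tau_0}^{\tau_1} \tau \int_M \Big|Rc + \he b - \frac{g}{2\tau}\Big|^2 dv\, d\tau \le \frac{A}{2}.
\ea

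Next, I would use $\square R = 2|Rc|^2$ together with $\square^* w = 0$ and a duality calculation in the spirit of Proposition \ref{prop:iden1} to obtain
\ba
\frac{d}{dt} \int_M R\, dv_t = 2 \int_M |Rc|^2\, dv_t.
\ea
After inserting the cutoff $\phi^r$ defined in \eqref{E204}, performing integration by parts on the compactly supported pieces, and letting $r \to \infty$ (exactly as in Proposition \ref{prop:iden1} and the proof of Proposition \ref{prop:nash1}), this identity can be integrated from $t_0 - \tau_1$ to $t_0 - \tau_0$. Since $\int_M R\, dv_t \le n/(2\tau)$ by Corollary \ref{cor:301} and $R \ge 0$, this produces
\ba
\int_{\tau_0}^{\tau_1} \int_M |Rc|^2\, dv\, d\tau \le \frac{1}{2}\int_M R\, dv_t \Big|_{t = t_0 - \tau_0} \le \frac{n}{4\tau_0}.
\ea

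Third, the pointwise decomposition $\he b = (Rc + \he b - g/(2\tau)) + (g/(2\tau) - Rc)$ together with $|g|^2 = n$ yields
\ba
|\na^2 b|^2 \le 2\Big|Rc + \he b - \frac{g}{2\tau}\Big|^2 + 4|Rc|^2 + \frac{n}{\tau^2}.
\ea
Integrating in $t$ and combining the three estimates above gives an overall bound of the form $\int_{\tau_0}^{\tau_1}\int_M(|Rc|^2 + |\na^2 b|^2)\, dv\, d\tau \le C(n,A)/\tau_0$, as the contribution of $n/\tau^2$ is at most $n/\tau_0 - n/\tau_1 \le n/\tau_0$. In particular this is stronger than the stated $C\tau_0^{-1}(1+\log(\tau_1/\tau_0))$, so the logarithmic factor is not strictly necessary.

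The main obstacle will be the careful justification of the integration by parts on the non-compact Ricci shrinker when differentiating $\int_M R\, dv_t$ in time. After inserting $\phi^r$, one has to verify that all cutoff error terms (for instance $\int R \la\na w, \na\phi^r\ra\, dV$ and $\int Rw\,\square\phi^r\, dV$) vanish as $r \to \infty$. This follows from \eqref{E205a}--\eqref{E205d}, the Gaussian decay of $w$ coming from Theorem \ref{thm:heatupper2}, and the integrability of $R\, dv$ and $|\na b|^2 dv$ guaranteed by Corollary \ref{cor:301}. Since these verifications are formally identical to those already performed in Lemma \ref{lem:301} and in the proof of Proposition \ref{prop:nash1}, no genuinely new analytic difficulty is expected.
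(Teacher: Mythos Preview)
Your proposal is correct and follows essentially the same strategy as the paper: the $|Rc|^2$ bound comes from integrating the evolution $\square R = 2|Rc|^2$ against the conjugate heat kernel (justified via the cutoff $\phi^r$ exactly as you describe and as the paper carries out), and the $|\na^2 b|^2$ bound comes from Perelman's monotonicity \eqref{E311a} together with an elementary pointwise decomposition. The one minor difference is that the paper first passes through $\int \tau |Rc+\na^2 b|^2\,dv\,d\tau$ via $(x-y)^2 \ge x^2/2 - y^2$ and then divides by $\tau_0$, which produces the $\log(\tau_1/\tau_0)$ term from $\int n/(2\tau)\,d\tau$; your direct decomposition of $|\na^2 b|^2$ in terms of the soliton quantity, $|Rc|^2$, and $n/\tau^2$ avoids this and gives the sharper bound $C(n,A)\tau_0^{-1}$, so your remark that the logarithm is unnecessary is correct.
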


\begin{proof}
Without loss of generality, we assume $t_0=0$.

From Corollary \ref{cor:301}, we have for any $\tau>0$ that
\begin{align} \label{eq:A2}
\int_M |\na b|^2+R\,dv_{-\tau } \le \frac{n}{2\tau}.
\end{align}
Direct calculation shows that
\begin{align*} 
\partial_t\int_{M} R w\phi^r\,dV_t&=\int_{M} \left\{ (\square R) w\phi^r- R \square^{*} (w\phi^r) \right\} dV_t \notag\\
&=\int_{M} 2|Rc|^2 w\phi^r+R\left\{w(\Delta \phi^r+\phi^r_t)+2\la \na w,\na \phi^r \ra \right\} \,dV_t \notag\\
&=\int_{M} \left\{2|Rc|^2\phi^r+R(\Delta \phi^r+\phi^r_t)-2R\la \na b,\na \phi^r \ra \right\} \,dv_t. 
\end{align*}
Integrating the above equation from $-\tau_1$ to $-\tau_0$, we obtain
\begin{align} \label{eq:A3}
&\int_{-\tau_1}^{-\tau_0}\int_M 2|Rc|^2 \phi^r \,dv_t dt \notag \\
\le & \int_{M} R\phi^r\,dv_{-\tau_0}+\int_{-\tau_1}^{-\tau_0}\int_M  \left\{ R(|\Delta \phi^r|+|\phi_t^r|)+R^2 |\na \phi^r|+|\na b|^2 |\na \phi^r|  \right\} dv_t dt. 
\end{align}

From \eqref{E202c} and Lemma \ref{L201}, $R$ increases at most quadratically. Combining \eqref{E205a}, \eqref{E205b}, \eqref{E205c} and \eqref{eq:A2}, it follows that the last integral in \eqref{eq:A3} tends to $0$ as $r \to \infty$. Therefore, we obtain
\begin{align} \label{eq:A3x}
\int_{-\tau_1}^{-\tau_0}\int_M |Rc|^2 \,dv_t dt \le \frac{1}{2} \int_{M} R\,dv_{-\tau_0} \le \frac{n}{4 \tau_0}.
\end{align}
On the other hand, it follows from \eqref{E311a} and Corollary \ref{cor:302} that
\begin{align} 
\int_{-\tau_1}^{-\tau_0} 2\tau \int_M \left|Rc+\na^2 b-\frac{g}{2\tau} \right|^2 \,dv_tdt \le -\WW_{(x_0,t_0)}(\tau_1) \le A. \label{eq:A4} 
\end{align}
By virtue of the elementary identity $(x-y)^2 \ge x^2/2-y^2$, it follows from \eqref{eq:A4} that
\begin{align} 
\tau_0 \int_{-\tau_1}^{-\tau_0} \int_M |Rc+\na^2 b|^2 \,dv_tdt \le \int_{-\tau_1}^{-\tau_0} \tau \int_M |Rc+\na^2 b|^2 \,dv_tdt \le A+\frac{n}{2} \log \frac{\tau_1}{ \tau_0}. \label{eq:A5} 
\end{align}
Combining \eqref{eq:A3x} and \eqref{eq:A5}, the conclusion follows immediately.
\end{proof}

\begin{lem} \label{lem:A2}
There exists a constant $C=C(n,A)>1$ such that the following estimates hold for any $t<t_0$ and $0 \le s \le 1/4$.
\begin{align} \label{eq:A6a}
\int_M \left\{1+|b|+\tau(|\Delta b|+|\na b|^2+R) \right\} e^{s b} \,dv_t \le C.
\end{align}
\end{lem}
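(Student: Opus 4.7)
The plan is to establish the four contributions in \eqref{eq:A6a} separately, namely (i) $\int e^{sb}\,dv_t$, (ii) $\int |b| e^{sb}\,dv_t$, (iii) $\int \tau(|\na b|^2 + R)e^{sb}\,dv_t$, and (iv) $\int \tau|\Delta b|e^{sb}\,dv_t$, and to bound each uniformly in $\tau = t_0 - t$.

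For (i), the idea is to convert the integral into a Gaussian one. Writing $dv_t = w\,dV_t$ with $w = (4\pi\tau)^{-n/2}e^{-b}$ gives $\int e^{sb}\,dv_t = (4\pi\tau)^{-ns/2}\int w^{1-s}\,dV_t$, so it suffices to show $\int w^{1-s}\,dV_t \le C\tau^{ns/2}$. I would pick an $H_n$-center $(z,t)$ of $(x_0,t_0)$ and apply Theorem \ref{thm:heatupper} with $\ep = 1$, together with $\NN \ge -A$ from Corollary \ref{cor:302}, to obtain the pointwise bound $w(y,t) \le C(n,A)\tau^{-n/2}\exp(-d_t^2(z,y)/(5\tau))$. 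Raising to the power $1-s \in [3/4,1]$ and integrating via a layer-cake decomposition against the volume upper bound $|B_t(z,r)|_t \le C(n)r^n$ from Theorem \ref{thm:volume4} produces the needed $\tau^{ns/2}$ factor. For (ii), Theorem \ref{thm:heatupper2} combined with $\NN \ge -A$ provides the pointwise lower bound $b \ge -C(n,A)$, so $|b| \le b_+ + C(n,A)$, and the elementary estimate $xe^{sx} \le (\delta e)^{-1}e^{(s+\delta)x}$ for $x \ge 0$ with $\delta = 1/8$ (so that $s + \delta \le 3/8 < 1/2$) reduces (ii) to (i) at the shifted parameter $s + \delta$.

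The crucial step is (iii), where I would integrate Perelman's differential Harnack inequality $\tau(2\Delta b - |\na b|^2 + R) + b - n \le 0$ from \cite[Theorem 21]{LW20} against $e^{sb}\,dv_t$. An integration by parts using $\na w = -w\na b$ yields the identity
\begin{align*}
\int \Delta b\, e^{sb}\,dv_t = (1-s)\int |\na b|^2 e^{sb}\,dv_t,
\end{align*}
which turns the integrated Harnack inequality into
\begin{align*}
(1 - 2s)\int |\na b|^2 e^{sb}\,dv_t + \int R\, e^{sb}\,dv_t \le \frac{1}{\tau}\int (n-b)\, e^{sb}\,dv_t.
\end{align*}
For $s \le 1/4$ we have $1 - 2s \ge 1/2$ and the right-hand side is bounded by $C(n,A)/\tau$ by (i) and (ii); since $R \ge 0$, this gives the required bound. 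Finally for (iv) I would decompose $|\Delta b| = (\Delta b)_+ + (\Delta b)_-$: the Harnack inequality provides $(\Delta b)_+ \le \tfrac{1}{2}(|\na b|^2 + (n-b)_+/\tau)$, so (i)-(iii) control $\int \tau(\Delta b)_+ e^{sb}\,dv_t$; combining with the identity from (iii) for $\int \Delta b\, e^{sb}\,dv_t$ then controls $\int \tau(\Delta b)_- e^{sb}\,dv_t$, and hence the full integral.

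The main obstacle is step (iii): the hypothesis $s \le 1/4$ enters essentially to keep $1 - 2s$ bounded away from zero, and the method would degenerate at the threshold $s = 1/2$ (which corresponds to $e^{b/2}$ being the critical Ornstein-Uhlenbeck exponent for the conjugate heat measure). The other technical subtlety is to justify the integration by parts rigorously; I would handle this by inserting the cutoff $\phi^r$ from \eqref{E204}, estimating the boundary contributions using the quadratic growth of $b$ (Remark \ref{rem:poten}) and the Gaussian decay of $w$, and letting $r \to \infty$ exactly as in the proofs of Proposition \ref{prop:iden1} and Proposition \ref{prop:nash1}.
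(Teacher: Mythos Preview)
Your proposal is correct; the differences from the paper lie in steps (i) and (ii). For (i), the paper does not invoke the Gaussian heat kernel bound or the volume bound at all. Instead it differentiates in the parameter $s$: writing $F(s)=\int_M e^{sb}\,dv_t$, one has $F'(s)=\int_M b\,e^{sb}\,dv_t$, and integrating the Harnack inequality against $e^{sb}\,dv_t$ with the same integration by parts you use in (iii) gives $F'(s)\le nF(s)$ for $s\le 1/2$, hence $F(s)\le e^{ns}$. This single computation simultaneously yields (i), the bound $\int_M b\,e^{sb}\,dv_t\le ne^{ns}$ needed for (ii), and the bound for (iii). For (ii) the paper then simply combines $\int_M b\,e^{sb}\,dv_t\le ne^{ns}$ with $b\ge -A$, avoiding your bootstrap to parameter $s+\delta$. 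Steps (iii) and (iv) in your proposal match the paper's argument essentially verbatim.

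The paper's route is more economical: it uses only the Harnack inequality and the pointwise lower bound $b\ge -A$, and it produces the explicit constant $e^{ns}$ in (i), independent of $A$. Your route for (i) is heavier, pulling in Theorem~\ref{thm:heatupper} and Theorem~\ref{thm:volume4}, but it is robust in that it would survive in settings where a Gaussian upper bound and a volume doubling estimate are available but a differential Harnack inequality is not. Since Harnack is needed anyway for (iii) and (iv), the paper's approach is the natural one here.
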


\begin{proof}
We compute 
\begin{align} 
\frac{d}{d s} \int_M e^{s b} \,dv_t= \int_M b e^{s b} \,dv_t. \label{eq:A6b} 
\end{align}
Here, the differentiation under the integral sign is allowed by Theorem \ref{thm:heatupper3} and Remark \ref{rem:poten}. By the differential Harnack inequality \cite[Theorem 21]{LW20}, we calculate
\begin{align} 
\int_M b e^{s b} \,dv_t \le& \int_M (\tau(-2\Delta b+|\na b|^2-R)+n) e^{s b} \,dv_t \notag \\
= & \int_M ( \tau ((2s-1)|\na b|^2-R)+n) e^{s b} \,dv_t \le n \int_M e^{s b} \,dv_t, \label{eq:A6c} 
\end{align}
where the integration by parts in the equality can be justified similarly as in Remark \ref{rem:ibp}. Combining \eqref{eq:A6b} and \eqref{eq:A6c}, we obtain
\begin{align*} 
\int_M e^{s b} \,dv_t \le e^{n s}.
\end{align*}
On the other hand, it follows from Theorem \ref{thm:heatupper1} that $b \ge -A$. Therefore, it follows from \eqref{eq:A6c} and the above inequality that
\begin{align}
\int_M \left\{ |b|+\tau(|\na b|^2+R) \right\} e^{s b} \,dv_t \le C(n,A).
\label{eqn:RA20_2}
\end{align}
Applying the differential Harnack inequality and the integration by parts again, we obtain
\begin{align*} 
\int_M 2\tau| \Delta b| e^{s b} \,dv_t \le& \int_M \left\{ |u|+\tau(|\na b|^2+R)+|b|+n \right\}e^{s b} \,dv_t \\
=& \int_M \left\{ -u+\tau(|\na b|^2+R)+|b|+n \right\}e^{s b} \,dv_t \\
\le & \int_M \left\{ 2s\tau |\na b|^2+2|b|+2n \right\}e^{s b} \,dv_t \le C(n,A),  
\end{align*}
where $u=\tau(2\Delta b-|\na b|^2+R)+b-n \le 0$.  It is clear that~\eqref{eq:A6a} follows from the combination of (\ref{eqn:RA20_2}) and the above inequality. 
\end{proof}

\begin{lem} \label{lem:A3}
There exists a constant $C=C(n)>1$ such that the following estimates hold for any $t<t_0$ and $0 \le s \le 1/4$.
\begin{align} \label{eq:Axx1}
\int_M |\na b|^4e^{s b} \,dv_t \le C \int_M |\na^2 b|^2 e^{s b} \,dv_t.
\end{align}
\end{lem}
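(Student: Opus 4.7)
The plan is to derive \eqref{eq:Axx1} by integration by parts, rewriting one factor of $|\nabla b|^2 \nabla b$ as the gradient of $e^{(s-1)b}$ (up to a constant), applying Cauchy--Schwarz, and absorbing. Since $M$ is non-compact, the argument must be carried out with the cutoff $\phi^r$ from \eqref{E204}; the only nontrivial point will be to control the boundary error produced by $|\nabla\phi^r|$ using the moment estimate \eqref{eq:A6a} of Lemma \ref{lem:A3}'s predecessor.

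First, writing $dv_t = (4\pi\tau)^{-n/2} e^{-b} dV_t$ and setting $\beta := s-1 \in [-1, -3/4]$ and $\eta := (\phi^r)^2$, I would compute
\begin{align*}
J_r := \int_M |\nabla b|^4 \eta \, e^{sb} dv_t
&= \frac{1}{\beta} \int_M |\nabla b|^2 \langle \nabla b, \nabla(e^{\beta b}) \rangle \eta (4\pi\tau)^{-n/2} dV_t \\
&= -\frac{1}{\beta} \int_M \bigl( 2 \nabla^2 b (\nabla b, \nabla b) + |\nabla b|^2 \Delta b \bigr) \eta \, e^{sb} dv_t \\
&\quad -\frac{1}{\beta} \int_M |\nabla b|^2 \langle \nabla b, \nabla \eta\rangle e^{sb} dv_t.
\end{align*}
The integration by parts is legitimate because $\eta$ has compact spatial support (contained in $\{F \le 2r\}$), so all quantities are integrable for fixed $t$ by smoothness. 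Using $|\Delta b| \le \sqrt{n} |\nabla^2 b|$ and $|\beta| \ge 3/4$, this gives
\begin{align*}
J_r \le C \int_M |\nabla^2 b|\, |\nabla b|^2 \eta \, e^{sb} dv_t + C \int_M |\nabla b|^3 |\nabla \eta| \, e^{sb} dv_t.
\end{align*}

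Next, by Cauchy--Schwarz, the first term is bounded by $\bigl(\int |\nabla^2 b|^2 \eta\, e^{sb} dv_t\bigr)^{1/2} J_r^{1/2}$. For the cutoff error, $|\nabla \eta| = 2\phi^r|\nabla\phi^r| \le Cr^{-1/2}\phi^r$ by \eqref{E205a}, so a further Cauchy--Schwarz gives
\begin{align*}
\int_M |\nabla b|^3 |\nabla \eta| e^{sb} dv_t
\le C r^{-1/2} J_r^{1/2} \left(\int_M |\nabla b|^2 \, e^{sb} dv_t\right)^{1/2} \le C r^{-1/2} \tau^{-1/2} J_r^{1/2},
\end{align*}
where the last step uses \eqref{eq:A6a}. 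Combining the two estimates and dividing by $J_r^{1/2}$ (which is finite since $\eta$ has compact spatial support),
\begin{align*}
J_r^{1/2} \le C \left(\int_M |\nabla^2 b|^2 \eta \, e^{sb} dv_t\right)^{1/2} + C r^{-1/2}\tau^{-1/2}.
\end{align*}
Squaring and letting $r \to \infty$, monotone convergence on both $\eta \nearrow 1$ integrals yields \eqref{eq:Axx1}.

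The main (modest) obstacle is handling the boundary term from the cutoff: one needs the correct choice $\eta = (\phi^r)^2$ (rather than $\phi^r$) so that $|\nabla\eta|/\eta^{1/2} \le Cr^{-1/2}$, which then permits bounding the error by $J_r^{1/2}$ (and not by $J_r$), enabling the absorbing argument. The moment bound $\int |\nabla b|^2 e^{sb} dv_t \le C\tau^{-1}$ from Lemma \ref{lem:A2} is exactly what makes the error term vanish as $r \to \infty$.
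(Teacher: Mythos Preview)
Your proof is correct and follows essentially the same route as the paper: integrate by parts via $\nabla e^{(s-1)b}$, bound the resulting $|\nabla^2 b|\,|\nabla b|^2$ term by Cauchy--Schwarz, control the cutoff error using \eqref{E205a} together with the moment bound $\int |\nabla b|^2 e^{sb}\,dv_t \le C\tau^{-1}$ from Lemma~\ref{lem:A2}, absorb, and let $r\to\infty$. One minor correction to your closing remark: the choice $\eta=(\phi^r)^2$ is not actually needed --- the paper works with plain $\phi^r$, since \eqref{E205a} already gives $|\nabla\phi^r|^2/\phi^r \le Cr^{-1}$, and a Young's-inequality absorption (in place of your Cauchy--Schwarz-and-divide) handles the error term with the simpler cutoff.
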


\begin{proof}
In the proof, all constants $C>1$ depend only on $n$, which may be different line by line.

We compute for $s \le 1/4$ that
\begin{align} 
&\int_M |\na b|^4 \phi^r e^{s b} \,dv_t \notag \\
=& (4\pi\tau)^{-\frac n 2} \int_M |\na b|^4 \phi^r e^{(s-1) b} \,dV_t \notag\\
=& (4\pi\tau)^{-\frac n 2} (s-1)^{-1} \int_M |\na b|^2 \la \na b, \na e^{(s-1)b} \ra \phi^r \,dV_t \notag\\
=& (4\pi\tau)^{-\frac n 2} (1-s)^{-1} \int_M \lc 2\na^2 b(\na b,\na b)+|\na b|^2 \Delta b \rc \phi^r e^{(s-1)b} \,dV_t+Z\notag\\
\le & C (4\pi\tau)^{-\frac n 2} (1-s)^{-1} \int_M |\na^2 b| |\na b|^2 \phi^r e^{(s-1)b} \,dV_t+Z\notag \\
\le & \frac{1}{4} \int_M |\na b|^4 \phi^r e^{s b} \,dv_t+C \int_M |\na^2 b|^2 \phi^r e^{s b}\, dv_t+Z, \label{eq:Axx2}
\end{align}
where the remainder
\begin{align} 
Z:&=(1-s)^{-1} \int_M |\na b|^2 \la \na b,\na \phi^r \ra e^{s b} \,dv_t \le 2 \int_M |\na b|^3 |\na \phi^r|e^{s b} \,dv_t \notag \\
&\le \frac{1}{4} \int_M |\na b|^4 \phi^r e^{s b} \,dv_t+4\int_M |\na b|^2 |\na \phi^r|^2 (\phi^r)^{-1} e^{s b} \,dv_t. \label{eq:Axx3}
\end{align}

Applying Lemma \ref{lem:A2} and \eqref{E205a}, we conclude from \eqref{eq:Axx2} and \eqref{eq:Axx3} that
\begin{align} 
\int_M |\na b|^4e^{s b} \phi^r \,dv_t \le C \int_M |\na^2 b|^2 e^{s b} \phi^r \,dv_t+\ep(r) \label{eq:Axxr}
\end{align}
where $\ep(r) \to 0$ as $r \to \infty$.  Thus we arrive at \eqref{eq:Axx1} by letting $r \to \infty$ in the above inequality. 
\end{proof}

The main result of this section is the following spacetime integral estimate.

\begin{prop} \label{prop:A1}
There exists a constant $C=C(n,A)>1$ and $\bar s=\bar s(n) <1$ such that the following estimates hold for any $r>0$, $0< \theta <1/2$ and $ s\le \bar s$.
\begin{align} \label{eq:A7a}
\int_{t_0-\tau_0}^{t_0-\theta \tau_0} \int_M \tau(|Rc|^2+|\na^2 b|^2+|\na b|^4) e^{s b} \,dv_t dt \le C \log \theta^{-1}.
\end{align}
\end{prop}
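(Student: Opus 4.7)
The plan is to bound the three integrands of Proposition~\ref{prop:A1} by variants of the computations in Lemmas~\ref{lem:A1}--\ref{lem:A3}. By Lemma~\ref{lem:A3} applied slicewise in $t$, one has $\int_M \tau |\na b|^4 e^{sb}\,dv_t \le C\int_M \tau|\na^2 b|^2 e^{sb}\,dv_t$ for $s\le 1/4$, absorbing the $|\na b|^4$ contribution into the $|\na^2 b|^2$ one. The elementary inequality $|\na^2 b|^2\le 2|Rc+\na^2 b|^2+2|Rc|^2$ then reduces matters to bounding
\[
(\mathrm I) := \int_{t_0-\tau_0}^{t_0-\theta\tau_0}\!\!\int_M \tau|Rc|^2 e^{sb}\,dv_t dt, \qquad (\mathrm{II}) := \int_{t_0-\tau_0}^{t_0-\theta\tau_0}\!\!\int_M \tau|Rc+\na^2 b|^2 e^{sb}\,dv_t dt
\]
each by $C\log\theta^{-1}$. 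I would take $\bar s\le 1/(8n)$ so that all absorption steps below close simultaneously.

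For $(\mathrm I)$ I would mimic the proof of Lemma~\ref{lem:A1} with the extra factor $\tau e^{sb}$. Using $\square R=2|Rc|^2$ and the formula $\square e^{sb}=e^{sb}(sn/(2\tau)+s(1-s)|\na b|^2-2s\Delta b-sR)$, one computes $\partial_t\int \tau R e^{sb}\phi^r\,dv_t$; the crucial step is integration by parts in the resulting $\langle\na R,\na b\rangle$ term, where $\na w=-w\na b$ and $\operatorname{div}(w\na b)/w=\Delta b-|\na b|^2$ force the $\Delta b$ pieces to cancel in pairs. After $r\to\infty$, this yields a schematic identity
\[
\partial_t\!\int\tau R e^{sb}\,dv_t = 2\!\int\tau|Rc|^2 e^{sb}\,dv_t + (\text{l.o.t.})
\]
whose lower-order terms are $\int Re^{sb}\,dv_t$, $\int\tau R^2 e^{sb}\,dv_t$, and $\int\tau R|\na b|^2 e^{sb}\,dv_t$. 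Integrating on $[t_0-\tau_0,t_0-\theta\tau_0]$: the boundary contribution is bounded by Lemma~\ref{lem:A2}; the term $\int Re^{sb}\,dv_t\le C/\tau$ integrates to $C\log\theta^{-1}$; the $R^2$ term absorbs via $R^2\le n|Rc|^2$ once $s<2/n$; and Cauchy--Schwarz $R|\na b|^2\le\tfrac12 R^2+\tfrac12|\na b|^4$ handles the last, with the $|\na b|^4$ half then absorbed through Lemma~\ref{lem:A3} into a small multiple of (I)+(II).

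For $(\mathrm{II})$ I would apply the weighted analogue of Perelman's $\mathcal W$-entropy identity. Writing $u=\tau(2\Delta b-|\na b|^2+R)+b-n\le 0$, the pointwise identity $\square^*(uw)=-2\tau|Rc+\na^2 b-g/(2\tau)|^2 w$ underlying Proposition~\ref{prop:nash1}(a), tested against $e^{sb}\phi^r$ and passed to the limit $r\to\infty$, yields
\[
\partial_t\!\int u e^{sb}\,dv_t = 2\!\int\tau|Rc+\na^2 b-g/(2\tau)|^2 e^{sb}\,dv_t + s\cdot(\text{correction terms}),
\]
where the correction terms are products of $s$ with quantities of the form $\int u e^{sb}(|\na b|^2+|\Delta b|+R+\tau^{-1})\,dv_t$, all bounded by $C/\tau$ via Lemma~\ref{lem:A2}, or small multiples of (I)+(II) via Lemma~\ref{lem:A3} and Cauchy--Schwarz. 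Integrating on $[t_0-\tau_0,t_0-\theta\tau_0]$ and bounding the boundary $\int u e^{sb}\,dv_t$ via $|u|\le \tau(2|\Delta b|+|\na b|^2+R)+|b|+n$ and Lemma~\ref{lem:A2} gives $\int\tau|Rc+\na^2 b-g/(2\tau)|^2 e^{sb}\,dv_t dt\le C\log\theta^{-1}$. Then $(\mathrm{II})$ follows from $|Rc+\na^2 b|^2\le 2|Rc+\na^2 b-g/(2\tau)|^2+n/(2\tau^2)$ combined with $\int e^{sb}\,dv\le C$ and $\int_{\theta\tau_0}^{\tau_0}\tau^{-1}d\tau=\log\theta^{-1}$.

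The principal difficulty will be the rigorous cutoff justification: every $\partial_t$-differentiation under the integral and every integration by parts generates error terms of the form $\int(\cdot)|\na\phi^r|\,dv_t$ or $\int(\cdot)|\square\phi^r|\,dv_t$ that must be shown to vanish as $r\to\infty$ using \eqref{E205a}, \eqref{E205d}, the quadratic growth of $b$ (Remark~\ref{rem:poten}), and the weighted integrability of Lemma~\ref{lem:A2}. For the Perelman quantity $u$ in $(\mathrm{II})$ this is especially delicate, since $u$ contains both $\Delta b$ and $|\na b|^2$; however, the differential Harnack inequality $u\le 0$ together with Lemma~\ref{lem:A2} supplies enough weighted control to pass to the limit. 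The overall bookkeeping mirrors the pattern already displayed in the proofs of Lemmas~\ref{lem:A1}--\ref{lem:A3}, the new wrinkle being a finite chain of Cauchy--Schwarz absorptions coupling $(\mathrm I)$ and $(\mathrm{II})$, which closes as long as $\bar s$ is chosen small depending only on $n$.
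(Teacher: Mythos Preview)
Your proposal is correct and follows essentially the same approach as the paper: both arguments evolve $\int \tau R\,e^{sb}\,dv_t$ (to produce the $|Rc|^2$ term via $\square R=2|Rc|^2$) and $\int u\,e^{sb}\,dv_t$ (to produce the $|Rc+\na^2 b-g/(2\tau)|^2$ term via Perelman's identity), control the order-$s$ error terms by Lemma~\ref{lem:A2} and Lemma~\ref{lem:A3}, and handle the cutoff remainders by Cauchy--Schwarz with a small parameter $\delta$ against the main quantity; the only organizational difference is that the paper combines the two evolutions into $\partial_t\int(\tau R+u)\,e^{sb}\,dv_t$ before absorbing, whereas you keep $(\mathrm I)$ and $(\mathrm{II})$ separate and close a coupled absorption at the end.
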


\begin{proof}
In the proof, all constants $C$ depend on $n$, and $C'$ depend on $n$ and $A$. Moreover, we use $\ep(r)$ to denote a function independent of $t$ such that $\ep(r) \to 0$ if $r \to \infty$. Those terms may be different line by line. Without loss of generality, we assume $t_0=0$.

We set $u=\tau(2\Delta b-|\na b|^2+R)+b-n \le 0$. Recall that from \cite{Pe1}, we have the celebrated identity
\begin{align*} 
\square^*(uw)=-2\tau \left|Rc+\na^2 b-\frac{g}{2\tau} \right|^2 w.
\end{align*}
Moreover, we have
\begin{align*} 
\square b=-2\Delta b+|\na b|^2-R+\frac{n}{2\tau}=\tau^{-1}(b-u-n/2).
\end{align*}
Direct computation shows that
\begin{align*} 
&\partial_t\int_{M} uw e^{s b} \phi^r\,dV_t \notag \\
=&\int_{M} \left\{ \square (e^{sb} \phi^r) uw- e^{sb} \phi^r \square^{*} (uw) \right\} dV_t \notag\\
=&\int_{M} \left\{ \lc (\square e^{sb}) \phi^r+e^{sb} (\square \phi^r)-2\la \na \phi^r, \na e^{sb} \ra \rc uw+2\tau \left|Rc+\na^2 b-\frac{g}{2\tau} \right|^2 w e^{sb} \phi^r \right\} dV_t \\
=&\int_{M} \left\{\lc (s \square b-s^2|\na b|^2) e^{sb} \phi^r+e^{sb} (\square \phi^r)-2\la \na \phi^r, \na e^{sb} \ra \rc u+2\tau \left|Rc+\na^2 b-\frac{g}{2\tau} \right|^2  e^{sb} \phi^r \right\} wdV_t,  \\
=&\int_{M} \left\{  \lc (s\tau^{-1}(b-u-n/2)-s^2|\na b|^2)  \phi^r+\square \phi^r-2s \la \na \phi^r, \na b \ra \rc u+2\tau \left|Rc+\na^2 b-\frac{g}{2\tau} \right|^2  \phi^r  \right\} e^{sb} dv_t. 
\end{align*}
It follows that
\begin{align}
&\partial_t\int_{M} uw e^{s b} \phi^r\,dV_t \notag \\
\ge &\int_{M} \left\{ (s\tau^{-1}(b-u-n/2))u \phi^r+\lc \square \phi^r-2s \la \na \phi^r, \na b \ra \rc u+2\tau \left|Rc+\na^2 b-\frac{g}{2\tau} \right|^2 \phi^r \right\} e^{sb} dv_t \notag \\
\ge &\int_{M} \left\{ -C s\tau^{-1}(u^2+b^2+1) \phi^r+\lc \square \phi^r-2s \la \na \phi^r, \na b \ra \rc u+2\tau \left|Rc+\na^2 b-\frac{g}{2\tau} \right|^2 \phi^r \right\} e^{sb} dv_t \notag\\
\ge &\int_{M} \left\{ -C s \lc \tau((\Delta b)^2+|\na b|^4+R^2)+\tau^{-1}(b^2+1) \rc +2\tau \left|Rc+\na^2 b-\frac{g}{2\tau} \right|^2 \right\} \phi^r e^{sb} dv_t +X_t, \label{eq:A7b}
\end{align}
where 
\begin{align*} 
X_t := \int_{M} \lc \square \phi^r-2s \la \na \phi^r, \na b \ra \rc u e^{sb} dv_t.
\end{align*}
Define $X' :=\int_{-\tau_0}^{-\theta \tau_0} |X_t|\,dt$. Then it follows from Lemma \ref{lem:A2} and inequalities \eqref{E205a} to \eqref{E205d} that for any positive $\delta$ we have
\begin{align} 
|X'|\le& \int_{-\tau_0}^{-\theta \tau_0} \int_{M} (|\square \phi^r|+2s|\na \phi^r||\na b|) |u| e^{sb}\, dv_t dt \notag \\
\le& \ep(r)+\int_{-\tau_0}^{-\theta \tau_0} \int_{M} \lc \delta^{-1} |\na \phi^r|^2(\phi^r)^{-1}\tau |\na b|^2+\delta \tau^{-1}u^2 \phi^r \rc e^{sb}\, dv_t dt \notag \\
\le& \ep(r)+\delta \int_{-\tau_0}^{-\theta \tau_0} \int_{M} \tau^{-1}u^2 \phi^r e^{sb}\, dv_t dt. \label{eq:xt001}
\end{align}

It is clear from the definition of $u$ that $u^2 \le C\lc \tau^2(|\Delta b|^2+|\na b|^2+R))+b^2+1 \rc$. In addition, since $-A \le b \le -\tau(2\Delta b-|\na b|^2+R)+n$, we have
\begin{align} 
b^2 \le C' \lc \tau^2(|\Delta b|^2+|\na b|^4+R^2)+1 \rc. \label{eq:xt002}
\end{align}
Combining these facts with \eqref{eq:Axxr}, we may choose $\delta$ in \eqref{eq:xt001} sufficiently small such that
\begin{align} 
|X'| \le \ep(r)+\frac{1}{10} \int_{-\tau_0}^{-\theta \tau_0} \int_{M} \tau(|\na^2 b|^2+|Rc|^2) \phi^r e^{sb} \, dv_t dt+ \log \theta^{-1}. \label{eq:xt003}
\end{align}

Similarly, we compute
\begin{align} 
&\partial_t\int_{M} \tau R e^{s b}w \phi^r\,dV_t \notag \\
=&\int_{M} \left\{ \square (\tau R e^{s b}) \phi^r w- \tau R e^{s b} \square^{*} (\phi^r w) \right\} dV_t \notag\\
=&\int_{M} \left\{ \lc \square (\tau R) e^{sb}+\tau R \square e^{sb}-2\tau \la \na R, \na e^{sb} \ra \rc\phi^r w+ \tau R e^{s b} \lc (\Delta \phi^r+\phi^r_t) w+2 \la \na w, \na \phi^r \ra \rc \right\} dV_t \notag\\
=&\int_{M} \left\{ \lc \square (\tau R) e^{sb}+\tau R \square e^{sb}\rc\phi^r w+2\tau R \lc \Delta e^{sb}-\la \na e^{sb},\na b \ra \rc \phi^r w \right\} dV_t +Y_t \notag\\
= &\int_{M} \left\{ 2\tau|Rc|^2-R+\tau R \lc s \square b+(s^2-2s) |\na b|^2 +2s \Delta b \rc \right\} \phi^r e^{sb} dv_t+Y_t \notag \\
\ge & \int_{M} \left\{ 2\tau|Rc|^2-R-C s \lc \tau(R^2+|\na b|^4+(\Delta b)^2)+R \rc \right\} \phi^r e^{sb} dv_t+Y_t, \label{eq:A7c}
\end{align}
where
\begin{align*} 
Y_t:&= \int_{M}\tau R e^{s b} \lc (\Delta \phi^r+\phi^r_t) w+2 \la \na w, \na \phi^r \ra+2s \la \na b, \na \phi^r \ra \rc \, dV_t \\
&=\int_{M}\tau R \lc \Delta \phi^r+\phi^r_t+(2s-2) \la \na b, \na \phi^r \ra \rc e^{s b}\, dv_t.
\end{align*}
We define similarly $Y:=\int_{-\tau_0}^{-\theta \tau_0} |Y_t|\,dt$. Then it follows from Lemma \ref{lem:A2} and inequalities \eqref{E205a} to \eqref{E205d} that
\begin{align} 
|Y'| \le& \ep(r)+C\int_{-\tau_0}^{-\theta \tau_0} \int_{M} \lc \delta^{-1} |\na \phi^r|^2(\phi^r)^{-1}\tau |\na b|^2+\delta \tau R^2 \phi^r \rc e^{sb}\, dv_t dt \notag \\
\le& \ep(r)+C\int_{-\tau_0}^{-\theta \tau_0} \int_{M} \delta \tau R^2 \phi^r e^{sb}\, dv_t dt \notag \\
\le& \ep(r)+\frac{1}{10} \int_{-\tau_0}^{-\theta \tau_0} \int_{M} \tau(|\na^2 b|^2+|Rc|^2) \phi^r e^{sb} \, dv_t dt, \label{eq:xt004}
\end{align}
for $\delta$ sufficiently small.

Combining \eqref{eq:A7b} and \eqref{eq:A7c}, we obtain
\begin{align} 
&\partial_t\int_{M} (\tau R+u) e^{s b} \phi^r\,dv_t \notag \\
\ge &\int_{M} \left\{ -C s \lc \tau(|\na b|^4+|\na^2 b|^2+R^2)+\tau^{-1}+R \rc +2\tau \left|Rc+\na^2 b-\frac{g}{2\tau} \right|^2+2\tau|Rc|^2-R \right\} \phi^r e^{sb} dv_t \notag \\
& +X_t+Y_t \notag \\
\ge &\int_{M} \left\{ -C s \lc \tau(|\na b|^4+|\na^2 b|^2+R^2) \rc +\tau (|\na^2 b|^2+|Rc|^2) \right\} \phi^r e^{sb} dv_t+X_t+Y_t-C' \tau^{-1}, \label{eq:A7e}
\end{align}
where we have used Lemma \ref{lem:A2}.

If $s$ is sufficiently small, it follows from \eqref{eq:A7e} and \eqref{eq:Axxr} that
\begin{align} 
&\partial_t\int_{M} (\tau R+u) e^{s b} \phi^r\,dv_t \notag \\
\ge & \frac{1}{2}\int_{M} \tau (|\na^2 b|^2+|Rc|^2) e^{sb} \phi^r\, dv_t+X_t+Y_t-C' \tau^{-1}+\ep(r) \label{eq:A7f}.
\end{align}
By integration from $-\tau_0$ to $-\theta \tau_0$, we obtain from \eqref{eq:A7f}, Lemma \ref{lem:A2}, \eqref{eq:xt003} and \eqref{eq:xt004} that
\begin{align*} 
\int_{-\tau_0}^{-\theta \tau_0} \int_{M} \tau (|\na^2 b|^2+|Rc|^2) e^{sb} \phi^r\, dv_t dt \le C' \log \theta^{-1}+\ep(r). 
\end{align*}
Letting $r \to \infty$, we obtain
\begin{align} 
\int_{-\tau_0}^{-\theta \tau_0} \int_{M} \tau (|\na^2 b|^2+|Rc|^2) e^{sb}\, dv_t dt \le C' \log \theta^{-1}. \label{eq:A8a}
\end{align}
Thus the inequality \eqref{eq:A7a} follows from the combination of \eqref{eq:A8a} and Lemma \ref{lem:A3}.

\end{proof}

\vskip10pt

Yu Li, Institute of Geometry and Physics, University of Science and Technology of China, No. 96 Jinzhai Road, Hefei, Anhui Province, 230026, China; yuli21@ustc.edu.cn.\\

Bing Wang, Institute of Geometry and Physics, School of Mathematical Sciences, University of Science and Technology of China, No. 96 Jinzhai Road, Hefei, Anhui Province, 230026, China; topspin@ustc.edu.cn.\\

\end{document}